\theoremstyle{definition}
\newtheorem{definition}{Definition}[section]
\newtheorem{definition1}{Definition}
\theoremstyle{remark}
\newtheorem{remark}[definition]{Remark}
\theoremstyle{plain}
\newtheorem{cor}[definition]{Corollary}
\newtheorem{cor1}{Corollary}
\theoremstyle{plain}
\newtheorem{lemma}[definition]{Lemma}
\theoremstyle{plain}
\newtheorem{theorem}[definition]{Theorem}
\newtheorem{theoremA}[]{Theorem}
\theoremstyle{remark}
\newtheorem{notation}[definition]{Notation}
\newcommand{\E}{\mathcal{E}}
\newcommand{\F}{\mathcal{F}}
\newcommand{\G}{\mathrm{G}}
\newcommand{\C}{\mathrm{C}}
\newcommand{\N}{\mathrm{N}}
\newcommand{\Z}{\mathrm{Z}}
\newcommand{\Aut}{\mathrm{Aut}}
\newcommand{\Hom}{\mathrm{Hom}}
\newcommand{\Out}{\mathrm{Out}}
\newcommand{\Inn}{\mathrm{Inn}}
\newcommand{\Syl}{\mathrm{Syl}}
\newcommand{\GF}{\mathrm{GF}}
\newcommand{\GL}{\mathrm{GL}}
\newcommand{\Sp}{\mathrm{Sp}}
\newcommand{\SL}{\mathrm{SL}}
\newcommand{\PGL}{\mathrm{PGL}}
\newcommand{\PSL}{\mathrm{PSL}}
\newcommand{\Alt}{\mathrm{Alt}}
\newcommand{\norm}{\mathrel{\unlhd}}
\def \ov {\overline}
\title{Fusion Systems containing pearls}
\author{Valentina Grazian}
\thanks{Funding: This work was supported by the LMS 150\textsuperscript{th} Anniversary Postdoctoral Mobility Grant [grant number PMG 16-17 10].}
\address{Institute of Mathematics, University of Aberdeen, Fraser Noble Building, Aberdeen AB24 3UE, U.K.}
\email{valentinagrazian@libero.it}
\begin{document}

\begin{abstract} An $\F$-essential subgroup is called a \emph{pearl} if it  is either elementary abelian of order $p^2$ or non-abelian of order $p^3$. In this paper we start the investigation of fusion systems containing pearls: we determine a bound for the order of $p$-groups containing pearls and we classify the saturated fusion systems on $p$-groups containing pearls and having sectional rank at most $4$.
\end{abstract}

\maketitle

\section*{Introduction}
In finite group theory, the word \emph{fusion} refers to the study of conjugacy maps between subgroups of a group. This concept has been investigated for over a century, probably starting with Burnside, and the modern way to solve problems involving fusion is via the theory of fusion systems.
Given any finite group $G$, there is a natural construction of a  saturated fusion system on one of its Sylow $p$-subgroups $S$: this is the category with objects the subgroups of $S$ and with morphisms between subgroups $P$ and $Q$ of $S$ given by the set $\Hom_G(P,Q)$ of all the restrictions of conjugacy maps by elements of $G$ that map $P$ into $Q$. In general, 
a saturated fusion system on a $p$-group $S$ is a category whose objects are the subgroups of $S$ and whose morphisms are the monomorphisms between subgroups which satisfy certain axioms, motivated by conjugacy relations and first formulated in the nineties by the representation theorist Puig (cf. \cite{Pg}). 
There are saturated fusion systems which do not arise as fusion systems of a finite group $G$ on one of its Sylow $p$-subgroups; these fusion systems are called \emph{exotic}. The Solomon fusion systems  $\mathrm{Sol}(p^a)$ (predicted by Benson and studied by Levi and Oliver in \cite{sol}) form the only known family of exotic simple fusion systems on $2$-groups. In contrast, for odd primes $p$, there is a plethora of exotic fusion systems (see for example \cite{ RV, p.index, p.index2, p.index3}). The classification results we prove in this paper lead us to the description of a new exotic fusion system on a $7$-group of order $7^5$.

 The starting point toward the classification of saturated fusion systems is given by the Alperin-Goldschmidt Fusion Theorem  \cite[Theorem 1.19]{AO}, that guarantees that a saturated fusion system  $\F$ on a $p$-group $S$ is completely determined by the $\F$-automorphism group of $S$  and by the $\F$-automorphism group of certain subgroups of $S$, that are called for this reason \emph{$\F$-essential}. 
More precisely, if $\F$ is a saturated fusion system on a $p$-group $S$, then a subgroup $E$ of $S$ is $\F$-essential if
\begin{itemize}
\item $E$ is $\F$-centric: $\C_S(E\alpha) \leq E\alpha$ for every $\alpha \in \Hom_\F(E,S)$;
\item $E$ is fully normalized in $\F$: $|\N_S(E)| \geq |\N_S(E\alpha)|$ for every $\alpha \in \Hom_\F(E,S)$;
\item $\Out_\F(E)=\Aut_\F(E)/\Inn(E)$ contains a strongly $p$-embedded subgroup.
\end{itemize}

The smallest candidate for an $\F$-essential subgroup is a group isomorphic to the direct product $\C_p \times \C_p$, since the outer automorphism group of a cyclic group does not have strongly $p$-embedded subgroups.  The smallest candidate for a non-abelian $\F$-essential subgroup is a non-abelian group of order $p^3$ (that is isomorphic to the group $p^{1+2}_+$ when $p$ is odd). The purpose of this paper is to start the investigation of saturated fusion systems $\F$ containing these small $\F$-essential subgroups.

\begin{definition1}
An \emph{$\F$-pearl} is an $\F$-essential subgroup of $S$ that is either elementary abelian of order $p^2$ or non-abelian of order $p^3$.
\end{definition1}

When this does not lead to confusion, we will omit the $\F$ in front of the name pearl.

 Fusion systems containing pearls are far from being rare. Pearls appear in the study of saturated fusion systems on $p$-groups having a maximal subgroup that is abelian (\cite{p.index, p.index2, p.index3}). Pearls are also contained in many of the saturated fusion systems on a Sylow $p$-subgroup of the group $\G_2(p)$, as proved in \cite{G2p}; in particular the fusion system of the Monster group on one of its Sylow $7$-subgroups contains an abelian pearl. Many saturated fusion systems on $p$-groups of small sectional rank contain pearls as well, when $p$ is odd.  The rank of a finite group $G$ is the minimum size of a generating set for $G$ and a finite $p$-group $S$ has sectional rank $k$ if every elementary abelian quotient $P/Q$ of subgroups of $S$ has order at most $p^k$ (or equivalently if every subgroup of $S$ has rank at most $k$). It turns out that if $p$ is an odd prime then every saturated fusion system $\F$ on a  $p$-group of sectional rank $2$ satisfying $O_p(\F)=1$ contains pearls (\cite[Theorem 1.1]{DRV}). In particular the $\F$-essential subgroups of all the exotic fusion systems on the group $7^{1+2}$ are abelian pearls. In her PhD thesis (\cite{IO}), the author proved that if $p\geq 5$, then every saturated fusion system $\F$ on a $p$-group of sectional rank $3$ satisfying $O_p(\F)=1$ contains an $\F$-essential subgroup that is a pearl. Abelian pearls (and extraspecial pearls modulo their center) are also the smallest example of soft subgroups, defined in \cite{Het1} as self-centralizing abelian subgroups of a $p$-group having index $p$ in their normalizer. Finally pearls are related to the so called $Qd(p)$-groups (as defined in \cite{Glau}). Indeed, suppose $p$ is an odd prime, $E$ is an abelian pearl and $G$ is a model  for $\N_\F(E)$ (as defined in \cite[Section 1.8]{AO}). Then $\N_S(E) \in \Syl_p(G)$, $\Out_\F(E)\cong G/E$ and  the group $O^{p'}(G)$ is a $Qd(p)$-group:
\[ O^{p'}(G) = \langle \N_S(E)^G \rangle \cong (\C_p \times \C_p) \colon\SL_2(p).\]
Similarly, it can be shown (for example by \cite{Winter}) that if $p$ is an odd prime and $E$ is a non-abelian (and so extraspecial) pearl then 
\[ O^{p'}(G)= \langle \N_S(E)^G \rangle \cong p^{1+2}_+\colon \SL_2(p),\]
and we write $\widetilde{Qd}(p)$ to denote groups of this type.
Fusion systems that do not involve $Qd(p)$ groups nor $\widetilde{Qd}(p)$ groups have been studied in \cite{Qdp}.
In such paper the authors also determine the finite simple groups that involve either $Qd(p)$ or $\widetilde{Qd}(p)$ groups (and so the finite simple groups that can realize fusion systems containing pearls). 

We start our analysis by showing that  if $S$ is a $p$-group of order $p^n$ and $\F$ is a saturated fusion system on $S$ containing a pearl, then $S$ has maximal nilpotency class, i.e. it has class $n-1$ (Lemma \ref{self-centr.max.class}). This follows from the well known fact that a $p$-group containing a self-centralizing elementary abelian subgroup of order $p^2$ has maximal nilpotency class (\cite[Proposition 1.8]{Ber}). 
There has been a lot of work on $p$-groups of maximal nilpotency class, aimed to get a better understanding of their structure (see \cite{black}, \cite[III.14]{Hup}, \cite[Chapter 3]{Led-Green}).  One of the main properties of such $p$-groups is that their  upper and lower central series coincide (and they have maximal length). We set $\Z_1(S)=\Z(S)$ and for $i\geq 2$ we denote by $\gamma_i(S)$ and $\Z_i(S)$ the $i$-th member of the lower and upper central series of $S$, respectively (Definition \ref{upper-lower.def}). Also, if $S$ is a $p$-group of maximal nilpotency class and order at least $p^4$, then it contains a characteristic maximal subgroup denoted $\gamma_1(S)$ (see Definition \ref{def.gamma1}) that plays an important role in the classification of fusion systems containing pearls.

If $p=2$ then Lemma \ref{self-centr.max.class} implies that every $2$-group containing a pearl is either dihedral  or quasidihedral (also called semi-dihedral) or generalized quaternion (result that can be deduced directly from \cite[Theorems 1 and 2]{Harada}). So the reduced fusion systems on $2$-groups containing pearls are known (\cite[Theorem A]{2rank4}). 
For this reason we focus on the case in which $p$ is an \emph{odd} prime. Note that if $p$ is odd then every pearl has exponent $p$ (the group $p^{1+2}_-$ cannot be $\F$-essential) and if we denote by $\mathcal{P}(\F)$ the set of $\F$-pearls then we have
\[\mathcal{P}(\F) = \mathcal{P}(\F)_a \cup \mathcal{P}(\F)_e,\]
where $\mathcal{P}(\F)_a$ denotes the set of abelian $\F$-pearls  and $\mathcal{P}(\F)_e$ that of extraspecial $\F$-pearls.

Let $\F$ be a saturated fusion system on the $p$-group $S$ containing a pearl, for $p$ odd. The main result of this paper is the characterization of the order $S$ with respect to its sectional rank.

\begin{theoremA}\label{main}
Suppose that $p$ is an odd prime, $S$ is a $p$-group of sectional rank $k$ and $\F$ is a saturated fusion system on $S$ such that $\mathcal{P}(\F) \neq \emptyset$. Then $S$ has maximal nilpotency class, $p\geq k$ and exactly one of the following holds:
\label{main}
\begin{enumerate}
\item $|S| = p^{k+1}$ and $S$ has a maximal subgroup $M$ that is elementary abelian (and if $|S|\geq p^4$ then $M=\gamma_1(S)$);
\item  $p=k+1$, $|S|\geq p^{p+1}$ and $\gamma_1(S)=\C_S(\Z_2(S))$;
\item  $k \geq 3$, $k+3 \leq p \leq 2k +1$ (with $p=2k +1$ only if $\mathcal{P}(\F)_e = \emptyset$), $S$ has exponent $p$, $\gamma_1(S)$ is not abelian and  $p^{k+2} \leq |S|\leq p^{p-1}$.
\end{enumerate}
\end{theoremA}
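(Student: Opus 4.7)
\emph{Proof plan.} By Lemma~\ref{self-centr.max.class}, $S$ has maximal nilpotency class, so its lower and upper central series coincide and (once $|S|\ge p^4$) the characteristic maximal subgroup $\gamma_1(S)$ is defined. I would fix a fully normalized $\F$-pearl $E$ and observe that, since $\Out_\F(E)$ contains a strongly $p$-embedded subgroup and acts faithfully on the elementary abelian $p^2$-section $E$ (abelian case) or $E/\Z(E)$ (extraspecial case), its Sylow $p$-subgroups have order exactly $p$. Hence $|\N_S(E):E|=p$ and $|\N_S(E)|\in\{p^3,p^4\}$.

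The natural first split is whether $\N_S(E)=S$. If so, then $|S|\in\{p^3,p^4\}$, and a direct inspection of the maximal-class $p$-groups of these orders yields the elementary abelian maximal subgroup $M$ required by case~(1), with $M=\gamma_1(S)$ when $|S|=p^4$. Otherwise $|S|\ge p^4$ and $\gamma_1(S)$ exists; the next dichotomy is whether $\gamma_1(S)$ is abelian. If it is, then the standard two-step-centralizer identification $\gamma_1(S)=\C_S(\Z_2(S))$, valid once $|S|\ge p^{p+1}$ (cf.\ \cite[III.14]{Hup}), combined with the uniserial lower bound on the rank of $\gamma_1(S)$ coming from the faithful action of $S/\gamma_1(S)\cong\C_p$ and with the sectional-rank upper bound $k$, would force $p=k+1$ and place us in case~(2). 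If on the other hand $\gamma_1(S)$ is non-abelian, the classical Blackburn-type bound gives $|S|\le p^{p-1}$, and one deduces that $S$ has exponent $p$ (otherwise a nontrivial $p$-th power would sit in $\gamma_1(S)$ and, together with a generator outside $\gamma_1(S)$, violate the class restriction), leading to case~(3).

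The main obstacle will be the tight numerical bookkeeping in case~(3): the lower bound $|S|\ge p^{k+2}$, the upper bound $p\le 2k+1$, and the strict inequality $p\le 2k$ when $\mathcal{P}(\F)_e\ne\emptyset$. My plan is to track the intersection $E\cap\gamma_1(S)$, which is a canonical subgroup of $E$ of index $p$, and to exploit the commutator structure of $\gamma_1(S)$ layer by layer: each lower-central factor $\gamma_i(S)/\gamma_{i+1}(S)$ that is forced to be nontrivial by non-abelianness of $\gamma_1(S)$ contributes a rank constraint, and counting these contributions up to depth $p-1$ should produce the bound $p\le 2k+1$. The extraspecial refinement would then follow from the observation that in this case $E$ itself contributes an additional generator to $\N_S(E)/\gamma_2(\N_S(E))$ not present for abelian pearls, tightening the inequality by one.
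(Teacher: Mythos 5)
There is a genuine gap at the heart of your plan: every quantitative claim in cases (2) and (3) --- the bound $|S|\le p^{p-1}$, the exponent-$p$ conclusion, the inequality $p\le 2k+1$ and its refinement for extraspecial pearls, and even $p\ge k$ --- is attributed to ``classical Blackburn-type'' structure theory of maximal class groups plus rank bookkeeping, but none of these follow from the group structure of $S$ alone. Blackburn's bound is $n\le 2l+2p-4$ where $l$ is the degree of commutativity, and since $l$ grows with $n$ this does not bound $|S|$ by $p^{p-1}$ when $\gamma_1(S)$ is non-abelian; likewise a non-abelian $\gamma_1(S)$ does not force exponent $p$ (elements outside $\gamma_1(S)$ only satisfy $x^p\in\Z_2(S)$, and $\gamma_1(S)$ itself can have large exponent). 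The indispensable ingredient, absent from your proposal, is the fusion-theoretic one: because $E$ is a pearl, the diagonal torus of $O^{p'}(\Out_\F(E))\cong\SL_2(p)$ lifts up the normalizer tower to an automorphism $\varphi_\lambda\in\Aut_\F(S)$ of order $p-1$ (Theorem \ref{lift}), and $\varphi_\lambda$ acts on each factor $\gamma_i(S)/\gamma_{i+1}(S)$ with the explicit weight $\lambda^{n-i-\epsilon}$ (Lemma \ref{lambda.action}). All the numerology of the theorem comes from congruences mod $p-1$ among these weights: they force $\gamma_{n-(p-1)}(S)$ (resp.\ $\gamma_{n-(k+1)}(S)$) to be elementary abelian, contradicting sectional rank $k$ and yielding $|S|\le p^{p-1}$ and $p\le 2k+1$ (Lemmas \ref{order.p-1} and \ref{pgeq2k}), and the parameter $\epsilon\in\{0,1\}$ distinguishing abelian from extraspecial pearls is exactly what produces the boundary case $p=2k+1$ --- not any extra generator of $\N_S(E)/\gamma_2(\N_S(E))$. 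Your proposed commutator-layer counting of $E\cap\gamma_1(S)$ has no mechanism to produce these congruences and would not close.

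A secondary error: your first branch ($\N_S(E)=S$, so $|S|\in\{p^3,p^4\}$) does not land in case (1). For $(k,p)=(2,3)$ and $|S|=3^4$ the group has sectional rank $2$, so it has no elementary abelian maximal subgroup ($\gamma_1(S)\cong\C_9\times\C_3$); this situation belongs to case (2). The correct primary dichotomy is $|S|=p^{k+1}$ versus $|S|\ne p^{k+1}$ (with Lemma \ref{lemma.1} handling the first branch via the rank-$k$ subgroup), not $\N_S(E)=S$ versus $\N_S(E)<S$.
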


The fact that $S$ has sectional rank at most $p$ is consistent with \cite[Theorem A]{elementary}, stating that if $P$ is a $p$-group containing an elementary abelian subgroup of order $p^2$ that is not contained in any other elementary abelian subgroup, then the elementary abelian subgroups of $P$ have order at most $p^p$.

In case $(1)$ of Theorem \ref{main} the $p$-group $S$ is uniquely determined up to isomorphism: it is isomorphic to the semidirect product of an elementary abelian group of order $p^k$ (the subgroup $\gamma_1(S)$ for $|S|\geq p^4$) with a cyclic group of order $p$ acting on it  as a single Jordan block.

If $|S|=p^3$ then $S \cong p^{1+2}_+$ (the group $p^{1+2}_-$ is resistant by \cite[Theorem 4.2]{Stancu}) and $\F$ is among the fusion systems determined in \cite{RV}.
Suppose that $|S|\geq p^4$ and that the group $\gamma_1(S)$ is abelian. Then the reduced fusion systems on $S$ are among the ones studied in \cite{p.index} if $\gamma_1(S)$ is not $\F$-essential, in \cite{p.index2} if $\gamma_1(S)$ is $\F$-essential and elementary abelian and in \cite{p.index3} if $\gamma_1(S)$ is $\F$-essential and not elementary, as might occur only in case (2) of Theorem \ref{main}. Therefore Theorem \ref{main} says that if $p$ is \emph{large enough} then the reduced fusion systems on $p$-groups containing pearls are known.

\begin{cor1}
Suppose that $p$ is an odd prime, $S$ is a $p$-group having sectional rank $k$ and $\F$ is a reduced fusion system on $S$ such that $\mathcal{P}(\F) \neq \emptyset$. If $p > 2k+1$ then $S$ is uniquely determined up to isomorphism and $\F$ is known.
\end{cor1}

In this paper we focus mainly on the cases in which the subgroup $\gamma_1(S)$ is not abelian.

Case $(2)$ of Theorem \ref{main} is the hardest to describe because in such case there is no upper bound for the order of the $p$-group $S$. The investigation of families of $p$-groups of sectional rank $p-1$ containing pearls will be the  subject of future research. Examples of fusion systems of this form are the saturated fusion systems on $3$-groups of maximal nilpotency class and sectional rank $2$ classified in \cite{DRV}.

As for case $(3)$ of Theorem \ref{main}, examples of saturated fusion systems containing (abelian) pearls on a $p$-group $S$ of order $p^{p-1}$ are given in \cite{ParkerStroth} (recall that in this case we have $p\geq 7$). In Lemma \ref{inclusion} we prove that if $\F$ is a saturated fusion system on $S$ containing a pearl $E$, then we can define a saturated fusion subsystem of $\F$ on every subgroup of $S$ properly containing the pearl $E$. As a consequence, for every $p\geq 7$, there are examples of saturated fusion systems containing abelian pearls on a $p$-group $S$ of order $p^a$ for every $3\leq a \leq p-1$ (and if $a\geq k+2$, where $k$ is the sectional rank of $S$, then  we get examples of fusion systems satisfying the assumptions of case $(3)$ of Theorem \ref{main}).

As an application of Theorem \ref{main}, we determine the saturated fusion systems on $p$-groups containing pearls and having sectional rank at most $4$.
In particular we find a new exotic simple fusion system on a $7$-group having sectional rank $3$ and order $7^5$.

\begin{theoremA}\label{small.rank}
 Suppose that $p$ is an odd prime, $S$ is a $p$-group having sectional rank $k\leq 4$ and $\F$ is a saturated fusion system on $S$ such that $\mathcal{P}(\F) \neq \emptyset$. Then $\F$ and $S$ are as described in Table \ref{small.rank.table}.

\begin{table}\renewcommand{\arraystretch}{1.8}
\centering
\normalfont
\resizebox{\textwidth}{!}{
\begin{tabular}{ | c | c | c | c | c |c| }
  \hline			
  $k$ & $p$ & $S$ & Pearls & Possible $\F$-essential subgroups & Saturated fusion systems\\
&&&& that aren't pearls  &\\ \hline \hline
 $2$ & $p\geq 3$ & $S\cong p^{1+2}_+$ & $\C_p \times \C_p$ & none & Classified in \cite{RV} \\ \hline
$2$ & $3$ & $|S|=3^n\geq 3^4$ &  $\C_3 \times \C_3$ or $3^{1+2}_+$ & $\gamma_1(S)$ if $n$ is odd and & Classified in \cite{DRV} \\
&&&& $\gamma_1(S) \cong \C_{(n-1)/2} \times \C_{(n-1)/2}$ &\\ \hline \hline
$3$ & $p \geq 3$ & $|S|=p^4$, $S\in \Syl_p(\Sp_4(p))$ & $\C_p \times \C_p$ and/or $p^{1+2}_+$ & $\gamma_1(S) \cong \C_p \times \C_p \times \C_p$ & Classified in \cite{p.index} and \cite{p.index2}\\
& & $S\in \Syl_p(\Sp_4(p))$ & & & if $\F$ is reduced \\ \hline 
$3$ & $7$ & $|S|=7^5$& $\C_7 \times \C_7$ & none & There is a unique $\F$ (up to iso) \\
& & $S\cong$ \texttt{SmallGroup(7\string^5,37)} & 1 $\F$-class $E^\F$& &  $\F = \F^{(3,7)} = \langle \Aut_\F(S), \Aut_\F(E) \rangle_S$ \\ 
& & $\Out_\F(S)\cong \C_6$ & $\Out_\F(E) \cong \SL_2(7)$ & &simple and exotic (assuming CFSG)\\\hline \hline 
$4$ & $p \geq 5$ & $|S|=p^5$ & $\C_p \times \C_p$ and/or $p^{1+2}_+$ & $\gamma_1(S) \cong \C_p \times \C_p \times \C_p \times \C_p$ &Classified in \cite{p.index} and \cite{p.index2} \\ 
&&&&& if $\F$ is reduced \\ \hline
$4$ & $5$ & $|S|\geq 5^6$ & $\C_5 \times \C_5$ and/or $5^{1+2}_+$ & $\gamma_1(S)=\C_S(\Z_2(S))$ abelian & Classified in \cite{p.index} and \cite{p.index3}\\ 
&&&&of exponent greater than $5$ & if $\F$ is reduced \\ \hline
$4$ & $5$ & $|S|\geq 5^6$ & $\C_5 \times \C_5$ or $5^{1+2}_+$ & $\gamma_1(S)=\C_S(\Z_2(S))$ non-abelian & ?\\ 
& & $\Out_\F(S)\cong \C_4$ & $\Out_\F(E) \cong \SL_2(5)$ & &\\ \hline
$4$ & $7$ & $|S|=7^6$ & $\C_7 \times \C_7$ & $\gamma_1(S) \cong 7^{1+4}_+$ and $\C_S(\Z_2(S))$ & Classified in \cite{G2p}\\ 
& & $S\in \Syl_7(\G_2(7))$ & & &\\ \hline 
$4$ & $7$ & $|S|=7^6$ & $7^{1+2}_+$ & none &  There is a unique $\F$ (up to iso) \\
& & $S\cong$ \texttt{SmallGroup(7\string^6, 813)} & 1 $\F$-class $E^\F$ & & $\F = \F^{(4,7)} = \langle \Aut_\F(S), \Aut_\F(E) \rangle_S$ \\ 
& & $\Out_\F(S)\cong \C_6$ & $\Out_\F(E) \cong \SL_2(7)$ & & $O_7(\F)= \Z(S)$ and $\F/\Z(S) \cong \F^{(3,7)}$ \\
\hline 
\end{tabular}}
\vspace{0.2cm}
\caption{Saturated fusion systems containing pearls on $p$-groups of sectional rank at most $4$, for $p$ odd.}\label{small.rank.table}
\end{table}
\end{theoremA}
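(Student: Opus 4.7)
The plan is to apply Theorem \ref{main} and enumerate the possibilities for $(k,p,|S|)$ with $k\le 4$, then in each subcase either invoke existing literature or carry out a direct analysis. Configuration (3) of Theorem \ref{main} is vacuous for $k\le 2$, and configuration (2) requires $p=k+1$, so it triggers only for $(k,p)\in\{(2,3),(4,5)\}$; in every remaining subcase we are in configuration (1), where $|S|=p^{k+1}$ and the elementary abelian maximal subgroup pins $S$ down up to isomorphism.

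Walking through the cases: for $k=2$, (1) gives $S\cong p^{1+2}_+$ (see \cite{RV}) and (2) gives the $3$-groups of maximal class (see \cite{DRV}). For $k=3$, (1) yields $S\in\Syl_p(\Sp_4(p))$ (covered by \cite{p.index,p.index2}) and (3) forces $p=7$ with $7^5\le|S|\le 7^6$; the order $|S|=7^6$ is to be ruled out by checking that the constraints of configuration (3), namely maximal class, exponent $7$, and $\gamma_1(S)$ non-abelian, push the sectional rank up to $4$ at order $7^6$, contradicting $k=3$. For $k=4$, (1) gives $|S|=p^5$ (\cite{p.index,p.index2}); (2) forces $p=5$ and $|S|\ge 5^6$, split on whether $\gamma_1(S)=\C_S(\Z_2(S))$ is abelian (\cite{p.index,p.index3}) or not (open, corresponding to the ``?'' row); (3) forces $p=7$ and $|S|=7^6$, split by the structure of $\gamma_1(S)$ into the $\Syl_7(\G_2(7))$-case (\cite{G2p}) and the new \texttt{SmallGroup(7\string^6,813)}-case with extraspecial pearls.

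The main work sits in the two new cases at $p=7$. For $k=3,p=7,|S|=7^5$ the plan is: (i) identify $S$ up to isomorphism from the constraints of maximal class, exponent $7$, $\gamma_1(S)$ non-abelian, and the presence of a self-centralizing abelian subgroup of order $7^2$, narrowing $S$ to \texttt{SmallGroup(7\string^5,37)} by direct enumeration; (ii) compute $\Aut(S)$, $\Out(S)$, and the $\Aut(S)$-orbits on subgroups of order $7^2$, showing that the pearls form a single orbit on which a cyclic group of order $6$ acts; (iii) verify saturation of the subsystem generated by $\Out_\F(S)\cong\C_6$ and $\Out_\F(E)\cong\SL_2(7)$, and rule out further essentials to establish uniqueness. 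For $k=4,p=7,|S|=7^6$ with extraspecial pearls, every pearl $E$ is $\F$-centric and contains $\Z(S)$, so $E/\Z(S)$ is an abelian pearl in the quotient system $\F/\Z(S)$ on $S/\Z(S)$; this reduces the classification to the $k=3$ new case, forces $S/\Z(S)\cong\texttt{SmallGroup(7\string^5,37)}$ and $\F/\Z(S)\cong\F^{(3,7)}$, and turns the remaining problem into identifying the central extensions compatible with the fusion data.

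The hardest step is proving that $\F^{(3,7)}$ is exotic and simple. Simplicity, meaning $O_7(\F^{(3,7)})=1$ together with $\F^{(3,7)}=O^7(\F^{(3,7)})=O^{7'}(\F^{(3,7)})$, should be visible directly from the generation of $\F^{(3,7)}$ by $\Aut_\F(S)$ and $\Aut_\F(E)\cong\SL_2(7)$. Exoticity requires, assuming the CFSG, enumerating all finite groups $G$ whose $7$-fusion system equals $\F^{(3,7)}$; since the presence of a pearl $E$ with $\Out_\F(E)\cong\SL_2(7)$ forces any such $G$ to involve $Qd(7)$, the list in \cite{Qdp} of finite simple groups involving $Qd(7)$ is the natural starting point, and for each candidate one shows that either the Sylow $7$-structure differs from \texttt{SmallGroup(7\string^5,37)} or the induced $7$-fusion has essential subgroups beyond the single pearl-orbit. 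Exoticity of $\F^{(4,7)}$ then follows from that of its central quotient $\F^{(3,7)}$, since realizability would descend along $\Z(S)$.
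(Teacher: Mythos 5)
Your global strategy --- feed Theorem \ref{main} into a case division on $(k,p,|S|)$, delegate the known configurations to the literature, and treat the two new $p=7$ cases by hand --- is the same as the paper's, and your enumeration of which configurations of Theorem \ref{main} can occur for $k\le 4$ is correct (including ruling out $|S|=7^6$ at $k=3$ via the sectional rank of $\gamma_1(S)/\Z(S)$). However, two steps as proposed would not go through. First, your reduction of the case $k=4$, $p=7$, $\mathcal{P}(\F)=\mathcal{P}(\F)_e$ to the rank-$3$ case via the quotient $\F/\Z(S)$ is circular: to form the quotient fusion system you need $\Z(S)$ to be normal in $\F$, i.e.\ normalized by $\Aut_\F(P)$ for \emph{every} $\F$-essential subgroup $P$, and this is exactly what is not yet known at that stage. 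Extraspecial pearls do normalize $\Z(S)$ (it is their Frattini subgroup), but the candidate essentials inside $\C_S(\Z_2(S))$ need not --- see Lemma \ref{ess.in.C2}(3), where $\Z(S)$ is explicitly allowed to be moved by $\Aut_\F(E)$. The paper therefore works directly in $S$ (Lemma \ref{7.6}): it pins down $S$ among the order-$7^6$ candidates by demanding an automorphism of order prime to $7$ centralizing $\Z(S)$ (forced by $\Delta_\F(E)\neq\emptyset$ and Lemma \ref{action.frat}), rules out all non-pearl essentials with the $\varphi_\lambda$-determinant and series-stabilization arguments, and only then concludes $O_7(\F)=\Z(S)$ and $\F/\Z(S)\cong\F^{(3,7)}$. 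Your plan reverses this order of deduction and omits the part that does the work. A similar omission occurs at $(k,p)=(4,5)$ with $\gamma_1(S)$ non-abelian: the table's assertion that $\gamma_1(S)$ is the only possible non-pearl essential is Lemma \ref{4.5}, a substantial argument your outline does not account for.

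Second, your treatment of simplicity of $\F^{(3,7)}$ is not a proof. What you describe ($O_7(\F)=1$ together with $\F=O^7(\F)=O^{7'}(\F)$) is the definition of a \emph{reduced} fusion system, not a simple one; simplicity requires excluding proper non-trivial normal subsystems, including those supported on a proper strongly closed subgroup $P<S$. Being generated by $\Aut_\F(S)$ and $\Aut_\F(E)$ with a single essential class does not make this ``visible directly'': the paper's Theorem \ref{simple}(1) excludes $P<S$ by a scalar-action computation on $P/\gamma_3(S)$ using Lemma \ref{lambda.action} and the congruence $n\not\equiv 1 \bmod (p-1)$, and only then invokes the Alperin--Goldschmidt theorem to force $\E=\F$. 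Your exoticity route via the $Qd(7)$-involvement list of \cite{Qdp} is a legitimate alternative to the paper's argument (which instead excludes alternating and sporadic groups by Sylow orders and cross-characteristic Lie type groups via the uniqueness of elementary abelian subgroups of order $7^3$), but it still needs the preliminary reduction to almost simple groups, which in turn rests on the simplicity you have not yet established.
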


The case in which $(k,p)=(4,5)$ and $\gamma_1(S)$ is not abelian is not completed: its study requires more work and will be the subject of future research.
Using the computer software \emph{Magma} we can check that there exist saturated fusion systems containing abelian pearls on the $5$-group stored in \emph{Magma} as \texttt{SmallGroup(5\string^6, i)} for every $i\in\{ 636, 639, 640, 641, 642\}$. These groups are the only candidates for a $5$-group $S$ of sectional rank $4$ and order $5^6$ containing pearls and with no abelian maximal subgroups.

The existence of saturated fusion systems containing pearls on the group stored in \emph{Magma} as   \texttt{SmallGroup(7\string^5, 37)} and on a Sylow $7$-subgroup of $\G_2(7)$ (that have sectional rank $3$ and $4$ respectively),  shows  that the bounds $k+3 \leq p \leq 2k+1$ on $p$ given in part $(3)$ of Theorem \ref{main} are best possible.

Theorem \ref{small.rank} (together with Lemma \ref{inclusion}) implies that if  $\F$ is a saturated fusion system on a Sylow $7$-subgroup $S$ of the group $\G_2(7)$ containing a pearl $E$ and $M$ is a maximal subgroup of $S$ containing $E$, then $\F$ contains a saturated fusion subsystem $\E$ on $M$ that has to be isomorphic to the fusion system $\F^{(3,7)}$ defined in Table \ref{small.rank.table}. Thus we get
\begin{equation}\label{relation} \frac{\F^{(4,7)}}{O_{7}(\F^{(4,7)})} \cong \F^{(3,7)}\cong \E \subset \F.\end{equation}
In particular the simple exotic fusion system $\F^{(3,7)}$ is isomorphic to a subsystem of the $7$-fusion system of the Monster group, since the Monster group and the group $\G_2(7)$ have isomorphic Sylow $7$-subgroups and the $7$-fusion system of the Monster group contains a pearl.

Theorem \ref{main} can also be applied fixing the prime $p$ and letting the sectional rank $k$ vary (recalling that $2\leq k\leq p$). If $p=3$ then $2\leq k\leq 3$ and the fusion systems containing pearls are classified in Theorem \ref{small.rank}:

\begin{cor1}
Let $\F$ be a saturated fusion system on a $3$-group $S$ and suppose that $\mathcal{P}(\F) \neq \emptyset$. Then one of the following holds:
\begin{enumerate}
\item $S\cong 3^{1+2}_+$ and $\F$ is among the fusion systems classified in \cite{RV};
\item $|S|\geq 3^4$, $S$ has sectional rank $2$ and $\F$ is among the fusion systems classified in \cite{DRV}; 
\item $S\cong \C_3 \wr \C_3$ and $\F$, if reduced, is among the fusion systems classified in \cite{p.index} and \cite{p.index2}.
\end{enumerate}
\end{cor1}

Similarly, if $p=5$ then by Theorem \ref{main} we conclude that either $S$ has order at most $5^6$ and contains a maximal subgroup that is elementary abelian or $S$ has sectional rank $4$.
The case $p=7$ is the first admitting examples  of saturated fusion systems containing pearls on $p$-groups as in case (3) of Theorem \ref{main}. In such situation we have $3\leq k\leq 4$ and by Theorem \ref{small.rank} we conclude that either $\F=\F^{(3,7)}$, or  $\F=\F^{(4,7)}$, or $\F$ is a saturated fusion system on a Sylow $7$-subgroup of $\G_2(7)$, and these fusion systems are all related (Equation (\ref{relation})).

\vspace{0.5cm}
\emph{Organization of the paper.} 
In Section 1 we prove some properties of $\F$-essential subgroups, showing in particular that  $p$-groups containing pearls have maximal nilpotency class (Lemma \ref{self-centr.max.class}) and that $\F$-essential subgroups having maximal nilpotency class are pearls (Corollary \ref{max.class.not.ess}).  The more general properties of $\F$-essential subgroups, such as the study of the outer automorphism group of the ones having rank at most $4$, will be used in the characterization of the $\F$-essential subgroups that are not pearls, that is the subject of Section $5$.

In Section $2$ we state some background on $p$-groups having maximal nilpotency class  and we prove some results concerning the relation between the order of such groups and their sectional rank, preparing the field for the proof of Theorem \ref{main}. 

In Section $3$ we study properties of pearls. We start recalling properties of soft subgroups that apply to pearls.  We use these in Theorem \ref{lift} to prove that every pearl $E$ is not properly contained in any $\F$-essential subgroup of $S$ and so every automorphism in $\N_{\Aut_\F(E)}(\Aut_S(E))$ is the restriction of an $\F$-automorphism of $S$. 
We close this section by proving a characterization of the $p$-group $S$ when the subgroup $\gamma_1(S)$ is extraspecial (Theorem \ref{S1extraspecial}) and showing that if the group $\gamma_1(S)$ is not abelian then either all the pearls are abelian or they are all extraspecial (Theorem \ref{type.pearl}).

Section $4$ contains the proof of Theorem \ref{main} and some results concerning the simplicity of $\F$, that will help in the proof of Theorem \ref{small.rank} to show that the fusion system $\F^{(3,7)}$ is simple.

 In Section $5$ we analyze the $\F$-essential subgroups of $p$-groups of maximal nilpotency class that are not pearls, in preparation for the proof of Theorem \ref{small.rank}, that is presented in Section $6$.

\newpage
\section{Preliminaries on Fusion Systems}
Let $p$ be a prime, let $S$ be a $p$-group and let $\F$ be a saturated fusion system on $S$.
We refer to \cite[Chapter 1]{AO} for definitions and notations regarding the theory of fusion systems.

Let $E$ be an $\F$-essential subgroup of $S$. The fact that $E$ is fully normalized in $\F$ guarantees that $\Aut_S(E) \in \Syl_p(\Aut_\F(E))$ and that $E$ is receptive (see \cite[Definition 1.2]{AO}); in particular every automorphism in $\N_{\Aut_\F(E)}(\Aut_S(E))$ is the restriction of an automorphism in $\Aut_\F(\N_S(E))$. The assumption that $\Out_\F(E)$ has a strongly $p$-embedded subgroup implies $O_p(\Out_\F(E))=1$ (or equivalently $O_p(\Aut_\F(E))=\Inn(E)$). We start this section by proving other properties of $\F$-essential subgroups.

\begin{definition}
If $G$ is a group, we say that a morphism $\varphi \in \Aut(G)$ \emph{stabilizes} the series of subgroups  $G_0 \leq G_1 \leq \dots \leq G_n=G$ if $\varphi$ normalizes each $G_i$ and acts trivially on $G_i/G_{i-1}$ for every $1 \leq i \leq n$.
\end{definition}

\begin{notation}
If $P$ is a $p$-group, we write $\Phi(P)$ for the Frattini subgroup of $P$.
\end{notation}

\begin{lemma}\label{char.series}
Let $E\leq S$ be a subgroup of $S$ such that $O_p(\Aut_\F(E))=\Inn(E)$.
Consider the sequence of subgroups:
\begin{equation}\label{series} E_0 \leq E_1 \leq \dots \leq E_n= E \end{equation}
such that $E_0\leq \Phi(E)$ and for every $0 \leq i \leq n$ the group $E_i$ is normalized by $\Aut_\F(E)$. If $\varphi \in \Aut_\F(E)$ stabilizes the series (\ref{series}) then $\varphi \in \Inn(E)$.
\end{lemma}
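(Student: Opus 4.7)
The plan is to show that the set
\[
K = \{\psi \in \Aut_\F(E) : \psi \text{ stabilizes the series } (\ref{series})\}
\]
is a normal $p$-subgroup of $\Aut_\F(E)$, and then invoke the hypothesis $O_p(\Aut_\F(E)) = \Inn(E)$. Since each $E_i$ is normalized by $\Aut_\F(E)$, the condition of stabilizing the series is preserved under conjugation by elements of $\Aut_\F(E)$; this makes $K$ a normal subgroup of $\Aut_\F(E)$. Thus the whole content of the lemma lies in showing $K$ is a $p$-group.

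To do so, I would pick $\psi \in K$ and argue in two stages. First, passing to the quotient $E/E_0$, the induced automorphism $\ov{\psi}$ stabilizes the series
\[
1 = E_0/E_0 \leq E_1/E_0 \leq \dots \leq E_n/E_0 = E/E_0,
\]
now with trivial bottom term. The standard ``stabilizer of a chain'' lemma (see for instance Aschbacher's \emph{Finite Group Theory}, 8.2) then applies: since each factor $E_i/E_{i-1}$ is centralized by $\ov{\psi}$ and the bottom is trivial, $\ov{\psi}$ has order a power of $p$ in $\Aut(E/E_0)$. Consequently, some power $\psi^{p^m}$ induces the identity on $E/E_0$, and in particular on the further quotient $E/\Phi(E)$ (using here the hypothesis $E_0 \leq \Phi(E)$).

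For the second stage I would invoke the well-known consequence of the Burnside Basis Theorem: the kernel of the natural map $\Aut(E) \to \Aut(E/\Phi(E))$ is a $p$-group. Hence $\psi^{p^m}$, and therefore $\psi$ itself, is a $p$-element of $\Aut_\F(E)$. As $\psi \in K$ was arbitrary, every element of $K$ has $p$-power order; being a finite group, $K$ is a $p$-group.

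Combining the two observations, $K$ is a normal $p$-subgroup of $\Aut_\F(E)$, so $K \leq O_p(\Aut_\F(E)) = \Inn(E)$, which yields $\varphi \in \Inn(E)$. The argument is essentially routine once the two reductions are in place; the only mildly delicate point is the presence of the non-trivial bottom term $E_0$, which is dealt with precisely by combining the chain-stabilizer lemma (applied in $E/E_0$) with the Frattini assumption $E_0 \leq \Phi(E)$ through the Burnside Basis Theorem. No single step is an obstacle; the main care is to make sure both reductions are used in the right order.
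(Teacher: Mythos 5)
Your proof is correct and follows essentially the same route as the paper: the paper likewise observes that the set of series-stabilizing automorphisms is a normal $p$-subgroup of $\Aut_\F(E)$ and then concludes from $O_p(\Aut_\F(E))=\Inn(E)$, the only difference being that it compresses your two-stage argument (chain stabilizer in $E/E_0$ plus the Burnside basis theorem to absorb the nontrivial bottom term $E_0\leq \Phi(E)$) into a single citation of \cite[Corollary 5.3.3]{Gor}. Your unpacking is exactly the content of that citation, so there is nothing to add.
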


\begin{proof}
By  \cite[Corollary 5.3.3]{Gor} the order of $\varphi$ is a power of $p$. Note that the set $H$ of all the morphisms in $\Aut_\F(E)$ stabilizing the series (\ref{series}) is a normal $p$-subgroup of $\Aut_\F(E)$. Hence $\varphi  \in H \leq O_p(\Aut_\F(E))$ and by assumption we conclude $\varphi \in \Inn(E)$.
\end{proof}

\begin{lemma}\label{strict.frattini}
Let $E$ be an $\F$-essential subgroup of $S$. Then
\[ \Phi(E) < [\N_S(E),E]\Phi(E) < E.\]
\end{lemma}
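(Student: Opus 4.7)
The plan is to first show that $\N_S(E)$ strictly contains $E$, and then to derive the two strict inclusions separately, one by a stabilizer-of-chain argument (via Lemma \ref{char.series}) and the other by a fixed-point/coinvariant argument for the induced $p$-group action on $E/\Phi(E)$.

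First I would verify $\N_S(E) > E$. Since $E$ is $\F$-centric, $\C_S(E) \leq E$, so $\Aut_S(E) = \N_S(E)/\C_S(E)$ and $\Inn(E) = E/\C_S(E)$, giving $\Aut_S(E)/\Inn(E) \cong \N_S(E)/E$. As $E$ is fully normalized in $\F$, $\Aut_S(E)$ is a Sylow $p$-subgroup of $\Aut_\F(E)$; since $\Out_\F(E)$ contains a strongly $p$-embedded subgroup, $p$ divides $|\Out_\F(E)|$, forcing $\Aut_S(E) > \Inn(E)$ and hence $\N_S(E) > E$.

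For the left inequality $\Phi(E) < [\N_S(E),E]\Phi(E)$, I would argue by contradiction. Assuming $[\N_S(E),E] \leq \Phi(E)$, every conjugation $c_g \in \Aut_S(E)$ acts trivially on $E/\Phi(E)$ and normalizes the characteristic subgroup $\Phi(E)$, so $c_g$ stabilizes the series $1 \leq \Phi(E) \leq E$. Since $E$ is $\F$-essential, $\Out_\F(E)$ has a strongly $p$-embedded subgroup and in particular $O_p(\Out_\F(E)) = 1$, so $O_p(\Aut_\F(E)) = \Inn(E)$. Lemma \ref{char.series} (applied with $E_0 = \Phi(E)$, $E_1 = E$) then forces $c_g \in \Inn(E)$ for all $g \in \N_S(E)$, i.e. $\Aut_S(E) \leq \Inn(E)$, contradicting the preliminary step.

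For the right inequality $[\N_S(E),E]\Phi(E) < E$, set $V = E/\Phi(E)$, a nonzero $\mathbb{F}_p$-vector space on which $\N_S(E)$ acts by conjugation. Because $[E,E] \leq \Phi(E)$, this action factors through the $p$-group $Q = \N_S(E)/E$, and the image of $[\N_S(E),E]$ in $V$ is precisely $\omega_Q V$, where $\omega_Q$ is the augmentation ideal of $\mathbb{F}_p[Q]$. A standard fact for $p$-group actions on nonzero $\mathbb{F}_p$-modules (either directly by orbit-counting applied to $V^*$, or by a Nakayama-type induction on $|Q|$) yields $V/\omega_Q V \neq 0$, hence $\omega_Q V < V$. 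Translating back, $[\N_S(E),E]\Phi(E) < E$.

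The hard part, such as it is, is purely conceptual rather than computational: one must recognize that the Frattini-series stabilization statement of Lemma \ref{char.series} is exactly what turns triviality of the action on $E/\Phi(E)$ into inner conjugation, and one must correctly identify the image of $[\N_S(E),E]$ in $E/\Phi(E)$ with the coinvariants of the induced action of $\N_S(E)/E$. Once these translations are in place, both inequalities reduce to the dichotomy $\N_S(E) > E$, itself a direct consequence of $E$ being $\F$-centric, fully normalized, and having $\Out_\F(E)$ of order divisible by $p$.
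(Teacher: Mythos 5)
Your proof is correct. For the inequality $\Phi(E) < [\N_S(E),E]\Phi(E)$ you argue exactly as the paper does: assume $[\N_S(E),E]\le\Phi(E)$, note that $\Aut_S(E)$ then stabilizes $\Phi(E)\le E$, and invoke Lemma \ref{char.series} to force $\Aut_S(E)=\Inn(E)$, contradicting $\N_S(E)>E$. The only differences are in the surrounding bookkeeping. For the preliminary fact $\N_S(E)>E$ you use saturation ($\Aut_S(E)\in\Syl_p(\Aut_\F(E))$ together with $p$ dividing $|\Out_\F(E)|$ because of the strongly $p$-embedded subgroup), whereas the paper gets the same contradiction more cheaply from $E$ being $\F$-centric and proper in the nilpotent group $S$, so that $\N_S(E)/\C_S(E)\cong E/\Z(E)$ is impossible; both are valid, and yours has the small advantage of not leaning on normalizers growing in $p$-groups. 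For the right-hand inequality the paper simply observes that $[\N_S(E),E]\Phi(E)=E$ would give $[\N_S(E),E]=E$ by the non-generator property of the Frattini subgroup, contradicting nilpotency of $S$; you instead identify the image of $[\N_S(E),E]$ in $E/\Phi(E)$ with $\omega_Q V$ for $Q=\N_S(E)/E$ and quote nonvanishing of coinvariants for $p$-groups acting on nonzero $\mathbb{F}_p$-modules. Your identification is correct ($E$ acts trivially on $V$ since $[E,E]\le\Phi(E)$, and the commutators $[g,e]$ map onto generators of $\omega_Q V$), and the coinvariant fact is standard, but it is really the same nilpotency phenomenon dressed in module language; the paper's two-line version is the more economical one here.
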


\begin{proof}
If $[\N_S(E), E] \leq \Phi(E)$ then the automorphism group $\Aut_S(E)\cong \N_S(E)/\C_S(E)$ centralizes the quotient $E/\Phi(E)$. Hence $\Aut_S(E)=\Inn(E)$ by Lemma \ref{char.series} and $\N_S(E)/\C_S(E) \cong E/\Z(E)$, contradicting the fact that $E$ is $\F$-centric and proper in $S$. So $\Phi(E) < [\N_S(E),E]\Phi(E)$. If $ [\N_S(E),E]\Phi(E) = E$ then $[\N_S(E),E]=E$, contradicting the fact that $S$ is nilpotent. Thus $[\N_S(E),E]\Phi(E) < E$.
\end{proof}

\begin{lemma}\label{self-centr.max.class}
Let $E\leq S$ be a pearl. Then every subgroup of $S$ containing $E$ has maximal nilpotency class. In particular $S$ has maximal nilpotency class.
\end{lemma}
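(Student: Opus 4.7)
The plan is to reduce to the cited Berkovich criterion (\cite[Proposition 1.8]{Ber}), which asserts that any $p$-group containing a self-centralizing elementary abelian subgroup of order $p^2$ has maximal nilpotency class. For any $T \leq S$ with $E \leq T$, it then suffices to exhibit such a subgroup inside $T$.

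First I would observe that, since $E$ is $\F$-essential, it is in particular $\F$-centric, so $\C_S(E) \leq E$, and therefore $\C_T(E) = \C_S(E) \cap T \leq E \cap T = E$, whence $\C_T(E) = \Z(E)$. If $E$ is elementary abelian of order $p^2$, then $E = \Z(E) = \C_T(E)$, so $E$ itself is a self-centralizing elementary abelian subgroup of $T$ of order $p^2$, and Berkovich's result applies directly.

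If $E$ is non-abelian of order $p^3$, I would take $A \leq E$ to be an elementary abelian maximal subgroup (such $A$ exists in every non-abelian group of order $p^3$ except $Q_8$: for $p$ odd any maximal subgroup of $p^{1+2}_+$ works and $\Omega_1(p^{1+2}_-) \cong C_p \times C_p$ works; for $D_8$ either Klein four subgroup works). Since $A$ has index $p$ in the non-abelian group $E$, one has $\C_E(A) = A$, and the goal is to promote this to $\C_T(A) = A$. Given $x \in \C_T(A)$, the observation $\C_T(A) \cap E = \C_E(A) = A$ shows that it suffices to rule out $x \in T \setminus E$. For such $x$, the element $x$ cannot centralize all of $E$ (else $x \in \C_T(E) \leq E$), so the induced conjugation on $E$ is nontrivial. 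A detailed analysis of the stabilizer of $A$ pointwise inside $\Aut(E)$, combined with the fact that the action on $E/A \cong C_p$ must be trivial (since $p \nmid |\Aut(C_p)|$) and the observation that $\Aut(A) \cong \GL_2(p)$ has cyclic Sylow $p$-subgroup of order $p$, should force a contradiction; if $x$ fails to normalize $E$ one instead compares $E$ with its conjugate $E^x$ inside $\N_T(A)$, using that $E \cap E^x \supseteq A$ forces $E \cap E^x = A$.

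The main obstacle is that the elementary abelian maximal subgroup $A$ is not in general characteristic in $E$, so Lemma~\ref{char.series} does not apply directly to conclude that an $x \in \C_T(A) \cap \N_T(E)$ acts as an inner automorphism. A careful bookkeeping of which automorphisms of $E$ fix $A$ pointwise (a subgroup of order $p^2$ containing $\Inn_A(E)$ of order $p$) is therefore required. Finally, the case $E \cong Q_8$ (which can occur only at $p=2$) falls outside the scope of the above argument since $E$ contains no Klein four subgroup; here I would invoke Harada's classification \cite{Harada} as used elsewhere in the paper to conclude that any $2$-group in which $Q_8$ sits as a centric subgroup is generalized quaternion, dihedral, or semi-dihedral, all of which are of maximal nilpotency class.
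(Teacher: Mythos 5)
Your treatment of the abelian case is correct and coincides with the paper's: centricity gives $\C_T(E)=E$ and \cite[Proposition 1.8]{Ber} applies at once. The extraspecial case, however, contains a genuine gap: the intermediate claim you are trying to prove, $\C_T(A)=A$ for an elementary abelian maximal subgroup $A$ of $E\cong p^{1+2}_+$, is simply false for the natural choice of $A$. In the situation the lemma must cover (an extraspecial pearl $E=\langle x\rangle \Z_2(S)$ inside a group $S$ of maximal class with $|S|\geq p^4$, e.g.\ $S\in\Syl_7(\G_2(7))$), take $A=\Z_2(S)$. Then $\C_S(A)=\C_S(\Z_2(S))$ is a \emph{maximal} subgroup of $S$; since $x\notin\C_S(\Z_2(S))$, the group $\N_S(E)\cap\C_S(\Z_2(S))$ has index $p$ in $\N_S(E)$, hence order $p^3$, while it meets $E$ only in $A$. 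So $\C_{\N_S(E)}(A)$ strictly contains $A$. This is exactly the obstruction you half-noticed: the pointwise stabilizer of a maximal subgroup $A$ in $\Aut(p^{1+2}_+)$ has order $p^2$ but $\Inn_A(E)$ has order only $p$, so there are outer automorphisms of order $p$ of $E$ fixing $A$ pointwise (e.g.\ $x\mapsto xz$ with $\langle z\rangle\Z(E)=A$), and for $A=\Z_2(S)$ these \emph{are} realized by elements of $\N_S(E)\setminus E$. No ``detailed analysis of the stabilizer'' can therefore force the contradiction you want; the discrepancy is not bookkeeping, it is a counterexample to your reduction.

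The ingredient you are missing is that $E$ is essential and not merely centric: the paper uses Lemma~\ref{strict.frattini} (equivalently, $O_p(\Out_\F(E))=\Inn(E)$, via Lemma~\ref{char.series}) to get $\Phi(E)<[\N_S(E),E]\Phi(E)$, and your sketch never invokes the strongly $p$-embedded condition in the non-abelian case. The paper's route is also structurally different: instead of hunting for a self-centralizing $\C_p\times\C_p$ inside $T$ itself, it passes to $\ov{P}=P/\Z(P)$ with $\Z(P)=\Z(E)=\Phi(E)$, shows $\C_{\ov{P}}(E/\Z(P))=E/\Z(P)\cong\C_p\times\C_p$ (here $[\N_S(E):E]=p$ pins $\C_{\ov P}(E/\Z(P))$ between $E/\Z(P)$ and $\N_S(E)/\Z(P)$, and the second possibility would give $[\N_S(E),E]\leq\Phi(E)$, contradicting Lemma~\ref{strict.frattini}), applies \cite[Proposition 1.8]{Ber} to $\ov{P}$, and lifts maximal class back to $P$ using $|\Z(P)|=p$. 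A repair of your approach is possible but delicate: one must choose $A$ to be a maximal subgroup of $E$ \emph{not} containing $[\N_S(E),E]\Phi(E)$ (so again Lemma~\ref{strict.frattini} is needed to know such a choice behaves well), and then run an induction up the normalizer tower; as written, your argument does not distinguish the good and bad choices of $A$ and asserts a claim that fails for the bad one.
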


\begin{proof} Let $P$ be a subgroup of $S$ containing $E$. Since pearls have maximal nilpotency class, we may assume that $E < P$.
Note that $E$ is $\F$-centric, so $\C_P(E) \leq \C_S(E) \leq E$.
If $E\cong \C_p \times \C_p$ then $P$ has maximal nilpotency class by \cite[Proposition 1.8]{Ber}. Suppose $E \cong p^{1+2}_+$. Then $\Z(P) = \Z(E)=\Phi(E)$ and $|\Z(P)|=p$. Since $E < P$ and $[\N_S(E) \colon E] =p$, we deduce that $\N_P(E)=\N_S(E)$.
Let $\ov{C}=\C_{P/\Z(P)}(E/\Z(P))$. Then $E/\Z(P) \leq \ov{C} \leq \N_P(E)/\Z(P)=\N_S(E)/\Z(P)$. Suppose by contradiction that $E/\Z(P) < \ov{C}$. Then $\ov{C}=\N_S(E)/\Z(P)=\N_S(E)/\Phi(E)$, contradicting the fact that   $\Phi(E) < [\N_S(E),E]\Phi(E)$ by Lemma \ref{strict.frattini}.  Therefore $E/\Z(P)= \ov{C} =\C_{P/\Z(P)}(E/\Z(P))$. Since $E/\Z(P) \cong \C_p \times \C_p$,  the group $P/\Z(P)$ has maximal nilpotency class by \cite[Proposition 1.8]{Ber}, and the fact that $|\Z(P)|=p$ allows us to conclude that $P$ has maximal nilpotency class.
\end{proof}

As mentioned in the Introduction, the reduced fusion systems on $2$-groups of maximal nilpotency class are known (\cite[Theorem A]{2rank4}).
For this reason, \textbf{from now on we assume that $p$ is an odd prime.}

By definition pearls have maximal nilpotency class.
An application of Lemma \ref{char.series} shows that every $\F$-essential subgroup having maximal nilpotency class is a pearl.

\begin{lemma}\label{max.class.quotient}
Let $E\leq S$ be an $\F$-essential subgroup of $S$. Suppose there exists a subgroup $K$ of $E$ such that
\begin{itemize}
\item $K$ is normalized by $\Aut_\F(E)$;
\item $E/K$ has maximal nilpotency class; and
\item $E < \C_{\N_S(E)}(~ K/(K \cap \Phi(E)) ~)$.
\end{itemize}
Then $E/K$ is isomorphic to either $\C_p \times \C_p$ or $p^{1+2}_+$.
\end{lemma}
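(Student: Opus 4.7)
The plan is a proof by contradiction: assume $\bar E := E/K$ is not isomorphic to either $\C_p \times \C_p$ or $p^{1+2}_+$, and pick an element $x$ in $\C_{\N_S(E)}(K/(K \cap \Phi(E)))$ with $x \notin E$, which exists by the third hypothesis. Since $E$ is $\F$-centric we have $\C_S(E) \leq E$, hence $c_x \in \Aut_\F(E) \setminus \Inn(E)$, and $c_x$ is a $p$-element because $x \in S$. The goal is to produce a chain
\[
K \cap \Phi(E) \;\leq\; K \;\leq\; E_2 \;\leq\; \dots \;\leq\; E_{m-1} \;\leq\; E
\]
of $\Aut_\F(E)$-invariant subgroups such that $c_x$ acts trivially on every successive quotient; Lemma \ref{char.series} will then force $c_x \in \Inn(E)$, the desired contradiction.

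The bottom quotient $K/(K \cap \Phi(E))$ is centralised by $c_x$ by hypothesis. For the upper part I would take $E_2, \dots, E_{m-1}$ to be the preimages in $E$ of a strictly increasing chain of characteristic subgroups of $\bar E$, running from the trivial subgroup to $\bar E$, whose successive quotients all have order $p$. Such preimages are automatically $\Aut_\F(E)$-invariant because $K$ is (so $\Aut_\F(E)$ acts on $\bar E$); and because $c_x$ is a $p$-element while $\Aut(\C_p) \cong \C_{p-1}$ has order coprime to $p$, $c_x$ acts trivially on every order-$p$ section it normalises. The whole problem thus reduces to exhibiting such a characteristic chain inside $\bar E$.

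The case analysis is on $n$ with $p^n := |\bar E|$ (so $n \geq 2$ and $\bar E$ has class $n - 1$ by the maximal class hypothesis). If $n \geq 4$, I would concatenate the upper central series $1 < \Z_1(\bar E) < \dots < \Z_{n-2}(\bar E) = [\bar E, \bar E]$, whose successive quotients have order $p$ in a maximal class $p$-group, with the characteristic maximal subgroup $\gamma_1(\bar E)$ of Definition \ref{def.gamma1}, which contains $[\bar E, \bar E]$ since it has index $p$ in $\bar E$. If $n = 3$, the exclusion $\bar E \not\cong p^{1+2}_+$ leaves $\bar E \cong p^{1+2}_-$; a Hall--Petrescu computation for odd $p$ shows that $\Omega_1(\bar E) \cong \C_p \times \C_p$ is a characteristic subgroup of order $p^2$, to be inserted between $\Z(\bar E)$ and $\bar E$. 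If $n = 2$, the exclusion $\bar E \not\cong \C_p \times \C_p$ leaves $\bar E \cong \C_{p^2}$, and $\Phi(\bar E)$ of order $p$ gives the required refinement.

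The main conceptual check is that the two conclusions $\C_p \times \C_p$ and $p^{1+2}_+$ really do obstruct the construction, so the argument does not rule them out as well. In $\C_p \times \C_p$ no proper subgroup is characteristic, since $\Aut(\C_p \times \C_p) = \GL_2(p)$ acts transitively on one-dimensional subspaces; in $p^{1+2}_+$ the natural surjection $\Aut(p^{1+2}_+) \twoheadrightarrow \GL_2(p)$ permutes the $p + 1$ maximal subgroups transitively, so no characteristic subgroup lies strictly between $\Z(\bar E)$ and $\bar E$. In each of these two cases any refinement terminates at a quotient of order $p^2$ on which $c_x$ need not act trivially, so Lemma \ref{char.series} cannot be invoked; this matches the statement of the lemma and is the delicate point to verify when writing out the proof in full.
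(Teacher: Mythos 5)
Your proof is correct and takes essentially the same route as the paper: both arguments manufacture an $\Aut_\F(E)$-invariant chain from $K\cap\Phi(E)$ up to $E$ whose sections above $K$ all have order $p$, and then apply Lemma \ref{char.series} to the non-inner $p$-element $c_x$ supplied by the third hypothesis (the paper caps its chain with the preimage of $\C_{\ov{E}}(\Z_2(\ov{E}))$ where you use $\gamma_1(\ov{E})$, and it excludes $\C_{p^2}$ and $p^{1+2}_-$ by an exponent-$p$ argument via $K\Phi(E)<K\Omega_1(E)<E$ rather than by naming the isomorphism types, but these are inessential variations). The one case you dismiss by convention, $|\ov{E}|=p$, is treated explicitly in the paper via the chain $\Phi(E)\leq K<E$; it deserves a line in your write-up, since whether $\C_p$ counts as having maximal nilpotency class is a matter of convention.
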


\begin{remark}
Note that $[E,K] \leq K \cap [E,E] \leq K \cap \Phi(E)$, so we always have  $E \leq \C_{\N_S(E)}(K/(K \cap \Phi(E))$. The third condition of the previous lemma says that there exists an element $g \in \N_S(E)$ such that the conjugation map $c_g$ is \emph{not} an inner automorphism of $E$ and acts trivially on $K/(K \cap \Phi(E))$. Note in particular that this is true when $K \cap \Phi(E) =K$ (that is $K\leq \Phi(E)$).
\end{remark}

\begin{proof}
Set $\ov{E}=E/K$.
Aiming for a contradiction, suppose $|\ov{E}|=p^m>p^3$.
Let $Z_i$ be the preimage in $E$ of $\Z_i(\ov{E})$ for every $i\geq 1$ and let $C$ be the preimage in $E$ of $\C_{\ov{E}}(\Z_2(\ov{E}))$. Consider the following sequence of subgroups of $E$:
\begin{equation}\label{max.class.seq} K \cap \Phi(E) \leq K < Z_1 < Z_2 < \dots < Z_{m-2} < C < E.\end{equation}
All the subgroups in the sequence are normalized by $\Aut_\F(E)$ (because $K$ is normalized by $\Aut_\F(E)$) and since $\ov{E}$ has maximal nilpotency class every quotient of consecutive members of the sequence, except $K/(K \cap \Phi(E))$, has order $p$.

 Then $\C_{\N_S(E)}(~ K/(K \cap \Phi(E)) ~)$ stabilizes the sequence (\ref{max.class.seq}). Hence $\C_{\N_S(E)}(~ K/(K \cap \Phi(E)) ~) \leq \Inn(E)$ by Lemma \ref{char.series}, a contradiction.

Hence we have $|\ov{E}|\leq p^3$.
If $|\ov{E}| =p$ then $\Phi(E) \leq K$ and $\C_{\N_S(E)}(~ K/(K \cap \Phi(E)) ~)$ stabilizes the sequence $\Phi(E) < K < E$, giving again a contradiction by Lemma \ref{char.series}. Thus $p^2 \leq |\ov{E}| \leq p^3$.

Since $\ov{E}$ has maximal nilpotency class, then either $\ov{E}$ is abelian of order $p^2$ or $\ov{E}$ is extraspecial of order $p^3$. Moreover $\ov{E}$ has exponent $p$, otherwise we can consider the sequence $K \cap \Phi(E) \leq K \leq K\Phi(E) < K\Omega_1(E) < E$ and we get a contradiction by Lemma \ref{char.series}.
Thus either  $\ov{E} \cong \C_p \times \C_p$ or $\ov{E} \cong p^{1+2}_+$.
\end{proof}

A direct consequence of Lemma  \ref{max.class.quotient} applied with $K=1$ is the following

\begin{cor}\label{max.class.not.ess}
Let $E\leq S$ be an $\F$-essential subgroup of $S$. If $E$ has maximal nilpotency class
then $E$ is a pearl.
\end{cor}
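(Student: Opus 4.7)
The plan is to invoke Lemma \ref{max.class.quotient} directly, taking the subgroup $K$ to be the trivial subgroup $1$, and simply check that the three hypotheses of that lemma are satisfied in this situation.

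First, $K = 1$ is clearly normalized by $\Aut_\F(E)$. Second, $E/K = E$ has maximal nilpotency class by assumption. The only point requiring verification is the third hypothesis, namely $E < \C_{\N_S(E)}(K/(K \cap \Phi(E)))$. With $K = 1$ the quotient $K/(K \cap \Phi(E))$ is trivial, so $\C_{\N_S(E)}(K/(K \cap \Phi(E))) = \N_S(E)$, and the condition reduces to the strict containment $E < \N_S(E)$.

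To see this, recall that $E$ is $\F$-essential and hence proper in $S$ (this is used, for instance, in the proof of Lemma \ref{strict.frattini}, where $E < S$ is needed to rule out $\N_S(E)/\C_S(E) \cong E/\Z(E)$). Because $S$ is a $p$-group and $E$ is a proper subgroup, $E$ is strictly contained in its normalizer $\N_S(E)$, yielding $E < \N_S(E)$ as required.

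All three hypotheses of Lemma \ref{max.class.quotient} being verified, the lemma forces $E = E/K$ to be isomorphic to either $\C_p \times \C_p$ or $p^{1+2}_+$. By definition this means $E$ is a pearl. There is no real obstacle in the argument; the content lies entirely in Lemma \ref{max.class.quotient}, and the corollary is the case $K = 1$.
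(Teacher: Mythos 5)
Your proposal is correct and is exactly the paper's argument: the paper states Corollary \ref{max.class.not.ess} as a direct consequence of Lemma \ref{max.class.quotient} applied with $K=1$, and the remark following that lemma already notes that the third hypothesis holds automatically when $K\leq \Phi(E)$, precisely because $E<\N_S(E)$ for a proper subgroup of a $p$-group. Your explicit verification of the three hypotheses, including the observation that $\F$-essential subgroups are proper in $S$, fills in the same details the paper leaves implicit.
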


The next result is about the outer automorphism group of an $\F$-essential subgroup.

\begin{lemma}\label{GLr}
Let $E$ be an $\F$-essential subgroup of $S$. Then $\Out_\F(E)$ acts faithfully on $E/\Phi(E)$. In particular if $E/\Phi(E)$ has order $p^r$  then $\Out_\F(E)$ is isomorphic to a subgroup of $\GL_r(p)$.
\end{lemma}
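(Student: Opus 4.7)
The plan is to reduce the statement to a direct application of Lemma \ref{char.series}. Since $E$ is $\F$-essential, $\Out_\F(E)$ contains a strongly $p$-embedded subgroup, and so $O_p(\Out_\F(E))=1$, which is equivalent to $O_p(\Aut_\F(E))=\Inn(E)$. Hence Lemma \ref{char.series} is available to us.

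First I would consider the kernel $K$ of the natural action of $\Aut_\F(E)$ on $E/\Phi(E)$: that is, the set of $\varphi \in \Aut_\F(E)$ that act trivially on $E/\Phi(E)$. For each such $\varphi$, I would apply Lemma \ref{char.series} to the two-step series
\[
\Phi(E) \leq E,
\]
noting that the Frattini subgroup $\Phi(E)$ is characteristic in $E$ and therefore normalized by $\Aut_\F(E)$, and that the ``bottom term'' trivially satisfies $\Phi(E)\leq \Phi(E)$. By hypothesis $\varphi$ acts trivially on $E/\Phi(E)$, so it stabilizes this series; Lemma \ref{char.series} then yields $\varphi \in \Inn(E)$. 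Thus $K \leq \Inn(E)$, which means that the induced action of $\Out_\F(E) = \Aut_\F(E)/\Inn(E)$ on $E/\Phi(E)$ is faithful.

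For the second assertion, I would use that the quotient $E/\Phi(E)$ of the $p$-group $E$ is elementary abelian, and hence, if $|E/\Phi(E)|=p^r$, then $\Aut(E/\Phi(E))\cong \GL_r(p)$. The faithful action constructed above provides the required embedding $\Out_\F(E) \hookrightarrow \GL_r(p)$.

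There is no real obstacle here: the whole argument is a one-step application of Lemma \ref{char.series} to the shortest nontrivial series ending in $E$ that has $\Phi(E)$ at the bottom, and the ``in particular'' statement is then immediate from the standard identification $\Aut(\C_p^{\,r}) \cong \GL_r(p)$.
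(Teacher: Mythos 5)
Your proof is correct and follows essentially the same route as the paper: both arguments observe that $O_p(\Aut_\F(E))=\Inn(E)$ makes Lemma \ref{char.series} applicable, apply it to the series $\Phi(E)\leq E$ to see that the kernel of the action on $E/\Phi(E)$ is exactly $\Inn(E)$, and then use that $E/\Phi(E)$ is elementary abelian to identify its automorphism group with $\GL_r(p)$. No gaps.
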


\begin{proof}
By Lemma \ref{char.series} we get $\C_{\Aut_\F(E)}(E/\Phi(E)) = \Inn(E)$. Hence  the group $\Out_\F(E)\cong \Aut_\F(E)/\Inn(E)$ acts faithfully on $E/\Phi(E)$. Since $E/\Phi(E)$ is elementary abelian (\cite[Theorem 5.1.3]{Gor}) we have $\Aut(E/\Phi(E)) \cong \GL_r(p)$, and we conclude.
\end{proof}

The following theorem characterizes the automorphism group of $\F$-essential subgroups that have rank at most $3$.
When we write $A \leq \Out_\F(E) \leq B$ we mean that $\Out_\F(E)$ is isomorphic to a subgroup of $B$ and contains a subgroup isomorphic to $A$.

\begin{theorem}\label{auto.rank.3}
Let $E\leq S$ be an $\F$-essential subgroup. If $E$ has rank at most $3$ then one of the following holds:
\begin{enumerate}
\item  $|E/\Phi(E)|= p^2$ and $\SL_2(p) \leq \Out_\F(E) \leq \GL_2(p)$;
\item $|E/\Phi(E)|= p^3$,  the action of $\Out_\F(E)$ on $E/\Phi(E)$ is reducible and
\[ \SL_2(p) \leq \Out_\F(E) \leq \GL_2(p) \times \GL_1(p);\]
\item  $|E/\Phi(E)|= p^3$,  the action of $\Out_\F(E)$ on $E/\Phi(E)$ is irreducible and the group  $O^{p'}(\Out_\F(E))$ is isomorphic to one of the following groups:
\begin{enumerate}
\item $\SL_2(p)$;
\item $\PSL_2(p)$;
\item the Frobenius group $13: 3$ with $p=3$.
\end{enumerate}
\end{enumerate}
In particular $[\N_S(E) \colon E]=p$ and every $P\in E^\F$ is $\F$-essential.
\end{theorem}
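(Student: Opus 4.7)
The plan is to apply Lemma \ref{GLr} to embed $\Out_\F(E)$ into $\GL_r(p)$ where $p^r=|E/\Phi(E)|$, and then to classify the subgroups of $\GL_r(p)$ that contain a strongly $p$-embedded subgroup (equivalently: have $O_p=1$ and non-normal Sylow $p$-subgroup). The case $r=1$ is ruled out immediately: for $p$ odd, $\GL_1(p)=\C_{p-1}$ is cyclic of order coprime to $p$, so no section of it can contain a strongly $p$-embedded subgroup. Hence $r\in\{2,3\}$, which matches the three cases of the theorem (with $r=3$ split by reducibility of the action).

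For $r=2$, the Sylow $p$-subgroup of $\GL_2(p)$ is already cyclic of order $p$, and a standard inspection of the subgroup lattice of $\GL_2(p)$ (via that of $\PGL_2(p)\cong\PSL_2(p).2$ for $p$ odd, using Dickson's list) shows that any subgroup of $\GL_2(p)$ with $O_p=1$ and more than one Sylow $p$-subgroup must contain $\SL_2(p)$; this gives case $(1)$. For $r=3$ with reducible action, I would fix an $\Out_\F(E)$-invariant proper subspace $U\le E/\Phi(E)$ and consider the stabiliser $P\le\GL_3(p)$ of $U$, a maximal parabolic with Levi complement $\GL_2(p)\times\GL_1(p)$ and unipotent radical of order $p^2$. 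Intersecting $\Out_\F(E)$ with the unipotent radical gives a normal $p$-subgroup of $\Out_\F(E)$, which must be trivial since $O_p(\Out_\F(E))=1$. Hence (up to conjugation) $\Out_\F(E)\le\GL_2(p)\times\GL_1(p)$, and projecting to the $\GL_2(p)$ factor and invoking the $r=2$ analysis produces the required $\SL_2(p)$ subgroup, giving case $(2)$.

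For $r=3$ with irreducible action, the key input is the classification of irreducible finite subgroups of $\GL_3(p)$ containing a strongly $p$-embedded subgroup. Since $O_p(\Out_\F(E))=1$, a Sylow $p$-subgroup has order $p$ (the $p$-part of $|\GL_3(p)|$ that can appear in a group with $O_p=1$ and irreducible action is $p^1$, as a full unipotent subgroup of order $p^3$ would be self-centralizing and normalise no nontrivial proper subspace complement in a way compatible with irreducibility). Then the groups with a strongly $p$-embedded subgroup of order coprime to $p$, together with a Sylow $p$-subgroup of order $p$, are (by the Bender--Aschbacher classification, or equivalently by Hering's theorem on doubly transitive groups of degree $p$) the quasisimple groups $\SL_2(p)$ and $\PSL_2(p)$ and, in the solvable case, Frobenius groups $C_q\colon C_p$ where $q$ is a prime whose order modulo $p$ equals $p$; in $\GL_3(p)$ acting irreducibly this solvable family forces $q\mid p^3-1$ with $q\nmid p-1$, and a case check shows the only such arithmetic possibility is $(p,q)=(3,13)$, yielding the Singer-normaliser subgroup $13\colon 3$ of $\GL_3(3)$. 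This produces the three cases $(3)(a),(b),(c)$.

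Finally, the last sentence follows uniformly. In each of the three outcomes above, the Sylow $p$-subgroup of $\Out_\F(E)$ has order $p$: in cases $(1)$ and $(2)$ this is clear from $\SL_2(p)\le\Out_\F(E)\le\GL_2(p)$ or $\GL_2(p)\times\GL_1(p)$, and in case $(3)$ it is immediate from the three listed isomorphism types. Since $E$ is $\F$-centric we have $\C_S(E)=\Z(E)$, so $\Out_S(E)\cong\N_S(E)/E$, and as $\Out_S(E)\in\Syl_p(\Out_\F(E))$ by full normalisation we conclude $[\N_S(E):E]=p$. For $P\in E^\F$, $\F$-centricity and possession of a strongly $p$-embedded outer automorphism group are invariant under $\F$-isomorphism, so it remains to check that $P$ is fully normalised. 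But $P<S$ nilpotent gives $[\N_S(P):P]\ge p$, while $P\in E^\F$ with $E$ fully normalised gives $|\N_S(P)|\le|\N_S(E)|=p|P|$; hence $[\N_S(P):P]=p$ and $P$ is $\F$-essential. The main obstacle is the classification invoked in Step~3b for the irreducible case, and in particular the identification of $13\colon 3$ as the unique exceptional solvable example at $p=3$.
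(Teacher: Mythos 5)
Your overall route is the same as the paper's: embed $\Out_\F(E)$ into $\GL_r(p)$ via Lemma \ref{GLr}, rule out $r=1$, split by $r\in\{2,3\}$ and by reducibility, and extract the final sentence from the fact that in every case $\Out_S(E)\in\Syl_p(\Out_\F(E))$ has order $p$. Your case $(1)$ and your closing paragraph coincide with the paper's argument. In the reducible rank-$3$ case you take a genuinely different path: the paper shows directly that $O^{p'}(\Out_\F(E))$ acts trivially on a $1$-dimensional invariant subspace and faithfully on the quotient (dualizing when the invariant subspace is a plane), then splits $E/\Phi(E)$ using a central involution of $\SL_2(p)$; you instead intersect with the unipotent radical of the parabolic stabilizing $U$ and kill it using $O_p(\Out_\F(E))=1$. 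That is a clean and valid way to get the embedding into $\GL_2(p)\times\GL_1(p)$, though you should add the (routine) argument that the copy of $\SL_2(p)$ in the image of the projection lifts to a subgroup of $\Out_\F(E)$: the kernel of the projection is a central $p'$-subgroup, and one checks it meets $O^{p'}(\Out_\F(E))$ trivially via a Schur-multiplier/perfectness argument.

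The irreducible rank-$3$ case, however, contains a genuine gap: the classification you invoke is not a theorem. When a group $G$ has Sylow $p$-subgroups of order $p$ and $O_p(G)=1$, distinct Sylow $p$-subgroups intersect trivially, so possessing a strongly $p$-embedded subgroup is equivalent to having more than one Sylow $p$-subgroup; this holds for $\Alt(7)$ with $p=7$, for $M_{11}$ with $p=11$, and indeed for every simple group whose order is divisible by $p$ exactly once. The Bender--Aschbacher classification of groups with a strongly $p$-embedded subgroup applies only under the hypothesis $m_p(G)\geq 2$, and Hering's theorem concerns doubly transitive affine groups, so neither delivers the list $\SL_2(p)$, $\PSL_2(p)$, $\C_q\colon \C_p$ that your argument rests on. What actually pins down the answer is the faithful irreducible embedding into $\GL_3(p)$, and to exploit that constraint you need the known subgroup structure of $\SL_3(p)$: the paper passes to the image $K$ in $\PSL_3(p)$, applies the list of maximal subgroups of $\PSL_3(p)$ together with $O_p(K)=1$ to force $K\leq \PGL_2(p)$ (or $p=7$ and $K\leq \PSL_3(2)\cong\PSL_2(7)$), and then uses a Schur-multiplier argument to lift back to $\SL_3(p)$, handling $p\in\{3,5\}$ (where the Frobenius group $13{:}3$ arises) by computer. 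Relatedly, your justification that a Sylow $p$-subgroup of $\Out_\F(E)$ has order $p$ in this case does not address $p$-subgroups of order $p^2$ at all; the conclusion is true (unipotent elements of $\GL_3(p)$ have order $p$ for $p$ odd, and the $m_p\geq 2$ configurations do not embed irreducibly in $\GL_3(p)$), but as written it is an assertion rather than a proof, and in any case it does not substitute for the missing subgroup classification.
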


\begin{proof}
Set $G=\Out_\F(E)$.
Let $r$ be the rank of $E$. Then $2\leq r \leq 3$ and by Lemma \ref{GLr}  the group $G$ is isomorphic to a subgroup of $\GL_r(p)$.
\begin{enumerate}
\item Suppose $|E/\Phi(E)|=p^2$. Thus the Sylow $p$-subgroups of $G$ have order at most $p$. Since $G$ has a strongly $p$-embedded subgroup, we deduce that it contains at least two Sylow $p$-subgroups. Hence $O^{p'}(G) \cong \SL_2(p)$ (\cite[Theorem 2.8.4]{Gor}).  In particular the quotient $\N_S(E)/E \cong \Out_S(E)$ has order $p$. Also, by Lemma \ref{strict.frattini} we deduce that $E/\Phi(E)$ is a  natural $\SL_2(p)$-module for $O^{p'}(G)=\langle \Out_S(E)^{G} \rangle$. 

\item Suppose $|E/\Phi(E)|= p^3$ and the action of $G$ on $E/\Phi(E)$ is reducible.
Set $V=E/\Phi(E)$. Let $U$ be a proper subgroup of $V$ normalized by $G$. Then $U$ is normalized by $O^{p'}(G)$ and $p \leq |U| \leq p^2$.

\begin{enumerate}
\item Suppose $|U|=p$. Then $[S,U]=1$ for every Sylow $p$-subgroup $S$ of $G$. Thus $[O^{p'}(G),U]=1$. So the subgroup $\C_{O^{p'}(G)}(V/U)$ stabilizes the sequence $1 < U < V$. Since $G$ has a strongly $p$-embedded subgroup we have $O_p(O^{p'}(G))=1$ and so $\C_{O^{p'}}(V/U)=1$.
Therefore $O^{p'}(G) \hookrightarrow \Aut(V/U)\cong \GL_2(p)$ and so $O^{p'}(G) \cong \SL_2(p)$.

Let $t \in \Z(O^{p'}(G))$ be an involution. Then by coprime action we get
\[ V = [V, t] \times \C_V(t).\]
Note that the subgroups $[V, t]$ and $\C_V(t)$ of $V$ are normalized by $G$.
Also, $U \leq\C_V(t)$. If $U \neq \C_V(t)$, then the quotients $V/\C_V(t)$ and $\C_V(t)/U$ have dimension 1. In particular $O^{p'}(G)$ stabilizes the series $1 < U < \C_V(t) < V$ and we get a contradiction by Lemma  \ref{char.series}. Hence $U=\C_V(t)$, $[V,t]$ has order $p^2$ and $V = [V,t] \times U$.

\item Suppose $|U|=p^2$. Note that $V$ can be seen as a $3$-dimensional vector space over $\GF(p)$ and the group $G$ acts on the dual space $V^*$, that is a $3$-dimensional vector space over $\GF(p)$ as well. Also, since $G$ normalizes $U$, it normalizes the subspace
\[ U^{\perp}= \{ \varphi \in V^* \mid u\varphi = 0 \text{ for every } u\in U\} \subseteq V^*.\]
Note that $U^{\perp}$ has dimension $1=\dim V - \dim U$. Thus $G$ normalizes a $1$-dimensional subspace of a vector space of dimension $3$. Hence, with an argument similar to the one used in part (a), we can show that there exists a $2$-dimensional subspace $W^*$ of $V^*$ normalized by $G$ and such that $V^* = U^* \oplus W^*$. In particular the corresponding subspace $W=(W^*)^{\perp}$ of $V$  is a $1$-dimensional subspace normalized by $G$ and such that $V = U \oplus W$.
\end{enumerate}

Since $O_p(G)=1$, there are unique subgroups $U\cong \C_p \times \C_p$ and $W \cong \C_p$ of $V$ that are normalized by $G$.
Therefore $G \leq \GL(U) \times \GL(W) \cong \GL_2(p) \times \GL_1(p)$.

\item Suppose $|E/\Phi(E)|= p^3$ and the action of $G$ on $E/\Phi(E)$ is irreducible. If $3\leq p\leq 5$ then we can prove the statement using the computer software \emph{Magma}. Suppose $p\geq 7$. If the action of $O^{p'}(G)$ on $E/\Phi(E)$ is reducible, then we can repeat the argument used in part (2) to conclude that $O^{p'}(G)\cong \SL_2(p)$.

Suppose the action of $O^{p'}(G)$ on $E/\Phi(E)$ is irreducible. Note that $O^{p'}(G) \leq \SL_3(p)$ and $K=O^{p'}(G)\Z(\SL_3(p))/\Z(\SL_3(p))$ is a subgroup of $\PSL_3(p)$ having a strongly $p$-embedded subgroup.
Using  the classification of maximal subgroups of $\PSL_3(p)$ appearing in \cite[Theorem 6.5.3]{GLS3} and the fact that $p$ divides the order of $K$ and $O_p(K)=1$, we deduce that either  $K$ is isomorphic to a subgroup of $\PGL_2(p)$ or $p=7$ and $K$ is isomorphic to a subgroup of $\PSL_3(2)\cong \PSL_2(7)$. By the definition of $K$ we conclude that $K \cong \PSL_2(p)$ for every $p\geq 7$. Finally note that the group $\PSL_2(p)$ has a Schur multiplier of order $2$ (\cite[Satz V.25.7]{Hup}) and  $|\Z(\SL_3(p))|$ is either $1$ or $3$. Hence $O^{p'}(G) \cong K$.
\end{enumerate}

Note that $\N_S(E)/E \cong \Out_S(E) \in \Syl_p(\Out_\F(E))$. Thus, what is  proved in all three cases implies $[\N_S(E) \colon E] =p$.

Suppose $P \in E^\F$. Then $P$ is $\F$-centric and $\Out_\F(P) \cong \Out_\F(E)$ has a strongly $p$-embedded subgroup. Since $E$ is fully normalized and $P$ is a proper subgroup of $S$ we have $|\N_S(E)| \geq |\N_S(P)| > |P|=|E|$. Since $[\N_S(E) \colon E] =p$ we deduce $|\N_S(P)|=|\N_S(E)|$ and so $P$ is fully normalized. Therefore $P$ is $\F$-essential.
\end{proof}

 Theorem \ref{auto.rank.3}(1) applies to pearls.

\begin{cor}\label{auto.pearl}
Let $E\in \mathcal{P}(\F)$ be a pearl. Then $\Out_\F(E) \leq \GL_2(p)$, $O^{p'}(\Out_\F(E))\cong \SL_2(p)$, $[\N_S(E)\colon E] =p$ and every subgroup of $S$ that is $\F$-conjugate to $E$ is a pearl.
\end{cor}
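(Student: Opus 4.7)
The plan is to deduce the corollary as a direct specialization of Theorem \ref{auto.rank.3}(1), after checking that the rank hypothesis is met.

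First I would verify that every pearl $E$ has $|E/\Phi(E)|=p^2$. If $E\cong \C_p\times \C_p$ is an abelian pearl, then $\Phi(E)=1$ and the claim is immediate. If $E\cong p^{1+2}_+$ is an extraspecial pearl (recall $p$ is odd, so the exponent-$p^2$ extraspecial group cannot appear by the remark preceding the definition of $\mathcal{P}(\F)$), then $\Phi(E)=[E,E]=\Z(E)$ has order $p$, so the Frattini quotient again has order $p^2$. In particular $E$ has rank $2$, so Theorem \ref{auto.rank.3} applies and we are necessarily in case (1). This immediately yields $\SL_2(p)\leq \Out_\F(E)\leq \GL_2(p)$, $[\N_S(E):E]=p$, and that every $P\in E^\F$ is $\F$-essential.

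Next I would identify $O^{p'}(\Out_\F(E))$. Since $[\GL_2(p):\SL_2(p)]=p-1$ is coprime to $p$, any Sylow $p$-subgroup of $\Out_\F(E)$ lies inside $\SL_2(p)\cap \Out_\F(E)=\SL_2(p)$. Conversely, for $p$ odd the group $\SL_2(p)$ is generated by its transvections, hence by its Sylow $p$-subgroups, so the subgroup generated by the Sylow $p$-subgroups of $\Out_\F(E)$ is exactly $\SL_2(p)$. This gives $O^{p'}(\Out_\F(E))\cong \SL_2(p)$ as required.

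Finally, for the last clause, I would observe that any $P\in E^\F$ is isomorphic to $E$ as an abstract group, hence is elementary abelian of order $p^2$ or non-abelian of order $p^3$; combined with the $\F$-essentiality of $P$ coming from Theorem \ref{auto.rank.3}, this shows $P\in \mathcal{P}(\F)$. There is no real obstacle in this corollary: all the work has been done in proving Theorem \ref{auto.rank.3}, and the only thing to check carefully is the computation of $O^{p'}$ inside the sandwich $\SL_2(p)\leq \Out_\F(E)\leq \GL_2(p)$.
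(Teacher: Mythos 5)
Your proposal is correct and takes essentially the same route as the paper, which derives the corollary simply by observing that a pearl has Frattini quotient of order $p^2$ and applying Theorem \ref{auto.rank.3}(1) (whose proof already establishes $O^{p'}(\Out_\F(E))\cong \SL_2(p)$). Your extra computation of $O^{p'}$ inside the sandwich $\SL_2(p)\leq \Out_\F(E)\leq \GL_2(p)$ is a sound way to recover that isomorphism from the statement of the theorem alone, and the remaining verifications are exactly what the paper leaves implicit.
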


As for essential subgroups $E$ having rank $4$, the fact that $\Out_\F(E)$ is isomorphic to a subgroup of $\GL_4(p)$ (Lemma \ref{GLr}) combined with \cite[Theorems 6.4 and 6.9]{Sambale}, prove the following.

\begin{theorem}\label{samb4}
Let $E$ be an $\F$-essential subgroup of $S$ having rank $4$. Then one of the following holds:
\begin{enumerate}
\item $[\N_S(E) \colon E]=p$;
\item $\N_S(E)/E \cong \C_9$ and  $p=3$;
\item $[\N_S(E) \colon E]=p^2$ and $O^{p'}(\Out_\F(E)/O_{p'}(\Out_\F(E))) \cong \PSL_2(p^2)$.
\end{enumerate}
\end{theorem}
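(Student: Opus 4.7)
The strategy is to put $G=\Out_\F(E)$ at the centre and to exploit the two facts we already have for any $\F$-essential subgroup: $G$ contains a strongly $p$-embedded subgroup and $O_p(G)=1$. By Lemma \ref{GLr} the group $G$ embeds into $\GL_4(p)$, and if we set $T=\Out_S(E)$ then $T\in\Syl_p(G)$ and $|T|=[\N_S(E)\colon E]$. So the theorem is really a statement about which Sylow $p$-subgroups (and which overgroups) can occur inside $\GL_4(p)$ subject to the strong $p$-embedding condition.

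First I would dispose of the easy direction $|T|=p$: this is already case (1) and nothing needs proving. So assume $|T|\geq p^2$. Since a Sylow $p$-subgroup of $\GL_4(p)$ has order $p^{6}$, we need a genuine restriction, and this is what Sambale's Theorem 6.4 provides: it pins down the shape of those $p$-subgroups of $\GL_n(p)$ that can arise as Sylow subgroups of a subgroup possessing a strongly $p$-embedded subgroup (after quotienting by the $p'$-core). I would read off from this theorem that, with $n=4$ and $p$ odd, either $T$ is cyclic of order $p$, or $p=3$ and $T\cong \C_9$, or $T$ is elementary abelian of order $p^2$; the latter occurs because the natural $\GF(p^2)$-module for $\SL_2(p^2)$ is $4$-dimensional over $\GF(p)$ and its unipotent radical embeds as an elementary abelian $p$-group of rank $2$ into $\GL_4(p)$.

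Next I would apply Sambale's Theorem 6.9 (which classifies subgroups of $\GL_n(p)$ having a strongly $p$-embedded subgroup and trivial $O_p$) to the surviving cases with $|T|\geq p^2$. The $p=3$ anomaly with $T\cong \C_9$ is case (2) and requires nothing more than recording that Sambale's list admits this exception. In the remaining case $|T|=p^2$ (elementary abelian), the quotient $\overline G:=G/O_{p'}(G)$ is almost simple with a strongly $p$-embedded subgroup and, inside $\GL_4(p)$, the rank-one Lie type candidates with an elementary abelian Sylow of rank $2$ force $O^{p'}(\overline G)\cong\PSL_2(p^2)$; the other possibilities produced by Bender-type classifications (e.g.\ $\SU_3(p)$, the Ree groups, $\mathrm{Sz}$, or alternating groups $\Alt_{2p}$) either demand a faithful module of dimension at least $6$ or are incompatible with $p$ odd, and so they can be discarded by a dimension/order count against $|\GL_4(p)|$.

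The main obstacle is the third paragraph: translating the abstract Sambale/Bender classification into the concrete elimination of every competitor besides $\PSL_2(p^2)$ inside $\GL_4(p)$, and simultaneously certifying that the exceptional $(p,T)=(3,\C_9)$ configuration really is separate from the $\PSL_2(p^2)$ branch (at $p=3$ both give $|T|=9$ but one is cyclic and the other elementary abelian, so $T$ itself distinguishes them). Everything else is bookkeeping: reading $|T|=[\N_S(E)\colon E]$ off the index formula, and rephrasing the conclusion of Sambale 6.9 as a statement about $O^{p'}(\Out_\F(E)/O_{p'}(\Out_\F(E)))$.
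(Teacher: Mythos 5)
Your proposal is correct and follows essentially the same route as the paper: the paper's entire argument is to combine Lemma \ref{GLr} (embedding $\Out_\F(E)$ into $\GL_4(p)$, with $\Out_S(E)\cong\N_S(E)/E$ a Sylow $p$-subgroup since $E$ is fully normalized and centric) with Sambale's Theorems 6.4 and 6.9, exactly as you do. The additional detail you supply about how the Bender-type candidates are eliminated inside $\GL_4(p)$ goes beyond what the paper records, but it is consistent with, and subsumed by, the cited results.
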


More information on the outer automorphism group of $\F$-essential subgroups having index $p$ in their normalizer is given by the next lemma.

\begin{lemma}\label{SL.index.p}
Let $E$ be an $\F$-essential subgroup of $S$. Suppose that $\N_S(E) < S$, $[\N_S(E) \colon E] =p$ and $\Phi(E) \norm \N_S(\N_S(E))$. Then $O^{p'}(\Out_\F(E)) \cong \SL_2(p)$ and if we set $\ov{E}= E/\Phi(E)$, then $\ov{E}/\C_{\ov{E}}(O^{p'}(\Out_\F(E)))$ is a natural $\SL_2(p)$-module for  $O^{p'}(\Out_\F(E))$.
\end{lemma}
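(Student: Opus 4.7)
The plan is to produce a second $\F$-essential subgroup above $E$ using the hypothesis, then to deduce the $\SL_2(p)$-structure by a commutator/dimension argument on $\ov E$.

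Since $\N_S(E) < S$ in the $p$-group $S$, we have $\N_S(E)$ properly contained in $\N_S(\N_S(E))$; pick $g \in \N_S(\N_S(E)) \setminus \N_S(E)$ and set $E' := E^g$. Then $E' \neq E$ and both are normal of index $p$ in $\N_S(E)$, so $\N_S(E) = EE'$. Moreover $\N_S(E') = \N_S(E)^g = \N_S(E)$, which makes $E'$ fully $\F$-normalized, $\F$-conjugate to $E$ via $c_g$, and hence itself $\F$-essential. The hypothesis $\Phi(E) \norm \N_S(\N_S(E))$ gives $\Phi(E) = \Phi(E)^g = \Phi(E^g) = \Phi(E')$, so $\Phi(E) \leq E \cap E' =: D$. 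The key computation is
\[
[\N_S(E), E] \;=\; [EE', E] \;=\; [E, E]\cdot [E', E] \;\leq\; \Phi(E)\cdot D \;=\; D,
\]
using $[E, E] \leq \Phi(E)$ and $[E', E] \leq E \cap E' = D$ (since $E \leq \N_S(E')$ and $E' \leq \N_S(E)$, both commutators land in $E \cap E'$). Setting $\ov D := D/\Phi(E)$, this gives $[\ov E, \Aut_S(E)] \leq \ov D$, and $\ov D$ has codimension~$1$ in $\ov E$.

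Set $G := O^{p'}(\Out_\F(E))$ and $V := \ov E/\C_{\ov E}(G)$. Since $[\N_S(E):E] = p$, the Sylow $p$-subgroup $\Aut_S(E)$ of $\Aut_\F(E)$ has order $p$; since $\Out_\F(E)$ has a strongly $p$-embedded subgroup, $O_p(G)=1$, and combined with Lemma~\ref{GLr} the action of $G$ on $V$ is faithful. I would argue $\dim V = 2$ as follows: the commutator bound yields $[V, \Aut_S(E)]$ of codimension at most~$1$ in $V$; a second Sylow $p$-subgroup of $G$ (existing by the strongly $p$-embedded hypothesis) provides a $G$-conjugate, transverse codimension-$1$ constraint, whose meet with the first equals $\C_V(G)=0$, forcing $\dim V \leq 2$, while faithfulness forces $\dim V \geq 2$. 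Consequently $G \hookrightarrow \GL(V) \cong \GL_2(p)$, and by the argument of Theorem~\ref{auto.rank.3}(1), $G \cong \SL_2(p)$ and $V$ is the natural $\SL_2(p)$-module.

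The delicate point is establishing $\dim V = 2$ precisely: the commutator bound for a single Sylow only bounds $[\ov E, \Aut_S(E)]$ from above, and combining conjugate bounds to determine $\dim \C_{\ov E}(G)$ exactly requires care. An alternative route would be to first invoke the classification of finite groups with a strongly $p$-embedded subgroup and cyclic Sylow $p$-subgroup of order $p$ (as in \cite[Theorem~2.8.4]{Gor}) together with the faithful action on $\ov E$ to pin down $G \cong \SL_2(p)$ directly, and then use the commutator bound $[\ov E, \Aut_S(E)] \leq \ov D$ to identify the natural-module quotient on $V$.
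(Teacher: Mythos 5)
Your opening moves (conjugating by $g\in \N_S(\N_S(E))\setminus \N_S(E)$, obtaining $\N_S(E)=EE'$ and $\Phi(E)=\Phi(E')\leq D:=E\cap E'$) coincide with the paper's setup, but the module-theoretic step has a genuine gap, and it sits exactly where you flagged it. Your computation $[\N_S(E),E]\leq D$ is correct but points the wrong way: for $A=\Out_S(E)$ cyclic of order $p$ acting on the elementary abelian group $\ov{E}$, one has $|[\ov{E},A]|=|\ov{E}:\C_{\ov{E}}(A)|$, so the containment $[\ov{E},A]\leq \ov{D}$ only says $|\C_{\ov{E}}(A)|\geq p$, which is far too weak. (Your phrase ``$[V,\Aut_S(E)]$ of codimension at most $1$'' is also backwards: being contained in a hyperplane is a codimension-at-\emph{least}-one statement.) What is actually needed is the reverse-flavoured inequality $\ov{D}\leq \C_{\ov{E}}(A)$, i.e. $|\ov{E}:\C_{\ov{E}}(A)|\leq p=|A|$, which makes $A$ an \emph{offender} on $\ov{E}$. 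This is where $\Phi(E)=\Phi(E')$ earns its keep: $E'/\Phi(E')=E'/\Phi(E)$ is abelian and contains $\ov{D}$, so $E'$ centralizes $\ov{D}$, and $E$ does as well; hence $\N_S(E)=EE'$ centralizes $\ov{D}$ and in fact $\ov{D}=\C_{\ov{E}}(A)=\Z(\N_S(E)/\Phi(E))$, which is how the paper argues. Your proposal never establishes the offender inequality.

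Second, even granting the offender condition, the ``two transverse codimension-one constraints force $\dim V\leq 2$'' step does not close. Since $G=O^{p'}(\Out_\F(E))=\langle A^G\rangle$, the subgroup $\C_{\ov{E}}(G)$ is the intersection of $\C_{\ov{E}}(A)^h$ over \emph{all} $G$-conjugates, and an intersection of many hyperplanes can have arbitrarily large codimension; there is also no reason for $\C_V(G)$ to vanish after passing to $V=\ov{E}/\C_{\ov{E}}(G)$, because $G$ is not a $p'$-group. The assertion that a group with a strongly $p$-embedded subgroup admitting an offender of order $p$ must be $\SL_2(p)$ with natural-module quotient is precisely the content of the result the paper quotes (\cite[Theorem 5.6]{Ellen}); it is not recoverable by this dimension count. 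Your fallback route fails for a similar reason: the groups with a strongly $p$-embedded subgroup and Sylow $p$-subgroups of order $p$ are exactly those with more than one such Sylow subgroup --- a huge class containing, e.g., $\PSL_2(p)$ and the Frobenius group $13:3$, both of which occur in Theorem \ref{auto.rank.3}(3) --- and since the rank of $\ov{E}$ is not bounded here, there is no embedding into $\GL_2(p)$ available to feed into \cite[Theorem 2.8.4]{Gor}. So the proof really does require the offender inequality together with a quoted classification of offenders in groups with strongly $p$-embedded subgroups.
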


\begin{proof}
Set $N=\N_S(E)$ and take $x\in \N_S(N) \backslash N$. Then $N=EE^x$ and by assumption $\Phi(E)=\Phi(E)^x=\Phi(E^x)$. Thus $E^x/\Phi(E)$ is abelian and so $(E \cap E^x)/\Phi(E) = \Z(N/\Phi(E))$. Set $\ov{N}=N/\Phi(E)$, $\ov{E}=E/\Phi(E)$, $G=O^{p'}(\Out_\F(E))$ and $A=\Out_S(E)\cong N/E$. Note that $G$ acts faithfully on $\ov{E}$ and both  $A/\C_A(\ov{E}) \cong A$ and $\ov{E}/ \C_{\ov{E}}(A) = \ov{E}/\Z(\ov{N})$ have order $p$. Hence the group $A$ is an offender for $\ov{E}$ in $G$, that is a subgroup $P$ of $G$ such that $P/C_P(\ov{E})$ is non-trivial and $|\ov{E}/\C_{\ov{E}}(P)| \leq |P/\C_P(\ov{E})|$.  Since $E$ is $\F$-essential, the group $G$ has a strongly $p$-embedded subgroup. Hence by \cite[Theorem 5.6]{Ellen} we conclude that $G\cong \SL_2(p)$ and $\ov{E}/\C_{\ov{E}}(G)$ is a natural $\SL_2(p)$-module for $G$.
\end{proof}

\section{On $p$-groups having maximal nilpotency class}
We refer to \cite[III.14]{Hup} and  \cite[Chapter 3]{Led-Green}  for definition and properties of $p$-groups of maximal nilpotency class. In this section we introduce our notation, we state the facts that we are going to use the most and we prove some new results.

Let $p$ be an odd prime and let $S$ be a $p$-group having maximal nilpotency class and order $|S|=p^n \geq p^3$.

\begin{definition}\label{upper-lower.def}~
\begin{itemize}
\item Set $\gamma_2(S) = [S,S]$ and $\gamma_i(S)=[\gamma_{i-1}(S),S]$ for every $i\geq3$.
\item Set $\Z_1(S)=\Z(S)$ and for every $i\geq 2$ let $\Z_i(S)$ be the preimage in $S$ of $\Z(S/\Z_{i-1}(S))$.
\end{itemize}
The groups $\gamma_i(S)$ are the members of the lower central series of $S$ and the groups $\Z_i(S)$ form the upper central series of $S$.
\end{definition}

Since $S$ has maximal nilpotency class we deduce that $\gamma_{n}(S)=1$ and $\Z_{n-1}(S)=S$. Also $S/\gamma_2(S) \cong \C_p \times \C_p$ and for every $2\leq i \leq n-1$ we have $\gamma_i(S)=\Z_{n-i}(S)$ and  $[\gamma_i(S) \colon \gamma_{i+1}(S)]=p$.
If $N$ is a normal subgroup of $S$ having order $p^i\leq p^{n-2}$ then $N=\gamma_{n-i}(S)=\Z_i(S)$ (\cite[Hilfssatz III.14.2]{Hup}).

\begin{definition}\label{def.gamma1}
If $|S|\geq p^4$ we write $\gamma_1(S)$ for the centralizer in $S$ of the quotient $\gamma_2(S)/\gamma_{4}(S)$.
\end{definition}

Note that $\gamma_1(S)$ is a maximal subgroup of $S$ that is characteristic in $S$. Also, $\gamma_2(S) < \gamma_1(S) < S$. In this sense the group $\gamma_1(S)$ \emph{completes} the lower central series of $S$.

\begin{lemma}\cite[Hauptsatz III.14.6]{Hup} Suppose $|S|=p^n\geq p^5$. Then
 \[ \gamma_1(S)=\C_S(\gamma_i(S)/\gamma_{i+2}(S)) \text{ for every } 2\leq i \leq n-3. \]
\end{lemma}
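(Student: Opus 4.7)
The plan is to set $K_i := \C_S(\gamma_i(S)/\gamma_{i+2}(S))$ for $2 \leq i \leq n-3$ and to show that $K_i$ coincides with the characteristic maximal subgroup $\gamma_1(S)$. Since $K_2 = \gamma_1(S)$ holds by the definition of $\gamma_1(S)$, only the cases $3 \leq i \leq n-3$ require work. My strategy is to establish (a) that $K_i$ is a characteristic subgroup of $S$ of index exactly $p$, and (b) that $\gamma_1(S) \leq K_i$; the two facts together force $K_i = \gamma_1(S)$ by an index count.

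For (a): being the centralizer of a characteristic section, $K_i$ is automatically characteristic in $S$. The quotient $S/K_i$ acts faithfully on the abelian section $\gamma_i(S)/\gamma_{i+2}(S)$ of order $p^2$, preserving the flag $1 < \gamma_{i+1}(S)/\gamma_{i+2}(S) < \gamma_i(S)/\gamma_{i+2}(S)$; since $S$ is a $p$-group and acts trivially on each chief factor of its own lower central series, the image of $S/K_i$ in $\Aut(\gamma_i(S)/\gamma_{i+2}(S)) \cong \GL_2(p)$ is contained in the unipotent stabilizer of the flag, which has order $p$. Hence $[S : K_i] \leq p$. On the other hand $K_i \neq S$, because $K_i = S$ would force $\gamma_{i+1}(S) = [S, \gamma_i(S)] \leq \gamma_{i+2}(S)$, contradicting the maximal nilpotency class of $S$ in view of $i+1 \leq n-2$. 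Therefore $[S : K_i] = p$.

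For (b): I would prove by induction on $i \geq 2$ that $[\gamma_1(S), \gamma_i(S)] \leq \gamma_{i+2}(S)$. The base case $i = 2$ is the definition of $\gamma_1(S)$. For the inductive step, write $\gamma_i(S) = [\gamma_{i-1}(S), S]$ and apply the three subgroups lemma modulo $\gamma_{i+2}(S)$ to the triple $(\gamma_1(S), \gamma_{i-1}(S), S)$: the commutator $[\gamma_1(S), \gamma_i(S)]$ is then contained in the product of $[[\gamma_1(S), \gamma_{i-1}(S)], S]$ and $[[\gamma_1(S), S], \gamma_{i-1}(S)]$. The first factor lies in $\gamma_{i+2}(S)$ by the induction hypothesis. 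The second factor only satisfies the generic bound $[[\gamma_1(S), S], \gamma_{i-1}(S)] \leq [\gamma_2(S), \gamma_{i-1}(S)] \leq \gamma_{i+1}(S)$, which is one step short. The needed sharpening into $\gamma_{i+2}(S)$ exploits the particular role of $\gamma_1(S)$ as the two-step centralizer inside a $p$-group of maximal nilpotency class of order at least $p^5$, and is precisely the detailed commutator calculation carried out in Huppert's Hauptsatz III.14.6.

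The main obstacle is exactly this sharpening in the inductive step: the naive three-subgroups-lemma estimate falls short of the target by one term in the lower central series, and closing the gap cannot be done from the generic properties of the lower central series alone. Once this refinement is in place, combining (a) and (b) yields $K_i = \gamma_1(S)$ for every $2 \leq i \leq n-3$, as required.
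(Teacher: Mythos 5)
The paper offers no proof of this lemma at all; it is quoted directly from Huppert [Hauptsatz III.14.6], so the only question is whether your argument is self-contained. It is not. Your step (a) is fine: $K_i=\C_S(\gamma_i(S)/\gamma_{i+2}(S))$ is characteristic, the action of $S$ on $\gamma_i(S)/\gamma_{i+2}(S)$ lands in the unipotent flag stabilizer because $S$ centralizes the consecutive lower-central factors, and $K_i\neq S$ because $[S,\gamma_i(S)]=\gamma_{i+1}(S)\nleq\gamma_{i+2}(S)$ for $i\leq n-2$; hence $[S:K_i]=p$. But step (b), the inclusion $\gamma_1(S)\leq K_i$, i.e.\ $[\gamma_1(S),\gamma_i(S)]\leq\gamma_{i+2}(S)$, is (given (a)) \emph{equivalent} to the statement being proved, and your inductive argument for it does not close. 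As you yourself observe, the three subgroups lemma only yields $[[\gamma_1(S),S],\gamma_{i-1}(S)]\leq[\gamma_2(S),\gamma_{i-1}(S)]\leq\gamma_{i+1}(S)$, one term short of $\gamma_{i+2}(S)$, and you then defer the "needed sharpening" to "the detailed commutator calculation carried out in Huppert's Hauptsatz III.14.6" --- that is, you invoke the theorem you are trying to prove. Nothing in the generic properties of the lower central series supplies the missing step: note that the degree-of-commutativity bound $l\geq 0$ only gives $[\gamma_1(S),\gamma_i(S)]\leq\gamma_{i+1}(S)$, and $l\geq 1$ fails in general (it is equivalent to $\gamma_1(S)=\C_S(\Z_2(S))$).

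So the proposal reduces the lemma to its own essential content and stops there. The actual argument in Huppert (and in Blackburn's original treatment) is of a different nature: one exploits that the two-step centralizers $K_2,\dots,K_{n-2}$ are all among the $p+1$ maximal subgroups of $S$ containing $\gamma_2(S)$, produces an element $s$ lying outside all of them (a ``uniform'' element), and runs an induction on $n$ together with explicit commutator identities for $s$ against generators $s_i=[s,s_{i-1}]$ of the $\gamma_i(S)$. If you want a self-contained proof you would need to reproduce that computation; otherwise the honest thing is to cite Huppert for the whole statement, as the paper does, rather than for the one step that carries all the difficulty.
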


Note that the group $\gamma_1(S)$ can equal $\C_S(\gamma_{n-2}(S)/\gamma_{n}(S)) =\C_S(\Z_2(S))$. The next theorem tells us when this can happen.

\begin{theorem}\label{equal2step} \cite[Corollary 3.2.7, Theorem 3.2.11, Theorem 3.3.5]{Led-Green} Suppose $|S|=p^n \geq p^4$
 and assume one of the following holds:
\begin{enumerate}
\item $n=4$; or
\item $n > p+1$; or
\item $5 \leq n \leq p+1$ and $n$ is odd.
\end{enumerate}
Then $\gamma_1(S) =\C_S(Z_2(S))$.
 \end{theorem}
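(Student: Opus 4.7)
My plan is to dispose of case (1) by a short direct computation, and for cases (2) and (3) reduce the problem to the positivity of the \emph{degree of commutativity} of $S$, a structural invariant from the theory of $p$-groups of maximal class.

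For $n=4$, the subgroup $\gamma_4(S)$ is trivial, so Definition \ref{def.gamma1} collapses to $\gamma_1(S)=\C_S(\gamma_2(S))$. Since $S$ has nilpotency class $3$, the quotient $\gamma_2(S)/\gamma_3(S)$ is non-trivial, so $S/\gamma_3(S)$ is non-abelian of order $p^3$ with centre $\gamma_2(S)/\gamma_3(S)$; thus $\Z_2(S)=\gamma_2(S)$ and the claim follows at once.

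For $n\geq 5$, I would identify $\Z_2(S)=\gamma_{n-2}(S)$ and reformulate the goal as $[\gamma_1(S),\gamma_{n-2}(S)]=1$. Standard commutator calculus places this inside $\gamma_{n-1}(S)=\Z(S)$, a group of order $p$, so the obstruction is a bilinear map on $(\gamma_1(S)/\gamma_2(S))\times(\gamma_{n-2}(S)/\gamma_{n-1}(S))$ with values in $\Z(S)$. I would control this via the degree of commutativity $\ell(S)$, defined as the largest integer such that $[\gamma_i(S),\gamma_j(S)]\leq\gamma_{i+j+\ell}(S)$ for all $i,j\geq 1$: positivity $\ell(S)\geq 1$ applied with $(i,j)=(1,n-2)$ yields precisely the required vanishing.

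The main obstacle, and the real content of cases (2) and (3), is the proof that $\ell(S)\geq 1$ under either hypothesis. I would invoke this directly from \cite[Corollary 3.2.7, Theorem 3.2.11, Theorem 3.3.5]{Led-Green}. Positivity of $\ell(S)$ is not automatic: it famously fails at $n=p+1$ for $S\cong \C_p\wr \C_p$, so the hypotheses cannot be weakened in a uniform way. For $n>p+1$ the classical argument proceeds by Hall--Witt commutator identities combined with an induction along the lower central series, exploiting the extra room to iterate, while in the range $5\leq n\leq p+1$ with $n$ odd a parity argument on the so-called exceptional two-step centralizer rules out the $\C_p\wr \C_p$-like configuration. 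Since these arguments are lengthy and orthogonal to the fusion-theoretic focus of the present paper, I would quote the result from \cite{Led-Green} rather than reproduce it.
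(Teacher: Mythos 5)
Your proof is essentially the paper's: the paper offers no argument at all for this statement and simply cites \cite[Corollary 3.2.7, Theorem 3.2.11, Theorem 3.3.5]{Led-Green}, and your reduction of cases (2) and (3) to positivity of the degree of commutativity is exactly the equivalence the paper records separately as Theorem \ref{positive.deg} (also quoted from \cite{Led-Green}); your direct treatment of $n=4$ via $\Z_2(S)=\gamma_2(S)$ and $\gamma_4(S)=1$ is correct. One factual slip in your side remark: $\C_p\wr\C_p$ is \emph{not} a counterexample at $n=p+1$ --- there $\gamma_1(S)$ is the abelian base group, the degree of commutativity is $n-3>0$ by convention, and a direct check shows $\C_S(\Z_2(S))$ is indeed the base group, so $\gamma_1(S)=\C_S(\Z_2(S))$ holds; the genuine obstructions for even $n$ in the range $6\leq n\leq p+1$ are groups such as Sylow $p$-subgroups of $\G_2(p)$ (order $p^{p-1}$ for $p=7$), where $\gamma_1(S)$ is extraspecial. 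This does not affect the validity of your argument, which only uses the implication $\ell\geq 1\Rightarrow[\gamma_1(S),\gamma_{n-2}(S)]\leq\gamma_n(S)=1$.
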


\begin{definition}
Suppose $|S|=p^n\geq p^4$.
The \emph{degree of commutativity} of $S$ is the largest integer $l$ such that $[\gamma_i(S),\gamma_j(S)] \leq \gamma_{i+j+l}(S)$ for every $1\leq i,j \leq n$ if $\gamma_1(S)$ is not abelian,  and it is equal to $n-3$ if $\gamma_1(S)$ is abelian.
\end{definition}

\begin{theorem}\label{positive.deg}\cite[Theorem 3.2.6]{Led-Green}
Let $l$ be the degree of commutativity of $S$. Then $l\geq 0$ and
\[ l \geq 1 \Leftrightarrow \gamma_1(S)=\C_S(\Z_2(S)).\]
\end{theorem}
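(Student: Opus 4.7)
Since this result is quoted verbatim from \cite[Theorem 3.2.6]{Led-Green}, my task is to sketch the ideas rather than give a complete proof. The plan is to handle the three assertions in order: $l \geq 0$; the easy implication $l \geq 1 \Rightarrow \gamma_1(S) = \C_S(\Z_2(S))$; and the harder reverse implication.

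For $l \geq 0$, the three-subgroup lemma together with induction on $i+j$ gives $[\gamma_i(S), \gamma_j(S)] \leq \gamma_{i+j}(S)$ for $i, j \geq 2$, while the case $\min(i,j) = 1$ is immediate from $\gamma_1(S) \leq S$. For the easy implication, specialising the definition of $l$ at $(i,j) = (1, n-2)$ gives $[\gamma_1(S), \Z_2(S)] \leq \gamma_n(S) = 1$, so $\gamma_1(S) \leq \C_S(\Z_2(S))$; the reverse inclusion follows because $\C_S(\Z_2(S))$ is a proper subgroup of $S$ (otherwise $\Z_2(S) \leq \Z(S)$, contradicting $|\Z_2(S)/\Z(S)| = p$ for $n \geq 4$) and $\gamma_1(S)$ is a maximal subgroup of $S$ containing $\Z_2(S)$.

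For the hard direction, the natural approach is to fix $s_1 \in \gamma_1(S) \setminus \gamma_2(S)$ and $s \in S \setminus \gamma_1(S)$, set $s_{i+1} = [s_i, s]$, and pass to the associated graded Lie algebra $L = \bigoplus_{i\geq 1} \gamma_i(S)/\gamma_{i+1}(S)$. Writing $[e_i, e_j] = c_{i,j}\, e_{i+j}$ in $L$, with $e_i$ the image of $s_i$ and $e_0$ of $s$, the Jacobi identity yields the recursion $c_{i, j+1} = c_{i,j} - c_{i+1, j}$ valid for $i + j \leq n - 2$. Iterating expresses every $c_{m,k}$ as an alternating binomial sum of the values $c_{1, k}, c_{1, k+1}, \ldots, c_{1, k+m-1}$; the hypothesis $\gamma_1(S) = \C_S(\Z_2(S))$ translates to $c_{1, n-2} = 0$, and $c_{1, k} = 0$ is automatic for $k \geq n - 1$.

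The main obstacle is that the Jacobi recursion by itself does not force every $c_{1,k}$ to vanish from $c_{1, n-2} = 0$, so one must exploit the additional relations coming from the $p$-power structure of $S$: the map $x \mapsto x^p$ on $S$ induces the restricted bracket-power $e \mapsto e^{[p]}$ on $L$, and the identities it satisfies in a maximal class $p$-group (together with the antisymmetry $c_{i,i} = 0$) yield enough extra constraints to propagate the vanishing $c_{1, n-2} = 0$ downwards by induction on $k$, forcing $c_{1, k} = 0$ for every $k \geq 1$ and hence $c_{i,j} = 0$ for all $i, j \geq 1$. Extracting the required $p$-power identities and running this induction is the technical core of Leedham-Green's proof.
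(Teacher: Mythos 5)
Your treatment of $l \geq 0$ and of the implication $l \geq 1 \Rightarrow \gamma_1(S) = \C_S(\Z_2(S))$ is correct (for the latter, the point is that $\gamma_1(S) \leq \C_S(\Z_2(S)) < S$ with $\gamma_1(S)$ maximal, forcing equality; the case where $\gamma_1(S)$ is abelian is immediate from the convention $l=n-3$). The paper itself offers no proof --- the statement is quoted from \cite[Theorem 3.2.6]{Led-Green} --- so the comparison is with the standard argument that the citation points to.

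The hard direction as you sketch it has a genuine gap, and the mechanism you propose to close it is the wrong one. Your Jacobi recursion $c_{i,j+1}=c_{i,j}-c_{i+1,j}$, iterated to $c_{m,k}=\sum_{r=0}^{m-1}(-1)^r\binom{m-1}{r}c_{1,k+r}$, reduces everything to the vanishing of the $c_{1,k}$; but you feed in only $c_{1,n-2}=0$ and $c_{1,k}=0$ for $k\geq n-1$, and correctly observe that this is insufficient. The missing input is not the $p$-power structure. It is, first, that $c_{1,2}=0$ holds by the very \emph{definition} of $\gamma_1(S)$ as $\C_S(\gamma_2(S)/\gamma_4(S))$ --- a relation your setup never uses --- and, second, that $c_{1,i}=0$ for all $2\leq i\leq n-3$ by the coincidence of the two-step centralizers, i.e.\ the lemma \cite[Hauptsatz III.14.6]{Hup} stated in the paper immediately before this theorem, which gives $\gamma_1(S)=\C_S(\gamma_i(S)/\gamma_{i+2}(S))$ in exactly that range. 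Together with the hypothesis $c_{1,n-2}=0$ this makes every $c_{1,k}$ vanish, and your own binomial formula (equivalently, a three-subgroup-lemma induction carried out at the level of subgroups) then yields $[\gamma_i(S),\gamma_j(S)]\leq\gamma_{i+j+1}(S)$ for all $i,j$ with no further work. By contrast, an argument resting on the restricted $p$-power map cannot succeed: the theorem applies to maximal class groups of exponent $p$ (which abound in this paper), and for those the induced $p$-power operation on the associated graded Lie ring is identically zero, so it carries no information. The $p$-th power structure enters Leedham-Green and McKay's development only later, for the quantitative bounds such as Theorem \ref{bound.deg.comm}.
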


The following theorem gives an upper bound for the order of $S$ with respect to the degree of commutativity of $S$.

\begin{theorem}\label{bound.deg.comm}\cite[Theorem 3.4.11, Corollary 3.4.12]{Led-Green}
Suppose that $p\geq 5$ and $|S|=p^n$ and let $l$ be the degree of commutativity of $S$. Then
\begin{enumerate}
\item $n \leq 2l + 2p -4$;
\item if $i= \lceil (2p-5)/3 \rceil$ then the group $\gamma_i(S)$ has nilpotency class at most $2$;
\item the group $[\gamma_1(S), \gamma_1(S), \gamma_1(S)]$ has order at most $p^{2p-8}$.
\end{enumerate}
\end{theorem}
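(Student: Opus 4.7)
The plan is to pass to the associated graded Lie ring $L(S)=\bigoplus_{i\geq 1}\gamma_i(S)/\gamma_{i+1}(S)$, equipped with the bracket induced from group commutators. Since $S$ has maximal class of order $p^n$, every graded component $L_i=\gamma_i(S)/\gamma_{i+1}(S)$ for $2\leq i\leq n-1$ has order $p$, while $L_1$ has order $p^2$, and the Hall--Witt identity translates into the Jacobi identity on $L(S)$. The degree of commutativity $l$ says exactly that $[L_i,L_j]\subseteq L_{i+j+l}$ for all $i,j\geq 1$, so the bracket is encoded by a sequence of structure constants $a(i,j)\in \GF(p)$ defined by $[s_i,s_j]\equiv a(i,j)\,s_{i+j+l}$ modulo $\gamma_{i+j+l+1}(S)$, where $s_1\in\gamma_1(S)\setminus\gamma_2(S)$ is fixed and $s_i=[s_{i-1},s]$ for a fixed $s\in S\setminus\gamma_1(S)$.

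For part (1), I would extract a two-variable recurrence for the $a(i,j)$ from Jacobi applied to the triples $(s_i,s_j,s)$, of the shape $a(i+1,j)-a(i,j+1)=a(i,j)$. This solves in binomial form and reduces the whole bracket to a single parameter, reproducing the standard Blackburn computation. The crucial input is that $s^p\in\gamma_1(S)$ because $[S\colon\gamma_1(S)]=p$: feeding this into a further Hall--Witt relation expresses the $p$-th power map on $\gamma_1(S)$ as a bracket polynomial in the $s_i$'s and forces a divisibility-by-$p$ relation on the $a(i,j)$ at index $i+j$ close to $p+l$. Tracking this shows that $a(i,j)$ vanishes for all $i+j$ sufficiently large, hence $\gamma_k(S)=1$ for $k$ bigger than the predicted threshold, which yields $n\leq 2l+2p-4$.

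Parts (2) and (3) should then follow from (1) by bookkeeping with the degree of commutativity. By definition $[\gamma_i(S),\gamma_i(S)]\leq\gamma_{2i+l}(S)$, and iterating gives $[\gamma_i(S),\gamma_i(S),\gamma_i(S)]\leq\gamma_{3i+2l}(S)$, which is trivial whenever $3i+2l\geq n$. Using the bound from (1), triviality holds as soon as $3i\geq 2p-4$, giving the threshold $i=\lceil(2p-5)/3\rceil$ for $p\geq 5$ (the numerators differ by $1$ but have the same ceiling since $3\nmid 2p-5$). Specialising to $i=1$, the same chain of inclusions gives $|[\gamma_1(S),\gamma_1(S),\gamma_1(S)]|\leq p^{n-3-2l}\leq p^{2p-7}$, and the sharper exponent $2p-8$ is obtained by observing that extremality in $n=2l+2p-4$ would already force the binomial coefficient governing the deepest nonzero bracket to vanish modulo $p$, pushing $[\gamma_1(S),\gamma_1(S),\gamma_1(S)]$ one step further down the lower central series and saving one factor of $p$.

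The main obstacle, as in all work with $p$-groups of maximal class, is the arithmetic of the Jacobi recursion: the structure constants $a(i,j)$ have to be tracked exactly modulo $p$, and the assumption $p\geq 5$ enters precisely because the relevant binomial coefficients only start to force vanishing for primes that are large enough (for $p=3$ Blackburn's examples show that the degree of commutativity can stay at $0$ with $n$ unbounded, so the statement genuinely fails there). Since Theorem \ref{bound.deg.comm} is cited from \cite[Theorem 3.4.11, Corollary 3.4.12]{Led-Green}, the detailed binomial arithmetic would be taken from that reference, but the route above is the one I would follow to reconstruct it.
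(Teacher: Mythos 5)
This statement is not proved in the paper at all: it is quoted verbatim from \cite[Theorem 3.4.11, Corollary 3.4.12]{Led-Green}, so the only thing to compare your sketch against is that reference. On its own merits your proposal has genuine gaps. The most concrete one is in your derivation of part (2) from part (1). Your iteration gives $[\gamma_i(S),\gamma_i(S),\gamma_i(S)]\leq\gamma_{3i+2l}(S)$, which is trivial once $3i+2l\geq n$, and via (1) this needs $3i\geq 2p-4$. Your parenthetical claim that $\lceil(2p-5)/3\rceil=\lceil(2p-4)/3\rceil$ ``since $3\nmid 2p-5$'' is false whenever $p\equiv 1\pmod 3$: for $p=7$ one has $2p-5=9$, so $\lceil(2p-5)/3\rceil=3$, while your argument requires $i\geq\lceil 10/3\rceil=4$. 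So for $p=7,13,19,\dots$ the naive bookkeeping does not reach the stated threshold, and part (2) genuinely needs a finer argument than two applications of the degree-of-commutativity inequality. The same problem recurs in part (3): your chain of inclusions yields $|[\gamma_1(S),\gamma_1(S),\gamma_1(S)]|\leq p^{\,n-3-2l}\leq p^{\,2p-7}$, one worse than the stated $p^{\,2p-8}$, and the proposed rescue (``extremality forces the binomial coefficient to vanish, saving one factor of $p$'') is an unsubstantiated hope rather than an argument.

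The deeper issue is part (1), which is the hard core of the theorem and effectively a lower bound $2l\geq n-2p+4$ on the degree of commutativity. Your description of the mechanism --- Jacobi gives a recurrence $a(i+1,j)-a(i,j+1)=a(i,j)$ on structure constants, and the relation $s^p\in\gamma_1(S)$ ``forces a divisibility-by-$p$ relation ... Tracking this shows that $a(i,j)$ vanishes for all $i+j$ sufficiently large'' --- names the right ingredients but skips the entire content: which Hall--Witt/collection identity is used, at which weight the congruence appears, and why it propagates to kill all high-weight brackets. This is a multi-page induction in Leedham-Green--McKay, not a corollary of the recurrence shape. Since you explicitly defer the binomial arithmetic to the reference, the honest summary is that your proposal is a plausible roadmap to the literature proof rather than a proof, and the two places where you do commit to explicit arithmetic (the ceiling identity in (2) and the exponent in (3)) are both wrong as written.
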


The power structure of the members of the lower central series of $S$ is also known.

\begin{theorem}\label{power}\cite[Proposition 3.3.2, Corollary 3.3.6]{Led-Green}
Suppose $|S|=p^n \geq p^4$. Then
\begin{itemize}
\item if $n\leq p+1$ then $\gamma_2(S)$  has exponent $p$ and $S^p \leq \Z(S)$;
\item if $n> p+1$ then for every $1 \leq  i \leq n-(p-1)$ we have $\Omega_1(\gamma_i(S))=\Z_{p-1}(S)$ and $\gamma_i(S)^p = \gamma_{i+p-1}(S)$.
\end{itemize}
\end{theorem}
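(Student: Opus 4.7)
The plan is built around two classical tools: the Hall--Petrescu collection formula
\[ (xy)^p = x^p y^p \prod_{i=2}^{p} c_i^{\binom{p}{i}}, \qquad c_i \in \gamma_i(\langle x,y\rangle)\leq \gamma_i(S), \]
together with the divisibility $p \mid \binom{p}{i}$ for $1 \leq i \leq p-1$, and the fact (Hall) that a $p$-group of nilpotency class strictly less than $p$ is regular. Throughout I would also use the standing identities $\gamma_i(S) = \mathrm{Z}_{n-i}(S)$ and $[\gamma_i(S):\gamma_{i+1}(S)]=p$ for $2 \leq i \leq n-1$.

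For the first case $n \leq p+1$: since $S$ has class $n-1 \leq p$, the subgroup $\gamma_2(S)$ has class at most $n-3 \leq p-2$, so $\gamma_2(S)$ is regular. Choosing $x_i \in \gamma_i(S) \setminus \gamma_{i+1}(S)$ for $2 \leq i \leq n-1$, I would show by downward induction on $i$ that $x_i^p \in \gamma_{i+p-1}(S) \leq \gamma_n(S) = 1$, using Hall--Petrescu inside the regular group $\gamma_2(S)$; this forces $\exp(\gamma_2(S)) = p$. For $S^p \leq \mathrm{Z}(S)$: given $s, t \in S$, expand $[s^p,t]$ via the commutator collection formula to land in $\gamma_2(S)^p \cdot \gamma_p(S)$; the first factor is trivial by what we just proved, and $\gamma_p(S) \leq \gamma_{n-1}(S) = \mathrm{Z}(S)$, so in fact $[s^p,t] \in \mathrm{Z}(S) \cap [\gamma_2(S),S]$; a second application (or a collection formula argument in $\langle s,t\rangle$, whose class is $\leq p-1$) pushes this further to $\gamma_n(S) = 1$, giving $s^p \in \mathrm{Z}(S)$.

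For the second case $n > p+1$, regularity is no longer available, and the essential input is the positive degree of commutativity (Theorems \ref{equal2step}(2) and \ref{positive.deg} give $\gamma_1(S) = \mathrm{C}_S(\mathrm{Z}_2(S))$ and hence $l \geq 1$). First, I would prove the containment $\gamma_i(S)^p \leq \gamma_{i+p-1}(S)$ for $1 \leq i \leq n-(p-1)$: Hall--Petrescu gives $(xy)^p \equiv x^p y^p \pmod{\gamma_2(\langle x,y\rangle)^p \cdot \gamma_p(\langle x,y\rangle)}$ for $x,y \in \gamma_i(S)$; the term $\gamma_p(\langle x,y\rangle)$ lies in $\gamma_{pi + (p-1)l - \text{correction}}(S)$, which one checks is contained in $\gamma_{i+p-1}(S)$ since $(p-1)(i-1)\geq 0$, while the term $\gamma_2(\langle x,y\rangle)^p$ is handled by an induction on $n-i$ combined with $l \geq 1$. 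Next, for the reverse containment $\gamma_{i+p-1}(S) \leq \gamma_i(S)^p$: fixing a uniform generator $x \in \gamma_1(S) \setminus \gamma_2(S)$ and commutator generators $y_j$ of each $\gamma_j(S)/\gamma_{j+1}(S)$, one shows directly that the $p$-th power of a well-chosen element of $\gamma_i(S)$ generates $\gamma_{i+p-1}(S)/\gamma_{i+p}(S)$, using $l \geq 1$ to discard cross-terms of high weight; comparing orders forces equality. Finally, the characterization $\Omega_1(\gamma_i(S)) = \mathrm{Z}_{p-1}(S) = \gamma_{n-p+1}(S)$ is immediate from the power identity: the containment $\supseteq$ follows from $\gamma_{n-p+1}(S)^p \leq \gamma_n(S) = 1$, and $\subseteq$ holds because the map $x \mapsto x^p$ induces a surjection $\gamma_i(S)/\gamma_{i+1}(S) \twoheadrightarrow \gamma_{i+p-1}(S)/\gamma_{i+p}(S)$ that is a bijection of cyclic groups of order $p$ in the relevant range.

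The main obstacle will be the second case: the Hall--Petrescu bookkeeping needs the exact commutator weights, and both directions of $\gamma_i(S)^p = \gamma_{i+p-1}(S)$ rely in a subtle way on the positive degree of commutativity to bound the extraneous commutators of weight $\geq p$ that appear when regularity fails. Producing the explicit element of $\gamma_i(S)$ with $p$-th power outside $\gamma_{i+p}(S)$ is the delicate computational step; everything else is organized around it.
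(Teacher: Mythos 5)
First, a point of comparison that matters here: the paper does not prove this statement at all. It is quoted verbatim, with attribution, from Leedham-Green and McKay (\cite[Proposition 3.3.2, Corollary 3.3.6]{Led-Green}), so there is no in-paper argument to measure your proposal against. Your plan does follow the route taken in that reference and in \cite[III.14]{Hup} --- regularity plus Hall--Petrescu for $n\leq p+1$, positive degree of commutativity for $n>p+1$ --- so the toolkit is the right one.

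As a proof, however, the proposal has gaps at exactly the points that carry the content. In the first bullet, the downward induction ``$x_i^p\in\gamma_{i+p-1}(S)$'' does not close as described: the inductive hypothesis says that $\gamma_{i+1}(S)$ has exponent $p$, and regularity of $\gamma_i(S)$ then only gives $[\gamma_i(S):\Omega_1(\gamma_i(S))]=|\{g^p: g\in\gamma_i(S)\}|\leq p$; the residual possibility that $\gamma_i(S)^p=\Z(S)\neq 1$ is not excluded by any Hall--Petrescu computation inside $\gamma_2(S)$ (the identities $[x^p,s]=1$ and $[x,s]^p=1$ are perfectly consistent with $x^p\neq 1$), and ruling it out is where the classical proof actually works, via an induction on $|S|$ through $S/\Z(S)$ combined with the action of an element outside $\C_S(\Z_2(S))$. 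By contrast your derivation of $S^p\leq\Z(S)$ is essentially sound and needs no second pass: for $K=\langle s,[s,t]\rangle$ one has $\gamma_j(K)\leq\gamma_{j+1}(S)$ for $j\geq 2$, so the correction terms land in $\gamma_2(S)^p\cdot\gamma_{p+1}(S)=1$ once the first claim is known. In the second bullet, the two steps you yourself flag --- the reverse inclusion $\gamma_{i+p-1}(S)\leq\gamma_i(S)^p$ (``the delicate computational step'') and the bookkeeping behind $\gamma_2(\langle x,y\rangle)^p\leq\gamma_{i+p-1}(S)$, which needs either the inductive hypothesis at an index that may exceed $n-(p-1)$ or a separate regularity argument for the small terms --- are asserted rather than carried out; the $\Omega_1$ statement also silently uses that $\Z_{p-1}(S)=\gamma_{n-p+1}(S)$ has exponent $p$, which is itself an instance of the power identity being proved. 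In short, this is a correct plan of attack whose two hardest steps are missing, and the inductive scheme proposed for the first of them would not succeed in the form stated.
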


Note that the previous theorem also implies that the group $\Z_i(S)$ has exponent $p$ for every $i\leq $ \rm{min}$\{ p - 1, n - 2\}$. Also, the group $\Omega_1(\gamma_1(S))$ has exponent $p$ (since it is generated by elements of order $p$ and has order at most $p^{p-1}$).

\begin{cor}\label{omegas}
Suppose $|S|=p^n > p^{p+1}$. Let $m\in \mathbb{N}$ be such that $\Omega_m(\gamma_1(S))< \Omega_{m+1}(\gamma_1(S))=\gamma_1(S)$. Then
\[ [\Omega_j(\gamma_1(S)) \colon \Omega_{j-1}(\gamma_1(S))] = p^{p-1} \text{ for every } 2 \leq j\leq m \text{ and } [\gamma_1(S) \colon \Omega_m(\gamma_1(S))] \leq p^{p-1}.\] Also, for every $j\leq m+1$ the quotient $\Omega_{j}(\gamma_1(S))/\Omega_{j-1}(\gamma_1(S))$ has exponent $p$.
\end{cor}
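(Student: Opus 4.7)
Set $S_1 = \gamma_1(S)$. Part (3) is immediate from the definition of $\Omega_j$: if $x \in \Omega_j(S_1)$ then $(x^p)^{p^{j-1}} = x^{p^j} = 1$, so $x^p \in \Omega_{j-1}(S_1)$, whence $\Omega_j(S_1)^p \leq \Omega_{j-1}(S_1)$ and the quotient is of exponent $p$. For parts (1) and (2), my plan is to prove by induction on $j$ that
\[ \Omega_j(S_1) = \gamma_{n-j(p-1)}(S) \quad \text{for every } 1 \leq j \leq m,\]
from which (1) follows by counting, and to deduce (2) separately from the $j = m+1$ step.

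The base case $j=1$ is exactly Theorem~\ref{power}, which gives $\Omega_1(S_1) = \Z_{p-1}(S) = \gamma_{n-p+1}(S)$, of order $p^{p-1}$. For the inductive step, assuming $\Omega_{j-1}(S_1) = \gamma_{n-(j-1)(p-1)}(S)$, I will consider the quotient $\ov{S} = S/\Omega_{j-1}(S_1)$, a $p$-group of maximal nilpotency class of order $p^{n'}$ with $n' = n-(j-1)(p-1)$. The identification
\[ \Omega_j(S_1)/\Omega_{j-1}(S_1) = \Omega_1(\gamma_1(\ov{S}))\]
reduces the step to computing $\Omega_1(\gamma_1(\ov{S}))$. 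Since $j \leq m$ forces $m(p-1) \leq n - 2$, we get $n' \geq p + 1$. If $n' > p + 1$, Theorem~\ref{power} applied to $\ov{S}$ gives $\Omega_1(\gamma_1(\ov{S})) = \Z_{p-1}(\ov{S}) = \gamma_{n'-p+1}(\ov{S})$ of order $p^{p-1}$, which lifts to $\gamma_{n-j(p-1)}(S)$. If $n' = p+1$, then $\gamma_1(\ov{S})$ has class at most $p-1$ and is therefore regular; Theorem~\ref{power} also yields $\gamma_1(\ov{S})^p \leq \ov{S}^p \leq \Z(\ov{S})$ of order at most $p$, so by regularity $|\Omega_1(\gamma_1(\ov{S}))| \geq p^{n'-2} = p^{p-1}$, and combining this with $\Omega_1(\gamma_1(\ov{S})) < \gamma_1(\ov{S})$ (which follows from $j \leq m$) together with the classification of normal subgroups of the maximal-class group $\ov{S}$ forces $\Omega_1(\gamma_1(\ov{S})) = \gamma_2(\ov{S})$, again of order $p^{p-1}$.

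Finally, for part (2), taking $j = m+1$: the defining property of $m$ gives $\Omega_1(\gamma_1(\ov{S})) = \gamma_1(\ov{S})$, so $[\gamma_1(S) \colon \Omega_m(S_1)] = |\gamma_1(\ov{S})| = p^{n'-1}$ with $n' = n - m(p-1)$. The established bounds $|\Omega_i(S_1)/\Omega_{i-1}(S_1)| \leq p^{p-1}$ combined with $|\gamma_1(S)| = p^{n-1}$ force $(m+1)(p-1) \geq n-1$, hence $n' \leq p$ and $[\gamma_1(S) \colon \Omega_m(S_1)] \leq p^{p-1}$. I expect the main obstacle to be the edge case $n' = p+1$ in the inductive step, where Theorem~\ref{power} does not directly identify $\Omega_1(\gamma_1(\ov{S}))$; my resolution exploits the regularity of $\gamma_1(\ov{S})$ together with the classification of normal subgroups of the maximal-class group $\ov{S}$ to recover the correct order $p^{p-1}$.
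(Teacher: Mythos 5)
Your overall strategy---induction on $j$, passing to the maximal-class quotient $S/\Omega_{j-1}(\gamma_1(S))$ and applying Theorem~\ref{power} there---is the same as the paper's, and your treatment of the boundary case $n'=p+1$ via regularity of $\gamma_1(\ov{S})$ and Hall's identity $|\Omega_1(G)|\,|G^p|=|G|$ is a legitimate variant of the paper's case $[S:\Omega_j(\gamma_1(S))]\le p^{p+1}$. However, there are two genuine gaps. First, part (3) is not ``immediate from the definition'': $\Omega_j(\gamma_1(S))$ is the subgroup \emph{generated} by the elements of order dividing $p^j$, not the set of such elements, so you cannot assert $x^{p^j}=1$ for every $x\in\Omega_j(\gamma_1(S))$. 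What is immediate is only that $\Omega_j(\gamma_1(S))/\Omega_{j-1}(\gamma_1(S))$ is generated by elements of order dividing $p$; to conclude that it has exponent $p$ one needs the order bound $\le p^{p}$ coming from parts (1)--(2), which makes the quotient regular (this is exactly how the paper argues). This matters elsewhere too: the reverse inclusion in your identification $\Omega_j(\gamma_1(S))/\Omega_{j-1}(\gamma_1(S))=\Omega_1(\gamma_1(\ov{S}))$ requires knowing that $\Omega_{j-1}(\gamma_1(S))$ has exponent dividing $p^{j-1}$, so the exponent statement must be carried along inside the induction rather than dispatched at the outset.

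Second, your derivation of part (2) is circular. You invoke ``the established bounds $|\Omega_i(\gamma_1(S))/\Omega_{i-1}(\gamma_1(S))|\le p^{p-1}$'' up to $i=m+1$ to force $(m+1)(p-1)\ge n-1$, but the bound for $i=m+1$ \emph{is} the assertion $[\gamma_1(S):\Omega_m(\gamma_1(S))]\le p^{p-1}$ that you are trying to prove; the induction only yields $[\gamma_1(S):\Omega_m(\gamma_1(S))]=p^{\,n-1-m(p-1)}$ with no a priori upper bound. The missing ingredient is the paper's appeal to the power structure: if $[\gamma_1(S):\Omega_m(\gamma_1(S))]=p^{p}$, then $\gamma_1(S)/\Omega_m(\gamma_1(S))$ is regular and generated by elements of order $p$, hence of exponent $p$, forcing $\gamma_1(S)^p=\gamma_p(S)\le\Omega_m(\gamma_1(S))=\gamma_{p+1}(S)$, which contradicts Theorem~\ref{power}; a larger index is excluded by the same argument you use to get $n'\ge p+1$. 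With these two repairs your argument goes through and essentially reproduces the paper's proof.
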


Since the members of the lower central series of $S$ are the only normal subgroups of $S$ contained in $\gamma_1(S)$, we get $\Omega_j(\gamma_1(S))=\gamma_{n-j(p-1)}(S)=\Z_{j(p-1)}(S)$ for every $j\leq m$.

\begin{proof}
We prove the statement by induction on $j\leq m$.
If $j=1$ then $\Omega_1(\gamma_1(S))=\Z_{p-1}(\gamma_1(S))$ has order $p^{p-1}$ by Theorem \ref{power}.
Suppose $[\Omega_j(\gamma_1(S)) \colon \Omega_{j-1}(\gamma_1(S))] = p^{p-1}$ and $\Omega_j(\gamma_1(S))<\gamma_1(S)$.

Suppose $[S \colon \Omega_j(\gamma_1(S))] \geq p^{p+2}$. Then we can apply Theorem \ref{power} to the quotient $S/\Omega_j(\gamma_1(S))$, that has maximal nilpotency class, and we conclude that $[\Omega_{j+1}(\gamma_1(S)) \colon \Omega_{j}(\gamma_1(S))]=p^{p-1}$. Since the quotient $\Omega_{j+1}(\gamma_1(S))/\Omega_{j}(\gamma_1(S))$ is generated by elements of order $p$ and has order $p^{p-1}$, we also deduce that it has exponent $p$.

Suppose $[S \colon \Omega_j(\gamma_1(S))] \leq p^{p+1}$.
Thus $\Omega_{j+1}(\gamma_1(S))/\Omega_j(\gamma_1(S))$ has order at most $p^p$ and it is generated by elements of order $p$. So the quotient group $\Omega_{j+1}(\gamma_1(S))/\Omega_j(\gamma_1(S))$ has exponent $p$. By Theorem \ref{power} we have $\gamma_2(S)\leq \Omega_{j+1}(\gamma_1(S))$ and $\gamma_1(S)^p =\gamma_{p}(S)$.
If $[\gamma_1(S) \colon \Omega_j(\gamma_1(S))]\leq p^{p-1}$ then $\gamma_{p}(S) \leq \Omega_j(\gamma_1(S))$ and so $j=m$.
Assume $[\gamma_1(S) \colon \Omega_j(\gamma_1(S))]=p^p$. If $\Omega_{j+1}(\gamma_1(S)) = \gamma_1(S)$ then $\gamma_1(S)/\Omega_j(\gamma_1(S))$ has exponent $p$, contradicting the fact that $\gamma_1(S)^p =\gamma_{p}(S) \nleq \Omega_j(\gamma_1(S))=\gamma_{p+1}(S)$. Therefore $\gamma_2(S)=\Omega_{j+1}(\gamma_1(S))$, $[\Omega_{j+1}(\gamma_1(S)) \colon \Omega_j(\gamma_1(S))] =p^{p-1}$ and $[\gamma_1(S) \colon \Omega_{j+1}(\gamma_1(S))] = [\gamma_1(S)\colon \gamma_2(S)] =p \leq p^{p-1}$.
\end{proof}

\begin{lemma}\label{omega.series}
Suppose that $|S|=p^n > p^{p+1}$ and let $l$ be the degree of commutativity of $S$. Fix $i\geq 1$ and consider the action of $\gamma_i(S)$ on $\gamma_1(S)$ by conjugation. If $l\geq (p-1) - i$ then $\gamma_i(S)$ stabilizes the series
\[ 1=\Omega_0(\gamma_1(S)) < \Omega_1(\gamma_1(S)) < \Omega_2(\gamma_1(S)) < \dots \Omega_m(\gamma_1(S)) < \Omega_{m+1}(\gamma_1(S))=\gamma_1(S).\]
\end{lemma}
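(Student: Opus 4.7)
The plan is to verify the two conditions of stabilization separately. Normalization is automatic: each $\Omega_j(\gamma_1(S))$ is characteristic in $\gamma_1(S)$, and $\gamma_1(S)$ is characteristic in $S$, so every $\Omega_j(\gamma_1(S))$ is normal in $S$ and hence normalized by $\gamma_i(S)$. It therefore remains to show
\[ [\gamma_i(S), \Omega_j(\gamma_1(S))] \leq \Omega_{j-1}(\gamma_1(S)) \quad\text{for every } 1 \leq j \leq m+1. \]
The key tool is the identification recorded right after Corollary \ref{omegas}, namely that $\Omega_j(\gamma_1(S)) = \gamma_{n-j(p-1)}(S)$ for $1 \leq j \leq m$, while $\Omega_{m+1}(\gamma_1(S)) = \gamma_1(S)$; combined with the degree-of-commutativity estimate $[\gamma_a(S), \gamma_b(S)] \leq \gamma_{a+b+l}(S)$, this reduces the problem to index chasing.

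For the interior range $1 \leq j \leq m$, degree of commutativity applied with $b = n - j(p-1)$ gives
\[ [\gamma_i(S), \Omega_j(\gamma_1(S))] \leq \gamma_{i + n - j(p-1) + l}(S). \]
The hypothesis $l \geq (p-1) - i$ rearranges to $i + l \geq p-1$, so the index on the right satisfies $i + n - j(p-1) + l \geq n - (j-1)(p-1)$, placing the commutator inside $\gamma_{n-(j-1)(p-1)}(S)$. For $j \geq 2$ this target is exactly $\Omega_{j-1}(\gamma_1(S))$; for $j = 1$ it is $\gamma_n(S) = 1 = \Omega_0(\gamma_1(S))$. Thus all interior cases fall out simultaneously.

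The case $j = m+1$ is the main point to watch and the step I expect to be the main obstacle. Here $\Omega_{m+1}(\gamma_1(S)) = \gamma_1(S)$ is not in general of the form $\gamma_{n-(m+1)(p-1)}(S)$, because the top gap $\gamma_1(S)/\Omega_m(\gamma_1(S))$ may have order strictly smaller than $p^{p-1}$. To handle it I plan to combine $[\gamma_i(S), \gamma_1(S)] \leq \gamma_{i+1+l}(S)$ with the bound $[\gamma_1(S) \colon \Omega_m(\gamma_1(S))] \leq p^{p-1}$ from Corollary \ref{omegas}, which rearranges to $n - m(p-1) \leq p$. Since the hypothesis forces $i + 1 + l \geq p \geq n - m(p-1)$, we obtain $\gamma_{i+1+l}(S) \leq \gamma_{n-m(p-1)}(S) = \Omega_m(\gamma_1(S))$, closing the argument. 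The one subtle point, and the reason for singling this endpoint out, is recognising that the extra ingredient needed is precisely the top-gap bound extracted from Corollary \ref{omegas}; once that is in hand the remaining work is a purely numerical check on indices.
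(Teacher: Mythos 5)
Your proof is correct and follows essentially the same route as the paper: both rest on identifying each $\Omega_j(\gamma_1(S))$ with a term $\gamma_k(S)$ of the lower central series, invoking the index bound $[\Omega_j(\gamma_1(S))\colon\Omega_{j-1}(\gamma_1(S))]\leq p^{p-1}$ from Corollary \ref{omegas}, and combining the degree-of-commutativity inequality with $i+l\geq p-1$. The only cosmetic difference is that the paper handles the top layer $j=m+1$ uniformly with the others (writing $\gamma_1(S)=\gamma_k(S)$ with $k=1$ and using the same index bound), whereas you single it out; the ingredients are identical.
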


\begin{proof}
By Corollary \ref{omegas} we have $[\Omega_j(\gamma_1(S)) \colon \Omega_{j-1}(\gamma_1(S))]\leq p^{p-1}$ for every $1\leq j \leq m+1$. In other words, if $\Omega_j(\gamma_1(S))=\gamma_{k}(S)$ for some $k\geq 1$, then $\gamma_{k+(p-1)}(S) \leq \Omega_{j-1}(\gamma_1(S))$.
Hence $[\gamma_i(S), \Omega_j(\gamma_1(S))] = [\gamma_i(S), \gamma_{k}(S)] \leq \gamma_{i+k+ l}(S) \leq \gamma_{k + (p-1)}(S) \leq \Omega_{j-1}(\gamma_1(S))$. Thus $\gamma_i(S)$ centralizes the quotient $\Omega_j(\gamma_1(S))/\Omega_{j-1}(\gamma_1(S))$ for every $1 \leq j \leq m+1$.
\end{proof}

We now consider elements and  subgroups of $S$ not contained in $\gamma_1(S)$.

\begin{lemma}\label{exponent}
Suppose $|S|=p^n$ and $x\in S$ is not contained in $\gamma_1(S)$. Then
\begin{enumerate}
\item $x^p \in \Z_2(S)$ and if $x\notin \C_S(\Z_2(S))$ then $x^p \in \Z(S)$;
\item for every $i\geq 1$, if $s_i\in \gamma_i(S) \backslash \gamma_{i+1}(S)$ and either $x\notin \C_S(\Z_2(S))$ or $\Z(S)\leq \langle x, s_i \rangle$, then $\gamma_i(S) \leq \langle x, s_i \rangle $.
\end{enumerate}
\end{lemma}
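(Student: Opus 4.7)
The plan is to prove part (2) first via a uniserial-action argument on the lower central series of $S$, and then derive part (1) by combining the elementary identification $S/\gamma_2(S) \cong \C_p \times \C_p$ with Theorem~\ref{power} and a short centralizer argument.

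For (2), I will first establish the following uniserial property: for every $x \in S \setminus \gamma_1(S)$ and every $1 \leq j \leq n-3$, the commutator map
\[ [-,x] \colon \gamma_j(S)/\gamma_{j+1}(S) \to \gamma_{j+1}(S)/\gamma_{j+2}(S) \]
is an isomorphism of cyclic groups of order $p$. Well-definedness and the homomorphism property follow from $[\gamma_1(S),\gamma_1(S)] \leq \gamma_3(S)$ (since $\gamma_1(S)/\gamma_3(S)$ has order $p^2$ and is therefore abelian) together with $[\gamma_{j+1}(S),S] = \gamma_{j+2}(S)$; surjectivity uses $x \notin \gamma_1(S) = \C_S(\gamma_j(S)/\gamma_{j+2}(S))$ for $j \geq 2$ by the Hauptsatz recalled in Section~2, and for $j = 1$ it follows from $S = \langle x\rangle \gamma_1(S)$ together with $[\gamma_1(S),\gamma_1(S)] \leq \gamma_3(S)$, which give $\gamma_2(S) \equiv [x,\gamma_1(S)] \pmod{\gamma_3(S)}$. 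At the top step $j = n-2$, the analogous map $\Z_2(S)/\Z(S) \to \Z(S)$ is surjective precisely when $x \notin \C_S(\Z_2(S))$, since $[\Z_2(S),x] \leq \Z(S)$ has order $1$ or $p$. Setting $H := \langle x,s_i\rangle$ and iteratively $s_{j+1} := [s_j,x] \in H$ for $j \geq i$, the uniserial isomorphisms give $s_j \in H \cap (\gamma_j(S) \setminus \gamma_{j+1}(S))$ for $i \leq j \leq n-2$. Under the first hypothesis $x \notin \C_S(\Z_2(S))$, the top-step isomorphism yields $s_{n-1} \in \Z(S) \setminus \{1\}$, so $\Z(S) = \langle s_{n-1}\rangle \leq H$; under the alternative hypothesis $\Z(S) \leq H$, this inclusion is immediate. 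A descending induction on $j$ from $n-1$ down to $i$, using $\gamma_j(S) = \langle s_j\rangle \gamma_{j+1}(S)$, then yields $\gamma_i(S) \leq H$.

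For (1), the abelianization $S/\gamma_2(S) \cong \C_p \times \C_p$ (standard for maximal-class $p$-groups of order at least $p^3$) already gives $x^p \in \gamma_2(S)$. To upgrade this to $x^p \in \Z_2(S) = \gamma_{n-2}(S)$, I will invoke Theorem~\ref{power}: when $n \leq p+1$ it gives $S^p \leq \Z(S) \leq \Z_2(S)$ outright; when $n \geq p+2$ one applies a Hall collection formula to $x^p$ modulo $\gamma_{n-2}(S)$, using $\gamma_1(S)^p = \gamma_p(S) \leq \gamma_{n-2}(S)$ to absorb the $p$-th powers of elements of $\gamma_1(S)$ and $\gamma_2(S)^p = \gamma_{p+1}(S) \leq \gamma_{n-2}(S)$ to absorb the commutator correction terms—this is the classical power-structure result for elements of $S \setminus \gamma_1(S)$ in a maximal-class $p$-group. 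For the strengthening, assume $x \notin \C_S(\Z_2(S))$. Since $x$ commutes with $x^p$, we have $x^p \in \C_{\Z_2(S)}(x)$. The map $[-,x] \colon \Z_2(S) \to \Z(S)$ is a homomorphism (as $\Z_2(S)$ is abelian and $[\Z_2(S),S] \leq \Z(S)$), non-zero by hypothesis, hence surjective onto the order-$p$ group $\Z(S)$; its kernel $\C_{\Z_2(S)}(x)$ therefore has order $p$ and, containing the central $\Z(S)$, equals $\Z(S)$. Thus $x^p \in \Z(S)$.

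The main technical obstacle will be the power-structure upgrade $x^p \in \gamma_2(S) \rightsquigarrow x^p \in \gamma_{n-2}(S)$ in the regime $n \geq p+2$ of part (1), where one must carefully track the Hall-collection corrections through the lower central series; by contrast, the uniserial-action argument for (2) and the centralizer computation for the strengthening in (1) are essentially mechanical once the setup is in place.
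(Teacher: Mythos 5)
Your part (2) is correct and is essentially the paper's own argument: the ``uniserial'' statement that $[-,x]$ induces isomorphisms $\gamma_j(S)/\gamma_{j+1}(S)\to\gamma_{j+1}(S)/\gamma_{j+2}(S)$ for $j\le n-3$ (and for $j=n-2$ exactly when $x\notin\C_S(\Z_2(S))$) is just a reformulation of the paper's observation that $x$ fails to centralize $\gamma_j(S)/\gamma_{j+2}(S)$, and the descending induction $\gamma_j(S)=\langle s_j\rangle\gamma_{j+1}(S)$ is identical.

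Part (1), however, has a genuine gap in the regime $n\ge p+2$. The two containments you invoke to ``absorb'' terms are simply false once $n>p+2$: by Theorem \ref{power} one has $\gamma_1(S)^p=\gamma_p(S)$ and $\gamma_2(S)^p=\gamma_{p+1}(S)$, and $\gamma_p(S)\le\gamma_{n-2}(S)$ holds only when $p\ge n-2$, i.e.\ $n\le p+2$ (similarly $\gamma_{p+1}(S)\le\gamma_{n-2}(S)$ forces $n\le p+3$). Since the lemma is needed for arbitrarily large $n$ (e.g.\ it feeds into Lemma \ref{sbg.max.class}, which is applied to groups of order greater than $p^{p+1}$ in Lemma \ref{bound.k}), this is not a boundary case you can ignore. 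Moreover, ``applying a Hall collection formula to $x^p$'' is not a well-defined step: $x$ is a single generator, not a product, so there is nothing to collect, and you flag this yourself as the unresolved ``main technical obstacle.'' The irony is that the machinery you build for part (2) already proves part (1) uniformly in $n$ with no power-structure input at all, and this is exactly what the paper does: $x^p\in\Phi(S)\le\gamma_2(S)$ and $[x^p,x]=1$; if $x^p\in\gamma_i(S)\setminus\gamma_{i+1}(S)$ then $\gamma_i(S)=\langle x^p\rangle\gamma_{i+1}(S)$, whence $[\gamma_i(S),x]\le\gamma_{i+2}(S)$, so $x$ centralizes $\gamma_i(S)/\gamma_{i+2}(S)$; since $x\notin\gamma_1(S)=\C_S(\gamma_i(S)/\gamma_{i+2}(S))$ for $2\le i\le n-3$, this forces $i\ge n-2$, i.e.\ $x^p\in\Z_2(S)$, and if additionally $x\notin\C_S(\Z_2(S))=\C_S(\gamma_{n-2}(S)/\gamma_n(S))$ then $i\ge n-1$ and $x^p\in\Z(S)$. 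Equivalently, your own uniserial isomorphisms show $\C_{\gamma_2(S)}(x)\le\Z_2(S)$, which is all that is needed. You should replace the Hall-collection paragraph by this centralizer argument; your separate treatment of the strengthening via $\C_{\Z_2(S)}(x)=\Z(S)$ is then subsumed by the same computation.
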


\begin{proof} ~
\begin{enumerate}
\item Suppose $x^p\neq 1$ and let  $1\leq i \leq n-1$ be such that $x^p \in \gamma_i(S) \backslash \gamma_{i+1}(S)$. So $\gamma_i(S)= \langle x^p\rangle \gamma_{i+1}(S)$. We want to prove that $i\geq n-2$. Note that
\[ [\gamma_i(S), x] = [\gamma_{i+1}(S)\langle x^p \rangle, x] = [\gamma_{i+1}(S),  x] \leq [\gamma_{i+1}(S),S] = \gamma_{i+2}(S).\]
Hence $x \in \C_S(\gamma_i(S)/\gamma_{i+2}(S))$. Since $x\notin \gamma_1(S)$ we deduce that $i \geq n-2$. So $x^p\in \gamma_{n-2}(S)=\Z_2(S)$.
If $x\notin \C_S(\Z_2(S))=\C_S(\gamma_{n-2}(S)/\gamma_{n}(S))$ then we also have $i \geq n-1$ and $x^p\in \gamma_{n-1}(S)=\Z(S)$.

\item Let $i\geq 1$ and set $s_{j+1}=[x, s_j] \in \gamma_{j+1}(S)$ for $i \leq j \leq n-2$. Since $x$ is not contained in  $\gamma_1(S)$, the element $x$ does not centralize $\gamma_j(S)/\gamma_{j+2}(S)$ for every $j\leq n-3$. So $s_j \in \gamma_j(S) \backslash \gamma_{j+1}(S)$, and so $\gamma_j(S)=\gamma_{j+1}(S)\langle s_j \rangle$ for every $j\leq n-2$.  If $x\notin \C_S(\Z_2(S))$ then $1\neq s_{n-1}=[x,s_{n-2}] \in \Z(S)$; so $\Z(S)=\langle s_{n-1} \rangle \leq \langle x, s_i\rangle$. Thus in any case we have  $\Z(S)\leq \langle x, s_i \rangle$ and $\gamma_i(S) = \langle s_i, s_{i+1}, \dots , s_{n-2} \rangle \Z(S) \leq \langle x, s_i \rangle$.
\end{enumerate}
\end{proof}

\begin{lemma}\label{sbg.max.class} Suppose $|S|=p^n$ and let $P$ be a proper subgroup of $S$ of order $p^m$ such that $P$ is not contained in $\gamma_1(S)$. Suppose moreover that either $P\nleq \C_S(\Z_2(S))$ or $\Z(S)\leq P$.
Then either $|P|=p$ or  $\Z_{m-1}(S) \leq P$ and $[P \colon \Z_{m-1}(S)]=p$. Also
\begin{itemize}
\item if $P \nleq \C_S(\Z_2(S))$ and $|P|\geq p^2$ then $P$ has maximal nilpotency class;
\item if $P\leq \C_S(\Z_2(S))$ and $|P| \geq p^3$ then $P/\Z(S)$ has maximal nilpotency class.
\end{itemize}
\end{lemma}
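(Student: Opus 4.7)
The plan is to apply Lemma \ref{exponent}(2) to a pair consisting of some $x\in P\setminus\gamma_1(S)$ and a carefully chosen $s\in P\cap\gamma_1(S)$: first to force $\Z_{m-1}(S)\leq P$, and then to exploit the iterated commutator chain in that lemma's proof to compute the class of $P$ (or of $P/\Z(S)$).

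Assume $m\geq 2$. Since $P\nleq\gamma_1(S)$ we have $[P\colon P\cap\gamma_1(S)]=p$, so $Q:=P\cap\gamma_1(S)$ has order $p^{m-1}$. Because $|\gamma_{n-m+2}(S)|=p^{m-2}<|Q|$, if we let $i$ be maximal with $Q\leq\gamma_i(S)$ then $1\leq i\leq n-m+1$ and we can pick $s\in Q\cap(\gamma_i(S)\setminus\gamma_{i+1}(S))$. Choose $x\in P\setminus\gamma_1(S)$; in case (a), using that a $p$-group is never the union of two proper subgroups and that $P\cap\gamma_1(S)$ and $P\cap\C_S(\Z_2(S))$ are both proper in $P$, we may further arrange $x\notin\C_S(\Z_2(S))$. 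Lemma \ref{exponent}(2) now gives $\gamma_i(S)\leq\langle x,s\rangle\leq P$ in case (a); in case (b), inspection of its proof shows that the iterated commutator chain still produces a subgroup $H\leq\langle x,s\rangle$ with $\gamma_i(S)=H\cdot\Z(S)$, and since $\Z(S)\leq P$ by hypothesis we still obtain $\gamma_i(S)\leq P$. From $|\gamma_i(S)|=p^{n-i}\leq p^m$ we get $i\geq n-m$; and $i=n-m$ would force $P=\gamma_{n-m}(S)\leq\gamma_1(S)$, a contradiction. Hence $i=n-m+1$ and $\Z_{m-1}(S)\leq P$ with $[P\colon\Z_{m-1}(S)]=p$.

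For the class statements, write $P=\langle x\rangle\cdot\gamma_{n-m+1}(S)$; combined with $[\gamma_{n-m+1}(S),\gamma_{n-m+1}(S)]\leq\gamma_{2(n-m+1)}(S)\leq\gamma_{n-m+2}(S)$ this gives $\gamma_2(P)\leq\gamma_{n-m+2}(S)$, and induction yields $\gamma_j(P)\leq\gamma_{n-m+j}(S)$ for all $j\geq 2$; in particular $\gamma_{m-1}(P)\leq\Z(S)$ and $\gamma_m(P)=1$. For a matching lower bound, pick $t_1\in\gamma_{n-m+1}(S)\setminus\gamma_{n-m+2}(S)$ and set $t_{k+1}=[x,t_k]$. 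Since $x\notin\gamma_1(S)=\C_S(\gamma_j(S)/\gamma_{j+2}(S))$ for $2\leq j\leq n-3$, induction shows that $t_k\in\gamma_{n-m+k}(S)\setminus\gamma_{n-m+k+1}(S)$ and $t_k\in\gamma_k(P)$ for $1\leq k\leq m-2$; in particular $t_{m-2}\in\gamma_{m-2}(P)\cap(\Z_2(S)\setminus\Z(S))$. In case (a), since $\langle t_{m-2}\rangle\cdot\Z(S)=\Z_2(S)$, the assumption $x\notin\C_S(\Z_2(S))$ forces $t_{m-1}=[x,t_{m-2}]\neq 1$, so $t_{m-1}\in\gamma_{m-1}(P)\cap(\Z(S)\setminus\{1\})$ and $P$ has class exactly $m-1$. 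In case (b), $t_{m-2}\in\gamma_{m-2}(P)\setminus\Z(S)$ together with $\gamma_{m-1}(P)\leq\Z(S)$ shows that $P/\Z(S)$ (of order $p^{m-1}$) has class exactly $m-2$.

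The main delicate point is the non-vanishing of $t_{m-1}$ in case (a): the hypothesis $x\notin\C_S(\Z_2(S))$ is a global statement about $x$, but one needs to evaluate $[x,\cdot]$ on the specific element $t_{m-2}$ produced by the commutator chain, and this works precisely because $t_{m-2}$ lands in $\Z_2(S)\setminus\Z(S)$ and therefore generates $\Z_2(S)$ modulo $\Z(S)$.
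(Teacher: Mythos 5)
Your proof is correct, and its skeleton is the same as the paper's: use Lemma \ref{exponent}(2) to force a term $\gamma_i(S)$ of the lower central series into $P$, pin down $i=n-m+1$ by order considerations, and then run the iterated commutator chain $t_{k+1}=[x,t_k]$ to compute the nilpotency class. The genuine divergence is in the second bullet. The paper handles $P\leq \C_S(\Z_2(S))$ by passing to $\ov{S}=S/\Z(S)$ and invoking the first case for $\ov{P}=P/\Z(S)$, which requires knowing that the relevant exceptional maximal subgroups of $\ov{S}$ behave well under the quotient (in particular that $\gamma_1(\ov{S})=\C_{\ov{S}}(\Z_2(\ov{S}))$); you instead stay inside $S$, reopen the proof of Lemma \ref{exponent}(2) to extract the weaker conclusion $\gamma_i(S)\leq\langle x,s\rangle\Z(S)$, and use the hypothesis $\Z(S)\leq P$ to close the gap. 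Your route buys a more self-contained treatment of case (b) at the price of an "inspection of the proof" appeal rather than a black-box citation; it also spells out the upper bound $\gamma_j(P)\leq\gamma_{n-m+j}(S)$ and the non-vanishing of $t_{m-1}$ (via $\langle t_{m-2}\rangle\Z(S)=\Z_2(S)$ and $x\notin\C_S(\Z_2(S))$), which the paper compresses into the single assertion $\gamma_k(P)=\gamma_{n-m+k}(S)$. One cosmetic remark: your chain argument is vacuous when $m=2$ in case (a), but there the conclusion is trivial since every group of order $p^2$ has maximal class.
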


\begin{proof}
Note that $P\gamma_1(S) = S$ and $[S\colon \gamma_1(S)]=p$, so $[P \colon P \cap \gamma_1(S)]=p$.

If $P\cap \gamma_1(S) = 1$ then $|P| =p$.
Suppose there exists $1 \neq z \in P \cap \gamma_1(S)$. Then there exists $i$ such that $z \in \gamma_i(S) \backslash \gamma_{i+1}(S)$.
In other words $\gamma_i(S)=\langle z \rangle \gamma_{i+1}(S)$.
\begin{enumerate}
\item Suppose $P\nleq \C_S(\Z_2(S))$ and let $x\in P$ be such that $x$ is not contained in $\gamma_1(S)$ nor $\C_S(\Z_2(S))$. Then by Lemma \ref{exponent} we have $\gamma_i(S) \leq \langle z , x \rangle \leq P$.
Let $j\in \mathbb{N}$ be minimal such that $\gamma_j(S) \leq P$. If $y\in P \backslash \gamma_j(S)$ then $y \notin \gamma_1(S)$ (otherwise $y\in \gamma_{k}(S)$ for some $k < j$ and $\gamma_{k}(S)\leq P$ contradicting the minimality of $j$). So $[P \colon \gamma_j(S)] = [S \colon \gamma_1(S)] =p$ and $\gamma_j(S) = P \cap \gamma_1(S)$. In particular $|\gamma_j(S)|=p^{m-1}$ and so $j=n-(m-1)$ and $\gamma_j(S)=\Z_{m-1}(S)$. Using the fact that $x$ is in neither $\gamma_1(S)$ nor $\C_S(\Z_2(S))$, we conclude that $\gamma_k(P)=\gamma_{n-m+k}(S)$ for every $k\geq 1$ and so $P$ has maximal nilpotency class.

\item Suppose $P\leq \C_S(\Z_2(S))$.
The group $\ov{S}=S/\Z(S)$ is a $p$-group of maximal nilpotency class. Also, $\gamma_i(\ov{S}) = \gamma_i(S)/\Z(S)$ for every $i$ and $\Z_2(\ov{S})=\Z_3(S)/\Z(S)$.
Thus $\gamma_1(\ov{S})=\C_{\ov{S}}(\Z_2(\ov{S}))$.
By assumption $\Z(S) \leq P$ so we can consider the group $\ov{P}=P/\Z(S) \leq \ov{S}$. Note that $\ov{P}$ is not contained in $\gamma_1(\ov{S})$. So by part (1) we conclude that $[\ov{P} \colon \gamma_{n-(m-1)}(\ov{S})] =p$ and either $|\ov{P}| =p$ or $\ov{P}$ has maximal nilpotency class.
\end{enumerate}
\end{proof}

\begin{remark}
If $G$ is a group and $N$ is a normal subgroup of $G$ such that $G/N$ is cyclic then $[G,G]=[G,N]$ (\cite[Lemma 2.1]{black}). This fact will be used several times in this paper, especially applied with $G=\gamma_i(S)$ and $N=\gamma_{i+1}(S)$ for $i\geq 1$.
\end{remark} 

A direct consequence of Lemma  \ref{sbg.max.class} is the following.

\begin{cor}\label{max.sbg.no.max.class}
If $|S|\geq p^4$ then $\gamma_1(S)$ and $\C_S(\Z_2(S))$ are the only maximal subgroups of $S$ that do not have maximal nilpotency class.
\end{cor}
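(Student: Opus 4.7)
My plan is to derive the corollary as a direct consequence of Lemma \ref{sbg.max.class}, treating it as a two-part claim: $(a)$ any maximal subgroup of $S$ other than $\gamma_1(S)$ and $\C_S(\Z_2(S))$ has maximal nilpotency class, and $(b)$ neither $\gamma_1(S)$ nor $\C_S(\Z_2(S))$ itself has maximal nilpotency class.

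For $(a)$, I first need to verify that $\C_S(\Z_2(S))$ is genuinely a maximal subgroup (so that ``not equal to'' is the same as ``not contained in''). Since $|\Z_2(S)|=p^2$, the Sylow $p$-subgroup of $\Aut(\Z_2(S))$ has order exactly $p$, whether $\Z_2(S)$ is elementary abelian or cyclic of order $p^2$; hence $[S\colon\C_S(\Z_2(S))]\leq p$. Equality holds because $\Z_2(S)\not\leq\Z(S)$ (as $|\Z(S)|=p<p^2=|\Z_2(S)|$). With this in hand, let $M$ be a maximal subgroup of $S$ distinct from $\gamma_1(S)$ and from $\C_S(\Z_2(S))$. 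Since all three are maximal, $M\nleq\gamma_1(S)$ and $M\nleq\C_S(\Z_2(S))$. As $|M|=p^{n-1}\geq p^3\geq p^2$, the first bullet of Lemma \ref{sbg.max.class} yields that $M$ has maximal nilpotency class.

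For $(b)$, I plan to treat the two exceptional subgroups separately. For $\C_S(\Z_2(S))$, the argument goes through the center: by definition $\Z_2(S)\leq\Z(\C_S(\Z_2(S)))$, so this center has order at least $p^2$, whereas a $p$-group of order $p^{n-1}$ and maximal class would have center of order $p$. For $\gamma_1(S)$, the argument goes through the derived subgroup: since $\gamma_1(S)/\gamma_2(S)$ is cyclic of order $p$, the remark that a cyclic quotient $G/N$ forces $[G,G]=[G,N]$ gives $[\gamma_1(S),\gamma_1(S)]=[\gamma_1(S),\gamma_2(S)]\leq\gamma_4(S)$, where the last inclusion is the very definition of $\gamma_1(S)=\C_S(\gamma_2(S)/\gamma_4(S))$. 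For $n=4$ this makes $\gamma_1(S)$ abelian, hence not of maximal class $2$; for $n\geq 5$ it bounds $|[\gamma_1(S),\gamma_1(S)]|$ by $p^{n-4}$, strictly smaller than the $p^{n-3}$ required for a group of order $p^{n-1}$ and maximal class.

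The main obstacle is minor, essentially bookkeeping. I need to check that the degenerate case $\gamma_1(S)=\C_S(\Z_2(S))$ (which, by Theorem \ref{positive.deg}, is exactly when the degree of commutativity is positive) does not cause trouble: there the list of exceptions collapses to a single subgroup, but both arguments in $(b)$ apply to it unchanged, and in $(a)$ only one exceptional maximal needs to be avoided. Apart from this, no step requires more than the definition of $\gamma_1(S)$, the size of $\Z_2(S)$, and the cited lemma.
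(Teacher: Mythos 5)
Your proposal is correct and follows essentially the same route as the paper: apply Lemma \ref{sbg.max.class} to any other maximal subgroup, rule out $\C_S(\Z_2(S))$ via its center of order at least $p^2$, and rule out $\gamma_1(S)$ via $[\gamma_1(S),\gamma_1(S)]=[\gamma_1(S),\gamma_2(S)]\leq\gamma_4(S)$. The only difference is that you spell out a few details the paper leaves implicit (the index of $\C_S(\Z_2(S))$ and the $n=4$ versus $n\geq 5$ split), which are all fine.
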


\begin{proof}
Note that $\Z_2(S) \leq \Z(\C_S(\Z_2(S)))$ and $|\Z_2(S)| =p^2$, so $\C_S(\Z_2(S))$ does not have maximal nilpotency class.
By definition we have $[\gamma_1(S),\gamma_1(S)]=[\gamma_1(S),\gamma_2(S)] \leq \gamma_{4}(S)$ so $[\gamma_1(S) \colon [\gamma_1(S),\gamma_1(S)]] \geq p^3$. Therefore $\gamma_1(S)$ does not have maximal nilpotency class. Finally, if $M$ is a maximal subgroup of $S$ distinct from $\gamma_1(S)$ and $\C_S(\Z_2(S))$, then by Lemma  \ref{sbg.max.class} the group $M$ has maximal nilpotency class.
\end{proof}

We now prove some results concerning the sectional rank of $S$.

\begin{lemma}\label{lemma.1}
Suppose that $S$ has sectional rank $k\geq 2$ and order $|S|=p^{k+1}$. Then $S$ has a maximal subgroup $M$ that is elementary abelian. Furthermore, if $|S|\geq p^4$ then $M=\gamma_1(S)$.
\end{lemma}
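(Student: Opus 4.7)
My plan is to show that one can always locate an elementary abelian subgroup of order $p^k$, which moreover must coincide with $\gamma_1(S)$ as soon as the latter is defined. The first observation is that, since $S$ has maximal nilpotency class, $S/\gamma_2(S)\cong\C_p\times\C_p$, so $\Phi(S)=\gamma_2(S)$ and the rank of $S$ equals $2$. As $S$ has sectional rank $k$, there exists a subgroup $P\leq S$ of rank exactly $k$, and hence $|P|\geq p^k$.

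I would then split the argument according to the value of $k$. If $k\geq 3$, then $P\neq S$ (otherwise the rank of $S$ would be $k\geq 3>2$, a contradiction), so $|P|\leq p^k$; combined with $|P|\geq p^k$ this forces $|P|=p^k$. A $p$-group of order $p^k$ and rank $k$ is elementary abelian, and $[S\colon P]=p$ makes it a maximal subgroup. If $k=2$ then $|S|=p^3$ and $S$ is non-abelian (since it has maximal class); as $p$ is odd, $S$ is isomorphic either to $p^{1+2}_+$, whose maximal subgroups are all isomorphic to $\C_p\times\C_p$, or to $p^{1+2}_-$, in which $\Z(S)\langle b\rangle\cong\C_p\times\C_p$ for any non-central element $b$ of order $p$ (which exists since the unique cyclic maximal subgroup of order $p^2$ does not exhaust $S$). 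In either case one obtains an elementary abelian maximal subgroup $M$.

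For the final clause, assume $|S|\geq p^4$ (so $k\geq 3$) and let $M$ be the elementary abelian maximal subgroup produced above. Since $[S\colon M]=p$, the subgroup $M$ contains $\gamma_2(S)=[S,S]$, and because $M$ is abelian we have $[M,\gamma_2(S)]=1\leq\gamma_4(S)$. Consequently $M\leq \C_S(\gamma_2(S)/\gamma_4(S))=\gamma_1(S)$ by Definition~\ref{def.gamma1}; as both sides are maximal subgroups of $S$, equality follows.

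The only real difficulty lies in the case $k=2$: there the subgroup $P$ of rank $k$ promised by the sectional rank hypothesis can coincide with $S$ itself, so the elementary abelian maximal subgroup cannot be extracted by the counting argument alone, and one instead appeals briefly to the classification of non-abelian $p$-groups of order $p^3$.
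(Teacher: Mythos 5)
Your proposal is correct and follows essentially the same route as the paper: take a subgroup of rank $k$, observe that for $|S|\geq p^4$ it cannot be all of $S$ (since $S/\gamma_2(S)\cong \C_p\times\C_p$ gives $S$ rank $2$) and hence has order $p^k$ and is an elementary abelian maximal subgroup, treat $|S|=p^3$ via the structure of the non-abelian groups of order $p^3$, and identify $M$ with $\gamma_1(S)$ because the abelian $M$ contains and centralizes $\gamma_2(S)$. The only cosmetic difference is that you spell out the two isomorphism types $p^{1+2}_{\pm}$ where the paper simply notes that $S$ is extraspecial (and your aside about a \emph{unique} cyclic maximal subgroup of $p^{1+2}_-$ is inaccurate --- there are $p$ of them --- but the existence of a non-central element of order $p$, which is all you use, is still correct).
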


\begin{proof}
If $|S|=p^3$ then $S$ is extraspecial and so contains a maximal subgroup that is elementary abelian.
Suppose $|S|\geq p^4$. By assumption there exists a subgroup $M\leq S$ having rank $k$.
So either $M=S$ or $[S \colon M] =p$ and $M$ is elementary abelian.
 If $M=S$ then, since $S$ has rank $2$, we deduce that $k=2$ and $|S|=p^3$, a contradiction. If
$M$ is a maximal subgroup of $S$, then $M$ contains $\gamma_2(S)$ and centralizes $\gamma_2(S)/\gamma_{4}(S)$ (since $M$ is abelian). Therefore $M=\gamma_1(S)$.
\end{proof}

\begin{lemma}\label{bound.k}
Suppose that $S$ has sectional rank $k\geq 2$ and order $|S|=p^n > p^{p+1}$. Then $k \leq p -1$.
\end{lemma}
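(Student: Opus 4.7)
The plan is to show that every subgroup $P\leq S$ satisfies $d(P)\leq p-1$, whence the sectional rank $k\leq p-1$ by the equivalent characterization of sectional rank stated in the Introduction. I split the argument into two cases according to whether $P\leq\gamma_1(S)$.

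If $P\not\leq \gamma_1(S)$, then the hypothesis $|S|>p^{p+1}$ gives $n>p+1$, so Theorem~\ref{equal2step} yields $\gamma_1(S)=\C_S(\Z_2(S))$. Hence $P\not\leq\C_S(\Z_2(S))$, and Lemma~\ref{sbg.max.class} forces either $|P|=p$ or $P$ to have maximal nilpotency class; in either event $d(P)\leq 2\leq p-1$, since $p\geq 3$.

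For the main case $P\leq\gamma_1(S)$, the strategy is to show that $\gamma_1(S)$ is a regular $p$-group, i.e.\ it has nilpotency class strictly less than $p$. Once this is established, every subgroup $P\leq\gamma_1(S)$ is also regular, and the classical identity $|P/P^p|=|\Omega_1(P)|$ combined with $P^p\leq\Phi(P)$ gives
\[d(P)=\log_p|P/\Phi(P)|\leq\log_p|P/P^p|=\log_p|\Omega_1(P)|\leq\log_p|\Omega_1(\gamma_1(S))|=p-1,\]
the final equality being Theorem~\ref{power}. To bound the class of $\gamma_1(S)$, I would use the degree of commutativity $l$ of $S$: the equality $\gamma_1(S)=\C_S(\Z_2(S))$ together with Theorem~\ref{positive.deg} gives $l\geq 1$, and an easy induction from the defining inequality $[\gamma_i(S),\gamma_j(S)]\leq\gamma_{i+j+l}(S)$ yields $\gamma_i(\gamma_1(S))\leq\gamma_{(i-1)(l+1)+1}(S)$, so the class of $\gamma_1(S)$ is at most $\lceil(n-1)/(l+1)\rceil$. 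For $n\leq 2p-1$ the bound $l\geq 1$ already forces class $\leq p-1<p$; for $n\geq 2p$, Theorem~\ref{bound.deg.comm}(1) gives the stronger $l\geq(n-2p+4)/2\geq 2$, and a short arithmetic check confirms the class remains strictly less than $p$.

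The main obstacle is establishing this class bound uniformly across the full range of $n$, which requires the case split near $n=2p$ and careful juggling of the inequalities between $n$, $l$, and $p$. Once regularity of $\gamma_1(S)$ is secured, the remainder of the argument is immediate from the standard theory of regular $p$-groups together with Theorem~\ref{power}.
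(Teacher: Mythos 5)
Your strategy is genuinely different from the paper's and, for $p\geq 5$, it works: reducing to $d(P)\leq p-1$ for every subgroup, disposing of $P\not\leq\gamma_1(S)$ via Lemma \ref{sbg.max.class}, and then using regularity of $\gamma_1(S)$ together with Hall's identity $|P/\mho_1(P)|=|\Omega_1(P)|$ and $|\Omega_1(\gamma_1(S))|=p^{p-1}$ (Theorem \ref{power}) is a clean argument, and your arithmetic check that $n\leq 2l+2p-4$ together with $l\geq 1$ forces the class of $\gamma_1(S)$ below $p$ does go through when $p\geq 5$ (one needs $p-4\leq l(p-3)$, which holds for $l\geq 1$ and $p\geq 5$).

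The genuine gap is at $p=3$. Theorem \ref{bound.deg.comm} is stated only for $p\geq 5$, so for $p=3$ and $n\geq 6$ you have no control on $l$ beyond $l\geq 1$, and the resulting class bound $\lceil (n-1)/2\rceil$ exceeds $p-1=2$ as soon as $n\geq 6$. Thus regularity of $\gamma_1(S)$ --- the linchpin of your main case --- is not established exactly where the statement is nontrivial for $p=3$ (it must rule out sectional rank $3$, which does occur at order $3^4=p^{p+1}$, e.g.\ for $\C_3\wr\C_3$). Repairing this requires importing Blackburn's structural results on $3$-groups of maximal class, which are not among the quoted tools. The paper sidesteps the issue entirely: it takes a minimal counterexample of rank $k\geq p$, shows the unique maximal subgroup containing a rank-$k$ subgroup $P$ must be $\gamma_1(S)$, deduces $\Phi(P)=\Phi(\gamma_2(S)\cap P)\leq\Phi(\gamma_2(S))$, and then applies regularity only to the \emph{quotient} $\gamma_1(S)/\Phi(\gamma_2(S))$, which is regular simply because its order is at most $p^p$; this makes the argument uniform in all odd $p$. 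If you add a separate treatment of $p=3$ (or replace the class bound on $\gamma_1(S)$ by an order-bounded quotient as the paper does), your proof is complete.
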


\begin{proof}
Aiming for a contradiction, suppose there exists a $p$-group $S$ having maximal nilpotency class, order larger than $p^{p+1}$ and sectional rank $k\geq p$.
We may assume that $S$ is a minimal counterexample with respect to the order. Thus if $M$ is a $p$-group having maximal nilpotency class and order smaller than $|S|$ then either $M$ has sectional rank at most $p-1$ or $M$ has sectional rank $p$ and order $|M|= p^{p+1}$.

Since $|S|>p^{p+1}$, by Theorem \ref{power} we have
$ \gamma_i(S)^p = \gamma_{i+p-1}(S) \text{ for every } 1 \leq i \leq n-p+1.$
In particular $[\gamma_i(S) \colon \gamma_i(S)^p] \leq p^{p-1}$ for every $i$.

Let $P\leq S$ be a subgroup of $S$ having rank $k$. Since $|S|>p^{p+1}$ we have $\gamma_1(S)=\C_S(\Z_2(S))$ and by Lemma \ref{sbg.max.class} every maximal subgroup of $S$ distinct from $\gamma_1(S)$ has maximal nilpotency class.
Let $M$ be a maximal subgroup of $S$ containing $P$.

Suppose $M\neq \gamma_1(S)$. Then by the minimality of $S$ we deduce that $k=p$ and $|M|=p^{p+1}$. Hence by Lemma \ref{lemma.1} applied to $M$, either the group $M$ is extraspecial of order $p^3$ or $\gamma_2(S)$ ($=\gamma_1(M)$) is elementary abelian. In the first case $|S|=p^4 \leq p^{p+1}$ (since $p$ is odd) and we reach a contradiction. In the second case the group $\gamma_2(S)$ is elementary abelian of order $p^p$, which again is a contradiction ($[\gamma_2(S) \colon \gamma_2(S)^p] = p^{p-1}$).
Note that we proved that every maximal subgroup of $S$ distinct from $\gamma_1(S)$ has sectional rank smaller than $k$.

Therefore $M=\gamma_1(S)$ is the unique maximal subgroup of $S$ containing $P$.
 In particular $P\nleq \gamma_2(S)$ and so $\gamma_1(S)=P\gamma_2(S)$. Let $x\in \gamma_1(S) \backslash \gamma_2(S)$. Then $P=\langle x \rangle (\gamma_2(S) \cap P)$. Note that $\Phi(\gamma_2(S) \cap P) \leq \Phi(P)$ and $|(\gamma_2(S) \cap P) / \Phi(\gamma_2(S) \cap P)| < p^k$, since $\gamma_2(S)\cap P \leq \gamma_2(S) \leq N$ for some maximal subgroup $N$ of $S$ that is distinct from $\gamma_1(S)$.
Hence
 \[ p^k =[P \colon \Phi(P)] = p[P \cap \gamma_2(S) \colon \Phi(P)] \leq p[P\cap \gamma_2(S) \colon \Phi(P \cap \gamma_2(S))] < p^{k+1}.\]
Thus the only possibility is  $\Phi(P) = \Phi(\gamma_2(S) \cap P)$. In particular $\Phi(P) \leq \Phi(\gamma_2(S))$ and $x^p \in \Phi(\gamma_2(S))$.

Note that the group $\gamma_1(S)/\Phi(\gamma_2(S))$ has order at most $p^p$ ($[\gamma_2(S) \colon \Phi(\gamma_2(S))] \leq [\gamma_2(S) \colon \gamma_2(S)^p]=p^{p-1}$) and is therefore a regular $p$-group (as defined by P. Hall). Also notice that $\gamma_1(S)/\Phi(\gamma_2(S))=(\langle x \rangle \gamma_2(S))/\Phi(\gamma_2(S)) = \Omega_1(\gamma_1(S)/\Phi(\gamma_2(S)))$. So $\gamma_1(S)/\Phi(\gamma_2(S))$ has exponent $p$ and
$[\gamma_1(S) \colon \gamma_1(S)^p] \geq [\gamma_1(S) \colon \Phi(\gamma_2(S))] = p^p$, contradicting Theorem \ref{power}.

Therefore if $|S| > p^{p+1}$ then $k \leq p -1$.
\end{proof}

\begin{cor}\label{kleqp}
Suppose that $S$ has sectional rank $k\geq 2$. Then $p \geq k$, with equality only if $|S|=p^{k+1}$.
\end{cor}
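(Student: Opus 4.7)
The plan is to split on the order of $S$ relative to the threshold $p^{p+1}$ that appears in Lemma \ref{bound.k}, which packages all the hard content. First, I would handle the case $|S| > p^{p+1}$: Lemma \ref{bound.k} immediately gives $k \leq p-1$, a strict inequality, so both claims (the inequality $p \geq k$ and the equality clause) hold vacuously in this range.

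Second, I would treat the remaining case $|S| = p^n$ with $n \leq p+1$ using two elementary rank bounds. For any $p$-group $P$ one has $\mathrm{rank}(P) = \log_p|P/\Phi(P)| \leq \log_p|P|$, so every proper subgroup of $S$ has rank at most $n-1 \leq p$. Moreover, because $S$ has maximal nilpotency class of length $n-1 \geq 2$, we have $S/\gamma_2(S) \cong \C_p \times \C_p$, which is a quotient of $S/\Phi(S)$, so $\mathrm{rank}(S) = 2$. Taking the maximum over all subgroups of $S$ gives $k \leq \max(2, n-1) \leq p$, since $p \geq 3$.

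For the equality statement, suppose $k = p$. The first case is excluded (it gives $k \leq p-1$), so we are in the second case with $n \leq p+1$. Since $\mathrm{rank}(S) = 2 < p$, the sectional rank is realised on a proper subgroup, giving $k \leq n-1$; combined with $n \leq p+1$ this forces $n = p+1$, hence $|S| = p^{p+1} = p^{k+1}$, as required.

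I do not foresee a significant obstacle: once Lemma \ref{bound.k} is available and the maximal-class identity $S/\gamma_2(S) \cong \C_p \times \C_p$ is noted, the remainder is an elementary bookkeeping of ranks of subgroups versus orders. The only point worth double-checking is that $\mathrm{rank}(S) = 2$ and not $1$, which follows from $|S| \geq p^3$ (so $S$ is non-cyclic), and that the case $P = S$ cannot by itself yield $k = p$ when $p \geq 3$.
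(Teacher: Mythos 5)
Your proof is correct and follows essentially the same route as the paper: split on whether $|S| > p^{p+1}$, invoke Lemma \ref{bound.k} in the large case, and do elementary order-versus-rank bookkeeping in the small case. The paper phrases the second case as ``$|S| \geq p^{k+1}$ since $S$ is of maximal class, hence $k+1 \leq p+1$,'' which is the same counting you perform via $\mathrm{rank}(P) \leq \log_p|P|$ together with $\mathrm{rank}(S)=2$.
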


\begin{proof}
If $|S| > p^{p+1}$ then by Lemma \ref{bound.k} we get $k \leq p-1$.
If $|S| \leq p^{p+1}$ then, since $|S|\geq p^{k+1}$, we deduce that $k+1 \leq p+1$ and so $k \leq p$.

Finally, if $p=k$ we get $p^{k+1} \leq |S| \leq p^{p+1}=p^{k+1}$ and so $|S|=p^{k+1}$.
\end{proof}

We now bound the order of $S$ by a function of its sectional rank.
Such bound will be improved once we add the assumption of a saturated fusion system defined on $S$ containing pearls.

\begin{theorem}\label{sectional.rank}
Suppose that $S$ has sectional rank $k\geq 2$ and $p\geq k+2$. Then $|S| \leq p^{2k}$, with strict inequality if $\gamma_1(S) = \C_S(\Z_2(S))$.
\end{theorem}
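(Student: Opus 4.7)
The plan is to proceed in two stages: first establish $n \leq p+1$, then bound $n$ by $2k$ in that regime.

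\emph{Stage 1 (reduction to $n \leq p+1$).} Apply the sectional rank bound to $\gamma_1(S)$, namely $[\gamma_1(S) : \Phi(\gamma_1(S))] \leq p^k$. Suppose for contradiction that $n > p+1$. Theorem~\ref{power} gives $\gamma_1(S)^p = \gamma_p(S)$, which has index $p^{p-1}$ in $\gamma_1(S)$. Since $\gamma_1(S)/\gamma_2(S)$ is cyclic, $[\gamma_1(S), \gamma_1(S)] = [\gamma_1(S), \gamma_2(S)] \leq \gamma_{3+l}(S)$, where $l$ denotes the degree of commutativity. Hence $\Phi(\gamma_1(S)) \leq \gamma_{\min(p,\,3+l)}(S)$, and the sectional rank bound forces $\min(p-1,\,l+2) \leq k$; as $p - 1 \geq k+1$, the abelian case (where the commutator is trivial and $\Phi(\gamma_1(S)) = \gamma_p(S)$) is excluded outright, and otherwise $l \leq k-2$. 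To finish Stage 1, I would invoke Corollary~\ref{omegas}: $\Omega_1(\gamma_1(S)) = \Z_{p-1}(S)$ has order $p^{p-1}$, exponent $p$, and rank at most $k \leq p-2$, hence must be non-abelian, and comparing its commutator subgroup (contained in $\gamma_{2(n-p+1)+l}(S)$) with the sectional rank bound, together with Theorem~\ref{bound.deg.comm}, contradicts $n > p+1$.

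\emph{Stage 2 (the bound in $n \leq p+1$).} Theorem~\ref{power} gives $\gamma_2(S)$ of exponent $p$ and $S^p \leq \Z(S)$. If $\gamma_1(S)$ is abelian, then $\gamma_1(S)^p \leq \Z(S)$ has order at most $p$, so $[\gamma_1(S) : \gamma_1(S)^p] \geq p^{n-2}$; the sectional rank bound then yields $n \leq k + 2 \leq 2k$ (using $k \geq 2$). If $\gamma_1(S)$ is non-abelian with degree of commutativity $l$, set $i_0 = \lceil (n - l)/2 \rceil \geq 2$. Then $\gamma_{i_0}(S) \leq \gamma_2(S)$ is abelian (since $[\gamma_{i_0}, \gamma_{i_0}] \leq \gamma_{2i_0 + l}(S) = 1$) and of exponent $p$, hence elementary abelian of order $p^{n - i_0}$. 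Rank at most $k$ gives $n - i_0 \leq k$, which rearranges to $n + l \leq 2k$ when $n - l$ is even and $n + l \leq 2k + 1$ when $n - l$ is odd. In particular $n \leq 2k + 1 - l$, giving $n \leq 2k$ whenever $l \geq 1$. Since $\gamma_1(S) = \C_S(\Z_2(S))$ is equivalent to $l \geq 1$ by Theorem~\ref{positive.deg}, the strict-inequality clause follows from a parity analysis combined with Theorem~\ref{equal2step} (which restricts when $\gamma_1(S) = \C_S(\Z_2(S))$ occurs for a given $n$) to exclude the boundary cases.

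\emph{Main obstacle.} The delicate step is Stage 1: for $n$ only slightly larger than $p+1$, the commutator-plus-$p$-th-power bound on $\gamma_1(S)$ alone is not enough, and one must use the rigid $\Omega_j$-filtration from Corollary~\ref{omegas} together with careful tracking of $[\Omega_1(\gamma_1(S)),\Omega_1(\gamma_1(S))]$ inside the lower central series. A secondary subtlety is the strict-inequality clause of Stage 2, where the parity-dependent bound $n \leq 2k + 1 - l$ only becomes strict after excluding boundary configurations via Theorem~\ref{equal2step}.
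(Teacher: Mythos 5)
There are two genuine gaps. The first is Stage 1: the claim $n\le p+1$ is not established by the ingredients you cite, and it is also not what the paper proves. Writing $\Omega_1(\gamma_1(S))=\Z_{p-1}(S)=\gamma_{n-p+1}(S)$, its non-abelianness forces $2(n-p+1)+l\le n-1$, i.e. $n\le 2p-3-l$; together with $l\le k-2$ and Theorem \ref{bound.deg.comm}(1) ($n\le 2l+2p-4$), and even with $l\ge 1$ (which Theorems \ref{equal2step} and \ref{positive.deg} do force when $n>p+1$), you only reach $n\le 2p-4$, which does not contradict $n>p+1$ for any $p\ge 7$. The paper does not reduce to $n\le p+1$ at all; it treats $n>p+1$ directly: assuming $n\ge 2k$, the section $\gamma_{k-1}(S)/\gamma_{2k}(S)$ has order $p^{k+1}$, so the sectional rank hypothesis forces $\gamma_{2k}(S)<\Phi(\gamma_{k-1}(S))=[\gamma_{k-1}(S),\gamma_{k-1}(S)]\gamma_{k-1}(S)^p$; since $[\gamma_{k-1}(S),\gamma_{k-1}(S)]\le\gamma_{2k}(S)$ this gives $\gamma_{2k}(S)<\gamma_{k-1}(S)^p=\gamma_{k+p-2}(S)$ by Theorem \ref{power}, hence $k+p-2<2k$ and $p<k+2$, a contradiction. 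That one computation is the idea your proposal is missing; without it (or an actual proof that $n\le p+1$) the regime $n>p+1$ is simply not covered.

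The second gap is the strict-inequality clause in Stage 2. Your inequality $n+l\le 2k+1$ still permits $n=2k$ with $l=1$, and Theorem \ref{equal2step} cannot exclude this: it only gives sufficient conditions for $\gamma_1(S)=\C_S(\Z_2(S))$ (i.e. for $l\ge 1$) and places no upper restriction on $n$ once $l\ge 1$ holds. The paper's actual step is to run the elementary-abelian argument one term higher up the lower central series: if $n=2k$ and $l\ge 1$ then $[\gamma_{k-1}(S),\gamma_{k-1}(S)]\le\gamma_{2k}(S)=1$, so $\gamma_{k-1}(S)$ is abelian of order $p^{k+1}$ and (for $k\ge 3$) of exponent $p$ by Theorem \ref{power}, contradicting the sectional rank. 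For the rest, your Stage 2 in the regime $n\le p+1$ is essentially the paper's case (1) — the paper uses $\gamma_{\lceil n/2\rceil}(S)$ and then $\gamma_k(S)$ where you use $\gamma_{i_0}(S)$ with $i_0=\lceil(n-l)/2\rceil$ — and your handling of the $l=0$, $n$ odd boundary via Theorem \ref{equal2step}(3) and Theorem \ref{positive.deg} is the right move, though it is needed already for the non-strict bound $n\le 2k$, not only for the strict one.
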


\begin{proof} Clearly the statement is true if $|S| =p^3$, so suppose $|S|\geq p^4$.
\begin{enumerate}
\item Assume $|S|=p^n \leq p^{p+1}$. Then for every $2\leq i \leq n$ the group $\gamma_i(S)$ has exponent $p$ by Theorem \ref{power}. By Theorem \ref{positive.deg} the degree of commutativity of $S$ is always non-negative, so  $[\gamma_{\lceil n/2 \rceil}(S), \gamma_{\lceil n/2 \rceil}(S)] \leq \gamma_{n}(S) = 1$. Therefore the subgroup $\gamma_{\lceil n/2 \rceil}(S)$ is elementary abelian and by definition of sectional rank we get ${\lfloor n/2 \rfloor} \leq k$ and so $n \leq 2k +1$.
In particular we get  $[\gamma_{k}(S), \gamma_{k}(S)]= [\gamma_{k}(S), \gamma_{k+1}(S)] \leq \gamma_{2k+1}(S)=1$, so $\gamma_{k}(S)$ is an elementary abelian subgroup of order $p^{n-k}$. Thus $n\leq 2k$.
Finally suppose $\gamma_1(S) = \C_S(\Z_2(S))$.
Then $S$ has positive degree of commutativity (Theorem \ref{positive.deg}) and so $[\gamma_{k-1}(S), \gamma_{k-1}(S)]=[\gamma_{k-1}(S),\gamma_{k}(S)] \leq \gamma_{2k}(S)$. If $|S|=p^{2k}$, then $\gamma_{k-1}(S)$ is an elementary abelian group of order $p^{k+1}$, contradicting the assumptions. Thus if $\gamma_1(S) = \C_S(\Z_2(S))$ then $|S|<p^{2k}$.

\item Assume $|S|=p^n > p^{p+1}$. By Theorem \ref{equal2step} we have $\gamma_1(S) = \C_S(\Z_2(S))$. Thus by Theorem \ref{positive.deg} we have $[\gamma_i(S),\gamma_j(S)] \leq \gamma_{i+j+1}(S)$ for every $1\leq i,j \leq n$.  In particular
\[ [\gamma_{k-1}(S), \gamma_{k-1}(S)]=[\gamma_{k-1}(S),\gamma_{k}(S)] \leq \gamma_{2k}(S). \]
Suppose for a contradiction that $n \geq 2k$. Then $[\gamma_{k-1}(S) \colon \gamma_{2k}(S)] = p^{k+1}$. Since $S$ has sectional rank $k$, we must have $\gamma_{2k}(S) < \Phi(\gamma_{k-1}(S)) = [\gamma_{k-1}(S), \gamma_{k-1}(S)]\gamma_{k-1}(S)^p$. Since $[\gamma_{k-1}(S),\gamma_{k-1}(S)]\leq \gamma_{2k}(S)$,  we must have  $\gamma_{2k}(S) < \gamma_{k-1}(S)^p$.
By Lemma \ref{power} either $\gamma_{k-1}(S)^p = 1$ or $\gamma_{k-1}(S)^p =\gamma_{k+p-2}(S)$. Therefore $\gamma_{2k}(S) < \gamma_{k+p-2}(S)$, implying that $k+ p -2 < 2k$. So
$p<k+2$, contradicting the assumption that $p\geq k+2$. Hence $|S| < p^{2k}$.
\end{enumerate}
\end{proof}

We conclude this section with a theorem about the automorphism group of a $p$-group of maximal nilpotency class.

\begin{theorem}\label{auto.S}
Suppose that $|S|=p^n\geq p^4$ and the group $\gamma_1(S)$ is neither abelian nor extraspecial.
Then $\Aut(S) \cong P \colon H$, where $P\in \Syl_p(\Aut(S))$ and $H$ is isomorphic to a subgroup of $\C_{p-1}$.
\end{theorem}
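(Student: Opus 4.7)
The plan is to analyze $\Aut(S)$ via its action on the Frattini quotient $S/\gamma_2(S)\cong\C_p\times\C_p$, exploiting that $\gamma_1(S)$ is a characteristic maximal subgroup of $S$ (Definition \ref{def.gamma1}). Consider the restriction map $\rho\colon\Aut(S)\to\Aut(S/\gamma_2(S))=\GL_2(p)$. Its kernel $K$ is a normal $p$-subgroup of $\Aut(S)$ by the standard fact that an automorphism of a $p$-group trivial on its Frattini quotient is itself a $p$-automorphism. Because $\rho(\Aut(S))$ stabilizes the line $\gamma_1(S)/\gamma_2(S)$, it lies in the Borel subgroup $B\leq\GL_2(p)$, which decomposes as $B=U\rtimes T$ with unipotent radical $U\cong\C_p$ and torus $T\cong\C_{p-1}\times\C_{p-1}$. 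Setting $P:=\rho^{-1}(U)$ gives a normal $p$-subgroup of $\Aut(S)$, and $\Aut(S)/P$ embeds into the $p'$-group $T$; hence $P$ is the unique Sylow $p$-subgroup of $\Aut(S)$.

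To trim $\Aut(S)/P$ to a cyclic group I will use the non-abelianity of $\gamma_1(S)$. Choose $s_0\in S\setminus\gamma_1(S)$ and $s_1\in\gamma_1(S)\setminus\gamma_2(S)$, and define inductively $s_{i+1}:=[s_i,s_0]$; by the standard theory of maximal-class $p$-groups reviewed in Section 2, $s_i$ generates $\gamma_i(S)$ modulo $\gamma_{i+1}(S)$. A $p'$-element $\alpha\in\Aut(S)$ has diagonal image $\rho(\alpha)=\mathrm{diag}(b,a)\in T$, so $\alpha(s_0)\equiv s_0^b$ and $\alpha(s_1)\equiv s_1^a$ modulo $\gamma_2(S)$. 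A routine induction on $i$, in which all correction terms land in $\gamma_{i+2}(S)$, then yields
\[ \alpha(s_i)\equiv s_i^{ab^{i-1}}\pmod{\gamma_{i+1}(S)}\quad\text{for every } i\geq 1. \]

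Now use the non-abelianity of $\gamma_1(S)$: there exist $i,j\geq 1$ with $[s_i,s_j]\neq 1$, and for any such pair there is a unique $k\geq i+j$ with $[s_i,s_j]\in\gamma_k(S)\setminus\gamma_{k+1}(S)$. Computing $\alpha([s_i,s_j])$ in two ways modulo $\gamma_{k+1}(S)$, on the one hand as $[s_i,s_j]^{ab^{k-1}}$ from the action on $\gamma_k(S)/\gamma_{k+1}(S)$ and on the other as $[\alpha(s_i),\alpha(s_j)]\equiv[s_i,s_j]^{a^{2}b^{i+j-2}}$ (after checking that all correction terms vanish modulo $\gamma_{k+1}(S)$), one obtains the fixed relation $a=b^{k-i-j+1}$, independent of $\alpha$. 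This forces $\rho(\alpha)$ to lie in the cyclic diagonal $\{\mathrm{diag}(b,b^{k-i-j+1}):b\in\GF(p)^{\times}\}\cong\C_{p-1}$ of $T$, so $\Aut(S)/P$ embeds in $\C_{p-1}$. Finally, Schur-Zassenhaus applied to the normal Sylow $p$-subgroup $P\trianglelefteq\Aut(S)$ produces a splitting $\Aut(S)=P\rtimes H$ with $H\cong\Aut(S)/P$ isomorphic to a subgroup of $\C_{p-1}$.

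The main obstacle is the commutator bookkeeping in the third step: verifying that the error terms in $\alpha(s_i)=s_i^{ab^{i-1}}c_i$, with $c_i\in\gamma_{i+1}(S)$, contribute nothing to $[\alpha(s_i),\alpha(s_j)]$ modulo $\gamma_{k+1}(S)$. This reduces to the standard inclusion $[\gamma_a(S),\gamma_b(S)]\leq\gamma_{a+b}(S)$ together with $k\geq i+j$. The hypothesis that $\gamma_1(S)$ is neither abelian nor extraspecial rules out degenerate cases in which extra automorphisms arise from the symplectic form on $\gamma_1(S)/\Z(\gamma_1(S))$ (as happens, for instance, when $|S|=p^4$ and $\gamma_1(S)\cong p^{1+2}_+$), so that the generic argument above applies uniformly and $H$ can be read off from the single relation $a=b^{k-i-j+1}$.
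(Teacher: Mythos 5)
Your overall strategy is the same as the paper's: extract the normal Sylow $p$-subgroup from the action on $S/\gamma_2(S)$, track the eigenvalues of a $p'$-automorphism down the lower central series, and use a non-vanishing commutator inside $\gamma_1(S)$ to tie the eigenvalue on $\gamma_1(S)/\gamma_2(S)$ to the one on $S/\gamma_1(S)$. But there is a genuine gap at the decisive step, and it is signalled by the fact that your argument never actually uses the hypothesis that $\gamma_1(S)$ is not extraspecial --- a hypothesis that cannot be dropped, since for $S\in\Syl_p(\G_2(p))$ the subgroup $\gamma_1(S)$ is extraspecial and $|\Aut(S)|$ is divisible by $(p-1)^2$. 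The problem is the claim that $\alpha(s_i)\equiv s_i^{ab^{i-1}}$ holds for \emph{every} $i\geq 1$. The induction $s_{i+1}=[s_i,s_0]$ only reaches $i=n-2$ in general: when $\gamma_1(S)\neq\C_S(\Z_2(S))$, the eigenvector $s_0$ of $\alpha$ outside $\gamma_1(S)$ is forced (when $a\neq b$) to lie in the characteristic maximal subgroup $\C_S(\Z_2(S))$, so $[s_{n-2},s_0]=1$; the group $\gamma_{n-1}(S)=\Z(S)$ is instead generated by $[s_1,s_{n-2}]$, and the eigenvalue of $\alpha$ on $\Z(S)$ is $a^2b^{n-3}$, not $ab^{n-2}$ --- this is exactly the case distinction built into the paper's identity (\ref{action}). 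Consequently, if the only nontrivial commutators $[s_i,s_j]$ land in $\Z(S)$ --- which is precisely what happens when $\gamma_1(S)$ is extraspecial --- your two-sided computation reads $a^2b^{n-3}=a^2b^{i+j-2}$ and produces a relation in $b$ alone, not $a=b^{k-i-j+1}$. The paper closes this hole by showing that non-extraspecialness (together with $\gamma_1(S)^p\leq\Z(S)$) forces $\gamma_{n-1}(S)<[\gamma_1(S),\gamma_1(S)]$ whenever $\gamma_1(S)\neq\C_S(\Z_2(S))$, so that one may take $k<n-1$, where the formula $ab^{k-1}$ is actually valid. You need this step.

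Two further points. First, a $p'$-element of the Borel subgroup is diagonalizable but need not be diagonal with respect to a basis chosen in advance, so $\rho(\alpha)=\mathrm{diag}(b,a)$ and $\alpha(s_0)\equiv s_0^b \bmod \gamma_2(S)$ are false for a fixed $s_0$; you must choose $s_0$ per $\alpha$ (the paper does this via Maschke's theorem), and as noted above that choice is exactly what pushes $s_0$ into $\C_S(\Z_2(S))$ and triggers the main gap. Second, your bookkeeping for the error terms points the wrong way: with $\alpha(s_i)=s_i^{ab^{i-1}}c_i$, $c_i\in\gamma_{i+1}(S)$, the corrections to $[\alpha(s_i),\alpha(s_j)]$ lie in $[\gamma_{i+1}(S),\gamma_j(S)]\leq\gamma_{i+j+1+l}(S)$ (with $l$ the degree of commutativity), which is contained in $\gamma_{k+1}(S)$ only if $k\leq i+j+l$; since always $k\geq i+j+l$, one must pick a pair $(i,j)$ attaining $k=i+j+l$, whereas the inequality $k\geq i+j$ you invoke does not close the estimate. (The residual worry that the exponent in $a=b^{k-i-j+1}$ might depend on $\alpha$ is harmless: the pointwise relations already give $\ker(\alpha\mapsto b)\leq\ker(\alpha\mapsto a)$ on the $p'$-quotient, which forces that quotient to embed in $\C_{p-1}$.)
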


\begin{notation}
Let $P\leq S$ be a subgroup and let $u,v \in S$ be elements.
When we write $u \equiv v \mod P$ we mean that $u=vt$ for some $t \in P$. In particular the expression $u \equiv 1 \mod P$ is equivalent to $u\in P$.
\end{notation}

\begin{proof}
Since $S$ has maximal nilpotency class and order at least $p^4$, the quotient $S/\Phi(S)=S/\gamma_2(S)$ is elementary abelian of order $p^2$. In particular the group  $\Aut(S)/\C_{\Aut(S)}(S/\Phi(S))$ is isomorphic to a subgroup $H$ of $\GL_2(p)$.  Moreover, the fact that the subgroup $\gamma_1(S)$ of $S$ is characteristic in $S$ implies that for any $s_1 \in \gamma_1(S)\backslash \Phi(S)$ we can find a basis  $\mathcal{B}=\{ x\Phi(S), s_1\Phi(S) \}$ of $S/\Phi(S)$ such that every element $h$ of $H$ expressed with respect to $\mathcal{B}$ is of the form 
\[ h= \begin{pmatrix} a & 0 \\ c & b \end{pmatrix},\]
for some $a,b \in \GF(p)^*$ and $c \in \GF(p)$.
Since the group $\C_{\Aut(S)}(S/\Phi(S))$ is a $p$-group, we deduce that $\Aut(S)$ has a unique normal Sylow $p$-subgroup $P$.

Suppose there exists a morphism $\varphi \in \Aut(S)$ having order prime to $p$.
By Maschke's Theorem there exists a maximal subgroup $M/\Phi(S)$ of $S/\Phi(S)$ such that $M\neq \gamma_1(S)$ and $M$ is normalized by $\varphi$.  In other words we can find a basis $\mathcal{B}=\{ x\Phi(S), s_1\Phi(S) \}$ of $S/\Phi(S)$, with $s_1 \in \gamma_1(S) \backslash \Phi(S)$ and $x\in M \backslash \Phi(S)$, such that
\[ \varphi C =   \begin{pmatrix} a & 0 \\ 0 & b \end{pmatrix}C \quad \text{ with respect to the basis $\mathcal{B}$, }\]
for some $a,b\in \GF(p^*)$, where $C= \C_{\Aut(S)}(S/\Phi(S))$.

We prove that $b$ can be expressed as a power of $a$ (modulo $p$). This implies that $\Aut(S)/CP \cong \left\{ \begin{pmatrix} a & 0 \\ 0 & a^t \end{pmatrix} \mid a\in \GF(p)^*, t \in \mathbb{N} \right\} \leq \GL_2(p)$, proving the theorem.

Define
\[ s_i = [x, s_{i-1}] ~ \text{ for every } ~ 2\leq i \leq n-2 ~ \text{ and } \]
\[ s_{n-1} = \begin{cases} [x, s_{n-2}] \quad \text{ if } \gamma_1(S) =\C_S(\Z_2(S)) \\
						[s_1, s_{n-2}] \quad \text{otherwise}
\end{cases}\]

Since $x\notin \gamma_1(S)$ we deduce $\gamma_i(S) = \langle s_i \rangle \gamma_{i+1}(S)$ for every $2 \leq  i \leq n-1$.

The morphism $\varphi$ acts on every quotient $\gamma_i(S)/\gamma_{i+1}(S)$. We show by induction on $i$  that

\begin{equation}\label{action}
\begin{split}
 s_i \varphi & \equiv s_i^{a^{i-1}b} \mod \gamma_{i+1}(S) ~ \text{ for every } ~ 1 \leq i\leq n-2 ~ \text{ and } \\
s_{n-1}\varphi &= \begin{cases} s_{n-1}^{a^{n-2}b} \quad \text{ if } \gamma_1(S) =\C_S(\Z_2(S)) \\
						s_{n-1}^{a^{n-3}b^2} \quad \text{otherwise}
\end{cases}
\end{split}
\end{equation}

If $i=1$, then the identity (\ref{action}) is true by definition of $b$.
Assume $1<i<n-2$. Then by the inductive hypothesis we have
\[ s_i \varphi = [x, s_{i-1}]\varphi = [x^a u, s_{i-1}^{a^{i-2}b}v] \quad \text{ for some } \quad u\in \gamma_2(S), v\in \gamma_i(S). \]
Thus
\[    s_i \varphi \equiv [x^a, s_{i-1}^{a^{i-2}b}] \mod \gamma_{i+1}(S) \equiv s_i^{a^{i-1}b} \mod \gamma_{i+1}(S). \]

The same argument works for $i=n-1$ when $\gamma_1(S) =\C_S(\Z_2(S))$. 

If $\gamma_1(S)\neq \C_S(\Z_2(S))$ and ~$i=n-1$ then we have
\[ s_{n-1}\varphi = [s_1, s_{n-2}]\varphi = [s_1^b u, s_{n-2}^{a^{n-3}b}v] = s_{n-1}^{a^{n-3}b^2},\]
for some $u\in \gamma_2(S)$ and $v\in \Z(S)$.

We can now show that $b$ depends on $a$.
By assumption $\gamma_1(S)$ is non-abelian, so there exists $i < j \leq n-2$ such that $[s_i,s_j] \neq 1$. Then $[s_i,s_j] \in \gamma_r(S) \backslash \gamma_{r+1}(S)$ for some $ 1 \leq r \leq n-1$.
So $[s_i,s_j] = s_r^k \mod \gamma_{r+1}(S)$ for some $k \in \GF(p)$.

Suppose $\gamma_1(S) \neq \C_S(\Z_2(S))$. Then $\Z(S)=\gamma_{n-1}(S)=\Z(\gamma_1(S))$. Since $\gamma_1(S)$ is not extraspecial by assumption and $\gamma_1(S)^p\leq \Z(S)$ by Theorems \ref{equal2step}(3) and \ref{power}, we have  $\gamma_{n-1}(S) < [\gamma_1(S), \gamma_1(S)]$. Thus if $\gamma_1(S) \neq \C_S(\Z_2(S))$ we may assume $r< n- 1$.

Returning to the general case, the identity (\ref{action}) implies
\[s_r^k \varphi = (s_r\varphi)^k \equiv s_r^{k(a^{r-1}b)} \mod \gamma_{r+1}(S).\]
On the other hand,
\[ s_r^k \varphi = [s_i,s_j]\varphi = [s_i\varphi, s_j\varphi] \equiv [s_i^{a^{i-1}b}, s_j^{a^{j-1}b}] \mod \gamma_{r+1}(S) \equiv (s_r^k)^{a^{i+j-2}b^2} \mod \gamma_{r+1}(S).\]
Hence
\[ b \equiv a^{r+1-i-j} \mod p. \]
\end{proof}

\begin{remark}
If the group $\gamma_1(S)$ is extraspecial, then the conclusion of Theorem \ref{auto.S} is not true. As an example, if $S$ is a Sylow $p$-subgroup of the group $\G_2(p)$, then $S$ has maximal nilpotency class, the subgroup $\gamma_1(S)$ of $S$ is extraspecial and $|\Aut(S)|$ is divisible by $(p-1)^2$.
\end{remark}

\section{Properties of pearls}
Let $p$ be an odd prime, let $S$ be a $p$-group and let $\F$ be a saturated fusion system on $S$.
Suppose $E\in \mathcal{P}(\F)$ is a pearl. By the definition of pearl and the fact that $E$ is $\F$-centric we deduce that either $\Phi(E)=1$ or $\Phi(E)=\Z(E) =\Z(S)$ and in both cases the quotient $E/\Phi(E)$ is a self-centralizing subgroup of $S/\Phi(E)$ having index $p$ in its normalizer. In other words, the group $E/\Phi(E)$ is a soft subgroup of $S/\Phi(E)$, as defined by H\'{e}thelyi.

\begin{definition}
A subgroup $A$ of a $p$-group $P$ is said to be \emph{soft} in $P$ if $\C_P(A)=A$ and  $[\N_P(A) \colon A] =p$.
\end{definition}

\begin{definition}
Let $Q$ be a subgroup of the $p$-group $P$ such that $[P \colon Q] =p^m$.
We set $\N^0(Q) = Q$ and $\N^i(Q)=\N_P(\N^{i-1}(Q))$ for every $1\leq  i \leq m$.
The sequence $\N^0(Q) < \N^1(Q) < \dots < \N^m(Q)=P$ is called the \emph{normalizer tower} of $Q$ in $P$.
If $[\N^i(Q) \colon \N^{i-1}(Q)] =p$ for every $1\leq  i \leq m$ then we say that $Q$ has \emph{maximal} normalizer tower in $P$.
\end{definition}

The next theorem describes some properties of soft subgroups, that are in particular satisfied by the subgroup $E/\Phi(E)$ of $S/\Phi(E)$ whenever $E$ is a pearl of $S$.

\begin{theorem}\label{Het} \cite[Lemma 2, Corollary 3]{Het1}\cite[Theorem 1, Lemma 1 and Corollary 6]{Het2}\cite[Theorem 2.1]{Het3}
Let $P$ be a $p$-group and let $A$ be a soft subgroup of $P$ such that $[P \colon A]= p^m\geq p^2$. Set
\[H_i= \begin{cases} \Z_i(\N^i(A)) \quad &\text{ if }\quad 1\leq i \leq m-1
\\ \Z(\N^1(A))[P,P]  \quad &\text{ if }\quad i=m \end{cases}\]
Then

\begin{enumerate}\setlength\itemsep{0.5em}
\item $A$ has maximal normalizer tower in $P$ and the members of such a tower are the only subgroups of $P$ containing $A$;
\item the group $\N^i(A)$ has nilpotency class $i+1$ for every $i\leq m-1$;
\item $H_i \leq \N^{i-1}(A)$ and $H_i$ is characteristic in $\N^i(A)$ for every $1\leq i \leq m$;
\item $[H_{i+1} \colon H_i] = [\N^{i}(A) \colon H_{i+1}] = [\N^0(A) \colon H_1]= p$  for every $1\leq i\leq m-1$;
\item $\N^i(A)/H_i \cong \C_p \times \C_p$  for every $1\leq i \leq m$;
\item the members of the sequence \\$\Z(\N^1(A))=H_1 < H_2 < \dots < H_{m-1} < H_m$ \\are the only subgroups of $H_m$ normalized by $A$ that contain $\Z(\N^1(A))$;
\item if $Q$ is a soft subgroup of $P$ with $[P \colon Q]  \geq p^2$, then $H_m =  \Z(\N^1(Q))[P,P]$;
\item if $Q$ is a soft subgroup of $P$ and $Q\leq \N^{m-1}(A)$ then there exists $g\in P$ such that $Q^g=A$.
\end{enumerate}
\end{theorem}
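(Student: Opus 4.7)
Since $A$ is self-centralizing in $P$, we have $A \leq \C_P(A) = A$, so $A$ is abelian; in particular $\N^1(A)/A$ acts faithfully on $A$ as a group of order $p$, so $\N^1(A)$ has nilpotency class exactly $2$ and $\Z(\N^1(A)) \leq A$. My overall plan is to prove (1)--(8) together by induction on $m=\log_p[P:A]$, with the essential structural work concentrated in (1) and (2); the remaining assertions will follow by formal arguments once the normalizer tower is understood.

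For (1) and (2), the key observation is that $A$ remains a soft subgroup of $\N^i(A)$ for every $i$, since $\C_{\N^i(A)}(A) \leq \C_P(A)=A$ and $[\N_{\N^i(A)}(A):A] = [\N^1(A):A] = p$. I would show inductively that $[\N^{i+1}(A):\N^i(A)] = p$ and that $\N^{i+1}(A)$ has nilpotency class exactly $i+2$: choosing $x \in \N^{i+1}(A)\setminus \N^i(A)$, the commutator map $[x,\,\cdot\,]$ restricted to $A$ (which is faithful because $A = \C_P(A)$) increases the nilpotency class by exactly one at each step, and any subgroup strictly between $\N^i(A)$ and $\N^{i+1}(A)$ would force the centralizer of $A$ to strictly exceed $A$ somewhere in the tower, contradicting softness. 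The uniqueness assertion in (1) --- that every subgroup containing $A$ is in the tower --- then follows, because any such subgroup $B$ lies in $\N^k(A)$ for the minimal $k$, and softness of $A$ in $\N^k(A)$ combined with the just-established strict tower forces $B = \N^k(A)$.

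Parts (3)--(5) are then formal: $H_i = \Z_i(\N^i(A))$ is characteristic in $\N^i(A)$ by definition, the inclusion $H_i \leq \N^{i-1}(A)$ comes from the class formula $\mathrm{cl}(\N^{i-1}(A)) = i$ proved in (2), and $\N^i(A)/H_i$ is generated by the image of $A$ together with that of an element of $\N^{i+1}(A) \setminus \N^i(A)$, giving the isomorphism to $\C_p\times \C_p$; the indices in (4) follow by comparing upper central series along the tower. For (6), I would use the rigidity of the tower: any $A$-invariant subgroup of $H_m$ lying above $\Z(\N^1(A))$ is forced into the chain $H_1 < H_2 < \cdots$ by tracking the action of $A$ through successive centers. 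Part (7) then makes $H_m = \Z(\N^1(A))[P,P]$ intrinsic to $P$, since $[P,P]$ does not depend on $A$ and $\Z(\N^1(A))$ is recoverable as $H_1$ for any soft subgroup. For (8), once one shows that the normalizer towers of $A$ and $Q$ inside $\N^{m-1}(A)$ coincide (both being chains of strict index $p$ by (1)), a standard Sylow-type argument inside $\N^{m-1}(A)$ produces the element of $P$ conjugating $Q$ to $A$. The main obstacle is the inductive step proving strict index $p$ in the tower and the exact class formula in (1)--(2); this requires the soft hypothesis essentially and is where I expect all the technical difficulty to lie, while the rest of the theorem amounts to careful bookkeeping with the tower.
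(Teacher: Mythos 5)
First, a point of comparison: the paper does not prove this theorem at all --- it is imported verbatim from H\'ethelyi's papers on soft subgroups, so there is no in-paper argument to measure your proposal against. Judged on its own, your sketch correctly locates the difficulty (the strict index-$p$ normalizer tower and the exact class formula in (1)--(2)), but the argument you offer for that key step does not work. Your opening inference --- that $\N^1(A)$ has nilpotency class exactly $2$ because $\N^1(A)/A$ has order $p$ and acts faithfully on the abelian group $A$ --- is false as a general principle: take $A\cong \C_p\times\C_p\times\C_p$ and $x$ acting as a single unipotent Jordan block; then $A\rtimes\langle x\rangle$ has an abelian self-centralizing maximal subgroup and nilpotency class $3$. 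The true reason $\N^1(A)$ has class $2$ is not the faithfulness of the action but the existence of an element $x\in \N^2(A)\setminus\N^1(A)$ (this is where the hypothesis $[P:A]\geq p^2$ enters, and your argument never uses it): such an $x$ gives a second abelian maximal subgroup $A^x\neq A$ of $\N^1(A)$, whence $\N^1(A)=AA^x$, $\Z(\N^1(A))=A\cap A^x$ has index $p^2$, and one reads off simultaneously the class bound, $[A:H_1]=p$, and $\N^1(A)/H_1\cong \C_p\times\C_p$. Your inductive step for higher $i$ has the same problem in amplified form: ``the commutator map $[x,\cdot]$ increases the nilpotency class by exactly one at each step'' and ``any intermediate subgroup would force the centralizer of $A$ to exceed $A$ somewhere in the tower'' are restatements of what must be proved, not arguments; nothing you have written rules out a jump of two in the class or an index $p^2$ in some step of the tower.

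Because (1) and (2) are not actually established, the downstream parts do not follow either, and several of them are not the ``formal bookkeeping'' you describe. The inclusion $H_i\leq \N^{i-1}(A)$ in (3), the index computations in (4), the uniqueness of the chain of $A$-invariant subgroups in (6), the independence of $H_m$ from the choice of soft subgroup in (7), and the conjugacy statement (8) each occupy a separate lemma in H\'ethelyi's papers; in particular (8) is the main theorem of one of them, and ``a standard Sylow-type argument'' is not a proof of it. In short: you have correctly identified where the work lies, but the mechanism that actually does that work --- playing distinct conjugates of $A$ (and of the $\N^{i}(A)$) against each other inside the next level of the tower --- is missing, and the one concrete deduction you do make in its place is invalid.
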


We can now start to study properties of pearls.

\begin{lemma}\label{pearl}
Suppose $S$ has maximal nilpotency class and order $|S|\geq p^4$ and let $E\leq S$ be an $\F$-essential subgroup of $S$. Then the following are equivalent:
\begin{enumerate}
\item $E$ is a pearl;
\item $E$ is contained in neither $\gamma_1(S)$ nor $\C_S(\Z_2(S))$;
\item there exists an element $x\in S\backslash \C_S(\Z_2(S))$ of order $p$ such that \[ \text{ either } \quad E=\langle x\rangle \Z(S) \quad \text{ or } \quad E=\langle x \rangle \Z_2(S);\]
\end{enumerate}
\end{lemma}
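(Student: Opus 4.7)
My plan is to prove $(2)\Rightarrow(1)$, $(1)\Rightarrow(2)$, $(1)\Rightarrow(3)$, and $(3)\Rightarrow(1)$; throughout I rely on the inclusions $\Z(S)\leq\C_S(E)\leq E$ (from $\F$-centricity) and, when $E$ is a pearl, $\Phi(E)=\Z(E)\leq\Z(S)$.

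The direction $(2)\Rightarrow(1)$ is immediate: with $\Z(S)\leq E$ and $E$ contained in neither $\gamma_1(S)$ nor $\C_S(\Z_2(S))$, Lemma~\ref{sbg.max.class} forces $E$ to have maximal nilpotency class, and Corollary~\ref{max.class.not.ess} then yields that $E$ is a pearl. For $(1)\Rightarrow(2)$ I first rule out $E\leq\C_S(\Z_2(S))$: this would force $\Z_2(S)\leq\C_S(E)\leq E$, impossible either because it forces an abelian pearl $E=\Z_2(S)$ to be normal in $S$ (giving $|S|=p^3$, contrary to $|S|\geq p^4$) or because $|\Z_2(S)|=p^2>p=|\Z(E)|$ in the extraspecial case. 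The harder step is ruling out $E\leq\gamma_1(S)$. Since $E$ is a pearl, $[\N_S(E)\colon E]=p$ by Corollary~\ref{auto.pearl} and $E/\Phi(E)$ is a soft subgroup of the maximal-class $p$-group $S/\Phi(E)$. If $E\leq\gamma_1(S)$, then by Theorem~\ref{Het}(1) the maximal subgroup $\gamma_1(S)/\Phi(E)$ of $S/\Phi(E)$ coincides with the penultimate term of the normalizer tower of $E/\Phi(E)$, so by Theorem~\ref{Het}(2) has maximal nilpotency class; together with $\gamma_1(S/\Phi(E))=\gamma_1(S)/\Phi(E)$ (which holds since $\Phi(E)\leq\Z(S)$), this contradicts Corollary~\ref{max.sbg.no.max.class} whenever $|S/\Phi(E)|\geq p^4$. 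The leftover case $|S|=p^4$ with $E$ extraspecial is handled directly: by Theorem~\ref{equal2step}(1) we have $\gamma_1(S)=\C_S(\Z_2(S))$, so the exclusion follows from the previous step.

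For $(1)\Rightarrow(3)$ I use $(1)\Rightarrow(2)$ to pick $x\in E\setminus(\gamma_1(S)\cup\C_S(\Z_2(S)))$; since pearls have exponent $p$ ($p^{1+2}_-$ is not $\F$-essential for $p$ odd) and $x\notin\Phi(E)$, $x$ has order $p$. For abelian pearl $E=\langle x\rangle\Z(S)$ by orders. For extraspecial pearl I must show $\Z_2(S)\leq E$: since $[\Z_2(S),E]\leq\Z(S)\leq\Z(E)$, each $z\in\Z_2(S)$ normalizes $E$ and induces an automorphism of $E$ trivial on $E/\Z(E)$; for $E$ extraspecial of order $p^3$ such automorphisms are inner, so $z\in E\cdot\C_S(E)=E$, giving $\Z_2(S)\leq E$ and $E=\langle x\rangle\Z_2(S)$ by orders. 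Conversely, for $(3)\Rightarrow(1)$: direct inspection shows $E$ is either elementary abelian of order $p^2$ or, via the class-$2$ formula $(xy)^p=x^py^p[y,x]^{p(p-1)/2}=1$ for odd $p$ and elements of order $p$, extraspecial of order $p^3$ and exponent $p$, hence a pearl. The main obstacle is the exclusion $E\not\leq\gamma_1(S)$ in $(1)\Rightarrow(2)$: it hinges on H\'ethelyi's normalizer-tower theorem combined with the fact that $\gamma_1(S)$ does not have maximal class, with the $|S|=p^4$ extraspecial corner handled via the coincidence $\gamma_1(S)=\C_S(\Z_2(S))$.
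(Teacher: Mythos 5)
Your proof is correct, but for the key direction $(1)\Rightarrow(2)$ it takes a genuinely different and heavier route than the paper. The paper disposes of both exclusions in one stroke: Lemma~\ref{self-centr.max.class} already shows that \emph{every} subgroup of $S$ containing a pearl has maximal nilpotency class, while Corollary~\ref{max.sbg.no.max.class} identifies $\gamma_1(S)$ and $\C_S(\Z_2(S))$ as exactly the maximal subgroups that do \emph{not} have maximal class, so neither can contain $E$ --- a two-line argument. You instead exclude $\C_S(\Z_2(S))$ by the centre/centralizer computation and exclude $\gamma_1(S)$ via H\'{e}thelyi's machinery: identifying $\gamma_1(S)/\Phi(E)$ with the penultimate term $\N^{m-1}(E/\Phi(E))$ of the normalizer tower, which has maximal class by Theorem~\ref{Het}(2), against $\gamma_1(S/\Phi(E))=\gamma_1(S)/\Phi(E)$ not having maximal class. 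This is sound, and you correctly isolate the one configuration ($|S|=p^4$ with $E$ extraspecial) where the hypotheses $[S/\Phi(E)\colon E/\Phi(E)]\geq p^2$ and $|S/\Phi(E)|\geq p^4$ fail, resolving it through $\gamma_1(S)=\C_S(\Z_2(S))$. Likewise, in $(1)\Rightarrow(3)$ you obtain $\Z_2(S)\leq E$ for extraspecial $E$ by noting that automorphisms of $p^{1+2}_+$ trivial on $E/\Z(E)$ are inner, whereas the paper reads $\Z_{m-1}(S)\leq E$ with index $p$ directly off Lemma~\ref{sbg.max.class} in its $(2)\Rightarrow(3)$ step. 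Both arguments buy the same conclusions; the paper's version is shorter because it reuses Lemma~\ref{self-centr.max.class}, while yours avoids that lemma for this implication at the cost of leaning on Theorem~\ref{Het} and a case split. The only points worth tightening are small: the existence of a single $x\in E\setminus(\gamma_1(S)\cup\C_S(\Z_2(S)))$ deserves the one-line remark that a $p$-group is never the union of two proper subgroups, and the step from ``$E=\Z_2(S)$ is normal'' to ``$|S|=p^3$'' should mention that $S/\C_S(E)$ embeds in a Sylow $p$-subgroup of $\Aut(\C_p\times\C_p)$, which has order $p$.
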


\begin{remark}
Let $\E$ be a saturated fusion system on a $p$-group $P$ containing a unique index $p$ abelian subgroup $A$ and suppose that there are $\E$-essential subgroups of $P$ distinct from $A$. Then by \cite[Lemma 2.3(a)]{p.index} the $\E$-essential subgroups of $P$ distinct from $A$ are of the form $\langle x \rangle \Z(P)$ or $\langle x \rangle\Z_2(P)$ for some $x\in P \backslash A$. Also, if $|\Z(P)|=p$ then $P$ has maximal nilpotency class \cite[Lemma 2.3(b)]{p.index} and by Lemma \ref{pearl} we conclude that the $\E$-essential subgroups of $P$ distinct from $A$ are pearls.
\end{remark}

\begin{proof}
From the assumption $|S|\geq p^4$ we deduce that $\Z_2(S) < \C_S(\Z_2(S))$. In particular $\Z_2(S)$ is not $\F$-centric and so it is not a pearl.

\begin{description}\setlength\itemsep{0.5em}

\item[$(1 \Rightarrow 2)$]
Suppose $E$ is a pearl. Then by Lemma \ref{self-centr.max.class} and  Corollary \ref{max.sbg.no.max.class} we conclude that the pearl $E$ is contained in neither $\gamma_1(S)$ nor $\C_S(\Z_2(S))$.

\item[$(2 \Rightarrow 3)$]
Suppose $E$ is contained in neither $\gamma_1(S)$ nor $\C_S(\Z_2(S))$.
By Lemma \ref{sbg.max.class}, we get that $E$ has maximal nilpotency class and so $E$ is a pearl by Corollary \ref{max.class.not.ess}.
Lemma \ref{sbg.max.class} also tells us that
if $|E|=p^m$ then $[E \colon Z_{m-1}(S)] =p$. Thus there exists an element $x\in S$ such that either $E=\langle x\rangle \Z(S)\cong \C_p \times \C_p$ or $E=\langle x \rangle \Z_2(S) \cong p^{1+2}_+$. Note in particular that $x\notin \C_S(\Z_2(S))$ because $E\nleq \C_S(\Z_2(S))$.

\item[$(3 \Rightarrow 1)$] Suppose statement 3 holds. Since $S$ has maximal nilpotency class we have $\Z(S) \cong \C_p$ and $\Z_2(S) \cong \C_p \times \C_p$. Thus either $E$ has order $p^2$ or $E$ is non-abelian of order $p^3$. Therefore $E$ is a pearl.  
\end{description}
\end{proof}

\begin{theorem}\label{lift}
Let $E\in \mathcal{P}(\F)$ be a pearl with $[S \colon E]=p^m$. Then $E$ has maximal normalizer tower in $S$, the members $\N^i(E)$ of such tower are the only subgroups of $S$ containing $E$ and for every $1 \leq i\leq m-1$  the group $\N^i(E)$ is not $\F$-essential and $\Aut_S(\N^i(E)) \norm \Aut_\F(\N^i(E))$.
In particular every morphism in $\N_{\Aut_\F(E)}(\Aut_S(E))$ is the restriction of a morphism in $\Aut_\F(S)$ that normalizes each member of the normalizer tower.
\end{theorem}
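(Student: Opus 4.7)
My plan is to treat $E/\Phi(E)$ as a soft subgroup of $S/\Phi(E)$, apply Theorem \ref{Het}, and then use receptivity in $\F$ to iteratively extend $\F$-morphisms of $E$ up to $\Aut_\F(S)$ preserving the tower. First I would note that $\Phi(E)\trianglelefteq S$: if $E\cong \C_p\times\C_p$ then $\Phi(E)=1$, while if $E\cong p^{1+2}_+$ then $\Phi(E)=\Z(E)=\Z(S)$ (using $\Z(S)\leq\C_S(E)\leq E$ together with $|\Z(S)|=p=|\Z(E)|$, since $S$ has maximal nilpotency class by Lemma \ref{self-centr.max.class}). Hence $E/\Phi(E)$ is a soft subgroup of $S/\Phi(E)$ of index $p^m$, and Theorem \ref{Het}(1) yields the maximal normalizer tower of $E$ together with the uniqueness of the subgroups of $S$ containing $E$.

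Next I would check that every $\N^j(E)$ is $\F$-centric and fully normalized, hence receptive in $\F$: centricity follows from $\C_S(\N^j(E))\leq \C_S(E)\leq E$, while full normalization follows because any $\F$-isomorphism from $\N^j(E)$ to $Q$ sends $E$ to a pearl by Corollary \ref{auto.pearl}, forcing $Q=\N^j(E\alpha)$ by uniqueness and thus $|\N_S(Q)|=|\N_S(\N^j(E))|$ since pearls all have maximal normalizer towers. Then, for $1\leq i\leq m-1$ and $\alpha\in \Aut_\F(\N^i(E))$, $(E\alpha)/\Phi(E)$ is soft in $S/\Phi(E)$ and contained in $\N^i(E)/\Phi(E)\leq \N^{m-1}(E/\Phi(E))$, so Theorem \ref{Het}(8) gives $g\in S$ with $E\alpha=E^g$; uniqueness forces $\N^i(E)^g=\N^i(E\alpha)=\N^i(E)$, so $g\in \N^{i+1}(E)$. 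Setting $\beta=c_g^{-1}\alpha$ yields the factorization
\[
\Aut_\F(\N^i(E))=\Aut_S(\N^i(E))\cdot\mathrm{Stab}_{\Aut_\F(\N^i(E))}(E),
\]
where every $\beta$ in the stabilizer preserves the full tower below $\N^i(E)$ and $\beta|_E\in\N_{\Aut_\F(E)}(\Aut_S(E))$.

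Finally I would run a joint induction on $j\in\{0,\ldots,m\}$, establishing simultaneously that (a) every $\phi\in\N_{\Aut_\F(E)}(\Aut_S(E))$ extends to a tower-preserving $\tilde\phi_j\in \Aut_\F(\N^j(E))$, and (b) for $1\leq j\leq m-1$, $\Aut_S(\N^j(E))\trianglelefteq \Aut_\F(\N^j(E))$. Part (b) gives that $\Out_S(\N^j(E))$ is a nontrivial normal $p$-subgroup of $\Out_\F(\N^j(E))$, forbidding a strongly $p$-embedded subgroup and thus ensuring $\N^j(E)$ is not $\F$-essential. Receptivity of $\N^j(E)$ together with normality in (b) at level $j$ produces the extension in (a) at level $j+1$; conversely, the factorization together with (a) at level $j+1$ gives (b) at level $j$, since for $\beta$ in the stabilizer of $E$ and $s\in \N^{j+1}(E)$ one has $\beta^{-1}c_s\beta=c_{\tilde\beta^{-1}(s)}\in\Aut_S(\N^j(E))$ via the extension $\tilde\beta$ of $\beta|_E$. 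Specializing (a) to $j=m$ delivers the final claim of the theorem. The main obstacle is arranging this intertwined induction so as to break the apparent circularity; the hypothesis on $\phi$ supplies the needed normalization at the base $j=0$, which launches the chain of extensions and normalities up the tower.
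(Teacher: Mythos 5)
Your setup is sound: the softness of $E/\Phi(E)$ in $S/\Phi(E)$ together with Theorem \ref{Het}(1) gives the maximal tower and the uniqueness statement, your verification that each $\N^j(E)$ is $\F$-centric and fully normalized works, and the factorization $\Aut_\F(\N^j(E))=\Aut_S(\N^j(E))\cdot\mathrm{Stab}_{\Aut_\F(\N^j(E))}(E)$ via Theorem \ref{Het}(8) is a correct (and pleasant) alternative to the Frattini argument the paper uses for the same purpose. The problem is that the intertwined induction at the end does not close. Your two implications are $(b)_j\Rightarrow(a)_{j+1}$ and $(a)_{j+1}\Rightarrow(b)_j$; the hypothesis on $\phi$ hands you $(a)_1$ for free, but to reach $(a)_2$ you need $(b)_1$, and in your scheme $(b)_1$ can only come from $(a)_2$. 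The chain therefore stalls at the first level instead of climbing the tower. There is also a local error inside the step $(a)_{j+1}\Rightarrow(b)_j$: the identity $\beta^{-1}c_s\beta=c_{s\tilde\beta}$ is valid only when $\tilde\beta$ restricts to $\beta$ on all of $\N^j(E)$, whereas the extension your induction produces restricts to $\beta$ only on $E$. The discrepancy is an $\F$-automorphism of $\N^j(E)$ that is the identity on $E$, and you give no argument that such automorphisms normalize $\Aut_S(\N^j(E))$; repairing this would itself require a separate chain-stabilization argument in the spirit of Lemma \ref{char.series}.

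The missing idea is that the normality in (b) must be established independently of the extension machinery, and that is exactly what the paper does. Every $\N^i(E)$ has maximal nilpotency class by Lemma \ref{self-centr.max.class}, so Corollary \ref{max.class.not.ess} shows it cannot be $\F$-essential once its order exceeds $p^3$ (hence for all $i\geq 2$, and for all $i\geq 1$ when $E$ is extraspecial); being centric and fully normalized with $|\Out_S(\N^i(E))|=p$ and with $\Out_\F(\N^i(E))$ having no strongly $p$-embedded subgroup, one gets $\Out_S(\N^i(E))\norm\Out_\F(\N^i(E))$ from \cite[Proposition A.7(b)]{AKO}. The one genuinely delicate case, which your proposal does not isolate, is $i=1$ with $E$ abelian, where $\N^1(E)\cong p^{1+2}_+$ could a priori itself be a pearl; the paper rules this out by observing that $\Phi(\N^2(E))$ is a maximal subgroup of $\N^1(E)$ that is not $\F$-conjugate to $E$, hence is normalized by $\Aut_\F(\N^1(E))$, so that $\Aut_S(\N^1(E))$ stabilizes $\Phi(\N^1(E))<\Phi(\N^2(E))<\N^1(E)$ and is therefore the normal Sylow $p$-subgroup. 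With (b) established for every $i$ in this way, the iterative extension argument you describe (which coincides with the paper's) runs without circularity.
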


\begin{proof}
As we noticed before the group $E/\Phi(E)$ is soft in $S/\Phi(E)$. Also, $\N^i(E/\Phi(E)) = \N^i(E)/\Phi(E)$.
Hence by Theorem \ref{Het} the group $E$ has maximal normalizer tower in $S$ and the members of such tower are the only subgroups of $S$ containing $E$.
By  Lemma \ref{self-centr.max.class} for every $1\leq  i \leq m$ the group $\N^i(E)$ has maximal nilpotency class. In particular if either $i\geq 2$ or $i\geq 1$ and $E$ is extraspecial,  then $\N^i(E)$ is not $\F$-essential by Corollary \ref{max.class.not.ess}.

Note that for every $i\geq 1$ the group $\N^i(E)$ is $\F$-centric, since it contains $E$.
Also, for every $i\geq 1$ and every $\alpha \in \Hom_\F(\N^i(E),S)$ the group $E\alpha$ is an $\F$-pearl by Corollary \ref{auto.pearl}. In particular $E\alpha$ has maximal normalizer tower in $S$. Since $\F$ is saturated, this fact is enough to guarantee that for every $i\geq 1$ the group $\N^i(E)$ is fully normalized in $\F$. In particular $\Aut_S(\N^i(E))\in \Syl_p(\Aut_\F(\N^i(E)))$ for every $i\geq 1$.
By the definition of $\F$-essential subgroup, we deduce that if either $i\geq 2$ or $i\geq 1$ and $E$ is extraspecial, then the group $\Out_\F(\N^i(E))$ does not have a strongly $p$-embedded subgroup. Since $E$ has maximal normalizer tower in $S$, we have $|\Out_S(\N^i(E))| = p$ for $1\leq i \leq  m-1$. Thus if either $i\geq 2$ or $i\geq 1$ and $E$ is extraspecial,  then we must have $\Out_S(\N^i(E))\norm \Out_\F(\N^i(E))$ by \cite[Proposition A.7(b)]{AKO} and so $\Aut_S(\N^i(E)) \norm \Aut_\F(\N^i(E))$.

We now show that if $E$ is abelian then $\N^1(E)$ is not $\F$-essential and $\Aut_S(\N^1(E)) \norm \Aut_\F(\N^1(E))$. If $\N^1(E)=S$ then there is nothing to prove. Suppose $\N^1(E) < S$. Note that $\N^1(E)\cong p^{1+2}_+$. In particular  $\N^1(E)$  has $p+1$ maximal subgroups and at least $p$ of them are conjugated to $E$ in $\N^2(E)$. The group $\Phi(\N^2(E))$ is a maximal subgroup of $\N^1(E)$, since $[\N^2(E) \colon \N^1(E)]=p$ and $\N^2(E)$ has maximal nilpotency class and so rank $2$. However $\Phi(\N^2(E))$ cannot be $\F$-conjugated to $E$ because $\Phi(\N^2(E)) \norm \N^2(E)$ and $E$ is fully normalized in $\F$. Hence $\Phi(\N^2(E))$ is normalized by  $\Aut_\F(\N^1(E))$. Since $\Aut_S(\N^1(E))$ stabilizes the sequence of subgroups $\Phi(\N^1(E)) < \Phi(\N^{2}(E)) < \N^1(E)$, by\cite[Corollary 5.3.3]{Gor}  we deduce that $\Aut_S(\N^1(E))\norm \Aut_\F(\N^1(E))$. In particular $\Out_\F(\N^1(E))$ does not contain strongly $p$-embedded subgroups and so $\N^1(E)$ is not $\F$-essential.

We finally show that every morphism in $\N_{\Aut_\F(E)}(\Aut_S(E))$ is the restriction of a morphism in $\Aut_\F(S)$ that normalizes each member of the normalizer tower.
Since $E$ is fully normalized in $\F$, it is receptive in $\F$ (\cite[Proposition 1.10]{AO}) and so every morphism $\varphi \in \Aut_\F(E)$ is the restriction of a morphism $\varphi_1$ in $\Hom_\F(\N_\varphi, S)$, where $\N_\varphi = \{ x \in \N^1(E) \mid \varphi c_x \varphi^{-1} \in \Aut_S(E) \}$ (\cite[Definition 1.2]{AO}). If $\varphi \in \N_{\Aut_\F(E)}(\Aut_S(E))$ then $\N_\varphi = \N^1(E)$ by definition. Since the morphism $\varphi_1$ normalizes $E$,
we also deduce that  $\N^1(E)\varphi_1=\N^1(E)$ and so $\varphi_1 \in \Aut_\F(\N^1(E))$. Note that  $\N^1(E)$ is fully normalized, and so receptive, and  we showed that $\Aut_S(\N^1(E)) \norm \Aut_\F(\N^1(E))$. Hence $\N_{\varphi_1}=\N^2(E)$ and we can repeat the argument. Such iteration works for every $1\leq  i \leq m-1$.  In other words, for every $1 \leq i \leq m-1$ there exists an automorphism  $\varphi_{i+1} \in \Aut_\F(\N^{i+1}(E))$ such that $\varphi_{i+1}|_E = \varphi$ and $\varphi_{i+1}|_{\N^j(E)} =\varphi_j$ for every $1\leq j \leq i$. 
Therefore every morphism in $\N_{\Aut_\F(E)}(\Aut_S(E))$ is the restriction of a morphism in $\Aut_\F(S)$ that normalizes each member of the normalizer tower.
\end{proof}

If $E\in \mathcal{P}(\F)$ then we can construct saturated fusion subsystems $\E_i$ of $\F$ defined on $\N^i(E)$ and such that $E \in \mathcal{P}(\E_i)$ for every $1\leq i \leq m-1$.

\begin{lemma}\label{inclusion}
Let $E \in \mathcal{P}(\F)$ be a pearl and let $m$ be such that $[S \colon E] =p^m$. For every $1 \leq i \leq m-1$ let $\E_i$ be the smallest fusion subsystem of $\F$ defined on $\N^i(E)$ such that  $\Aut_{\E_i}(E)=\Aut_\F(E)$ and $\Aut_{\E_i}(\N^i(E)) = \Inn(\N^i(E))\N_{\Aut_\F(\N^i(E))}(E)$. Then $\E_i$ is a saturated fusion subsystem of $\F$ and $E\in \mathcal{P}(\E_i)$. 
\end{lemma}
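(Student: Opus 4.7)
The plan is to verify that $\E_i$ satisfies the saturation axioms by leveraging the saturation of $\F$ together with the lifting property established in Theorem \ref{lift}. First I would confirm that $E$ is a pearl in $\E_i$. By construction every morphism of $\E_i$ is a morphism of $\F$, so centricity of $E$ in $\E_i$, the equality $\Aut_{\E_i}(E)=\Aut_\F(E)$, and the presence of a strongly $p$-embedded subgroup in $\Out_{\E_i}(E)$ are all inherited directly from $\F$. Since $\Aut_{\N^i(E)}(E)=\Aut_S(E)\in\Syl_p(\Aut_\F(E))$ by Corollary \ref{auto.pearl}, $E$ is fully $\E_i$-normalized and the Sylow axiom holds at $E$; an analogous check using Theorem \ref{lift}, which gives $\Aut_S(\N^i(E))=\Inn(\N^i(E))\norm\Aut_\F(\N^i(E))$, handles the Sylow axiom at $\N^i(E)$.

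The crucial step is the extension axiom. By construction every $\E_i$-morphism is a composition of restrictions of elements of $\Aut_\F(E)$ and $\Inn(\N^i(E))\N_{\Aut_\F(\N^i(E))}(E)$. Given $\varphi\in\N_{\Aut_\F(E)}(\Aut_S(E))$, Theorem \ref{lift} supplies an extension $\widetilde\varphi\in\Aut_\F(S)$ normalizing every $\N^j(E)$; in particular $\widetilde\varphi|_{\N^i(E)}\in\N_{\Aut_\F(\N^i(E))}(E)\subseteq\Aut_{\E_i}(\N^i(E))$. Propagating this through the tower step by step, using that each $\N^j(E)$ is receptive in $\F$ and that these lifts preserve the entire tower, extends every generating morphism on $E$ (or on an $\E_i$-conjugate of $E$) to $\N^i(E)$ inside $\E_i$. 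The extension axiom for an arbitrary $\E_i$-morphism then follows by composing the extensions of its generating factors along the tower.

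To identify the $\E_i$-essentials and confirm that $E$ is among them, I would observe that any $\E_i$-essential $F$ has $\Out_{\E_i}(F)$ containing a strongly $p$-embedded subgroup. By the structure of the generating data of $\E_i$ together with Theorem \ref{Het}, $F$ must lie in the $\E_i$-orbit of $E$: each $\N^j(E)$ with $1\le j\le i$ satisfies $\Inn(\N^j(E))\norm\Aut_{\E_i}(\N^j(E))$ (inherited from the corresponding fact in $\F$ via Theorem \ref{lift}), so no such $\N^j(E)$ is $\E_i$-essential. Hence the $\E_i$-essentials are exactly the pearls $\E_i$-conjugate to $E$, which are pearls by Corollary \ref{auto.pearl}, yielding $E\in\mathcal{P}(\E_i)$.

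The main obstacle is the extension axiom for arbitrary $\E_i$-morphisms, not merely for automorphisms of the generating subgroups $E$ and $\N^i(E)$: saturation is not inherited automatically when one restricts automorphism groups in this way. This is precisely where the full strength of Theorem \ref{lift} — which lifts normalizing automorphisms of $E$ through the entire normalizer tower of $E$ in $S$ — becomes indispensable, since it provides the coherent extension data required at every level of the tower inside the smaller $p$-group $\N^i(E)$.
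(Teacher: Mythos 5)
Your proposal diverges from the paper's argument, and the divergence exposes a genuine gap. The paper does not verify the saturation axioms by hand: it forms the finite group $G=\N^i(E)\colon \N_{\Out_\F(\N^i(E))}(E)$ (a semidirect product, which exists because $\N_{\Out_\F(\N^i(E))}(E)$ is a $p'$-group --- a consequence of $\Aut_S(\N^i(E))\in\Syl_p(\Aut_\F(\N^i(E)))$ being normal there and of $\N_{\N^{i+1}(E)}(E)=\N^1(E)$), observes that $\E_i$ is by definition the fusion system generated by $\F_{\N^i(E)}(G)$ and $\Delta=\Aut_\F(E)$, and then invokes the saturation criterion of Broto--Levi--Oliver (\cite[Proposition 5.1]{BLO}), which is exactly the tool built for systems generated by the fusion of a finite group together with a prescribed automorphism group of one $p$-centric subgroup. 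Your route is to check the Sylow and extension axioms directly, but you only do so at the two subgroups $E$ and $\N^i(E)$ and their conjugates along the normalizer tower. Saturation is a condition on \emph{every} subgroup of $\N^i(E)$: for a subgroup $P$ not $\E_i$-conjugate into $E$ and not one of the $\N^j(E)$, the morphism set $\Hom_{\E_i}(P,-)$ consists of arbitrary composites of restrictions of the generators, and nothing in your argument controls $\Aut_{\E_i}(P)$ or produces a fully automized, receptive representative of $P^{\E_i}$. The sentence ``the extension axiom for an arbitrary $\E_i$-morphism then follows by composing the extensions of its generating factors along the tower'' is precisely the assertion that needs proof; one cannot appeal to an Alperin-type decomposition of $\E_i$-morphisms here, since that theorem is itself a consequence of saturation.

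Two smaller points. First, you write that Theorem \ref{lift} gives $\Aut_S(\N^i(E))=\Inn(\N^i(E))$; this is false for $i<m$, since $\Aut_S(\N^i(E))\cong \N^{i+1}(E)/\Z(\N^i(E))$ properly contains $\Inn(\N^i(E))$. What Theorem \ref{lift} gives is normality of $\Aut_S(\N^i(E))$ in $\Aut_\F(\N^i(E))$; combined with $\N_{\N^{i+1}(E)}(E)=\N^1(E)\leq \N^i(E)$ this does yield $\Inn(\N^i(E))\in\Syl_p(\Aut_{\E_i}(\N^i(E)))$, but the argument is not the one you state. Second, your final paragraph's claim that the $\E_i$-essential subgroups are exactly the $\E_i$-conjugates of $E$ is more than the lemma needs and is not justified without first knowing $\E_i$ is saturated; for the conclusion $E\in\mathcal{P}(\E_i)$ it suffices (once saturation is in hand) that $E$ is $\E_i$-centric, fully normalized in $\E_i$, and that $\Out_{\E_i}(E)=\Out_\F(E)$ has a strongly $p$-embedded subgroup, which is the part of your argument that does work.
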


\begin{remark}\label{auto.remark} Note that for every $1 \leq i \leq m-1$ we have \[\Aut_\F(\N^i(E)) = \Aut_S(\N^i(E))\N_{\Aut_\F(\N^i(E))}(E).\]
Indeed for every $i\geq 1$ the group $\N^i(E)$ has maximal nilpotency class by Lemma \ref{self-centr.max.class} and the group $\Aut_\F(\N^i(E))$ acts on $\N^i(E)/\Phi(\N^i(E)) \cong \C_p \times \C_p$. By Theorem \ref{lift} the pearl $E$ has maximal normalizer tower in $S$ and for every $i\geq 1$ we have $\Aut_S(\N^i(E))\norm \Aut_\F(\N^i(E))$. In particular for every $1 \leq  i \leq m-1$ the group $\Aut_S(\N^i(E))$ acts transitively on the conjugates of $\N^{i-1}(E)$ contained in $\N^i(E)$ and by the Frattini Argument (\cite[3.1.4]{KS}) we get $\Aut_\F(\N^i(E)) = \Aut_S(\N^i(E))\N_{\Aut_\F(\N^i(E))}(\N^{i-1}(E))$. Since this statement is true for every $1\leq  i \leq m-1$, we conclude that $\Aut_\F(\N^i(E)) = \Aut_S(\N^i(E))\N_{\Aut_\F(\N^i(E))}(E)$.
\end{remark}

\begin{proof}
Fix $1\leq i \leq m-1$ and set $\N=\N^i(E)$ and $\E=\E_i$. We only need to prove that  $\E$ is saturated. 
Set $G= \N \colon \N_{\Out_\F(\N)}(E)$, $\Delta= \Aut_\F(E)$ and $K=\Out_G(E) = \Out_S(E)\N_{\Out_\F(\N)}(E)$. Hence $G$, $E$, $\Delta$ and $K$ satisfy the assumptions of \cite[Proposition 5.1]{BLO}. By definition, $\E$ is the smallest fusion subsystem of $\F$ containing $\F_\N(G)$ and $\Delta$, and so $\E$ is saturated.  
\end{proof}

\begin{lemma}\label{max.sbg}
Let $E,P \in \mathcal{P}(\F)$ be pearls and suppose that $P\notin E^\F$. Let $M_E$ and $M_P$ be the unique maximal subgroups of $S$ containing $E$ and $P$, respectively (whose uniqueness is guaranteed by Theorem \ref{lift}). Then $M_P \notin M_E^\F$ (in particular $M_P\neq M_E$).
\end{lemma}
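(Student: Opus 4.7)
The plan is to argue by contradiction. Suppose that $M_P \in M_E^\F$ and fix $\alpha \in \Hom_\F(M_E, S)$ with $M_E \alpha = M_P$. By Corollary \ref{auto.pearl} the image $E\alpha$ is again a pearl of $S$, and Theorem \ref{lift} applied to $E\alpha$ identifies $M_P$ as the unique maximal subgroup of $S$ containing $E\alpha$. Hence $M_P$ contains two pearls $E\alpha$ and $P$, each having $M_P$ as its unique maximal overgroup. The aim is to produce $g \in S$ with $(E\alpha)^g = P$; composing with $\alpha$ this gives $P \in E^\F$, contradicting the hypothesis.

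When $E\alpha$ and $P$ have the same isomorphism type the argument is a direct application of Theorem \ref{Het}(8). If both are abelian, they are soft subgroups of $S$ of the same index and each has $M_P$ as the penultimate term of its normalizer tower by Theorem \ref{lift}; Theorem \ref{Het}(8) then yields the required $g \in S$. If both are extraspecial, then $\Phi(E\alpha) = \Phi(P) = \Z(S)$, and the images of $E\alpha$ and $P$ in $\ov{S} = S/\Z(S)$ are abelian pearls of the maximal-class $p$-group $\ov{S}$ (the maximal nilpotency class of $\ov{S}$ follows from Lemma \ref{self-centr.max.class} applied to $P/\Z(S)$), sharing $M_P/\Z(S)$ as their common unique maximal overgroup. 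Applying the abelian case inside $\ov{S}$ and lifting using $\Z(S) \leq E\alpha \cap P$ gives the desired $g \in S$.

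The main obstacle is the mixed case, where one pearl is abelian and the other extraspecial. Here an $\F$-conjugacy between $E\alpha$ and $P$ is impossible a priori (the orders differ), so the case must be eliminated outright. Writing $E\alpha = \langle x \rangle \Z(S)$ and $P = \langle y\rangle \Z_2(S)$ via Lemma \ref{pearl}, I would distinguish whether $E\alpha \leq P$ or $E\alpha \cap P = \Z(S)$. In the former subcase, orders force $P = \N^1_S(E\alpha)$, which is not $\F$-essential by Theorem \ref{lift} --- a contradiction, since $P$ is a pearl. In the latter subcase, Theorem \ref{Het}(1) applied to the soft subgroup $E\alpha$ of $M_P$ forces $\langle E\alpha, P\rangle$ to coincide with some $\N^i_{M_P}(E\alpha)$; placing the $\F$-essential extraspecial pearl $P$ as an index-$p$ subgroup of the maximal-class group $\N^i(E\alpha)$ and invoking again the non-essentiality of the tower members (Theorem \ref{lift}) produces the final contradiction. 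This completes the argument and yields $M_P \notin M_E^\F$, and in particular $M_P \neq M_E$.
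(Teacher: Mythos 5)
Your overall strategy matches the paper's: reduce to two pearls sharing the same unique maximal overgroup, settle the equal-isomorphism-type case with Theorem \ref{Het}(8), and eliminate the mixed (one abelian, one extraspecial) configuration by contradiction. The initial reduction, the same-type case (including the passage to $S/\Z(S)$ for two extraspecial pearls, which is exactly how softness is used in the paper), and the subcase $E\alpha \leq P$, where $P = \N^{1}(E\alpha)$ is forced and Theorem \ref{lift} gives the contradiction, are all correct.

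The gap is in the mixed case with $E\alpha \cap P = \Z(S)$. Writing $\langle E\alpha, P\rangle = \N^{i}(E\alpha)$ is fine, but your claim that $P$ is then an \emph{index-$p$} subgroup of $\N^{i}(E\alpha)$ is false in general: $|\N^{i}(E\alpha)| = p^{i+2}$ and $i$ need not be $2$ (for instance $P = \langle xz\rangle\Z_2(S)$ with $E\alpha = \langle x\rangle\Z(S)$ and $z \in \Z_{i+1}(S)\setminus\Z_{i}(S)$ first enters the tower at level $i$). Moreover, even when $P$ is maximal in $\N^{i}(E\alpha)$, it is not a member of the normalizer tower of $E\alpha$ — it does not contain $E\alpha$ — so "the non-essentiality of the tower members" cannot be applied to $P$ directly. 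What is needed, and what the paper supplies, is a descent: taking $i \geq 2$ minimal with $P \leq \N^{i}(E\alpha)$, the $p+1$ maximal subgroups of the maximal-class group $\N^{i}(E\alpha)$ are $\Z_{i+1}(S)$, $\N^{i-1}(E\alpha)$, and $p-1$ subgroups $S$-conjugate to $\N^{i-1}(E\alpha)$; since $P \nleq \gamma_1(S)$ rules out $P \leq \Z_{i+1}(S)$ and minimality rules out $P \leq \N^{i-1}(E\alpha)$, some $S$-conjugate of $P$ lies in $\N^{i-1}(E\alpha)$, and iterating produces an $S$-conjugate of $P$ equal to $\N^{1}(E\alpha)$, which is not $\F$-essential — the contradiction. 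Your sketch skips this descent, and without it the subcase is not closed.
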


\begin{proof}
Assume by contradiction that $M_E = M_P\alpha$ for some $\alpha \in \Hom_\F(M_P,S)$. By Corollary \ref{auto.pearl} the group $P\alpha$ is a pearl. Also, $P\alpha \leq M_P\alpha= M_E$ and $P\alpha \notin E^\F$. Upon replacing $P$ by $P\alpha$ we can assume that $M_E = M_P$.

Suppose $E\cong P$. Then $\Phi(E)=\Phi(P)$ and by Theorem \ref{Het}(8) the groups $E$ and $P$ are conjugated in $S$, contradicting the fact that $P\notin E^\F$. Thus $E$ and $P$ are not isomorphic and we can assume that $E$ is abelian and $P$ is extraspecial. By Lemma \ref{lift} the group $\N^1(E)$ is not $\F$-essential so $\N^1(E)\neq P$.
Let $i\geq 2$ be the smallest integer such that $P\leq \N^i(E)$ (such $i$ exists because $P\leq M_E$). The maximal subgroups of $\N^i(E)$ are $\Z_{i+1}(S)$, $\N^{i-1}(E)$ and $p-1$ subgroups conjugated in $S$ to $\N^{i-1}(E)$. Therefore there exists $g\in S$ such that $P^g \leq \N^{i-1}(E)$. Iterating this argument we get that there exists $g\in S$ such that $P^g = \N^1(E)$, contradicting the fact that $P$ is $\F$-essential.

Therefore $M_E \neq M_P\alpha$ for every $\alpha \in \Hom_\F(M_P,S)$ and $M_P \notin M_E^\F$.
\end{proof}

\begin{definition}\label{delta}
Let $E\in \mathcal{P}(\F)$ be a pearl. Let  $\lambda\in \GF(p)^*$ be an element of order $p-1$. We denote by $\varphi_\lambda$ the automorphism of $S$ that normalizes $E$ and acts on $E/\Phi(E)$ as \[\begin{pmatrix} \lambda^{-1} & 0 \\ 0 & \lambda \end{pmatrix},\]
with respect to some basis $\{ x\Phi(E), z\Phi(E)\}$ of $E/\Phi(E)$, where $\langle z \rangle= \Z(S)$ if $E$ is abelian and $\langle z \rangle\Phi(E)=\Z_2(S)$ otherwise.

We set $\Delta_\F(E) = \{ \varphi_\lambda \in \Aut_\F(S)| \lambda\in \GF(p)^*$ is an element of order $p-1 \}$.
\end{definition}

Note that $\Delta_\F(E)$ is a subset of $\Aut_\F(S)$ and Corollary \ref{auto.pearl} and Theorem \ref{lift} guarantee that $\Delta_\F(E) \neq \emptyset$. Each automorphism $\varphi_\lambda \in \Delta_\F(E)$ normalizes the members of the normalizer tower of $E$ and the members of the lower central series of $S$. The study of the action of the elements of $\Delta_\F(E)$ on the subgroups of $S$ is the key point of the proofs of the main theorems appearing in this paper.

\begin{lemma}\label{action.frat}
Let $E\in \mathcal{P}(\F)$ be a pearl and let $\varphi_\lambda\in \Delta_\F(E)$. Then $\varphi_\lambda$ centralizes $\Phi(E)$.
\end{lemma}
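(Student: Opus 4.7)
My plan is to split into the two cases of the definition of pearl.

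If $E$ is an abelian pearl, then $E\cong \C_p\times \C_p$, so $\Phi(E)=1$ and the statement is vacuous.

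The substantive case is when $E$ is an extraspecial pearl, i.e.\ $E\cong p^{1+2}_+$, so that $\Phi(E)=[E,E]=\Z(E)=\Z(S)$ has order $p$. Here I would use the basis $\{x\Phi(E),z\Phi(E)\}$ of $E/\Phi(E)$ with respect to which $\varphi_\lambda$ acts as $\mathrm{diag}(\lambda^{-1},\lambda)$ (as in Definition \ref{delta}), and note that $\Phi(E)=\langle [x,z]\rangle$. By the definition of $\varphi_\lambda$, there exist $u,v\in \Phi(E)$ with $x\varphi_\lambda = x^{\lambda^{-1}}u$ and $z\varphi_\lambda=z^\lambda v$. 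Since $u,v\in \Phi(E)=\Z(E)$, the commutator identities in $E$ reduce to
\[ [x,z]\varphi_\lambda = [x\varphi_\lambda,z\varphi_\lambda] = [x^{\lambda^{-1}}u,\,z^\lambda v] = [x^{\lambda^{-1}},z^\lambda] = [x,z]^{\lambda^{-1}\lambda} = [x,z], \]
where the last equality uses that $E$ has exponent $p$ (since $p$ is odd and $E\cong p^{1+2}_+$), so that the commutator map $E/\Phi(E)\times E/\Phi(E)\to \Phi(E)$ is bilinear. Hence $\varphi_\lambda$ fixes a generator of $\Phi(E)$, and thus centralizes $\Phi(E)$.

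Conceptually the calculation is just the observation that any automorphism of an extraspecial $p$-group of order $p^3$ (for $p$ odd) acts on the center $\Z(E)=[E,E]$ by the determinant of its induced action on $E/\Z(E)$; since $\det(\mathrm{diag}(\lambda^{-1},\lambda))=1$, the induced action on $\Phi(E)$ is trivial. I do not anticipate any genuine obstacle: the proof is a direct, two-line commutator computation, and the only thing to verify carefully is that the factors $u,v\in \Phi(E)$ disappear, which follows from $\Phi(E)\le \Z(E)$.
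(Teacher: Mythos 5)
Your proof is correct and takes essentially the same route as the paper's: the abelian case is vacuous since $\Phi(E)=1$, and in the extraspecial case one computes $[x,z]\varphi_\lambda=[x^{\lambda^{-1}}u,\,z^{\lambda}v]=[x,z]$ using that $u,v\in\Phi(E)=\Z(E)$ and that $\langle [x,z]\rangle=\Phi(E)$. The only cosmetic remark is that the final equality $[x^{\lambda^{-1}},z^{\lambda}]=[x,z]$ rests on $E$ having class $2$ together with $|\Phi(E)|=p$ (so that $\lambda^{-1}\lambda\equiv 1 \bmod p$ suffices), rather than on $E$ having exponent $p$; this does not affect the argument.
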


\begin{proof}
If $E$ is abelian then $\Phi(E)=1$ and there is nothing to prove. Suppose $E$ is extraspecial and $\Phi(E)=\Z(S)$. Let $x,z\in E\backslash \Phi(E)$ be such that $x\varphi_\lambda = x^{\lambda^{-1}}u$ and $z\varphi_\lambda = z^{\lambda}v$, for some $u,v \in \Phi(E)=\Z(S)$. By properties of commutators (\cite[Lemma 2.2.2]{Gor}) and the fact that $[x,z] \in[E,E] = \Phi(E) = \Z(S)$ we get
\[ [x,z] \varphi_\lambda = [x\varphi_\lambda, z\varphi_\lambda] =[x^{\lambda^{-1}}u,   z^{\lambda}v] = [x,z].\]
Note that $\langle [x,z] \rangle = \Phi(E)$ because $E$ is not abelian. Therefore $\Phi(E)$ is centralized by $\varphi_\lambda$.
\end{proof}

\begin{lemma}\label{lambda.action} Suppose $|S|=p^n \geq p^4$. Let $E\in \mathcal{P}(\F)$ be a pearl and let $\varphi_\lambda\in \Delta_\F(E)$.
Then for every $1\leq i \leq n-1$, and every $s_{i} \in \gamma_i(S) \backslash \gamma_{i+1}(S)$ we have
\[ s_i\varphi \equiv s_i^{a_i} \mod \gamma_{i}(S)^p \quad  \text{ with } \quad a_i = \lambda^{n-i-\epsilon},\]
where $\epsilon = 0$ if $E$ is abelian and $\epsilon =1$ otherwise.
\end{lemma}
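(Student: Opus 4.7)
The plan is to establish that the automorphism $\varphi_\lambda$ acts on each one-dimensional quotient $\gamma_i(S)/\gamma_{i+1}(S)$ by multiplication by the scalar $a_i=\lambda^{n-i-\epsilon}$; the congruence stated in the lemma then follows because $\gamma_i(S)^p\leq\gamma_{i+1}(S)$ by Theorem \ref{power}. Since $\varphi_\lambda$ has order dividing $p-1$, it acts semisimply on the elementary abelian quotient $\gamma_i(S)/\gamma_{i+1}(S)\cong\C_p$, hence as multiplication by some $\mu_i\in\GF(p)^\ast$, and the proof reduces to showing $\mu_i=a_i$.

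I next read off two boundary values of the sequence $(\mu_i)$ directly from the definition of $\varphi_\lambda$ together with Lemma \ref{action.frat}. Since $x\notin\gamma_1(S)$ by Lemma \ref{pearl}, the image of $x$ generates $S/\gamma_1(S)$; the defining relation $x\varphi_\lambda\equiv x^{\lambda^{-1}}\mod\Phi(E)$, combined with $\Phi(E)\leq\gamma_2(S)$, shows that $\varphi_\lambda$ acts on $S/\gamma_1(S)$ as multiplication by $\lambda^{-1}$. At the other end, the basis element $z$ projects to a generator of $\gamma_{n-1-\epsilon}(S)/\gamma_{n-\epsilon}(S)$, and $z\varphi_\lambda\equiv z^\lambda\mod\Phi(E)$ yields $\mu_{n-1-\epsilon}=\lambda$; in the extraspecial case $\epsilon=1$, Lemma \ref{action.frat} additionally guarantees that $\varphi_\lambda$ fixes $\gamma_{n-1}(S)=\Phi(E)$ pointwise, in agreement with $a_{n-1}=\lambda^{0}=1$.

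The engine of the proof is the recursion $\mu_{i+1}=\lambda^{-1}\mu_i$ for $1\leq i\leq n-2$. For any $s\in\gamma_i(S)\backslash\gamma_{i+1}(S)$, writing $x\varphi_\lambda=x^{\lambda^{-1}}u_x$ and $s\varphi_\lambda=s^{\mu_i}u_s$ with $u_x\in\Phi(E)\leq\gamma_{n-1}(S)$ and $u_s\in\gamma_{i+1}(S)$, the identity $[s,x]\varphi_\lambda=[s\varphi_\lambda,x\varphi_\lambda]$ together with bilinearity of the commutator map on the graded Lie ring of $S$ yields
\[
[s,x]\varphi_\lambda\equiv[s,x]^{\mu_i\lambda^{-1}}\mod\gamma_{i+2}(S),
\]
where the error terms $[s,u_x]\in[\gamma_i(S),\gamma_{n-1}(S)]\leq\gamma_{n+i-1}(S)=1$ and $[u_s,x]\in[\gamma_{i+1}(S),S]=\gamma_{i+2}(S)$ disappear modulo $\gamma_{i+2}(S)$. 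By Lemma \ref{pearl} the element $x$ lies in neither $\gamma_1(S)$ nor $\C_S(\Z_2(S))$, so $x$ does not centralise any quotient $\gamma_i(S)/\gamma_{i+2}(S)$ for $1\leq i\leq n-2$; consequently the map $s\mapsto[s,x]$ induces a surjection $\gamma_i(S)/\gamma_{i+1}(S)\to\gamma_{i+1}(S)/\gamma_{i+2}(S)$, and the displayed identity forces $\mu_{i+1}=\lambda^{-1}\mu_i$. Solving this recursion with the top boundary condition $\mu_{n-1-\epsilon}=\lambda$ gives $\mu_i=\lambda^{n-i-\epsilon}=a_i$ for every $1\leq i\leq n-1$.

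The main obstacle I expect is the careful bookkeeping of the error terms above: the various applications of the commutator identity and the conjugation corrections must all be absorbed into $\gamma_{i+2}(S)$, which rests on the inclusions $\Phi(E)\leq\gamma_{n-1}(S)$, $[\gamma_i(S),\gamma_{n-1}(S)]=1$ and $[\gamma_{i+1}(S),S]\leq\gamma_{i+2}(S)$. Once this verification is done cleanly, what remains is just the simple multiplicative recursion displayed above, matched against the two boundary values supplied by the definition of $\varphi_\lambda$ on the pearl.
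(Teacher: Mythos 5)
Your computation of the scalars is sound and matches the paper's own argument: you identify the action of $\varphi_\lambda$ on each quotient $\gamma_i(S)/\gamma_{i+1}(S)$ as multiplication by a scalar $\mu_i$, read off the boundary value $\mu_{n-1-\epsilon}=\lambda$ from the definition of $\varphi_\lambda$ (with $\mu_{n-1}=1$ supplied by Lemma \ref{action.frat} in the extraspecial case), and propagate via the commutator recursion $\mu_{i+1}=\lambda^{-1}\mu_i$ --- exactly the paper's downward induction. The error-term bookkeeping ($u_x\in\Phi(E)\leq\Z(S)$, $[u_s,x]\in\gamma_{i+2}(S)$) is also fine.

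The gap is in your opening sentence: you claim the stated congruence modulo $\gamma_i(S)^p$ ``follows because $\gamma_i(S)^p\leq\gamma_{i+1}(S)$.'' The implication runs the other way. What your argument delivers is $s_i\varphi_\lambda\equiv s_i^{a_i}\mod\gamma_{i+1}(S)$; since $\gamma_i(S)^p$ is contained in $\gamma_{i+1}(S)$ (indeed in $\gamma_{i+2}(S)$ by Theorem \ref{power}, and it is trivial whenever $\gamma_i(S)$ has exponent $p$), the congruence modulo $\gamma_i(S)^p$ is strictly \emph{stronger} than the one you have proved --- when $\gamma_i(S)^p=1$ it asserts the exact equality $s_i\varphi_\lambda=s_i^{a_i}$, which is precisely the form the paper needs later (e.g.\ in Lemma \ref{exp.Si} and Theorem \ref{S1extraspecial}). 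The paper closes this gap with a further step that your proposal omits: since $\varphi_\lambda$ has order prime to $p$ and normalizes $\gamma_{i+1}(S)/\gamma_{i+2}(S)$ inside $\gamma_i(S)/\gamma_{i+2}(S)\cong\C_p\times\C_p$, Maschke's Theorem provides a $\varphi_\lambda$-invariant complement $V_i/\gamma_{i+2}(S)$ on which $\varphi_\lambda$ acts by the same scalar $a_i$, refining the congruence to modulo $\gamma_{i+2}(S)$; iterating this pushes the congruence down to modulo $\gamma_i(S)^p$. Without this final third of the argument the lemma as stated is not established.
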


\begin{proof}
Let $x \in E$ be such that $x$ is contained in neither $\gamma_1(S)$ nor $\C_S(\Z_2(S))$ and $x\varphi_{\lambda} = x^{\lambda^{-1}}u$ for some $u \in \Phi(E)\leq \Z(S)$.
Note that the existence of $x$ is guaranteed by Lemma \ref{pearl} and the existence of $\varphi_\lambda$.
Let $s_1$ be an element of $\gamma_1(S)$ not contained in $\gamma_2(S)$ and set $s_i= [x, s_{i-1}]$ for every $i\geq 2$. Since $x\notin \gamma_1(S)$ and $x\notin \C_S(\Z_2(S))$ we deduce that $\gamma_i(S) = \langle s_i \rangle \gamma_{i+1}(S)$ for every $i\geq 1$.

Set $\epsilon = 0$ if $E$ is abelian and $\epsilon=1$ otherwise.
We first prove by downward induction on $i\leq n-1$ that \[s_i\varphi \equiv s_i^{a_i} \mod \gamma_{i+1}(S),  \text{ where } a_i = \lambda^{n-i-\epsilon}.\]

Suppose $i=n-1$. Note that $s_{n-1} \in \Z(S)$ and by definition of $\varphi_\lambda$ if $E$ is abelian and Lemma \ref{action.frat} otherwise, we get \[ s_{n-1}\varphi_{\lambda} = s_{n-1}^{a_{n-1}} \quad \text{ where } a_{n-1} = \lambda^{1-\epsilon}.\]
Suppose the statement is true for $2\leq i+1\leq n-1$.  Hence
\begin{equation}\label{eq1} s_{i+1}\varphi_\lambda \equiv s_{i+1}^{\lambda^{n-(i+1)-\epsilon}} \mod \gamma_{i+2}(S).\end{equation}

Note that $s_i\varphi_{\lambda} = s_i^{a_i}h$ for some $a_i\in \GF(p)^*$ and $h\in \gamma_{i+1}(S)$.
We have $[x,h]\in \gamma_{i+2}(S)$ and $\gamma_i(S)/\gamma_{i+2}(S) \cong \C_p \times \C_p$.
Therefore
\begin{equation}\label{eq2} [x,s_i] \varphi_{\lambda} = [x^{\lambda^{-1}}u, s_i^{a_i}h] \equiv [x,s_i]^{\lambda^{-1}a_i} \equiv s_{i+1}^{\lambda^{-1}a_i} \mod \gamma_{i+2}(S).\end{equation}

On the other hand, $[x,s_i] = s_{i+1}$, so comparing equations (\ref{eq1}) and (\ref{eq2})  we get
\[s_{i+1}^{\lambda^{n-(i+1)-\epsilon}} \equiv s_{i+1}^{\lambda^{-1}a_i} \mod \gamma_{i+2}(S).\]
Hence
\[ s_i\varphi_\lambda \equiv s_i^{a_i} \mod \gamma_{i+1}(S) \quad  \text{ with } \quad a_i = \lambda^{n-i-\epsilon}.\]

It remains to show that we have indeed $s_i\varphi_\lambda \equiv s_i^{a_i} \mod \gamma_i(S)^p$.

If $i=n-1$ then the statement is true.
Suppose $i< n-1$. Then $\gamma_i(S)/\gamma_{i+2}(S)\cong \C_p \times \C_p$ and $\varphi_\lambda$ normalizes $\gamma_{i+1}(S)/\gamma_{i+2}(S)$. By Maschke's Theorem there exists a subgroup $\gamma_{i+2}(S) < V_i < \gamma_i(S)$ such that $V_i\neq \gamma_{i+1}(S)$ and $V_i/\gamma_{i+2}(S)$ is normalized by $\varphi_{\lambda}$. More precisely the action of $\varphi_{\lambda}$ on $V_i/\gamma_{i+2}(S)$ is the same as the action on $\gamma_i(S)/\gamma_{i+1}(S)$. Thus we can assume $s_i\varphi_\lambda \equiv s_i^{a_i} \mod \gamma_{i+2}(S)$.
If $\gamma_{i+2}(S) =\gamma_i(S)^p$ then we are done. Suppose $\gamma_i(S)^p < \gamma_{i+2}(S)$. Since the quotient $\gamma_i(S)/\gamma_i(S)^p$ has exponent $p$, we deduce that $V_i/\gamma_{i+3}(S)\cong \C_p \times \C_p$. Hence by Maschke's Theorem there exists $\gamma_{i+3}(S)< V_{i+1} < V_i$ normalized by $\varphi_\lambda$ and distinct from $\gamma_{i+2}(S)$. Thus $s_i\varphi_\lambda \equiv s_i^{a_i} \mod \gamma_{i+3}(S)$.
We can iterate this process until we get  $s_i\varphi_\lambda \equiv s_i^{a_i} \mod \gamma_i(S)^p$.
\end{proof}

The next result is a first application of Lemma \ref{lambda.action}.

\begin{lemma}\label{exp.Si}
Suppose that $p$ is an odd prime, $S$ is a $p$-group having sectional rank $k$ and order at least $p^4$ and $\F$ is a saturated fusion system on $S$ such that $\mathcal{P}(\F) \neq \emptyset$ (so $S$ has maximal nilpotency class by Lemma \ref{self-centr.max.class}). Then for every $i\geq 1$, if $\gamma_i(S)$ has order at most $p^{p-1}$ then $\gamma_i(S)$ has exponent $p$. In particular if $|S|\leq p^{p}$ then $S$ has exponent $p$.
\end{lemma}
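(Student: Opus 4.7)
The plan is to argue by contradiction, applying Lemma \ref{lambda.action} simultaneously to an element $s_i \in \gamma_i(S)\setminus\gamma_{i+1}(S)$ and to its $p$-th power. Fix a pearl $E$ and a diagonal automorphism $\varphi_\lambda \in \Delta_\F(E)$ with $\lambda \in \GF(p)^*$ of order $p-1$. Suppose for contradiction that $\gamma_i(S)$ does not have exponent $p$. Since $|\gamma_i(S)| \leq p^{p-1}$, the nilpotency class of $\gamma_i(S)$ is at most $p-2 < p$, so $\gamma_i(S)$ is regular by Hall's theorem and $\gamma_i(S)^p \neq 1$. Being characteristic in $\gamma_i(S)$, hence normal in $S$ and contained in $\gamma_{i+1}(S)$, the subgroup $\gamma_i(S)^p$ must equal $\gamma_j(S)$ for some $i+1 \leq j \leq n-1$. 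By regularity $\gamma_j(S) = \{x^p : x \in \gamma_i(S)\}$, and after possibly replacing $i$ by the largest index $i'<j$ with $\gamma_{i'}(S)^p = \gamma_j(S)$ (still satisfying $|\gamma_{i'}(S)| \leq p^{p-1}$), I can pick $s_i \in \gamma_i(S) \setminus \gamma_{i+1}(S)$ with $s_i^p \in \gamma_j(S) \setminus \gamma_{j+1}(S)$.

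By Lemma \ref{lambda.action}, $s_i \varphi_\lambda = s_i^{a_i} v$ with $a_i = \lambda^{n-i-\epsilon}$ and $v \in \gamma_i(S)^p = \gamma_j(S)$. I would compute $(s_i\varphi_\lambda)^p$ via the Hall--Petrescu formula inside the regular group $\gamma_i(S)$. Since $\gamma_k(\langle s_i^{a_i}, v\rangle) \leq \gamma_{(k-1)i+j}(S)$ for $k \geq 2$, every correction $c_k^{\binom{p}{k}}$ with $2\leq k\leq p-1$ is a $p$-th power of an element of $\gamma_{(k-1)i+j}(S)$ and therefore lies in $\gamma_{(k-1)i+j+1}(S) \leq \gamma_{j+1}(S)$; the top term $c_p$ vanishes as $\gamma_p(\langle s_i^{a_i}, v\rangle) = 1$; and $v^p \in \gamma_j(S)^p \leq \gamma_{j+1}(S)$. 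Hence
\[ (s_i \varphi_\lambda)^p \equiv s_i^{p a_i} = (s_i^p)^{a_i} \pmod{\gamma_{j+1}(S)}. \]
On the other hand, Lemma \ref{lambda.action} applied directly to $s_i^p \in \gamma_j(S)\setminus\gamma_{j+1}(S)$ gives $s_i^p\varphi_\lambda \equiv (s_i^p)^{a_j} \pmod{\gamma_{j+1}(S)}$ with $a_j = \lambda^{n-j-\epsilon}$. Since $s_i^p$ has order $p$ modulo $\gamma_{j+1}(S)$, equating the two expressions for $s_i^p\varphi_\lambda=(s_i\varphi_\lambda)^p$ forces $\lambda^{j-i} \equiv 1 \pmod p$, so $p-1 \mid j-i$. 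But $|\gamma_i(S)| \leq p^{p-1}$ gives $i \geq n-p+1$, and $j \leq n-1$, so $j-i \leq p-2$, a contradiction.

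For the final statement, assume $|S| = p^n \leq p^p$. Then $|\gamma_1(S)| \leq p^{p-1}$, so $\gamma_1(S)$ has exponent $p$ by what was just proved; moreover $S$ has class at most $p-1$ and is therefore regular. By Lemma \ref{pearl} a pearl contains an element $x_0 \in S\setminus\gamma_1(S)$ of order $p$. Any $y \in S\setminus\gamma_1(S)$ can be written as $x_0^a g$ with $g\in\gamma_1(S)$; applying Hall--Petrescu in $S$, all correction terms lie in $[S,S]^p = \gamma_2(S)^p \leq \gamma_1(S)^p = 1$, so $y^p = (x_0^p)^a g^p = 1$. Thus $S$ has exponent $p$. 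The main obstacle is the careful bookkeeping of the Hall--Petrescu corrections in paragraph 2: verifying that each $\gamma_k(\langle s_i^{a_i}, v\rangle)^p$ sits inside $\gamma_{j+1}(S)$ relies on both the class bound in $\gamma_i(S)$ and the choice of $s_i$ ensuring $\gamma_i(S)^p = \gamma_j(S)$ exactly.
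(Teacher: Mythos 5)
Your proof is correct, and its engine is the same as the paper's: compute the $\varphi_\lambda$-eigenvalue of $s_i^p$ in two ways --- once as the $p$-th power of $s_i\varphi_\lambda$, once directly from Lemma \ref{lambda.action} applied inside $\gamma_j(S)$ --- and observe that the resulting congruence $p-1 \mid j-i$ is incompatible with $1 \le j-i \le p-2$. Where you diverge is in the reduction. The paper first invokes Theorem \ref{power} (and the remark following it) to dispose of all $i\ge 2$ outright, since $\gamma_i(S)=\Z_{n-i}(S)$ already has exponent $p$ for purely group-theoretic reasons when $|\gamma_i(S)|\le p^{p-1}$, and to force $n\le p$ and $\gamma_1(S)^p=\Z(S)$; the error term $z$ in $x\varphi_\lambda=x^{a_1}z$ is then central of order $p$, so $(x\varphi_\lambda)^p=(x^p)^{a_1}$ in one line with no commutator corrections, and the contradiction is $n\equiv 2 \bmod (p-1)$ with $4\le n\le p$. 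You keep $i$ general, which forces you to (a) choose $s_i$ via the maximal index $i'$ so that $s_i^p$ actually generates $\gamma_j(S)$ modulo $\gamma_{j+1}(S)$, and (b) control the Hall--Petrescu corrections using regularity of $\gamma_i(S)$, the inclusion $[\gamma_a(S),\gamma_b(S)]\le\gamma_{a+b}(S)$, and the fact that consecutive lower central quotients have order $p$ (so $p$-th powers of elements of $\gamma_m(S)$ land in $\gamma_{m+1}(S)$). I checked this bookkeeping and it holds; note that you crucially use the full strength of Lemma \ref{lambda.action} (congruence modulo $\gamma_i(S)^p$ rather than just modulo $\gamma_{i+1}(S)$) to place $v$ in $\gamma_j(S)$, without which the corrections would not be absorbed by $\gamma_{j+1}(S)$. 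Your closing Hall--Petrescu argument for the exponent of $S$ is likewise a hands-on version of the paper's one-line appeal to regularity of $S$ together with $S=E\gamma_1(S)$ being generated by elements of order $p$. Both routes are valid: the paper's reduction buys brevity, while yours is self-contained modulo Lemma \ref{lambda.action} and does not lean on the $\Omega_1$/power structure of Theorem \ref{power}.
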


\begin{proof}Let $E \in \mathcal{P}(\F)$ be a pearl, let $\varphi_\lambda\in \Delta_\F(E)$ and let $\epsilon=0$ if $E$ is abelian and $\epsilon=1$ otherwise.
Aiming for a contradiction, suppose there exists $i\geq 1$ such that the group $\gamma_i(S)$ has order at most $p^{p-1}$ and does not have exponent $p$. By Lemma \ref{power} we get $|S|=p^n\leq p^p$, $\gamma_i(S) = \gamma_1(S)$  and $\gamma_1(S)^p = \Z(S)$.
Take $x\in \gamma_1(S) \backslash \gamma_2(S)$. Then by Lemma \ref{lambda.action} we get
\[ (x^p)^{\lambda^{1 - \epsilon}} = (x^p)\varphi_\lambda = (x\varphi_\lambda)^p =(x^{\lambda^{n-1-\epsilon}}z)^p,\]
for some $z\in \Z(S)=\gamma_1(S)^p$.
Note that $z^p=1$ and since $z$ commutes with $x$ we deduce $(x^p)^{\lambda^{1 - \epsilon}} = (x^{\lambda^{n-1-\epsilon}})^p$. Note that $\gamma_1(S)$ is a regular group because it has order at most $p^{p-1}$. If $x^p=1$ then $\gamma_1(S)$ is generated by elements of order $p$ ($\gamma_2(S)$ has exponent $p$ because $|S|\leq p^p$) and so it has exponent $p$, contradicting the assumptions. Thus $x^p\neq 1$ and $1 \equiv n-1 \mod p-1$, that implies $n\equiv2 \mod p-1$. However $3 \leq n\leq p$,  a contradiction. Thus the group $\gamma_i(S)$ has exponent $p$.

Note that $E\nleq \gamma_1(S)$ by Lemma \ref{pearl}, so $S=E\gamma_1(S)$. If $|S|\leq p^p$ then $|\gamma_1(S)|\leq p^{p-1}$ and $S$ is a regular group (as defined by P. Hall) generated by elements of order $p$. Therefore $S$ has exponent $p$.
\end{proof}

We proceed characterizing the case in which the group $\gamma_1(S)$ is extraspecial.

\begin{theorem}\label{S1extraspecial}
Let $p$ be an odd prime, let $S$ be a $p$-group and let $\F$ be a saturated fusion system on $S$. If $\mathcal{P}(\F)\neq \emptyset$ then $S$ has maximal nilpotency class and the following are equivalent:
\begin{enumerate}
\item $\gamma_1(S)$  is extraspecial;
\item $\gamma_1(S) \neq \C_S(\Z_2(S))$;
\item $\mathcal{P}(\F)_a \neq \emptyset, |S| = p^{p-1}$ and $\gamma_1(S)$ is not abelian.
\end{enumerate}
In particular, if any of the above conditions holds, then $p\geq 7$ and $S$ has exponent $p$.
\end{theorem}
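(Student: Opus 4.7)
The claim that $S$ has maximal nilpotency class is immediate from Lemma~\ref{self-centr.max.class}, so the members $\gamma_i(S)$ and $\Z_i(S)$ form the standard series. I would assume $|S|\geq p^4$, so that $\gamma_1(S)$ is defined (the case $|S|=p^3$ reducing to $S\cong p^{1+2}_+$). My plan is to prove the equivalences by the cyclic chain $(1)\Rightarrow(2)\Rightarrow(3)\Rightarrow(1)$ and then read off the ``in particular'' clauses. The step $(1)\Rightarrow(2)$ is immediate: if $\gamma_1(S)=\C_S(\Z_2(S))$ then $\Z_2(S)\leq\Z(\gamma_1(S))$, giving $|\Z(\gamma_1(S))|\geq p^2$, which contradicts $\gamma_1(S)$ being extraspecial.

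For $(2)\Rightarrow(3)$, I would combine Theorems~\ref{positive.deg} and \ref{equal2step}: the hypothesis $\gamma_1(S)\neq\C_S(\Z_2(S))$ forces the degree of commutativity $l=0$ and restricts $n$ to be even with $6\leq n\leq p+1$. Passing to the graded Lie algebra $L=\bigoplus_{i\geq 1}\gamma_i(S)/\gamma_{i+1}(S)$, Lemma~\ref{lambda.action} shows that $\varphi_\lambda\in\Delta_\F(E)$ acts on $L_i$ as $\lambda^{n-i-\epsilon}$, with $\epsilon\in\{0,1\}$ according as the pearl $E$ is abelian or extraspecial. Since $l=0$, there is some nonzero graded bracket $L_i\otimes L_j\to L_{i+j}$, whose $\varphi_\lambda$-equivariance forces $n\equiv\epsilon\pmod{p-1}$. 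In the allowed range this congruence admits the unique solution $n=p-1$ with $\epsilon=0$, and no solution with $\epsilon=1$. Hence $\mathcal{P}(\F)_e=\emptyset$, $\mathcal{P}(\F)_a\neq\emptyset$, $|S|=p^{p-1}$, and $p\geq 7$ (since $n\geq 6$). Finally $\gamma_1(S)$ must be non-abelian, because an abelian $\gamma_1(S)$ with $n\geq 6$ would give $l=n-3>0$.

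For $(3)\Rightarrow(1)$, a reprise of the equivariance computation for a graded bracket at depth $l$ yields $n+l\equiv\epsilon\pmod{p-1}$; with $n=p-1$, $\epsilon=0$, and $0\leq l\leq n-3=p-4$ the only admissible value is $l=0$, so $\gamma_1(S)\neq\C_S(\Z_2(S))$ and reversing the argument of $(1)\Rightarrow(2)$ gives $\Z(\gamma_1(S))=\gamma_{n-1}(S)$ of order $p$. It remains to prove $[\gamma_1(S),\gamma_1(S)]=\gamma_{n-1}(S)$, i.e., $\gamma_1(S)$ has nilpotency class~$2$, and this is the main obstacle. My approach is to work in the graded Lie algebra of $\gamma_1(S)$, writing $[s_i,s_j]=c_{ij}s_{i+j}$ on the eigenvector basis supplied by Lemma~\ref{lambda.action}, and to exploit: (a)~the recursion $c_{ij}=c_{i+1,j}+c_{i,j+1}$ for $i+j\leq n-2$, arising from Jacobi with $x\in E\setminus(\gamma_1(S)\cup\C_S(\Z_2(S)))$ acting on $L$ as the derivation $s_i\mapsto s_{i+1}$; (b)~antisymmetry $c_{ij}+c_{ji}=0$; (c)~the parity constraint imposed by $n=p-1$ being even; and (d)~the remaining Jacobi identities $c_{ij}c_{i+j,k}+c_{jk}c_{j+k,i}+c_{ki}c_{k+i,j}=0$ among triples. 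Together these force $c_{ij}=0$ whenever $i+j<n-1$, which is precisely the extraspecial condition. The ``in particular'' clauses follow at once: $p\geq 7$ is established in the $(2)\Rightarrow(3)$ step, and $|S|=p^{p-1}\leq p^p$ gives $S$ exponent~$p$ by Lemma~\ref{exp.Si}.
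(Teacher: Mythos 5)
Your steps $(1)\Rightarrow(2)$ and $(2)\Rightarrow(3)$ are sound, and the latter takes a genuinely different route from the paper: the paper derives the congruence $n\equiv\epsilon\pmod{p-1}$ by noting that $\varphi_\lambda$ normalizes three distinct maximal subgroups of $S$ and hence acts as a scalar on $S/\gamma_2(S)$, while you extract it from a nonzero graded bracket supplied by $l=0$. Both work. (One ordering point: you must establish that $\gamma_1(S)$ is non-abelian \emph{before} using $l=0$ to produce a nonzero bracket, since for abelian $\gamma_1(S)$ the degree of commutativity is defined differently; your argument for non-abelianness is fine, it just has to come first.)

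The genuine gap is in $(3)\Rightarrow(1)$, precisely where you flag the main obstacle: the ingredients (a)--(d) do not force $c_{ij}=0$ for $i+j<n-1$. The recursion (a) expresses every $c_{ij}$ as $\sum_k\binom{n-1-i-j}{k}\,t_{i+k}$ in terms of the top coefficients $t_i=c_{i,n-1-i}$; antisymmetry then amounts to $t_i=-t_{n-1-i}$ and is otherwise automatic; and the triple Jacobi identities are almost all vacuous because any factor $c_{i+j,k}$ with $i+j+k\geq n$ already vanishes. Concretely, for $p=7$, $n=6$ the assignment $t_1=c_{1,4}=0$, $t_2=c_{2,3}=1$ satisfies (a), (b) and every identity in (d) (the only nontrivial one, from the triple $(1,2,2)$, reads $(t_1+t_2)t_1=0$), yet yields $c_{1,2}=c_{1,3}=t_1+t_2=1\neq 0$; no parity consideration rules it out. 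The missing input is the defining property of $\gamma_1(S)$: by the quoted Hauptsatz, $\gamma_1(S)=\C_S(\gamma_j(S)/\gamma_{j+2}(S))$ for $2\leq j\leq n-3$, so $[s_1,s_j]\in\gamma_{j+2}(S)$ and hence $c_{1,j}=0$ for these $j$. With this base case your recursion (a) alone finishes by induction on $i$ — and this is exactly the input the paper's proof hinges on, in the form ``$[s_1,s_i]=1$ for $i\leq n-3$''. Two further points you should make explicit. First, vanishing of the leading coefficient $c_{ij}$ is not yet the statement $[s_i,s_j]\in\Z(S)$: one needs the weight argument again (a nonzero $[s_i,s_j]\in\gamma_k(S)\setminus\gamma_{k+1}(S)$ forces $k\equiv i+j\pmod{p-1}$ with $i+j\leq k\leq p-2$, hence $k=i+j$), which the paper carries out. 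Second, the paper sidesteps propagating through all pairs $(i,j)$ with a normality trick: once $s_1\Z(S)$ is central in $\gamma_1(S)/\Z(S)$, the preimage of $\Z(\gamma_1(S)/\Z(S))$ is a normal subgroup of $S$ contained in $\gamma_1(S)$ but not in $\gamma_2(S)$, hence equals $\gamma_1(S)$, giving $[\gamma_1(S),\gamma_1(S)]\leq\Z(S)$ in one stroke.
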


\begin{proof}
By Lemma \ref{self-centr.max.class} the group $S$ has maximal nilpotency class. 
Let $E \in \mathcal{P}(\F)$ be a pearl and  let $\varphi_\lambda\in \Delta_\F(E)$.
\begin{description}\setlength\itemsep{0.5em}
\item[$(1 \Rightarrow 2)$] Suppose $\gamma_1(S)$ is extraspecial. Then $\Z(\gamma_1(S))=\Z(S)$ so $\gamma_1(S) \neq \C_S(\Z_2(S))$.

\item[$(2 \Rightarrow 3)$]  Suppose $\gamma_1(S) \neq \C_S(\Z_2(S))$.
Since $\gamma_1(S)$ does not centralize $\Z_2(S)$, it cannot be abelian.
Let $M$ be the unique maximal subgroup of $S$ containing $E$ (whose uniqueness is guaranteed by Theorem \ref{Het}(1)). Then $M\neq \gamma_1(S)$ and $M\neq \C_S(\Z_2)$ by Lemma \ref{pearl}.
So the morphism $\varphi_\lambda$ normalizes the distinct groups $\gamma_1(S)$, $\C_S(\Z_2(S))$ and $M$. Since $S/\gamma_2(S) \cong \C_p \times \C_p$ we deduce that $\varphi_\lambda$ acts as a scalar on $S/\gamma_2(S)$.
If $E\cong p^{1+2}_+$ then by Lemma \ref{lambda.action} the morphism $\varphi_\lambda$ acts on $\gamma_1(S)/\gamma_2(S)$ as $\lambda^{n-2}$. Thus we need $n-2 \equiv -1 \mod (p-1)$, that is $n \equiv1 \mod(p-1)$. In particular $n$ is odd, as $p-1$ is even. Therefore by Theorem \ref{equal2step} we conclude $\gamma_1(S)=\C_S(\Z_2(S))$, contradicting the assumptions. Hence we need $E\cong \C_p \times \C_p$. In this case, the morphism $\varphi_\lambda$ acts on $\gamma_1(S)/\gamma_2(S)$ as $\lambda^{n-1}$. So $n-1 \equiv -1 \mod (p-1)$, that is $n \equiv0 \mod (p-1)$. So the group $S$ has order $p^{\alpha(p-1)}$ for some $\alpha \in \mathbb{N}$. By Theorem \ref{equal2step} we also have $6 \leq \alpha(p-1) \leq p+1$. Thus $\alpha =1$ and $|S|=p^{p-1}$.

\item[$(3 \Rightarrow 1)$]  Suppose $\mathcal{P}(\F)_a \neq \emptyset$, $|S|=p^{p-1}$ and $\gamma_1(S)$ is not abelian.
Assume $E \in \mathcal{P}(\F)_a$ and let $s_i\in \gamma_i(S) \backslash \gamma_{i+1}(S)$ for $1\leq i \leq n-1$. We first prove that $[s_1,s_i]=1$ for every $1 \leq i \leq n-3$. Since $|S|=p^{p-1}$, by Lemma \ref{exp.Si} we have $\gamma_i(S)^p=1$ for every $i\geq 1$.
Then by Lemma \ref{lambda.action} we get $s_i\varphi_\lambda = s_i^{\lambda^{-i}}$ for every $i\geq 1$.
Fix $2 \leq i\leq n-3$ and aiming for a contradiction suppose that $[s_1,s_i] \neq 1$. Note that $[\gamma_1(S),\gamma_i(S)] \leq \gamma_{i+2}(S)$. Hence $[s_1,s_i] \in \gamma_{k}(S) \backslash \gamma_{k+1}(S)$ for some $i+2 \leq k \leq p-2$.
By properties of commutators we get
\[ [s_1,s_i]^{\lambda^{-k}} = [s_1,s_i]\varphi_\lambda = [s_1^{\lambda^{-1}}, s_i^{\lambda^{-i}}] = [s_1,s_i]^{\lambda^{-1-i}}.\]
Hence $k\equiv 1+i \mod p-1$, contradicting the fact that $i+2 \leq k \leq p-2$. Therefore $[s_1,s_i]=1$ for every $1 \leq i \leq n-3$.

If $\gamma_1(S) =\C_S(\Z_2(S))$ then the same argument shows that $[s_1,s_{n-2}] = 1$ and so  $s_1 \in \Z(\gamma_1(S))$. Since $\Z(\gamma_1(S))=\gamma_i(S)$ for some $i$ and $s_1\notin \gamma_2(S)$ we get $\gamma_1(S)=\Z(\gamma_1(S))$, contradicting the assumption that $\gamma_1(S)$ is not abelian. Hence $\gamma_1(S) \neq \C_S(\Z_2(S))$ and so $\Z(\gamma_1(S))=\Z(S)$.

Consider the group $S/\Z(S)$, which has maximal nilpotency class. Let $Z$ be the preimage in $S$ of $\Z(\gamma_1(S)/\Z(S))$. Then $Z \leq \gamma_1(S)$ and $Z\norm S$, so $Z=\gamma_i(S)$ for some $i$. Also, $s_1\in Z$ by what we proved above. Since $s_1\notin \gamma_2(S)$ we conclude $Z=\gamma_1(S)$. Therefore the group $\gamma_1(S)/\Z(S)$ is abelian. Hence $[\gamma_1(S),\gamma_1(S)] = \Z(S) = \Z(\gamma_1(S))$ and since $\gamma_1(S)$ has exponent $p$ we get $[\gamma_1(S),\gamma_1(S)]=\Phi(\gamma_1(S))$.
Thus $\gamma_1(S)$ is extraspecial.
\end{description}

Suppose any of the above conditions holds. Since $|S|=p^{p-1}$, Lemma \ref{exp.Si} implies that $S$ has exponent $p$.
Since $\gamma_1(S)\neq \C_S(\Z_2(S))$, by Lemma \ref{equal2step} we need $|S| \geq p^6$, and so $p\geq 7$.
\end{proof}

We close this section proving that if the subgroup $\gamma_1(S)$ is not abelian then the fusion system $\F$ contains only one type of pearls and the presence of distinct $\F$-classes of pearls gives information on the order of $S$.

\begin{theorem}\label{type.pearl}
Suppose that the group $\gamma_1(S)$ is not abelian.
Then
\[ \text{ either }\quad \mathcal{P}(\F) = \mathcal{P}(\F)_a \quad \text{ or } \quad \mathcal{P}(\F)= \mathcal{P}(\F)_e.\]
If moreover $\gamma_1(S)$ is not extraspecial and $E\in \mathcal{P}(\F)$ is a pearl then the following hold:
\begin{enumerate}
\item  $\Out_\F(S) = \N_{\Out_\F(S)}(E) \cong \C_{p-1}$ and $\Out_\F(E) \cong \SL_2(p)$;
\item if $|S|=p^n$ and there exists an $\F$-pearl $P$ such that $P\notin E^\F$  then $n \equiv \epsilon \mod (p-1)$, where $\epsilon = 0$ if $E$ is abelian and $\epsilon =1$ otherwise.
\end{enumerate}
\end{theorem}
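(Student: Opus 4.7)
The plan is to split the argument into three layers: first I handle the dichotomy in the ``easy'' case $\gamma_1(S)$ extraspecial, then I prove (1) in the remaining case, and finally I use (1) to finish both the dichotomy and (2). The central tool is that any $p'$-automorphism of $S$ normalizing three pairwise distinct maximal subgroups of $S$ must act as a scalar on $S/\gamma_2(S)\cong \C_p\times \C_p$; matching the two eigenvalues predicted by Lemma \ref{lambda.action} and Definition \ref{delta} then yields a congruence modulo $p-1$.

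\emph{Case $\gamma_1(S)$ extraspecial.} By Theorem \ref{S1extraspecial}, $|S|=p^{p-1}$, $\gamma_1(S)\neq \C_S(\Z_2(S))$, and $\mathcal{P}(\F)_a\neq\emptyset$. If an extraspecial pearl $P$ also existed, any $\varphi_\mu\in \Delta_\F(P)$ would normalize $\gamma_1(S)$, $\C_S(\Z_2(S))$, and the unique maximal subgroup $M_P$ containing $P$, which are three pairwise distinct maximal subgroups of $S$ by Lemma \ref{pearl}. Scalar action on $S/\gamma_2(S)$ would then require the eigenvalue $\mu^{-1}$ on $S/\gamma_1(S)$ to match the eigenvalue $\mu^{n-2}$ on $\gamma_1(S)/\gamma_2(S)$ coming from Lemma \ref{lambda.action} with $\epsilon=1$, forcing $\mu^{p-2}=1$, impossible since $\mu$ has order $p-1$.

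\emph{Proof of (1) when $\gamma_1(S)$ is neither abelian nor extraspecial.} By Theorem \ref{auto.S}, $\Aut(S)=P\rtimes H$ with $H$ cyclic of order dividing $p-1$. Since $\Inn(S)=\Aut_S(S)$ is a Sylow $p$-subgroup of $\Aut_\F(S)$, $\Out_\F(S)$ embeds in $H$ and so has order dividing $p-1$; because the image of $\varphi_\lambda$ has order $p-1$ there (it acts on $S/\gamma_1(S)$ as $\lambda^{-1}$), we deduce $\Out_\F(S)\cong \C_{p-1}=\langle[\varphi_\lambda]\rangle$, and as $\varphi_\lambda$ normalizes $E$ we obtain $\Out_\F(S)=\N_{\Out_\F(S)}(E)$. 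For $\Out_\F(E)\cong \SL_2(p)$: Corollary \ref{auto.pearl} gives $\SL_2(p)=O^{p'}(\Out_\F(E))\leq \Out_\F(E)\leq \GL_2(p)$, and by a Frattini argument it suffices to show $\N_{\Out_\F(E)}(\Out_S(E))=\N_{\SL_2(p)}(\Out_S(E))$. Every element of $\N_{\Aut_\F(E)}(\Aut_S(E))$ lifts to $\Aut_\F(S)$ by Theorem \ref{lift}, so its two eigenvalues $(a,b)$ on $E/\Phi(E)$ in the basis $\{x,z\}$ of Definition \ref{delta} are evaluated at its class $[\varphi_\lambda]^k$ in $\Out_\F(S)\cong \C_{p-1}$ by the two characters ``action on $S/\gamma_1(S)$'' (sending $\varphi_\lambda\mapsto\lambda^{-1}$) and ``action on $\Z(S)$ respectively $\Z_2(S)/\Z(S)$'' (sending $\varphi_\lambda\mapsto\lambda$). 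These characters are mutually inverse on the generator, so $(a,b)=(\lambda^{-k},\lambda^k)$, whence $ab=1$ and $\N_{\Out_\F(E)}(\Out_S(E))\leq \SL_2(p)$, completing~(1).

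\emph{Conclusion.} Using (1): if an abelian pearl $E$ and an extraspecial pearl $P$ coexisted when $\gamma_1(S)$ is not extraspecial, the character $\chi_\Z\colon\Out_\F(S)\to \GF(p)^*$ describing the action on $\Z(S)$ would send $[\varphi_\lambda]\mapsto \lambda$ (an isomorphism onto $\GF(p)^*$) but $[\varphi_\mu]\mapsto 1$ (by Lemma \ref{action.frat}, since $\Phi(P)=\Z(S)$), forcing $\varphi_\mu\in \Inn(S)$---a contradiction because $\Inn(S)$ is a $p$-group while $\varphi_\mu$ has order $p-1$ on $P/\Phi(P)$. This completes the dichotomy. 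For~(2), let $P\notin E^\F$ and take $\varphi_\mu\in \Delta_\F(P)$ of $p'$-order; its image in $\Out_\F(S)\cong \C_{p-1}$ is a power of $[\varphi_\lambda]$, and since every maximal subgroup of a $p$-group is normal and hence fixed by $\Inn(S)$, $\varphi_\mu$ fixes the same maximal subgroups of $S$ as that power of $\varphi_\lambda$; in particular it fixes $\gamma_1(S)$ and $M_E$, and by definition also $M_P$. These three maximal subgroups are pairwise distinct by Lemmas \ref{pearl} and \ref{max.sbg}, so $\varphi_\mu$ acts as a scalar on $S/\gamma_2(S)$, and the equality $\mu^{-1}=\mu^{n-1-\epsilon}$ yields $n\equiv \epsilon\pmod{p-1}$. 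I expect the delicate step to be the identification of the two eigenvalues in (1) as values of a \emph{single} character on the cyclic group $\Out_\F(S)$: it is this rigidity, furnished by Theorem \ref{lift}, that collapses $\Out_\F(E)$ from a potentially larger subgroup of $\GL_2(p)$ down to $\SL_2(p)$.
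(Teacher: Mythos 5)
Your proof is correct and follows the paper's overall strategy: isolate the extraspecial case via Theorem \ref{S1extraspecial}, deduce $\Out_\F(S)\cong\C_{p-1}$ from Theorem \ref{auto.S} and the order of $\varphi_\lambda$ on $S/\gamma_1(S)$, get $\Out_\F(E)\cong\SL_2(p)$ from the determinant-one shape $\mathrm{diag}(\lambda^{-k},\lambda^k)$ of lifted normalizer elements together with a Frattini argument, and extract the congruence in (2) from scalar action on $S/\gamma_2(S)$ forced by three $\varphi_\mu$-invariant maximal subgroups. Two sub-steps differ from the paper, both harmlessly. For the extraspecial case the paper computes the weight of $\varphi_\lambda$ on a commutator $[x,z]$ with $z\in\Z_2(S)\backslash\Z(S)$ and $x\in\gamma_1(S)\backslash\gamma_2(S)$, concluding $[x,z]=1$ and hence the contradiction $\gamma_1(S)=\C_S(\Z_2(S))$; you instead feed the three distinct maximal subgroups $\gamma_1(S)$, $\C_S(\Z_2(S))$, $M_P$ into the scalar-action argument and get $\mu^{p-2}=1$. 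Both are weight computations; yours reuses the same mechanism as part (2), which is economical. For the dichotomy when $\gamma_1(S)$ is not extraspecial, the paper derives the two congruences $n\equiv\epsilon_E$ and $n\equiv\epsilon_P \pmod{p-1}$ and compares them; you instead observe that the character of $\Out_\F(S)\cong\C_{p-1}$ on $\Z(S)$ is faithful when an abelian pearl exists (sending $[\varphi_\lambda]\mapsto\lambda$) yet kills $[\varphi_\mu]$ for an extraspecial pearl by Lemma \ref{action.frat}, forcing $\varphi_\mu\in\Inn(S)$ --- a slightly cleaner clash that does not need Lemma \ref{max.sbg} at that stage, though you still need it (as the paper does) to get the three distinct maximal subgroups in part (2). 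One cosmetic caveat: in part (1) the lifted elements of $\N_{\Out_\F(E)}(\Out_S(E))$ are products of a unipotent element of $\Out_S(E)$ with $\mathrm{diag}(\lambda^{-k},\lambda^k)$, so the clean statement is that they have determinant one rather than that their eigenvalues are literally read off by two characters; this does not affect the conclusion.
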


\begin{proof}
Suppose the group $\gamma_1(S)$ is extraspecial.  Then by  Theorem \ref{S1extraspecial} we have $\mathcal{P}(\F)_a \neq \emptyset$ and $|S|=p^{p-1}$.
Aiming for a contradiction, suppose there exists an extraspecial pearl $E\in \mathcal{P}(\F)_e$ and let $\varphi_\lambda\in \Delta_\F(E)$. By Lemmas \ref{action.frat} and \ref{lambda.action}, the morphism $\varphi_\lambda$ centralizes $\Z(S)=\Phi(E)$ and acts as $\lambda$ on $\Z_2(S)/\Z(S)$ and as  $\lambda^{(p-1)-2}=\lambda^{-2}$ on $\gamma_1(S)/\gamma_2(S)$.
Take $z\in \Z_2(S)\backslash \Z(S)$ and $x \in \gamma_1(S) \backslash \gamma_2(S)$. Since $S$ has exponent $p$ we get
\[ [x,z]\varphi_\lambda = [x^{\lambda^{-2}},z^{\lambda}] =[x,y]^{\lambda^{-1}}.\]
Since $  \lambda^{-1} \not\equiv 1 \mod p-1$, we deduce that $[x,z]=1$ and  so $x\in \C_S(\Z_2(S))$. Thus $\gamma_1(S)=\langle x \rangle \gamma_2(S) =\C_S(\Z_2(S))$, contradicting the fact that $\Z(S)=\Z(\gamma_1(S))$.
Therefore $\mathcal{P}(\F)_e = \emptyset$ and  $\mathcal{P}(\F) = \mathcal{P}(\F)_a$.

Suppose $\gamma_1(S)$ is not extraspecial and $|S|=p^n$. Then by Theorem \ref{auto.S} the group $\Aut_\F(S)$ has order at most $p^{n-1}(p-1)$.
Let $E\in \mathcal{P}(\F)$ be a pearl. Since there exists some  $\varphi_\lambda \in \Delta_\F(E)$ and $\Out_\F(E)$ is isomorphic to a subgroup of $\GL_2(p)$ (Corollary \ref{auto.pearl}), the group $\N_{\Out_\F(E)}(\Out_S(E)) / \Out_S(E)$ is isomorphic to a subgroup of $\C_{p-1} \times \C_{p-1}$ containing $\langle \varphi_\lambda|_E \rangle \cong \C_{p-1}$. By Lemma \ref{lift} we conclude that $\Out_\F(E) \cong \SL_2(p)$ and \[\Out_\F(S) = \N_{\Out_\F(S)}(E) = \langle \varphi_\lambda\rangle \cong \C_{p-1}.\]

Suppose $P\in\mathcal{P}(\F)$ is a pearl. We want to show that $P$ is of the same type of $E$. Clearly this holds if $P\in E^\F$. Assume $P \notin E^\F$. Using what we proved above, we have $\Out_\F(S) = \N_{\Out_\F(S)}(P)$.
Let $M_E$ and $M_P$ be maximal subgroups of $S$ containing $E$ and $P$, respectively. By Lemma \ref{max.sbg} the groups $M_E$ and $M_P$ are uniquely determined and $M_E \neq M_P$.
Also by Theorem \ref{pearl} we have $M_E \neq \gamma_1(S) \neq M_P$. Note that $M_E$, $M_P$ and $\gamma_1(S)$ are maximal subgroups of $S$ normalized by $\Aut_\F(S)$. Therefore $\Aut_\F(S)$ acts as scalars on the quotient $S/\gamma_2(S) \cong \C_p \times \C_p$.
Let $\varphi_\lambda\in \Delta_\F(E)$ and $\varphi_\mu \in \Delta(P)$. Then $\varphi_\lambda$ acts on $\gamma_1(S)/\gamma_2(S)$ as $\lambda^{n-1-{\epsilon_E}}$ and on $M_E/\gamma_2(S)$ as $\lambda^{-1}$, where $\epsilon_E$ is equal to $0$ if $E$ is abelian and to $1$ otherwise. Since the action is scalar we need $n-1 -\epsilon_E \equiv -1 \mod (p-1)$ and so $n \equiv \epsilon_E \mod (p-1)$. Set $\epsilon_P=0$ if $P$ is abelian and $\epsilon_P=1$ otherwise. Since $\varphi_\mu$ acts as scalar on $S/\gamma_2(S)$, we also need $n \equiv \epsilon_P \mod (p-1)$. In particular $\epsilon_E = \epsilon_P$ and so $E$ and $P$ are either both abelian or both extraspecial. We proved that  $n \equiv \epsilon_E \mod (p-1)$ and either $\mathcal{P}(\F) = \mathcal{P}(\F)_a$ or $\mathcal{P}(\F)=\mathcal{P}(\F)_e$.
\end{proof}

\section{Proof of Theorem \ref{main} and simplicity of $\F$}
In this section we prove Theorem \ref{main} and we show that if $\F$ contains an abelian pearl and the subgroup $\gamma_1(S)$ of $S$ is neither abelian nor extraspecial then $O_p(\F)=1$. We also determine some sufficient conditions for $\F$ to be simple.  

\begin{lemma}\label{order.p-1}
Suppose that $p$ is an odd prime, $S$ is a $p$-group of sectional rank $k$ and $\F$ is a saturated fusion system on $S$ such that $\mathcal{P}(\F) \neq \emptyset$. If $p > k+1$ then $p^{k+1} \leq |S|\leq p^{p-1}$.

In particular if $p=k+2$ then $|S|=p^{k+1}=p^{p-1}$ and $S$ has a maximal subgroup that is elementary abelian of order $p^k$.
\end{lemma}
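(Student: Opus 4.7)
My plan is a case analysis, treating the lower bound, the upper bound, and the ``in particular'' statement separately. For the lower bound $|S| \geq p^{k+1}$: pearls are \emph{proper} $\F$-essential subgroups of order at least $p^2$, so $|S| \geq p^3$, and Lemma \ref{self-centr.max.class} gives that $S$ has maximal nilpotency class, hence rank $2$. A subgroup of $S$ realizing the sectional rank $k$ is therefore proper in $S$ whenever $k \geq 3$, forcing $|S| > p^k$; and for $k = 2$ we already have $|S| \geq p^3 > p^2$. Either way $|S| \geq p^{k+1}$.

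For the upper bound, write $|S|=p^n$. First, if $n > p+1$ then Theorem \ref{power} gives that $\Omega_1(\gamma_1(S)) = \Z_{p-1}(S)$ is elementary abelian of rank $p-1 \geq k+1$, a contradiction; so $n \leq p+1$. To rule out $n \in \{p, p+1\}$, I would note that $\gamma_1(S)$ cannot be abelian (else Theorem \ref{power} forces its rank to be $p-1$) nor extraspecial (Theorem \ref{S1extraspecial} would force $|S| = p^{p-1}$), so $\gamma_1(S) = \C_S(\Z_2(S))$ with degree of commutativity $l \geq 1$ (Theorem \ref{positive.deg}), and $\gamma_1(S)$ non-abelian yields $l \leq n-4$. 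Fix a pearl $E$, an automorphism $\varphi_\lambda \in \Delta_\F(E)$, and elements $s_i \in \gamma_i(S) \setminus \gamma_{i+1}(S)$. The heart of the proof is to compute the action of $\varphi_\lambda$ on the image of $[s_1, s_2]$ in $\gamma_{3+l}(S)/\gamma_{4+l}(S)$ in two ways: directly, Lemma \ref{lambda.action} gives the scalar $\lambda^{n-3-l-\epsilon}$; but because $l \geq 1$ pushes the iterated commutators $[[s_1,s_2],s_i]$ into $\gamma_{4+2l}(S) \subseteq \gamma_{4+l}(S)$, the commutator bracket is bilinear modulo $\gamma_{4+l}(S)$, and this gives the same scalar as $\lambda^{(n-1-\epsilon)+(n-2-\epsilon)}$. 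Equating the exponents produces the key congruence
\[ n + l \equiv \epsilon \pmod{p-1}. \]
A quick check on the four sub-cases $(n, \epsilon) \in \{(p,0),(p,1),(p+1,0),(p+1,1)\}$ against the constraints $1 \leq l \leq n-4$ and $l+2 \leq k \leq p-2$ eliminates each one: three sub-cases admit no admissible $l$, while the remaining case $n=p+1$, $\epsilon=0$, $l=p-3$ forces $\Phi(\gamma_1(S)) = \gamma_p(S)$ of order $p$ and hence rank $\gamma_1(S) = p-1 > k$, again a contradiction.

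For the ``in particular'' claim, $p=k+2$ collapses the two bounds to $|S| = p^{k+1} = p^{p-1}$; since $p=4$ is not prime, $p \geq 5$, so $k \geq 3$ and $|S| \geq p^4$, and Lemma \ref{lemma.1} then identifies the elementary abelian maximal subgroup as $\gamma_1(S)$. The hard part of the argument will be the bilinearity step powering the congruence: one has to genuinely exploit $l \geq 1$ to push the iterated commutators below $\gamma_{4+l}(S)$, and without that positivity the whole case analysis collapses.
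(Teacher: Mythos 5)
Your lower bound and the ``in particular'' step are fine, and for $n\in\{p,p+1\}$ your congruence $n+l\equiv\epsilon\pmod{p-1}$ is a legitimate repackaging of the eigenvalue comparison the paper performs with $\varphi_\lambda$. The genuine gap is in your disposal of the case $n>p+1$. Theorem \ref{power} gives only that $\Omega_1(\gamma_1(S))=\Z_{p-1}(S)$ has order $p^{p-1}$, and the remark following it gives exponent $p$; neither says this group is \emph{abelian}, and without that you cannot conclude its rank is $p-1$ (a non-abelian group of exponent $p$ and order $p^{p-1}$ can have far smaller rank -- an extraspecial group of that order has rank $p-2$, for instance). For $p+2\le n\le 2p-4$ the general constraints on the degree of commutativity ($l\ge 1$ from Theorem \ref{positive.deg} and $n\le 2l+2p-4$ from Theorem \ref{bound.deg.comm}) do not force $[\gamma_{n-p+1}(S),\gamma_{n-p+1}(S)]=1$, so abelianness is not a formal consequence of the results you cite. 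Proving that $\gamma_{n-(p-1)}(S)$ is abelian is precisely the content of the paper's proof: it takes $x\in\gamma_{n-(p-1)}(S)\setminus\gamma_{n-(p-1)+1}(S)$ and $y\in\gamma_{n-(p-1)}(S)$, compares the two $\varphi_\lambda$-eigenvalues of $[x,y]$ exactly as you do for $[s_1,s_2]$, and uses $l\ge1$ to show the resulting congruence on the depth of $[x,y]$ is unsatisfiable -- uniformly for every $n>p-1$. So the tool you deploy only for $n\in\{p,p+1\}$ is in fact what is needed for the whole range $n\ge p$, and replacing it by a citation of Theorem \ref{power} leaves the large-order case unproved.

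Two smaller points. First, you extract the key congruence from the image of $[s_1,s_2]$ in $\gamma_{3+l}(S)/\gamma_{4+l}(S)$, but maximality of $l$ only guarantees \emph{some} pair $(i,j)$ with $[\gamma_i(S),\gamma_j(S)]\not\le\gamma_{i+j+l+1}(S)$, not that $(i,j)=(1,2)$ works; you should run the computation on a general witnessing pair (the indices $i,j$ cancel and the same congruence $n+l\equiv\epsilon\pmod{p-1}$ drops out), or justify the choice $(1,2)$. Second, for $n=p$ the claim ``$\gamma_1(S)$ abelian forces rank $p-1$'' has a loophole: Theorem \ref{power} only gives $S^p\le\Z(S)$, so $\gamma_1(S)$ could a priori contain an element of order $p^2$ and have rank $n-2=p-2$, which is compatible with $k=p-2$; to close this you need $S$ to have exponent $p$, which is Lemma \ref{exp.Si} and again uses the pearl.
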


\begin{proof}
Aiming for a contradiction, suppose that $p > k+1$ and $|S|=p^n> p^{p-1}$.
Since $n\neq p-1$, by Theorem \ref{S1extraspecial} we get $\gamma_1(S)=\C_S(\Z_2(S))$. Thus Theorem \ref{positive.deg} implies $[\gamma_1(S),\gamma_j(S)2] \leq \gamma_{j+2}(S)$ for every $j\geq 2$.
Let $E\in \mathcal{P}(\F)$ be a pearl and let $\varphi_\lambda \in \Delta_\F(E)$.
Then $\varphi_\lambda$ acts on $\gamma_{n-(p-1)}(S)/\gamma_{n-(p-1)+1}(S)$ as $1$ if $E$ is abelian and as $\lambda^{-1}$ otherwise.
In particular if $x \in \gamma_{n-(p-1)}(S) \backslash \gamma_{n-(p-1)+1}(S)$ then $x$ commutes with every element of $\gamma_{n-(p-1)}(S)$. Thus $x \in \Z(\gamma_{n-(p-1)}(S))$ and since the members of the lower central series of $S$ are the only normal subgroups of $S$ contained in $\gamma_2(S)$, we deduce that $\Z(\gamma_{n-(p-1)}(S))=\gamma_{n-(p-1)}(S)$ and $\gamma_{n-(p-1)}(S)$ is abelian.
Using the action of $\varphi_\lambda$ we can also deduce that $\gamma_{n-(p-1)}(S)$ has exponent $p$ and so it is elementary abelian. However $|\gamma_{n-(p-1)}(S)| =p^{p-1} > p^k$, contradicting the assumptions.
Thus $|S|\leq p^{p-1}$.
Clearly if $S$ has sectional rank $k$ and contains a pearl then $|S|\geq p^{k+1}$. Therefore if $p>k+1$ then $p^{k+1} \leq |S|\leq p^{p-1}$.

Suppose $p=k+2$. Then $|S|=p^{p-1}=p^{k+1}$ and we conclude by Lemma \ref{lemma.1}.
\end{proof}

\begin{lemma}\label{pgeq2k}
Suppose that $p$ is an odd prime, $S$ is a $p$-group of sectional rank $k\geq 2$ and $\F$ is a saturated fusion system on $S$ such that $\mathcal{P}(\F) \neq \emptyset$. If $p \geq 2k+1$ (with equality only if $\mathcal{P}(\F)_e\neq \emptyset$) then $|S|=p^{k+1}$.
\end{lemma}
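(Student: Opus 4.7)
The plan is to argue by contradiction: assume $|S|=p^n$ with $n\geq k+2$ and derive a contradiction with the sectional rank being $k$. By Theorem \ref{sectional.rank} we have $n\leq 2k$ (with strict inequality if $\gamma_1(S)=\C_S(\Z_2(S))$), and by Lemma \ref{order.p-1} we have $n\leq p-1$, so $|\gamma_1(S)|\leq p^{p-2}$ and Lemma \ref{exp.Si} gives that $\gamma_1(S)$ has exponent $p$. I would then split the analysis according to the structure of $\gamma_1(S)$: abelian, extraspecial, or neither.

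First I would dispatch the two quick subcases. If $\gamma_1(S)$ is abelian then, having exponent $p$, it is elementary abelian of rank $n-1$, forcing $n-1\leq k$ and contradicting $n\geq k+2$. If $\gamma_1(S)$ is extraspecial, Theorem \ref{S1extraspecial} gives $|S|=p^{p-1}$ and $\mathcal{P}(\F)_a\neq\emptyset$, and Theorem \ref{type.pearl} then forces $\mathcal{P}(\F)_e=\emptyset$; the case $p=2k+1$ is ruled out by the hypothesis $\mathcal{P}(\F)_e\neq\emptyset$, while for $p\geq 2k+3$ we get $n=p-1>2k$, contradicting Theorem \ref{sectional.rank}.

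This leaves the main case $\gamma_1(S)=\C_S(\Z_2(S))$ non-abelian, so the degree of commutativity $l$ is $\geq 1$ by Theorem \ref{positive.deg}, and Theorem \ref{type.pearl} gives that all pearls share a common type with a well-defined $\epsilon\in\{0,1\}$ (abelian or extraspecial). Fixing $E\in\mathcal{P}(\F)$ and $\varphi_\lambda\in\Delta_\F(E)$, for any non-trivial commutator $[s_i,s_j]\in\gamma_r(S)\setminus\gamma_{r+1}(S)$ (with $s_i\in\gamma_i(S)\setminus\gamma_{i+1}(S)$), I would compare the two ways of computing its image under $\varphi_\lambda$ using Lemma \ref{lambda.action}: the right-hand side $[s_i\varphi_\lambda,s_j\varphi_\lambda]$ contributes exponent $\lambda^{2n-i-j-2\epsilon}$ modulo $\gamma_{r+1}(S)$, while the left-hand side contributes $\lambda^{n-r-\epsilon}$, yielding $r-i-j\equiv\epsilon-n\pmod{p-1}$. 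Since $\gamma_1(S)$ is non-abelian some such pair exists, so $l$ is at least the smallest non-negative representative of this residue class, namely $p-1-n+\epsilon$. The hypothesis excludes $(\epsilon,p)=(0,2k+1)$ (all pearls would then be abelian, contradicting $\mathcal{P}(\F)_e\neq\emptyset$), so in the remaining subcases $p\geq 2k+2-\epsilon$, and hence $l\geq 2k+1-n$.

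Setting $j:=n-k-1\geq 1$, this bound rearranges to $2j+1+l\geq n$, so $[\gamma_j(S),\gamma_{j+1}(S)]\leq\gamma_n(S)=1$ and $\gamma_{j+1}(S)\leq\Z(\gamma_j(S))$. Since $\gamma_j(S)/\gamma_{j+1}(S)\cong\C_p$ is cyclic, the quotient $\gamma_j(S)/\Z(\gamma_j(S))$ is cyclic, forcing $\gamma_j(S)$ to be abelian; being of exponent $p$ it is elementary abelian of rank $n-j=k+1>k$, contradicting the sectional rank. The most delicate step I expect is the eigenvalue bookkeeping producing $l\geq p-1-n+\epsilon$ combined with the ``group modulo its centre cyclic implies abelian'' trick, which is precisely what extracts the extra layer of abelianness needed to close the sharp case $p=2k+1$ with extraspecial pearls.
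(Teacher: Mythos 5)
Your proof is correct and its core coincides with the paper's: both use the eigenvalues of $\varphi_\lambda$ from Lemma \ref{lambda.action} to force commutators to vanish and thereby exhibit $\gamma_{n-k-1}(S)$ as an elementary abelian subgroup of order $p^{k+1}$, contradicting the sectional rank, with the boundary case $p=2k+1$ closed by arranging $\epsilon=1$. The differences are only organizational: the paper argues directly with elements $x\in\gamma_{n-k-1}(S)\setminus\gamma_{n-k}(S)$ and $y\in\gamma_{n-k}(S)$ (choosing an extraspecial pearl when $p=2k+1$, so it needs no trichotomy on $\gamma_1(S)$ and no appeal to Theorem \ref{sectional.rank}, Theorem \ref{type.pearl} or the degree of commutativity), whereas you package the same congruence $r-i-j\equiv\epsilon-n\pmod{p-1}$ as the lower bound $l\geq p-1-n+\epsilon$ on the degree of commutativity before specializing to $[\gamma_j(S),\gamma_{j+1}(S)]$.
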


\begin{proof}
Suppose $p \geq 2k+1$.
By Lemma \ref{order.p-1} we have $|S|\leq p^{p-1}$. In particular by Lemma \ref{exp.Si}, the group $S$ has exponent $p$.
Aiming for a contradiction, assume that $|S|=p^n > p^{k+1}$. In particular we can consider the proper subgroup $\gamma_{n-(k+1)}(S)$ of $S$.
We prove that $\gamma_{n-(k+1)}(S)$ is abelian, and so it is an elementary abelian subgroup of $S$ of order $p^{k+1}$, contradicting the assumptions.

Let $E\in \mathcal{P}(\F)$ be a pearl and let $\varphi_\lambda \in \Delta_\F(E)$.
Set $\epsilon = 0$ if $E$ is abelian and $\epsilon =1$ otherwise.
Let $x\in \gamma_{n-(k+1)}(S)$ be such that $\gamma_{n-(k+1)}(S)=\langle x \rangle \gamma_{n-k}(S)$. Then by Lemma \ref{lambda.action} we have
 \[ x\varphi_\lambda = x^{\lambda^{k+1 -\epsilon}}.\]

Let $y\in \gamma_{n-k}(S)$ be such that $y\in \gamma_{n-i}(S) \backslash \gamma_{n-i+1}(S)$ for some $1 \leq i \leq k$. Then \[y\varphi_\lambda = y^{\lambda^{i-\epsilon}}.\]
If $[x,y] \neq 1$ then $[x,y] \in \gamma_{n-j}(S)\backslash \gamma_{n-(j-1)}(S)$ for some $j\geq 1$. In particular we get
\[ [x,y]^{\lambda^{j-\epsilon}} = [x, y] \varphi_\lambda = [x^{\lambda^{k+1-\epsilon}}, y^{\lambda^{i-\epsilon}}] \equiv [x,y]^{\lambda^{k + 1 + i- 2\epsilon}} \mod \gamma_{n-(j-1)}(S).\]

Thus we need $j  \equiv k + 1 + i- \epsilon \mod p-1$.

Note that $\gamma_{n-j}(S) \leq \gamma_{n-k}(S)$ so $j\leq k \leq p-2$. By assumption we also have
\[k+1 +i - \epsilon \leq 2k +1 -\epsilon \leq p-1\]
(indeed if $p=2k+1$ then  $\mathcal{P}(\F)_e\neq \emptyset$ and we can assume $\epsilon =1$). Hence we need $j = k +1 +i - \epsilon \geq k+1$, that is a contradiction.

Therefore $x$ commutes with $y$ and since $y$ was chosen arbitrarily we conclude that $x \in \Z(\gamma_{n-(k+1)}(S))$. Since $x\notin \gamma_{n-k}(S)$ we deduce that $\gamma_{n-(k+1)}(S)$ is abelian, contradicting the fact that $S$ has sectional rank $k$.
Therefore if $p \geq 2k+1$ (with equality only if $\mathcal{P}(\F)_e\neq \emptyset$) then $|S|=p^{k+1}$.
\end{proof}

\begin{proof}[\textbf{Proof of Theorem \ref{main}}] By Lemma \ref{self-centr.max.class} the group $S$ has maximal nilpotency class and by Corollary \ref{kleqp} we have $p\geq k$.

If $|S|=p^{k+1}$ then by Lemma \ref{lemma.1} the $p$-group $S$ has a maximal subgroup $M$ that is elementary abelian, and if $|S|\geq p^4$ then $M=\gamma_1(S)$.

Suppose $|S| \neq p^{k+1}$.  Then by
Corollary \ref{kleqp} and Lemmas \ref{order.p-1} and \ref{pgeq2k} either $p=k+1$ or  $k+3 \leq p \leq 2k+1$ (with $p=2k+1$ only if $\mathcal{P}(\F)_e = \emptyset$).
\begin{itemize}
\item If $p=k+1$ then $|S| \geq p^{k+2}=p^{p+1}$ and by Theorem \ref{S1extraspecial} we get $\gamma_1(S)=\C_S(\Z_2(S))$.
\item Suppose $k+3 \leq p \leq 2k+1$ (with $p=2k+1$ only if $\mathcal{P}(\F)_e= \emptyset$). Then by Lemma \ref{order.p-1} we get $p^{k+2} \leq |S| \leq p^{p-1}$ and by Lemma \ref{exp.Si} we conclude that $S$ has exponent $p$. In particular the group $\gamma_1(S)$ is not abelian since it has exponent $p$ and order at least $p^{k+1} > p^k$.
It remains to show that $k\geq 3$.
If $k=2$ then $p= 5$ and $|S|=5^4$. In particular the group $\C_S(\Z_2(S))$ is elementary abelian of order $5^3$, contradicting the fact that $S$ has sectional rank $2$. So we need $k\geq 3$.
\end{itemize}
\end{proof}

The next results study the simplicity of the fusion system $\F$, intended as in \cite[Definition I.6.1]{AKO}.

\begin{theorem}\label{simple}
Let $p$ be an odd prime, let $S$ be a $p$-group of order $|S|=p^n\geq p^4$ and let $\F$ be a saturated fusion system on $S$ such that $\mathcal{P}(\F) \neq \emptyset$.
If $\F$ contains an abelian pearl then $O_p(\F)=1$.
 Suppose moreover that the subgroup $\gamma_1(S)$ of $S$  is neither abelian nor extraspecial. Then
\begin{enumerate}
\item  if there exists a unique $\F$-conjugacy class of $\F$-essential subgroups and $n\not\equiv 1 \mod (p-1)$ then $\F$ is simple;
\item  if $S$ does not have sectional rank $p-1$ and $\F=\F_S(G)$ for some finite non-abelian  simple group $G$,  then $G$ is not alternating nor sporadic. 
 \end{enumerate}
\end{theorem}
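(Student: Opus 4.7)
The plan is as follows. For the first assertion, one exploits that $O_p(\F)$ lies in every $\F$-essential subgroup and is invariant under $\Aut_\F(E)$. Taking $E$ to be an abelian pearl, Corollary \ref{auto.pearl} shows $O^{p'}(\Out_\F(E))\cong\SL_2(p)$ acts naturally on $E\cong\C_p\times\C_p$; this action is irreducible, so the $\Aut_\F(E)$-invariant subgroups of $E$ are only $1$ and $E$. If $O_p(\F)=E$, then $E\norm S$, hence $\N_S(E)=S$; but $[\N_S(E):E]=p$ by Corollary \ref{auto.pearl}, giving $|S|=p^3$ against $|S|\ge p^4$. Therefore $O_p(\F)=1$.

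For part (1) of the additional statement, the first step is to show every $\F$-pearl is abelian. By Theorem \ref{type.pearl}(1), $\Aut_\F(S)=\langle \Inn(S),\varphi_\lambda\rangle$ normalizes $E$ and hence the unique maximal subgroup $M_E$ containing $E$; since $\gamma_1(S)$ and $\C_S(\Z_2(S))$ are characteristic in $S$ and, by Theorem \ref{S1extraspecial}, distinct from each other and from $M_E$ (using $E\not\le \gamma_1(S)\cup \C_S(\Z_2(S))$ from Lemma \ref{pearl}), $\varphi_\lambda$ fixes three distinct lines in $S/\gamma_2(S)\cong\C_p^2$ and therefore acts as a scalar. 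Reading off this scalar via Lemma \ref{lambda.action} on $\gamma_1(S)/\gamma_2(S)$ versus $S/\gamma_1(S)$ forces $n\equiv 1\pmod{p-1}$ in the extraspecial case, excluded by hypothesis. Hence $E$ is abelian and the first assertion gives $O_p(\F)=1$. Simplicity is then established by verifying (i) $O^{p'}(\F)=\F$, using that $\varphi_\lambda$ is the restriction of a $p'$-element of $O^{p'}(\Aut_\F(E))=\SL_2(p)$ by Theorem \ref{lift}; (ii) $O^p(\F)=\F$, using that the hyperfocal subgroup contains $[E,\SL_2(p)]=E$ by irreducibility together with $[S,\Inn(S)]=\gamma_2(S)$; and (iii) no proper non-trivial strongly closed subgroup $T<S$ supports a normal subsystem of $\F$, which follows from $\Aut_\F(E)$-irreducibility (forcing $T\cap E\in\{1,E\}$ and then, if $T\neq 1$, $E\le T$ via the standard $T\cap \Z(S)\neq 1$) together with a case analysis showing that the resulting candidates for $T$ fail the extension axiom of Aschbacher's normal-subsystem definition, in the presence of a unique $\F$-class of essentials.

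For part (2), the hypotheses combine with Theorem \ref{main} to place $S$ in case (3): $\gamma_1(S)$ is non-abelian, $|S|\le p^{p-1}$ and $S$ has exponent $p$. If $G=A_n$, the Sylow $p$-subgroups of $A_n$ of maximal nilpotency class occur only for $p^2\le n<p^2+p$, where the Sylow is $\C_p\wr \C_p$; its $\gamma_1$ is the elementary abelian base group, contradicting case (3). For larger $n$ the Sylow $p$-subgroups of $A_n$ involve $(\C_p\wr \C_p)\wr \C_p$ and do not have maximal nilpotency class, so contain no pearls. If $G$ is sporadic, one invokes (assuming CFSG) the classification in \cite{Qdp} of finite simple groups involving $Qd(p)$ or $\widetilde{Qd}(p)$, which lists exactly the simple groups realizing pearl-bearing fusion systems; running through the sporadic entries and their Sylow structure at the relevant prime (for instance, the Monster at $p=7$ has extraspecial $\gamma_1(S)$), each lands in case (1) or case (2) of Theorem \ref{main}, contradicting the hypotheses.

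The main obstacle is the simplicity assertion in part (1): controlling normal subsystems on proper strongly closed subgroups requires careful use of Aschbacher's theory, since $\Aut_\F(E)$-irreducibility alone restricts but does not a priori exclude strongly closed subgroups containing $\langle E^\F\rangle$ (e.g.\ the maximal subgroup $M_E$). One must verify that the extension and Frattini conditions defining a normal subsystem fail on each such candidate, exploiting the uniqueness of the $\F$-conjugacy class of essentials and the specific structure of pearls and their normalizer towers developed in Section 3.
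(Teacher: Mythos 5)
There is a genuine gap in part (1), which is the heart of the theorem. First, your preliminary step "every $\F$-pearl is abelian" does not work as written: under the hypothesis that $\gamma_1(S)$ is not extraspecial, Theorem \ref{S1extraspecial} gives $\gamma_1(S)=\C_S(\Z_2(S))$, so $\varphi_\lambda$ fixes only \emph{two} lines of $S/\gamma_2(S)$ (namely $M_E/\gamma_2(S)$ and $\gamma_1(S)/\gamma_2(S)$), not three, and no scalar action -- hence no congruence on $n$ -- follows. (In the paper the existence of an abelian pearl is simply part of the standing hypothesis for the whole theorem, carried over from the first assertion; it is not derived.) Second, and more seriously, the decisive step is exactly the one you defer to an unexecuted "case analysis": excluding the unique maximal subgroup $M_E$ of $S$ containing $E$ as the support of a non-trivial normal subsystem $\E$. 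The paper's mechanism is concrete: strong closure forces the support $P$ to be normal in $S$ with $\Z(S)\leq P$, whence $E=\langle \Z(S)^{\Aut_\F(E)}\rangle\leq P$ and $P\in\{S,M_E\}$; if $P=M_E$, the Frattini condition gives $\Out_\E(E)=\Out_\F(E)\cong\SL_2(p)$, so $E$ and a non-$\E$-conjugate $E^g$ are both $\E$-pearls of $P$, Lemma \ref{max.sbg} produces two distinct invariant maximal subgroups of $P$ besides $\gamma_2(S)$, normality forces $\Out_\E(P)\cong\C_{p-1}$ to be normalized by $\Out_S(P)$ so that $\varphi_\lambda$ acts as a scalar on $P/\gamma_3(S)$, and Lemma \ref{lambda.action} then yields $n\equiv 1\pmod{p-1}$, contradicting the hypothesis. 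This is the only place the assumption $n\not\equiv 1\pmod{p-1}$ is used; your proposal instead spends that hypothesis on the (flawed) abelian-pearl reduction and never connects it to the exclusion of $M_E$, so the key contradiction is missing.

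Your remaining steps are broadly compatible with the paper. The first assertion ($O_p(\F)=1$) is essentially the paper's argument ($O_p(\F)\leq E$ and the only $\Aut_\F(E)$-invariant proper subgroup would be $\Z(S)$, which is not invariant). For the case $P=S$ the paper does not pass through $O^p(\F)$ and $O^{p'}(\F)$: it notes that the Frattini condition forces $\Aut_\E(E)=\Aut_\F(E)$ and $\Out_\E(S)\cong\Out_\F(S)$, and since $E^\F$ is the unique class of essentials, Alperin--Goldschmidt gives $\E=\F$ directly; your route via hyperfocal/focal quotients would need an additional argument that those two conditions characterize all normal subsystems on $S$. Part (2) is fine: your alternating-group argument (maximal class forces $\C_p\wr\C_p$, whose order $p^{p+1}$ or abelian $\gamma_1$ contradicts case (3) of Theorem \ref{main}) matches the paper's, and for sporadic groups the paper simply checks that no sporadic group has a Sylow $p$-subgroup of order between $p^5$ and $p^{p-1}$, which is a cleaner version of your $Qd(p)$-based check.
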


\begin{proof} Let $E \in \mathcal{P}(\F)_a$ be an abelian pearl.
Note that $O_p(\F)$ is a normal subgroup of $S$ that is contained in every $\F$-essential subgroup of $S$ (\cite[Proposition I.4.5]{AKO}). In particular $O_p(\F) \leq E$. Since $|S|\geq p^4$, the group $E$ is not normal in $S$ and so either $O_p(\F)=\Z(S)$ or $O_p(\F)=1$. 
Since $\Z(S)$ is not normalized by $\Aut_\F(E)$, we deduce that $O_p(\F)=1$.
\begin{enumerate}
\item  Suppose that $E^\F$ is the only $\F$-conjugacy class of $\F$-essential subgroups. Let $\E$ be a non-trivial normal subsystem of $\F$ (as defined in \cite[Definition I.6.1]{AKO}). We want to prove that $\E=\F$.
Suppose that $\E$ is defined on the subgroup $P$ of $S$. Then $P$ is strongly $\F$-closed. In particular $P\norm S$ and $\Z(S) \leq P$. Since $E=\langle \Z(S)^{\Aut_\F(E)}\rangle$, we deduce that $E \leq P$ and so either $P=S$ or $P$ is the unique maximal subgroup of $S$ containing $E$. Also, by the Frattini condition in the definition of normal subsystem (\cite[Definition I.6.1]{AKO}) we deduce that $\Out_\E(E) = \Out_\F(E)\cong \SL_2(p)$ (Theorem \ref{type.pearl}(1)). In particular $E$ and every $\F$-conjugate of $E$ contained in $P$ are $\E$-essential subgroups of $P$.  Suppose $P < S$ and let $g\in S \backslash P$.  The groups $E$ and $E^g$ are $\E$-essential subgroups of $P$ and $E^g\notin E^\E$. By Lemma \ref{max.sbg} there exist unique maximal subgroups $M_1$ and $M_2$ of $P$ such that $E\leq M_1$, $E^g\leq M_2$ and $M_1\neq M_2$. Note that $\gamma_3(S)=[P,P]\leq M_1 \cap M_2$ and $M_1 \neq \gamma_2(S) \neq M_2$. Let $\varphi_\lambda\in \Delta_\E(E)\neq \emptyset$. We have  $\Out_\E(P) \leq \Out_\F(P)= \Out_S(P)\N_{\Out_\F(P)}(E)$ (Remark \ref{auto.remark}) and $\N_{\Out_\F(P)}(E) \cong \C_{p-1}$ (Theorem \ref{type.pearl}(1)). Therefore   $\Out_\E(P) = \N_{\Out_\E(P)}(E)\cong \C_{p-1}$ and the automorphism $\varphi_\lambda$ acts as scalars on $P/\gamma_3(S)$. We can assume that $\varphi_\lambda\in \Delta_\F(E)$ and so it acts on $S$ as described by Lemma \ref{lambda.action}. In particular $\varphi_\lambda$ acts as $\lambda^{-1}$ on $\gamma_2(S)/\gamma_3(S)$ if and only if $n-2 \equiv -1 \mod (p-1)$, that is $n \equiv 1 \mod (p-1)$ and contradicts the assumptions.
Therefore we get $P=S$. Note that $E^\E = E^\F$, $\Out_\E(E)\cong \Out_\F(E)\cong \SL_2(p)$ and $\Out_\E(S) \cong \Out_\F(S)$  (Theorem \ref{type.pearl}(1)). Since $E^\F$ is the only $\F$-conjugacy class of $\F$-essential subgroups of $S$, the Alperin-Goldschmidt fusion theorem (\cite[Theroem I.3.5]{AKO}) guarantees that $\F$ is completely determined by $\Aut_\F(S)$ and $\Aut_\F(E)$. Therefore $\E = \F$ and the fusion system $\F$ is simple.

\item
By assumption the group $\gamma_1(S)$ is neither abelian nor extraspecial and $S$ does not have sectional rank $p-1$. So by Theorem \ref{main} we have $p^5 \leq |S|<p^{p-1}$ and $S$ has exponent $p$. Since $S$ is not abelian, the group $G$ is not abelian and if it is alternating then $G=\Alt(m)$ with $m\ge p^2$, that implies $|S|\geq p^{p+1}$ and gives a contradiction.
Also, there is no sporadic group with a Sylow $p$-subgroup of order $p^5 \leq |S| < p^{p-1}$ for any $p$.
\end{enumerate}
\end{proof}

\begin{cor}\label{simple.pearl}
Let $p$ be an odd prime, and let $\F$ be a saturated fusion system on $S$ such that $\mathcal{P}(\F)_a \neq \emptyset$.
Suppose $p$ is as in part $(3)$ of Theorem \ref{main} and $\gamma_1(S)$ is not extraspecial.
If all the $\F$-essential subgroups of $S$ are pearls then $\F$ is simple.
\end{cor}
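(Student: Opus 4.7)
My strategy will be to reduce this corollary to Theorem \ref{simple}(1). Under the hypothesis of Corollary \ref{simple.pearl}, Theorem \ref{main}(3) gives $k\geq 3$, $k+3\leq p\leq 2k+1$, $p^{k+2}\leq |S|\leq p^{p-1}$, exponent $p$ and $\gamma_1(S)$ non-abelian; combined with our assumption that $\gamma_1(S)$ is not extraspecial, Theorem \ref{type.pearl} then forces $\mathcal{P}(\F)=\mathcal{P}(\F)_a$. So I am in the setting of Theorem \ref{simple}, and the whole task reduces to verifying its two hypotheses for part (1): (a) the $\F$-essential subgroups form a single $\F$-conjugacy class, and (b) writing $|S|=p^n$, $n\not\equiv 1\mod(p-1)$.

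For (a), I will use the assumption that every $\F$-essential subgroup of $S$ is a pearl, so it is enough to prove that all pearls are $\F$-conjugate. Suppose for contradiction there exist pearls $E$ and $P$ with $P\notin E^\F$. Since $\mathcal{P}(\F)=\mathcal{P}(\F)_a$, Theorem \ref{type.pearl}(2) applies with $\epsilon=0$ and yields $n\equiv 0\mod(p-1)$. But Theorem \ref{main}(3) gives $k+2\leq n\leq p-1$ with $k\geq 3$, so $5\leq n\leq p-1$, and the only multiple of $p-1$ in this range is $n=p-1$ itself. Theorem \ref{S1extraspecial} then forces $\gamma_1(S)$ to be extraspecial, contradicting the hypothesis. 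Hence all pearls are $\F$-conjugate, establishing (a); as a by-product we also obtain $n\leq p-2$.

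For (b), I combine $n\leq p-2$ with $n\geq k+2\geq 5$. Any integer $n$ in $[5,p-2]$ satisfies $1<n<p$, so its residue modulo $p-1$ is $n$ itself, which is at least $5$ and therefore not equal to $1$. Consequently $n\not\equiv 1\mod(p-1)$, which is exactly hypothesis (b) of Theorem \ref{simple}(1). Applying that theorem yields simplicity of $\F$ and completes the proof.

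I don't expect any real obstacle here, since Theorems \ref{main}, \ref{type.pearl}, \ref{S1extraspecial} and \ref{simple} have already done the heavy lifting; the only point that requires any care is ruling out $n=p-1$ when closing out step (a), which is precisely where the extra hypothesis that $\gamma_1(S)$ is not extraspecial is consumed.
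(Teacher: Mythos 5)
Your proposal is correct and follows essentially the same route as the paper: both arguments use Theorem \ref{S1extraspecial} to rule out $|S|=p^{p-1}$ (which is where the non-extraspecial hypothesis is consumed), Theorem \ref{type.pearl}(2) to deduce a single $\F$-conjugacy class of pearls, and then Theorem \ref{simple}(1). Your write-up is in fact slightly more careful than the paper's, which leaves the verification of $n\not\equiv 1 \bmod (p-1)$ implicit.
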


\begin{proof}
By assumption $S$ has exponent $p$ and $p^{k+2} \leq |S|\leq p^{p-1}$, where $k$ is the sectional rank of $S$. In particular the group $\gamma_1(S)$ is not abelian. Since the group $\gamma_1(S)$ is not extraspecial, by Theorem \ref{S1extraspecial} we get $|S|<p^{p-1}$. Therefore Theorem \ref{type.pearl}(2) implies that there is a unique $\F$-conjugacy class of pearls. Since all the $\F$-essentials subgroups of $S$ are pearls,  we conclude by Theorem \ref{simple}(1) that $\F$ is simple.
\end{proof}

\section{Essential subgroups of $p$-groups of maximal nilpotency class}

Let $p$ be an odd prime and let $S$ be a $p$-group having maximal nilpotency class and order $|S|=p^n\geq p^4$.
Let $E$ be an $\F$-essential subgroup of $S$. Then by Lemma \ref{pearl} either $E$ is a pearl or $E$ is contained in $\gamma_1(S)$ or $\C_S(\Z_2(S))$.
In this section we focus on $\F$-essential subgroups of $S$ that are not pearls.

First notice that if $\gamma_1(S)$ is abelian then none of its proper subgroups can be $\F$-essential, since $\F$-essential subgroups are $\F$-centric.

\begin{lemma}\label{no.ess.inside.extra}
Suppose that the group $\gamma_1(S)$ is extraspecial. Then no proper subgroup of $\gamma_1(S)$ is $\F$-essential.
\end{lemma}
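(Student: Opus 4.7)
My plan is to assume, for contradiction, that $E<\gamma_1(S)$ is $\F$-essential, and derive a contradiction by combining Lemma \ref{char.series} with the structural information coming from Theorem \ref{S1extraspecial}. That theorem provides $|S|=p^{p-1}$ with $p\geq 7$, $S$ of exponent $p$, and $\gamma_1(S)$ extraspecial with $\Z(\gamma_1(S))=\Z(S)$ of order $p$. First I would record basic containments: $\F$-centricity of $E$ gives $\Z(S)\leq\C_S(E)\leq E$, and $E\neq\Z(S)$ because $\C_S(\Z(S))=S$, so $\Z(S)<E<\gamma_1(S)$. The extraspecial commutator bound forces $[g,E]\leq[\gamma_1(S),\gamma_1(S)]=\Z(S)\leq E$ for every $g\in\gamma_1(S)$, so $\gamma_1(S)\leq\N_S(E)$ and $\C_{\gamma_1(S)}(E)=\C_S(E)\cap\gamma_1(S)\leq E$. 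Since $S$ has exponent $p$ and $E'\leq\Z(S)$, also $\Phi(E)=E'\leq\Z(S)$.

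The key step will be to show that every morphism in $\Aut_\F(E)$ fixes $\Z(S)$ setwise. In fact I plan to prove the stronger statement that the $\F$-conjugacy class of $\Z(S)$ is the singleton $\{\Z(S)\}$. Let $\phi\in\Hom_\F(\Z(S),S)$ and set $W=\phi(\Z(S))$. Since $\Z(S)\norm S$, the subgroup $\Z(S)$ is fully normalized and hence receptive; moreover every $s\in S=\N_S(\Z(S))$ acts as the identity on $\Z(S)$, so $\phi c_s\phi^{-1}=\mathrm{id}_W\in\Aut_S(W)$, giving $\N_\phi=S$. Receptivity then extends $\phi$ to an $\F$-automorphism of $S$, which must preserve the characteristic subgroup $\Z(S)$, so $W=\Z(S)$.

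With this in hand, I would apply Lemma \ref{char.series} to the series $\Phi(E)\leq\Z(S)\leq E$, which is $\Aut_\F(E)$-invariant by the characteristicity of $\Phi(E)$ in $E$ together with the previous paragraph. For every $g\in\gamma_1(S)$, the inner automorphism $c_g$ of $E$ stabilizes this series trivially: it fixes $\Z(S)$ pointwise (hence acts trivially on $\Z(S)/\Phi(E)$) and induces the identity on $E/\Z(S)$ because $[g,E]\leq\Z(S)$. Lemma \ref{char.series} then gives $c_g\in\Inn(E)$ for every $g\in\gamma_1(S)$, so $\Aut_{\gamma_1(S)}(E)\leq\Inn(E)$. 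The reverse inclusion is immediate from $E\leq\gamma_1(S)$, yielding $\Aut_{\gamma_1(S)}(E)=\Inn(E)$. Unpacking this equality inside $\Aut(E)$ gives $\gamma_1(S)=E\cdot\C_{\gamma_1(S)}(E)$, and combined with $\C_{\gamma_1(S)}(E)\leq E$ from the first paragraph this forces $\gamma_1(S)=E$, contradicting $E<\gamma_1(S)$.

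The hardest part of the plan will be the middle step: a priori an $\F$-automorphism of $E$ could move $\Z(S)$ to another subgroup of order $p$ inside $E$, and the receptivity/extension argument is exactly what rules this out, reducing the problem to a clean application of Lemma \ref{char.series}.
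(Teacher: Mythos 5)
There is a genuine gap at your key middle step, and unfortunately the claim you reduce the lemma to is false in exactly the situation at hand. First, the receptivity argument runs in the wrong direction: receptivity of $Q$ allows one to extend isomorphisms $P\to Q$ \emph{into} $Q$ over $\N_\varphi$, so to extend $\phi\colon \Z(S)\to W$ over $\N_\phi=S$ you would need the target $W$ (not the source $\Z(S)$) to be receptive, and an arbitrary $\F$-conjugate $W$ of $\Z(S)$ need not be. More seriously, the statement you are trying to establish --- that the $\F$-class of $\Z(S)$ is the singleton $\{\Z(S)\}$ --- fails here: by Theorem \ref{S1extraspecial} (which you invoke) we have $\mathcal{P}(\F)_a\neq\emptyset$, so there is an abelian pearl $E_0=\Z(S)\langle x\rangle\cong \C_p\times\C_p$ with $E_0\nleq\gamma_1(S)$, and $\Aut_\F(E_0)=\Out_\F(E_0)\geq \SL_2(p)$ acts transitively on the $p+1$ subgroups of order $p$ of $E_0$; hence $\Z(S)$ is $\F$-conjugate to $p$ subgroups distinct from itself. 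What your argument actually needs is only the weaker statement that $\Aut_\F(E)$ normalizes $\Z(S)$ for the particular $E<\gamma_1(S)$, but that is not automatic either (compare Lemma \ref{ess.in.C2}(3), where $\Z(S)$ is explicitly not normalized by $\Aut_\F(E)$): if $E$ is elementary abelian then $\Z(S)$ is not characteristic in $E$, and nothing you have said prevents $O^{p'}(\Aut_\F(E))$ from moving it. That invariance is precisely the point requiring proof.

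Your preliminary reductions ($\Z(S)<E$, $E\norm\gamma_1(S)$, $\C_{\gamma_1(S)}(E)\leq E$, $\Phi(E)\leq\Z(S)$) are correct and match the opening of the paper's proof, and your endgame (Lemma \ref{char.series} applied to $\Phi(E)\leq\Z(S)\leq E$ forces $\Aut_{\gamma_1(S)}(E)=\Inn(E)$, hence $\gamma_1(S)=E\cdot\C_{\gamma_1(S)}(E)=E$) is sound \emph{provided} $\Z(S)$ is $\Aut_\F(E)$-invariant; in particular it does dispose of the case $\Phi(E)=\Z(S)$, where $\Z(S)$ is characteristic in $E$. The case that remains open in your proposal is $E$ elementary abelian, and the paper closes it by an entirely different device: such an $E$ is an abelian normal subgroup of the extraspecial group $\gamma_1(S)$ of order $p^{n-1}$, so $|E|\leq p^{n/2}$ and $\gamma_1(S)/E$ is elementary abelian of order at least $p^{(n-2)/2}$, which contradicts the bound $[\gamma_1(S)\colon E]\leq p^{n/4}$ from \cite[Theorem 6.9]{Sambale} unless $n=4$, a case excluded by Theorem \ref{equal2step}. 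You would need that bound, or a genuinely new argument, to finish. (A minor further point: Theorem \ref{S1extraspecial} presupposes $\mathcal{P}(\F)\neq\emptyset$, which is not a hypothesis of the lemma; the facts you actually use, however, follow directly from $\gamma_1(S)$ being extraspecial in a group of maximal class.)
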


\begin{proof}
Aiming for a contradiction, suppose there exists a subgroup $E < \gamma_1(S)$ that is $\F$-essential.
Note that $\Phi(\gamma_1(S)) =\Z(\gamma_1(S)) = \Z(S) \leq E$, since $E$ is $\F$-centric. Thus $E\norm \gamma_1(S)$ and by Lemma \ref{strict.frattini} we get that $E$ is elementary abelian. Since $\gamma_1(S)$ is extraspecial of order $p^{n-1}$, this implies   $|E|\leq p^{n/2}$. In particular the quotient $\gamma_1(S)/E$ is elementary abelian of order $[\gamma_1(S) \colon E] \geq p^{(n-2)/2}$. On the other hand, \cite[Theorem 6.9]{Sambale} implies that $[\gamma_1(S) \colon E]\leq p^{n/4}$. Thus $n= 4$ and  $\gamma_1(S)=\C_S(\Z_2(S))$ is abelian by Theorem \ref{equal2step}, contradicting the fact that $\gamma_1(S)$ is extraspecial.
\end{proof}

\begin{lemma} \label{Si.in.ess}
Suppose that $|S|> p^{p+1}$ and let $l$ be the degree of commutativity of $S$.
Let $E\leq S$ be an $\F$-essential subgroup and suppose that $E\leq \gamma_1(S)$. Fix $2\leq  i \leq n-1$.
If $\gamma_i(S)\nleq E$ then  $l \leq (p-2)-i$ and $|S|\leq p^r$ where $r=4(p -2) - 2i$.
\end{lemma}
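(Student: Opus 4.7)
The plan is to prove the contrapositive: assuming $l \geq (p-1)-i$, deduce $\gamma_i(S) \leq E$, which contradicts the hypothesis $\gamma_i(S) \nleq E$. Granting this, the bound $|S| \leq p^r$ follows immediately from Theorem~\ref{bound.deg.comm}(1): substituting $l \leq (p-2)-i$ into $n \leq 2l + 2p - 4$ yields $n \leq 2(p-2-i) + 2p - 4 = 4(p-2) - 2i = r$.

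First I would invoke Lemma~\ref{omega.series}: the hypothesis $l \geq (p-1)-i$ ensures that $\gamma_i(S)$ stabilizes the series
\[
1 = \Omega_0(\gamma_1(S)) < \Omega_1(\gamma_1(S)) < \cdots < \Omega_{m+1}(\gamma_1(S)) = \gamma_1(S).
\]
Note that $|S| > p^{p+1}$ forces $\gamma_1(S) = \C_S(\Z_2(S))$ by Theorem~\ref{equal2step}, so $l \geq 1$ by Theorem~\ref{positive.deg}, and each $\Omega_j(\gamma_1(S))$ has exponent $p^j$ by Corollary~\ref{omegas}. Since $E \leq \gamma_1(S)$, the exponent constraint yields $\Omega_j(E) = E \cap \Omega_j(\gamma_1(S))$, producing a characteristic series
\[
1 = \Omega_0(E) \leq \Omega_1(E) \leq \cdots \leq \Omega_{m+1}(E) = E
\]
normalized by $\Aut_\F(E)$ with bottom contained in $\Phi(E)$.

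Second, I would establish that $\gamma_i(S) \leq \N_S(E)$. Because $E$ is $\F$-centric and $E \leq \gamma_1(S)$, we have $\Z(\gamma_1(S)) \leq \C_S(E) \leq E$. Starting from the commutator bound $[\gamma_i(S), \gamma_1(S)] \leq \Omega_m(\gamma_1(S))$ supplied by Lemma~\ref{omega.series}, I would iterate along the $\Omega$-filtration, exploiting the degree of commutativity, to show $[\gamma_i(S), E] \leq E$. Given this, for each $g \in \gamma_i(S)$ the conjugation $c_g \in \Aut_\F(E)$ satisfies
\[
[g, \Omega_j(E)] \subseteq E \cap [\gamma_i(S), \Omega_j(\gamma_1(S))] \subseteq E \cap \Omega_{j-1}(\gamma_1(S)) = \Omega_{j-1}(E)
\]
for every $j$, so $c_g$ stabilizes the characteristic series. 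Lemma~\ref{char.series} then forces $c_g \in \Inn(E)$, which combined with $\C_S(E) \leq E$ gives $g \in E$. Hence $\gamma_i(S) \leq E$, the desired contradiction.

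The main obstacle is the normalization step $\gamma_i(S) \leq \N_S(E)$: the degree of commutativity only yields $[\gamma_i(S), E] \leq \gamma_{i+1+l}(S)$, and the inclusion $\gamma_{i+1+l}(S) \subseteq E$ is not automatic. Overcoming this requires a finer analysis of how $E$ is situated inside $\gamma_1(S)$, presumably by iterating the $\Omega$-filtration together with $\Z(\gamma_1(S)) \leq E$ to trap each successive nested commutator $[\gamma_i(S), [\gamma_i(S), \ldots, E]]$ further down the filtration until it reaches a subgroup already known to be contained in $E$. Once that step is secured, applying Lemma~\ref{char.series} to the characteristic $\Omega$-series of $E$ completes the proof.
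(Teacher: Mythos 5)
Your overall strategy is the right one -- the paper also combines the $\Omega$-filtration $\Omega_j(E)=E\cap\Omega_j(\gamma_1(S))$ with Lemma~\ref{omega.series} and Lemma~\ref{char.series}, and the order bound is read off from Theorem~\ref{bound.deg.comm}(1) exactly as you say. But the step you flag as ``the main obstacle'' is a genuine gap, and the finer analysis you gesture at is not how it gets resolved: you do not need, and in general cannot expect to prove, that all of $\gamma_i(S)$ normalizes $E$.

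The missing idea is the normalizer-grows property of $p$-groups. Since $\gamma_i(S)\norm S$, the product $\gamma_i(S)E$ is a subgroup, and the hypothesis $\gamma_i(S)\nleq E$ makes $E$ a \emph{proper} subgroup of the $p$-group $\gamma_i(S)E$; hence $E< N:=\N_{\gamma_i(S)E}(E)$. By the Dedekind identity $N=(N\cap\gamma_i(S))E$, so there exists $g\in\gamma_i(S)\setminus E$ that normalizes $E$. Your computation
\[
[g,\Omega_j(E)]\subseteq E\cap[\gamma_i(S),\Omega_j(\gamma_1(S))]\subseteq E\cap\Omega_{j-1}(\gamma_1(S))=\Omega_{j-1}(E)
\]
is valid for exactly such a $g$ (it is only the containment in $E$ that needs $g\in\N_S(E)$, and that is now guaranteed). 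Lemma~\ref{char.series} gives $c_g\in\Inn(E)$, whence $g\in E\C_S(E)=E$ because $E$ is $\F$-centric -- contradicting $g\notin E$. So instead of deriving $\gamma_i(S)\leq E$ from a global normalization statement, one derives a contradiction from a single well-chosen element, and the attempt to ``trap successive nested commutators further down the filtration'' is unnecessary. With this substitution your argument closes; without it, the proof as written does not go through.
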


\begin{proof}
Consider the following sequence of characteristic subgroups of $E$:
\begin{equation}\label{om.series} 1 < \Omega_1(E) \leq \dots \leq \Omega_m(E) =E.\end{equation}
By Corollary \ref{omegas}, for every $j\leq m$ we have $\Omega_j(E)= E \cap \Omega_j(\gamma_1(S))$.
Suppose $\gamma_i(S)\nleq E$. In particular $E < N=\N_{\gamma_i(S)E}(E)$ and so $\Out_N(E)\cong N/E \neq 1$. Assume by contradiction that $l \geq (p-1)-i$. Then by Lemma  \ref{omega.series} the group $\N_{\gamma_i(S)}(E)$ stabilizes the series (\ref{om.series}). Hence by Lemma \ref{char.series} we get $\Aut_{N}(E) = \Inn(E)$, that is a contradiction. Thus we need $l\leq  (p-2)-i$.
The bound on the order of $S$ follows from  Theorem \ref{bound.deg.comm}(1).
\end{proof}

The previous lemma will be useful to prove Theorem \ref{small.rank}.

\begin{lemma}\label{ess.in.C2}
Suppose that $|S|\geq p^4$ and $\gamma_1(S)\neq \C_S(\Z_2(S))$. Let $E\leq S$ be an $\F$-essential subgroup contained in $\C_S(\Z_2(S))$ but not in $\gamma_1(S)$. Then $p^3 \leq |E| \leq p^5$, $E$ has exponent $p$ and one of the following holds:
\begin{enumerate}
\item $E\cong \C_p \times \C_p \times \C_p$; or
\item $E \cong \C_p \times p^{1+2}_+$ and $\Z(E)=\Z_2(S)$; or
\item $E/\Z_2(S) \cong p^{1+2}_+$ and $\Z(S)$ is not normalized by $\Aut_\F(E)$.
\end{enumerate}
In particular if $E=\C_S(\Z_2(S))$ then $E/\Z_2(S)\cong p^{1+2}_+$, $\Z_3(S)=\Phi(E)$ and $|S|=p^6$.
\end{lemma}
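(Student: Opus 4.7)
The paper's standing hypothesis $\mathcal{P}(\F)\neq\emptyset$ together with $\gamma_1(S)\neq\C_S(\Z_2(S))$ invokes Theorem \ref{S1extraspecial}, forcing $\gamma_1(S)$ extraspecial, $|S|=p^{p-1}$, $p\geq 7$, and $S$ (hence $E$) of exponent $p$. Writing $|E|=p^m$, the $\F$-centricity $\Z(S)\leq E$ together with Lemma \ref{sbg.max.class} applied to $E\leq\C_S(\Z_2(S))$, $E\nleq\gamma_1(S)$, produces $\Z_{m-1}(S)\leq E$, $[E:\Z_{m-1}(S)]=p$, and (for $|E|\geq p^3$) that $E/\Z(S)$ has maximal nilpotency class. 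If $|E|=p^2$ then $E\cong\C_p\times\C_p$ would be an $\F$-pearl, contradicting Lemma \ref{pearl}; if $|E|=p^3$ then the non-abelian possibility $p^{1+2}_+$ is ruled out for the same reason, leaving $E\cong\C_p^3$, case~(1).

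Now suppose $|E|\geq p^4$. The inclusion $\Z_2(S)\leq\Z(E)$ combined with the fact that $E/\Z(S)$ has maximal class of order $\geq p^3$ (so its center has order $p$) forces $\Z(E)=\Z_2(S)$. I apply Lemma \ref{max.class.quotient} with $K=\Z(E)=\Z_2(S)$: $K$ is characteristic in $E$, and $E/K$ inherits maximal class of order $p^{m-2}$ from $E/\Z(S)$. For condition~(c) there are two subcases. If $E$ is a proper subgroup of $\C_S(\Z_2(S))$, normalizer growth in the $p$-group $\C_S(\Z_2(S))$ produces some $g\in\N_{\C_S(\Z_2(S))}(E)\setminus E$, which lies in $\N_S(E)$, centralizes $\Z_2(S)$, and hence centralizes $K/(K\cap\Phi(E))$. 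If instead $E=\C_S(\Z_2(S))$, then $E$ is normal in $S$ and $\Phi(E)=[E,E]$ is normal in $S$; writing $E=\langle y\rangle\gamma_2(S)$ with $y\in\C_S(\Z_2(S))\setminus\gamma_1(S)$, the decomposition $[E,E]=[y,\gamma_2(S)]\cdot[\gamma_2(S),\gamma_2(S)]$, combined with the surjection $\gamma_2(S)/\gamma_3(S)\twoheadrightarrow\gamma_3(S)/\gamma_4(S)$ given by $[y,\cdot]$ and the degree-$0$ bound $[\gamma_2(S),\gamma_2(S)]\leq\gamma_4(S)$, identifies $\Phi(E)=\gamma_3(S)\supseteq\Z_2(S)=K$, making condition~(c) automatic. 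In either subcase Lemma \ref{max.class.quotient} yields $|E/\Z(E)|\leq p^3$, so $|E|\leq p^5$; and $E=\C_S(\Z_2(S))$ would then force $p^{p-2}=|E|\leq p^5$, i.e.\ $p=7$ and $|S|=7^6$.

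For the structural claims: when $|E|=p^4$, $E/\Z(E)\cong\C_p^2$ makes $E$ class-$2$ and $2$-generator modulo center of exponent $p$, so $[E,E]$ is generated by a single commutator and has order $p$; the resulting central product structure yields $E\cong\C_p\times p^{1+2}_+$ (case~(2)). When $|E|=p^5$, $E/\Z(E)$ is the unique maximal-class exponent-$p$ group of order $p^3$, namely $p^{1+2}_+$ (case~(3)); and $\Z(S)$ cannot be $\Aut_\F(E)$-invariant, for otherwise Lemma \ref{max.class.quotient} applied with the characteristic order-$p$ subgroup $K=\Z(S)$ (for which condition~(c) is automatic since $S$ centralizes $\Z(S)$) would give $|E/\Z(S)|\leq p^3$, contradicting $|E|=p^5$. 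The addendum follows directly: $E=\C_S(\Z_2(S))$ forces $p=7$ and places us in case~(3), with $\Phi(E)=\gamma_3(S)=\Z_3(S)$ by the computation above. The main obstacle is the uniform verification of condition~(c) of Lemma \ref{max.class.quotient}, handled by the normalizer-growth argument in the generic subcase and by the direct calculation of $\Phi(E)$ in the exceptional subcase $E=\C_S(\Z_2(S))$.
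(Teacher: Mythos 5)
Your argument has one genuine gap, and it occurs at the very first step: you invoke Theorem \ref{S1extraspecial} to conclude that $\gamma_1(S)$ is extraspecial, $|S|=p^{p-1}$, $p\geq 7$ and $S$ has exponent $p$, justifying this by ``the paper's standing hypothesis $\mathcal{P}(\F)\neq\emptyset$''. But $\mathcal{P}(\F)\neq\emptyset$ is \emph{not} a hypothesis of Lemma \ref{ess.in.C2}: the standing assumptions of Section 5 are only that $S$ has maximal nilpotency class and order at least $p^4$, and the pearl hypothesis is added explicitly only in Theorem \ref{ess.extra} (which would be redundant if it were already in force). Theorem \ref{S1extraspecial} genuinely needs $\mathcal{P}(\F)\neq\emptyset$ --- its proof runs entirely through the action of the automorphisms $\varphi_\lambda\in\Delta_\F(E)$ attached to a pearl --- so from $\gamma_1(S)\neq\C_S(\Z_2(S))$ alone you may only extract, via Theorem \ref{equal2step}, that $|S|\geq p^6$ and $p\geq 5$. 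Every step of your proof that leans on ``$S$ has exponent $p$'' is therefore unsupported as written: the exponent-$p$ claim for $E$ itself, the identification $E\cong\C_p\times\C_p\times\C_p$ in the order-$p^3$ case, the splitting $E\cong\C_p\times p^{1+2}_+$ in the order-$p^4$ case, and the computation $\Phi(E)=[E,E]=\gamma_3(S)$ when $E=\C_S(\Z_2(S))$. The paper instead derives the exponent statement from regularity of $E$ (available since $p\geq 5$ and $|E|\leq p^5$) combined with the structure of $\Omega_1(E)$ from Theorem \ref{power} and a stabilized-series contradiction via Lemma \ref{char.series}; you would need to supply an argument of that kind. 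Likewise your sharpened addendum ``$p=7$ and $|S|=7^6$'' uses $|S|=p^{p-1}$ and is not available here; the lemma only claims $|S|=p^6$ (the passage to $p=7$ happens in Theorem \ref{ess.extra}).

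Apart from this, your route is sound and close in spirit to the paper's. Both reduce the bound $|E|\leq p^5$ to Lemma \ref{max.class.quotient}; the paper first splits on whether $\Z(S)$ is normalized by $\Aut_\F(E)$ and applies that lemma with $K=\Z(S)$ or $K=\Z_2(S)$ accordingly, whereas you apply it uniformly with $K=\Z(E)=\Z_2(S)$, verifying condition (c) by normalizer growth inside $\C_S(\Z_2(S))$ when $E$ is proper there and via $\Z_2(S)\leq\Phi(E)$ when $E=\C_S(\Z_2(S))$, and you then recover the non-invariance of $\Z(S)$ in case (3) a posteriori by a second application with $K=\Z(S)$. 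That reorganization is correct and arguably cleaner, and your explicit identification $[E,E]=\gamma_3(S)$ in the exceptional subcase is a nice way to make condition (c) automatic. If you either add $\mathcal{P}(\F)\neq\emptyset$ to the hypotheses (it does hold in the paper's one application of this lemma) or replace the appeal to Theorem \ref{S1extraspecial} by the regularity and $\Omega_1$ argument, the proof goes through.
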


\begin{proof}
Note that $\Z_2(S) < E$ because $E$ is $\F$-centric and $\Z_2(S)$ is not $\F$-essential, so $|E|\geq p^3$.
Suppose $|E|=p^3$. Then $E$ is abelian. If it is not elementary abelian then the series $1 < E^p < \Omega_1(E) < E$ is stabilized by $\N_S(E)$, contradicting Lemma \ref{char.series}. Thus $E$ is elementary abelian, $E \cong \C_p \times \C_p \times \C_p$. Also note that $E < \C_S(\Z_2(S))$ otherwise $\C_S(\Z_2(S))=\gamma_1(S)$, contradicting the assumptions.

Suppose $|E| \geq p^4$. Then by Lemma \ref{sbg.max.class} the quotient $E/\Z(S)$ has maximal nilpotency class and $\Z_3(S) \leq E$. In particular, since $E\nleq \gamma_1(S)$, we have $[E,\Z_3(S)]\Z(S) = \Z_2(S)$ and $\Z_2(S)=\Z(E)$ (so $E$ is not abelian).

If $\Z(S)$ is normalized by $\Aut_\F(E)$ then by Lemma \ref{max.class.quotient} we conclude $E/\Z(S) \cong p^{1+2}_+$. In particular $|E|=p^4$ and by Theorem \ref{equal2step} we deduce that $E < \C_S(\Z_2(S))$.
Suppose   $\Z(S)$ is not normalized by $\Aut_\F(E)$. Note that the quotient $E/\Z_2(S)$ has maximal nilpotency class and $\Z_2(S)$ is normalized by $\Aut_\F(E)$.

If $E < \C_S(\Z_2(S))$ then $E < N=\N_{\C_S(\Z_2(S))}(E)$ and $N$ centralizes $\Z_2(S)$.
Therefore by Lemma \ref{max.class.quotient}  we conclude that $E/\Z_2(S)$ is isomorphic to either $\C_p \times \C_p$ or $p^{1+2}_+$.

Suppose that $E=\C_S(\Z_2(S))$. Since the members of the lower central series of $S$ are the only normal subgroups of $S$ of index greater than $p$ in $S$, we deduce that $[E, \Z_3(S)]=\Z_2(S)$. In particular $\Z_2(S) \leq \Phi(E)$. Hence by Lemma \ref{max.class.quotient} we get $E/\Z_2(S) \cong p^{1+2}_+$. In particular $|E|=p^5$, $\Z_3(S)=\Phi(E)$ and $|S|=p^6$.

We now show that if $p^4 \leq |E| \leq p^5$ then $E$ has exponent $p$. Note that the assumption $\gamma_1(S)\neq \C_S(\Z_2(S))$ implies $p\geq 5$ (by Theorem \ref{equal2step}). In particular $E$ is a regular group and so every element of $\Omega_1(E)$ has order $p$. Suppose for a contradiction that $E$ does not have exponent $p$. Then $\Omega_1(E) < E$.
Thus by Lemma \ref{power}, either $|E|=p^4$ and $\Omega_1(E)=\Z_3(S)$ or $|E|=p^5$ and $\Omega_1(E)=\Z_4(S)$.
If $|E|=p^4$ then $|[E,E]| =p$ (because $[E \colon \Z(E)]=p^2$) and so the series $1 < [E,E] < \Z(E) < \Omega_1(E) < E$ is stabilized by $\N_S(E)$, contradicting Lemma \ref{char.series}. If $|E|=p^5$ then $E/\Z_2(S)\cong p^{1+2}_+$ and so $\Z_3(S)=\Z_2(S)\Phi(E)=\Z(E)\Phi(E)$. Thus the series $\Z_2(S) \cap \Phi(E) \leq \Z_2(S) < \Z_2(S)\Phi(E) < \Omega_1(E)<E$ is stabilized by $\N_S(E)$, and again we get a contradiction by Lemma \ref{char.series}.
Thus $E$ has exponent $p$. In particular if $|E|=p^4$ then $E \cong \C_p \times p^{1+2}_+$.
\end{proof}

We conclude characterizing the $\F$-essential subgroups of $S$ when the group $\gamma_1(S)$ is extraspecial.

\begin{theorem}\label{ess.extra}
Suppose that $p$ is an odd prime, $\F$ is a saturated fusion system on a $p$-group $S$ and $\mathcal{P}(\F)\neq \emptyset$ (so $S$ has maximal nilpotency class).
Suppose that $\gamma_1(S)$ is extraspecial and let $\E$ be the set of $\F$-essential subgroups of $S$. Then $p\geq 7$, $S$ has order $p^{p-1}$ and exponent $p$ and
\[ \E \subseteq \{ \gamma_1(S) \} \cup \mathcal{P}(\F)_a \cup \{E\leq \C_S(\Z_2(S)) \mid E\nleq \gamma_1(S) \text{ and } p^3 \leq |E| \leq p^5\}. \]

	If moreover $p=7$  then $S$ is isomorphic to a Sylow $7$-subgroup of the group $\G_2(7)$ (and this is always the case when $\C_S(\Z_2(S))$ is $\F$-essential).
\end{theorem}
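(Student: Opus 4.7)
The plan is to argue in three stages: deduce the global structural facts from Theorem~\ref{S1extraspecial}, carry out a case analysis to classify $\E$, and finally pin down the isomorphism type of $S$ when $p=7$.

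First, since $\gamma_1(S)$ is extraspecial and $\mathcal{P}(\F)\neq\emptyset$, Theorem~\ref{S1extraspecial} immediately delivers $p\geq 7$, $|S|=p^{p-1}$, $S$ has exponent $p$, and $\mathcal{P}(\F)_a\neq\emptyset$. Because $\gamma_1(S)$ is non-abelian, Theorem~\ref{type.pearl} then forces $\mathcal{P}(\F)=\mathcal{P}(\F)_a$, so every pearl is abelian. For the classification of $\E$ I would analyse an essential subgroup $E$ using Lemma~\ref{pearl}. If $E$ is contained in neither $\gamma_1(S)$ nor $\C_S(\Z_2(S))$, then $E$ is a pearl and lies in $\mathcal{P}(\F)_a$. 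If $E\leq\gamma_1(S)$, then Lemma~\ref{no.ess.inside.extra} forbids $E$ from being a proper subgroup of $\gamma_1(S)$, so $E=\gamma_1(S)$. The remaining case is $E\leq\C_S(\Z_2(S))$ with $E\nleq\gamma_1(S)$; here the extraspecial structure yields $\Z(\gamma_1(S))=\Z(S)$ of order $p$, while $\Z_2(S)\leq\Z(\C_S(\Z_2(S)))$ has order $p^2$, so $\gamma_1(S)\neq\C_S(\Z_2(S))$. The hypotheses of Lemma~\ref{ess.in.C2} are therefore met and yield $p^3\leq|E|\leq p^5$. Together, the three cases give the claimed inclusion for $\E$.

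The parenthetical is then a corollary of Lemma~\ref{ess.in.C2}: if $\C_S(\Z_2(S))$ is $\F$-essential, the last assertion of that lemma forces $|S|=p^6$, which combined with $|S|=p^{p-1}$ gives $p=7$. It remains to prove the uniqueness statement when $p=7$. In that case $|S|=7^6$, $S$ has exponent $7$, and $\gamma_1(S)$ is extraspecial of exponent $7$ and order $7^5$; for odd $p$, among the two extraspecial groups of order $p^{1+2n}$ only $p^{1+2n}_+$ has exponent $p$, so $\gamma_1(S)\cong 7^{1+4}_+$. Any $x\in S\setminus\gamma_1(S)$ has order $7$, hence $S=\gamma_1(S)\rtimes\langle x\rangle$. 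Conjugation by $x$ on $\gamma_1(S)/\Z(S)\cong \GF(7)^4$ is unipotent and strictly lowers the flag $\gamma_1(S)>\gamma_2(S)>\gamma_3(S)>\gamma_4(S)>\Z(S)$ (whose successive quotients are one-dimensional, by the maximal-class hypothesis), forcing $c_x$ to act as a single Jordan block of size $4$. This action preserves the symplectic form induced by the commutator on $\gamma_1(S)/\Z(S)$ and lifts uniquely, up to conjugation in $\Aut(\gamma_1(S))$, to an automorphism of $\gamma_1(S)$ trivial on $\Z(S)$; hence the isomorphism type of $S$ is determined. Since a Sylow $7$-subgroup of $\G_2(7)$ satisfies all these constraints, $S$ is isomorphic to one.

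The main obstacle is the final uniqueness claim: pushing the Jordan-block information on $\gamma_1(S)/\Z(S)$ through to an actual isomorphism between $S$ and a Sylow $7$-subgroup of $\G_2(7)$. The cleanest route is likely to exhibit an explicit presentation of $S$ and check the commutator relations directly, though one could alternatively appeal to the classification of $p$-groups of maximal class recalled in Section~2.
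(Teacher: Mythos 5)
Your reduction of the main inclusion is exactly the paper's argument: Theorem \ref{S1extraspecial} and Theorem \ref{type.pearl} give $p\geq 7$, $|S|=p^{p-1}$, exponent $p$ and $\mathcal{P}(\F)=\mathcal{P}(\F)_a$, and the trichotomy via Lemma \ref{pearl}, Lemma \ref{no.ess.inside.extra} and Lemma \ref{ess.in.C2} (including the observation that $\gamma_1(S)\neq \C_S(\Z_2(S))$, so that Lemma \ref{ess.in.C2} applies, and that $\C_S(\Z_2(S))$ essential forces $|S|=p^6$, hence $p=7$) is precisely what the paper does. That part is correct.

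The gap is in your replacement of the paper's final step, which in the paper is a \emph{Magma} computation showing that there is a unique group of order $7^6$, class $5$ and exponent $7$ with an extraspecial maximal subgroup. Your hand argument correctly reduces to $S=\gamma_1(S)\rtimes\langle x\rangle$ with $\gamma_1(S)\cong 7^{1+4}_+$ and $c_x$ inducing a regular unipotent element of $\Sp_4(7)$ on $\gamma_1(S)/\Z(S)$, but the assertion that this ``lifts uniquely, up to conjugation in $\Aut(\gamma_1(S))$, to an automorphism of $\gamma_1(S)$ trivial on $\Z(S)$'' is exactly the point that needs proof and is false as stated at the level of $\Sp_4(7)$: for odd $q$ the regular unipotent elements of $\Sp_4(q)$ do \emph{not} form a single conjugacy class (a single Jordan block of even size carries a quadratic-form invariant, so the class splits), so knowing the Jordan type does not determine $c_x$ up to conjugacy by automorphisms centralizing $\Z(\gamma_1(S))$. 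To recover uniqueness of $S$ you would have to show that the several classes are fused under the full similitude group $\Out(\gamma_1(S))\cong\mathrm{CSp}_4(7)$ together with the freedom of replacing $x$ by $x^k h$ ($h\in\gamma_1(S)$), and then that isomorphisms of the resulting split extensions correspond to this fusion; none of this is carried out. As written, your argument would equally ``prove'' uniqueness in situations where several non-isomorphic maximal-class groups with a fixed extraspecial maximal subgroup exist, so the step is a genuine missing idea rather than a routine verification. Either supply this fusion analysis explicitly (or an explicit presentation check, as you suggest at the end), or fall back on the computational verification the paper uses.
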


\begin{proof}
By assumption $\mathcal{P}(\F)\neq \emptyset$ and $\gamma_1(S)$ is extraspecial. Hence by Theorems \ref{S1extraspecial} and \ref{type.pearl} we have $\mathcal{P}(\F) =\mathcal{P}(\F)_a$, $p\geq 7$, and $S$ has order $p^{p-1}$ and exponent $p$.

Let $E\leq S$ be an $\F$-essential subgroup. By Lemmas \ref{pearl}, \ref{no.ess.inside.extra} and \ref{ess.in.C2} one of the following holds:
\begin{itemize}
\item $E \in \mathcal{P}(\F)_a$;
\item $E=\gamma_1(S)$;
\item $E\leq \C_S(\Z_2(S))$, $E\nleq \gamma_1(S)$, $p^3 \leq |E|\leq p^5$ and if $E=\C_S(\Z_2(S))$ then $|S|=p^6$.
\end{itemize}

Note that if   $E=\C_S(\Z_2(S))$ then the fact that $|S|=p^{p-1}$ implies $p=7$.

Finally, one can show using the computer software \emph{Magma} that there exists a unique  (up to isomorphism) group $P$ having order $7^6$, nilpotency class $5$, exponent $7$ and a maximal subgroup that is extraspecial ($P$ is isomorphic to the group listed in the \texttt{SmallGroups} library as \texttt{SmallGroup(7\string^6, 807)}). Also such $P$ is isomorphic to a  Sylow $7$-subgroup of the group $\G_2(7)$. Thus if $p=7$ then $S$  is isomorphic to a  Sylow $7$-subgroup of the group $\G_2(7)$.
\end{proof}

Fusion systems on Sylow $p$-subgroups of the group $\G_2(p)$ have been classified in \cite{G2p}.
More generally, fusion systems on $p$-groups having a maximal subgroup that is extraspecial are the subject of study of Moragues-Moncho (\cite{Raul}).

\section{Fusion systems on $p$-groups of small sectional rank containing pearls}
Let $p$ be an odd prime, let $S$ be a $p$-group having sectional rank $k \leq 4$ and let $\F$ be a saturated fusion system on $S$ such that $\mathcal{P}(\F) \neq \emptyset$. In particular $S$ has maximal nilpotency class (Lemma \ref{self-centr.max.class}).

\begin{lemma}\label{small.rank.1}
One of the following holds:
\begin{enumerate}
\item $|S|=p^{k+1}$;
\item $(k,p)=(2,3)$ or $(k,p)=(4,5)$, $|S|\geq p^{p+1}$ and $\gamma_1(S) = \C_S(\Z_2(S))$;
\item $3\leq k\leq 4$, $p=7$, $S$ has exponent $7$ and
\begin{enumerate}
\item if $k=3$ then $|S|=7^5$;
\item if $k=4$ then $|S|=7^6$ and if $\mathcal{P}(\F)_a \neq \emptyset$ then $S$ is isomorphic to a Sylow $7$-subgroup of the group $\G_2(7)$.
\end{enumerate}
 \end{enumerate}
\end{lemma}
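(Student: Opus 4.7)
The strategy is to run the trichotomy of Theorem \ref{main} through the three possible values $k\in\{2,3,4\}$ and then use the structural results on $p$-groups with extraspecial $\gamma_1(S)$ (Theorems \ref{S1extraspecial} and \ref{ess.extra}) to either exclude or identify the borderline cases. First, case (1) of Theorem \ref{main} reads $|S|=p^{k+1}$, which is conclusion (1) of the lemma. Case (2) of Theorem \ref{main} requires $p=k+1$ together with $|S|\geq p^{p+1}$ and $\gamma_1(S)=\C_S(\Z_2(S))$; since $p$ is an odd prime and $k\leq 4$, the equation $p=k+1$ forces $(k,p)\in\{(2,3),(4,5)\}$, and this is conclusion (2).

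Case (3) of Theorem \ref{main} requires $k\geq 3$ and $k+3\leq p\leq 2k+1$. With $k\in\{3,4\}$ the only prime in the interval is $p=7$; in both instances $S$ has exponent $7$, $\gamma_1(S)$ is non-abelian, and $p^{k+2}\leq |S|\leq p^{p-1}=7^6$. When $k=4$ the inequalities collapse to $|S|=7^6$. When $k=3$ we get $|S|\in\{7^5,7^6\}$, and the task is to rule out $|S|=7^6$.

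To exclude $|S|=7^6$ when $k=3$, I would argue as follows. Here $p=2k+1$, so the parenthetical clause in Theorem \ref{main}(3) forces $\mathcal{P}(\F)_e=\emptyset$, whence $\mathcal{P}(\F)=\mathcal{P}(\F)_a\neq\emptyset$. Since $|S|=7^6=p^{p-1}$ and $\gamma_1(S)$ is non-abelian, the implication (3)$\Rightarrow$(1) of Theorem \ref{S1extraspecial} makes $\gamma_1(S)$ extraspecial. Theorem \ref{ess.extra} then identifies $S$ with a Sylow $7$-subgroup of $\G_2(7)$. But the maximal subgroup $\gamma_1(S)\cong 7^{1+4}_+$ already has rank $4$, so such a group has sectional rank $4$, contradicting $k=3$. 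Therefore $|S|=7^5$, giving conclusion (3)(a).

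For $k=4$, $p=7$, $|S|=7^6$, the remaining assertion is that whenever $\mathcal{P}(\F)_a\neq\emptyset$ the group $S$ is a Sylow $7$-subgroup of $\G_2(7)$. Under that hypothesis, Theorem \ref{S1extraspecial} (with $\gamma_1(S)$ non-abelian and $|S|=p^{p-1}$) again makes $\gamma_1(S)$ extraspecial, and Theorem \ref{ess.extra} provides the identification with a Sylow $7$-subgroup of $\G_2(7)$. This completes conclusion (3)(b). The only genuine subtlety is the $k=3$, $|S|=7^6$ exclusion, where one must combine the ban on extraspecial pearls coming from $p=2k+1$, the forced extraspecial structure of $\gamma_1(S)$, and the known sectional rank of $\G_2(7)$'s Sylow; the rest is a direct tabulation against Theorem \ref{main}.
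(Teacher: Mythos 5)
Your proposal is correct and follows essentially the same route as the paper: reduce to cases via Theorem \ref{main}, use Theorem \ref{S1extraspecial} to force $\gamma_1(S)$ extraspecial when $|S|=7^6$, derive a sectional-rank contradiction for $k=3$, and invoke Theorem \ref{ess.extra} for the $\G_2(7)$ identification when $k=4$. The only (harmless) difference is that in the $k=3$ exclusion the paper skips the detour through $\G_2(7)$ and observes directly that $\gamma_1(S)/\Z(S)$ is an elementary abelian section of order $7^4$.
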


\begin{proof}
By Theorem \ref{main}, we only need to prove statements $3(a)$ and $3(b)$.
\begin{enumerate}[$(a)$]
\item Suppose $k=3$. Then by Theorem \ref{main} we have $7^5 \leq |S| \leq 7^6$, with $|S|=7^6$ only if $\mathcal{P}(\F)_e=\emptyset$. Aiming for a contradiction, assume $|S|=7^6$. Then by Theorem \ref{S1extraspecial} the group $\gamma_1(S)$ is extraspecial (note that it is not abelian because $S$ has exponent $7$ and sectional rank $3$).  In particular the quotient $\gamma_1(S)/\Phi(\gamma_1(S)) = \gamma_1(S)/\Z(S)$ is an elementary abelian section of $S$ having order $7^4$, contradicting the fact that $k=3$. Thus we need $|S|=7^5$.

\item Suppose $k=4$. Then by Theorem \ref{main} we have $|S|=7^6$.  If $\mathcal{P}(\F)_a \neq \emptyset$ then the group $\gamma_1(S)$ is extraspecial by Theorem \ref{S1extraspecial} and by Theorem \ref{ess.extra} the group $S$ is isomorphic to a Sylow $7$-subgroup of the group $\G_2(7)$.
\end{enumerate}
\end{proof}

If $k=2$ and $|S|=p^3$ then $S\cong p^{1+2}_+$ by \cite[Theorem 4.2]{Stancu} and the saturated fusion systems on $S$ have been classified in \cite{RV}.
If $(k,p)=(2,3)$ then the saturated fusion systems on the $3$-group $S$ have been classified in \cite{DRV}. We now focus on $k\geq 3$.

\begin{lemma}\label{Sp4p}
If $k=3$ and $|S|=p^{k+1}=p^4$  then $S$ is isomorphic to a Sylow $p$-subgroup of the group $\Sp_4(p)$.
\end{lemma}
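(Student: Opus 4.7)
The plan is to show that under these hypotheses $S$ is determined up to isomorphism as the semidirect product of an elementary abelian group of order $p^3$ with a cyclic group of order $p$ acting as a single Jordan block, and then to observe that a Sylow $p$-subgroup of $\Sp_4(p)$ has this structure.

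First I would invoke Lemma \ref{lemma.1} to obtain that $\gamma_1(S)$ is an elementary abelian maximal subgroup of $S$ of order $p^3$. Since $\gamma_1(S)$ is abelian and contains $\Z_2(S)$, we have $\gamma_1(S) \leq \C_S(\Z_2(S))$, and equality holds because $\C_S(\Z_2(S))$ is a proper subgroup of $S$ (as $|\Z(S)| = p < p^2 = |\Z_2(S)|$).

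Next I would use the hypothesis $\mathcal{P}(\F) \neq \emptyset$ to produce the needed complement. Pick a pearl $E \in \mathcal{P}(\F)$. By Lemma \ref{pearl}, there is an element $x \in S \setminus \C_S(\Z_2(S)) = S\setminus \gamma_1(S)$ of order $p$ such that $E$ is either $\langle x \rangle \Z(S)$ or $\langle x \rangle \Z_2(S)$; the element $x$ has order $p$ because $\C_p \times \C_p$ and $p^{1+2}_+$ both have exponent $p$ (for $p$ odd). Now fix $s_1 \in \gamma_1(S) \setminus \gamma_2(S)$ and set $s_2 = [x, s_1]$ and $s_3 = [x, s_2]$. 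Since $x \notin \gamma_1(S)$ and $x \notin \C_S(\Z_2(S))$, Lemma \ref{exponent}(2) (or a direct commutator computation) gives that $s_i \in \gamma_i(S) \setminus \gamma_{i+1}(S)$ for $i = 1,2,3$, so $\gamma_1(S) = \langle s_1, s_2, s_3 \rangle$. Because $\gamma_1(S)$ is elementary abelian and $\langle x \rangle \cap \gamma_1(S) = 1$, the group $S$ decomposes as the semidirect product $\gamma_1(S) \rtimes \langle x \rangle$ in which conjugation by $x$ cyclically shifts $s_1 \mapsto s_1 s_2^{\pm 1}$, $s_2 \mapsto s_2 s_3^{\pm 1}$, $s_3 \mapsto s_3$; in other words $x$ acts on $\gamma_1(S) \cong \mathbb{F}_p^3$ as a single Jordan block of size three. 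These data determine $S$ up to isomorphism.

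Finally, I would identify this group with a Sylow $p$-subgroup $T$ of $\Sp_4(p)$. The group $T$ has order $p^4$, is of maximal nilpotency class, and contains an elementary abelian maximal subgroup (namely the product of the three root subgroups that do not include the unique negative simple coroot pairing, or concretely the subgroup of upper unitriangular symplectic matrices whose $(1,2)$ entry is zero). Consequently $T$ falls under the same description given in the previous paragraph, and hence $T \cong S$. The only mildly subtle point is the last identification; the rest is a direct assembly of results already proved in the paper. I do not anticipate the argument being difficult, since both the uniqueness of $S$ and the structure of $\Syl_p(\Sp_4(p))$ are essentially classical.
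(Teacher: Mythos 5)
Your proof follows essentially the same route as the paper: both obtain from Lemma \ref{lemma.1} (via Theorem \ref{main}) that $\gamma_1(S)$ is elementary abelian of order $p^3$, take an order-$p$ element $x$ of a pearl outside $\gamma_1(S)$, observe that $x$ acts on $\gamma_1(S)$ as a single Jordan block so that $S\cong\gamma_1(S)\rtimes\langle x\rangle$, and identify this with a Sylow $p$-subgroup of $\Sp_4(p)$. The argument is correct; your explicit verification that $\Syl_p(\Sp_4(p))$ has this structure is a point the paper leaves implicit.
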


\begin{proof}
By Theorem \ref{main} the group $\gamma_1(S)$ is elementary abelian, $\gamma_1(S) \cong \C_p \times \C_p \times \C_p$.
 By assumption there exists a pearl $E\in \mathcal{P}(\F)$. Let $x\in E \backslash \gamma_1(S)$. Then $x$ has order $p$, $[\gamma_1(S), x] = \gamma_2(S)$ and $[\gamma_2(S),x] = \gamma_3(S) = \Z(S)$. Let $1 \neq z\in \Z(S)$, $u\in \gamma_2(S) \backslash \Z(S)$ and $v\in \gamma_1(S) \backslash \gamma_2(S)$.  Then $\mathcal{B} = \{ z, u, v \}$ is a basis for $\gamma_1(S)$ and with respect to $\mathcal{B}$ the element $x$ acts on $\gamma_1(S)$ as the matrix
\[\begin{pmatrix} 1 & 1 & 0 \\ 0 & 1 & 1 \\ 0 & 0 & 1 \end{pmatrix}.\]
Thus $S \cong \gamma_1(S) \colon \langle x \rangle$ is isomorphic to a Sylow $p$-subgroup of the group $\Sp_4(p)$.
\end{proof}

\begin{lemma}\label{sec.3.exotic}
Suppose $k=3$, $p=7$ and $S$ has order $7^5$. Then $S$ is isomorphic to the group listed in \emph{Magma} as \rm{\texttt{SmallGroup(7\string^5, 37)}}, $\mathcal{P}(\F)= \mathcal{P}(\F)_a$, $\F$ is simple and there exists an abelian pearl $E\in \mathcal{P}(\F)_a$ such that
\begin{itemize}
\item $E^\F$ is the unique $\F$-conjugacy class of $\F$-essential subgroups of $S$;
\item $\Aut_\F(E) \cong \SL_2(7)$ and $\Out_\F(S) = \N_{\Out_\F(S)}(E) \cong \C_6$.
\end{itemize}
If moreover we assume the classification of finite simple groups then $\F$ is exotic.
\end{lemma}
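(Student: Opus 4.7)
The plan is to combine Theorem \ref{main}(3) (applied with $k=3$, $p=7$) with the rigidity results of Sections~3--4 to identify $S$ and its $\F$-essential subgroups, then to conclude simplicity by Theorem \ref{simple} and exoticity by appealing to CFSG.

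First (structure of $S$ and pearl data): since $p = 2k+1$, Theorem \ref{main}(3) forces $\mathcal{P}(\F)_e = \emptyset$, so $\mathcal{P}(\F) = \mathcal{P}(\F)_a$; Lemma \ref{small.rank.1}(3) then gives $|S| = 7^5$, maximal class, exponent $7$, and $\gamma_1(S)$ non-abelian. Theorem \ref{S1extraspecial} (since $|S| \ne 7^{p-1}$) rules out $\gamma_1(S)$ extraspecial; Theorem \ref{equal2step} (with $n=5$ odd and $\leq p+1$) gives $\gamma_1(S) = \C_S(\Z_2(S))$; Theorem \ref{positive.deg} then yields $[\gamma_1(S), \gamma_1(S)] = \gamma_4(S) = \Z(S)$ of order $7$ and $\Z(\gamma_1(S)) = \gamma_3(S)$ of order $49$, whence $\gamma_1(S) \cong 7^{1+2}_+ \times \C_7$. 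A direct \emph{Magma} check identifies $S$ with \texttt{SmallGroup(7\string^5, 37)}. Theorem \ref{type.pearl}(1) gives $\Aut_\F(E) = \Out_\F(E) \cong \SL_2(7)$ (as $E$ is abelian, $\Inn(E)=1$) and $\Out_\F(S) = \N_{\Out_\F(S)}(E) \cong \C_6$, while Theorem \ref{type.pearl}(2) forces all pearls into a single $\F$-class since $n = 5 \not\equiv 0 \pmod 6$.

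Second (no non-pearl essentials): by Lemma \ref{pearl}, any non-pearl essential $E'$ is contained in $\gamma_1(S) = \C_S(\Z_2(S))$ and contains $\gamma_3(S) = \Z(\gamma_1(S))$, so $|E'| \in \{7^3, 7^4\}$. If $|E'| = 7^3$, then $E'$ is elementary abelian, and Theorem \ref{auto.rank.3}(2) forces $\SL_2(7) \leq \Aut_\F(E')$ acting naturally on $\gamma_3(S)$ and trivially on a $1$-dimensional complement (using that $\gamma_3(S)$ is strongly $\F$-closed, hence $\Aut_\F(E')$-invariant). Combined with the unipotent $\Aut_S(E') \cong \N_S(E')/E'$, which fixes $\gamma_3(S)$ pointwise and shifts the complement into $\gamma_3(S)$, this generates the affine subgroup $GF(7)^2 \rtimes \SL_2(7)$ of $GL_3(7)$ of order $16464$, whose Sylow $7$-subgroup has order $7^3$; this contradicts that a Sylow $7$-subgroup of $\Aut_\F(E')$ equals $\Aut_S(E')$ (order $7^2$ if $E' = \gamma_2(S)$, order $7$ for the seven $S$-conjugate non-normal candidates). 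For $E' = \gamma_1(S)$, Theorem \ref{auto.rank.3}(2) provides an $\Aut_\F(\gamma_1(S))$-invariant decomposition $\gamma_1(S)/\gamma_4(S) = U \oplus W$ with $W = \gamma_3(S)/\gamma_4(S)$; but for any $x \in S \setminus \gamma_1(S)$ one computes $(x-1)^i(\gamma_1(S)/\gamma_4(S)) = \gamma_{1+i}(S)/\gamma_4(S)$ for $i = 1, 2$ and $(x-1)^3 = 0$, so the action of $x$ on $\gamma_1(S)/\gamma_4(S)$ is a single Jordan block of size $3$, incompatible with any decomposition into $x$-invariant subspaces of dimensions $2$ and $1$.

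Finally (simplicity and exoticity), Theorem \ref{simple}(1) gives $\F$ simple, since $E^\F$ is the unique $\F$-class of essentials and $n = 5 \not\equiv 1 \pmod 6$. Under CFSG, Theorem \ref{simple}(2) excludes $G$ alternating or sporadic (the sectional rank of $S$ is $3 \neq p - 1 = 6$), and case-by-case inspection of the finite simple groups of Lie type with Sylow $7$-subgroup of order $7^5$ and maximal-class structure shows that none realises $\F$, so $\F$ is exotic. The main obstacle is the rule-out of non-pearl essentials in the second step: the abstract group $\gamma_1(S) \cong 7^{1+2}_+ \times \C_7$ does admit $\SL_2(7)$-actions compatible with Theorem \ref{auto.rank.3}(2), so the theorem alone is insufficient; the crucial inputs are the Sylow-$7$ incompatibility for order-$7^3$ candidates (via the affine-group argument) and the Jordan-form incompatibility for $\gamma_1(S)$ itself forced by maximal class.
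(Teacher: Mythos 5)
Your overall strategy matches the paper's, and several pieces are sound: the identification of $S$, the pearl data via Theorem \ref{type.pearl}, and the simplicity step. Your exclusion of $\gamma_1(S)$ as $\F$-essential via the Jordan-block shape of $c_x$ on $\gamma_1(S)/\Phi(\gamma_1(S))$ (a single block of size $3$, incompatible with the $\Aut_\F(\gamma_1(S))$-invariant decomposition $U\oplus W$ from Theorem \ref{auto.rank.3}(2)) is correct and is a genuinely different, arguably cleaner, route than the paper's, which instead computes $\det(\varphi_\lambda|_{\gamma_1(S)/\Z_2(S)})=\lambda^7\neq 1$ against $\Out_\F(\gamma_1(S))\cong \SL_2(7)$.

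The genuine gap is in ruling out the non-pearl essential candidates $E'$ of order $7^3$ with $\Z_2(S)<E'<\gamma_1(S)$. First, your justification that $\gamma_3(S)=\Z_2(S)$ is $\Aut_\F(E')$-invariant is wrong: $\Z_2(S)$ is \emph{not} strongly $\F$-closed, since for an abelian pearl $E=\langle x\rangle\Z(S)$ one has $E\cap\Z_2(S)=\Z(S)$ and $\Aut_\F(E)\cong\SL_2(7)$ moves $\Z(S)$ to subgroups of $E$ not contained in $\Z_2(S)$. Second, even granting reducibility, Theorem \ref{auto.rank.3}(2) only supplies \emph{some} invariant decomposition $U\oplus W$; $U$ need not equal $\gamma_3(S)$ (indeed $\Aut_S(E')$ consists of transvections with centre $\Z(S)$ and axis $\gamma_3(S)$, and these do preserve decompositions with $\Z(S)\leq U\neq\gamma_3(S)$ and $W\leq\gamma_3(S)$), so your affine-group contradiction does not get off the ground; and if $U$ \emph{were} $\gamma_3(S)$ the contradiction would already be that $\Aut_S(E')$ fails to preserve $W$, not a Sylow count in $\GF(7)^2\rtimes\SL_2(7)$. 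Third, the irreducible case of Theorem \ref{auto.rank.3}(3) — where $O^{7'}(\Out_\F(E'))$ could be $\PSL_2(7)$ acting irreducibly on $E'\cong\GF(7)^3$ — is not addressed at all. The paper closes all these cases at once: by Maschke one may replace $E'$ by an $S$-conjugate normalized by $\varphi_\lambda$; then $\varphi_\lambda|_{E'}$ lies in $\Out_\F(E')\in\{\SL_2(7),\PSL_2(7)\}$, which is perfect and hence acts with determinant $1$, whereas Lemma \ref{lambda.action} forces $\det(\varphi_\lambda|_{E'})=\lambda^{4+2+1}=\lambda\neq 1$. Finally, your exoticity step is asserted rather than carried out: the paper excludes defining characteristic $7$ by Sylow order and cross-characteristic via \cite[Theorem 4.10.3]{GLS3}, since $S$ contains several elementary abelian subgroups of order $7^3$ where uniqueness would be required.
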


\begin{proof}
By Theorem \ref{main} we get that $\mathcal{P}(\F)=\mathcal{P}(\F)_a$, $S$ has exponent $7$ and $\gamma_1(S)$ is not abelian.
Note that by Theorem \ref{equal2step} and the fact that $|S|=7^5$ we conclude $\gamma_1(S)=\C_S(\Z_2(S))$.
Using the computer software \emph{Magma}, one can show that the group  $P=$ \texttt{SmallGroup(7\string^5, 37)} is (up to isomorphism) the unique group of order $7^5$ having nilpotency class $4$, exponent $7$ and such that the group $\C_S(\Z_2(S))$ is not abelian. Therefore $S\cong P$.

Since $|S|=7^5$, by Theorem \ref{S1extraspecial} the group $\gamma_1(S)$ is not extraspecial and so by Theorem \ref{type.pearl}(2) there exists a unique $\F$-conjugacy class of pearls, say $E^\F$ for $E\in \mathcal{P}(\F)_a$.  Also Theorem \ref{type.pearl}(1) implies that $\Aut_\F(E)\cong \SL_2(7)$ and $\Out_\F(S) = \N_{\Out_\F(S)}(E) \cong \C_6$.

Aiming for a contradiction, suppose that the group $\gamma_1(S)$ is $\F$-essential.
Since $\Phi(\gamma_1(S))=\Z(S) < \Z_2(S)=\Z(\gamma_1(S))$, by Theorem \ref{auto.rank.3} we deduce that $O^{7'}(\Out_\F(\gamma_1(S)))\cong \SL_2(7)$ and $\Out_\F(\gamma_1(S)) \leq \GL_2(7) \times \GL_1(7)$. Since $\Out_\F(S) \cong \C_6$ and every morphism in $\N_{\Aut_\F(\gamma_1(S))}(\Aut_S(\gamma_1(S)))$ is the restriction of a morphism of $\Aut_\F(S)$, we deduce that $\Out_\F(\gamma_1(S))\cong \SL_2(p)$. Take $\varphi_\lambda\in \Delta_\F(E)$. Then $\varphi_\lambda|_{\gamma_1(S)} \in \Aut_\F(\gamma_1(S))$ and  by Lemma \ref{lambda.action} the morphism $\varphi_\lambda$ acts as $\begin{pmatrix} \lambda^4 & 0 \\ 0 & \lambda^3 \end{pmatrix}$ on $\gamma_1(S)/\Z_2(S)$. Such matrix has determinant $\lambda^7 \not\equiv 1 \mod p$, contradicting the fact that $\Out_\F(\gamma_1(S))\cong \SL_2(p)$. Thus the group $\gamma_1(S)$ is not $\F$-essential.

Suppose for a contradiction that there exists an $\F$-essential subgroup $P\leq S$ that is not an $\F$-pearl. Hence by Lemma \ref{pearl} and the fact that $\gamma_1(S)$ is not $\F$-essential   we get $P < \gamma_1(S)$. Thus $\Z(\gamma_1(S))=\Z_2(S) < P < \gamma_1(S)$ and so $P\cong \C_7 \times \C_7 \times \C_7$. By Theorem \ref{auto.rank.3} we also have $[\N_S(P) \colon P]=7$, so $\gamma_1(S)=\N_S(P)$ and $P\neq \gamma_2(S)$.
Take $\varphi_\lambda\in \Delta_\F(E)$. Then $\varphi_\gamma$ acts on $\gamma_1(S)/\Z_2(S)\cong \C_p \times \C_p$ and normalizes $\gamma_2(S)/\Z_2(S)$. By Maschke's Theorem (\cite[Theorem 3.3.2]{Gor}) there exists a maximal subgroup $T/\Z_2(S)$ of $\gamma_1(S)/\Z_2(S)$ that is distinct from $\gamma_2(S)/\Z_2(S)$ and normalized by $\varphi_\gamma$. Note that $T=P^g$ for some $g\in S$ and $T$ is $\F$-essential by Theorem \ref{auto.rank.3}. Upon replacing $P$ with $T$, we may assume that $P$ is normalized by $\varphi_\lambda$. In particular $\varphi_\gamma$ acts on $P/\Z_2(S)$ as it does on $\gamma_1(S)/\gamma_2(S)$.  Note that every morphism in $\N_{\Aut_\F(P)}(\Aut_S(P))$ is the restriction of a morphism of $\Aut_\F(S)$ and by Theorem \ref{auto.rank.3} we get that either $\Out_\F(P) \cong \SL_2(7)$ or $\Out_\F(P)\cong \PSL_2(7)$. In particular $\varphi_\lambda$ acts on $P$ as a matrix having determinant $1$ (modulo $7$). However this contradicts Lemma \ref{lambda.action}.

Thus all the $\F$-essential subgroups of $S$ are pearls.
Hence by Corollary \ref{simple.pearl} we conclude that $\F$ is simple.

Assume the classification of finite simple groups and suppose by contradiction that $\F$ is not exotic. Then by Theorem \ref{simple}(2) there exists a finite simple group $G$ of Lie type that realizes $\F$. In particular $S$ is isomorphic to a Sylow $7$-subgroup of $G$. Since there is no simple group of Lie type in defining characteristic $7$ having a Sylow $7$-subgroup of order $7^5$ (see \cite[Theorem 2.2.9 and Table 2.2]{GLS3}), we deduce that $G$ is of cross-characteristic $7$. Note that the group $\gamma_2(S)$ is elementary abelian of order $7^3$.  By \cite[Theorem 4.10.3]{GLS3} the group $\gamma_2(S)$ has to be the unique elementary abelian subgroup of order $7^3$. However every maximal subgroup of $\gamma_1(S)$ containing $\gamma_3(S)$ is elementary abelian of order $7^3$, giving a contradiction. Therefore the fusion system $\F$ is exotic.
\end{proof}

\begin{remark} It can be checked with \emph{Magma} that the group $P=$ \texttt{SmallGroup(7\string^5, 37)} is isomorphic to a maximal subgroup of a Sylow $7$-subgroup of the group $\G_2(7)$.
\emph{Magma} also enables us to show that there exists an automorphism $\varphi$ of $P$ that normalizes a self-centralizing subgroup $E$ of $P$ isomorphic to $\C_7 \times \C_7$ and acts on it as the matrix $\begin{pmatrix} 3 & 0 \\ 0 & 5 \end{pmatrix}$. By \cite[Proposition 5.1]{BLO} the fusion system $\E$ determined by $\Inn(P)\langle \varphi\rangle$ and the subgroup of $\Aut(E)$ isomorphic to $\SL_2(7)$ (containing $\varphi$)  is saturated (and $E$ is an $\E$-essential subgroup). This implies that the fusion system described in Lemma \ref{sec.3.exotic} exists.
\end{remark}

\begin{lemma}\label{7.6}
Suppose $k=4$, $p=7$, $S$ has order $7^6$ and $\mathcal{P}(\F)=\mathcal{P}(\F)_e$. Then $S$ is isomorphic to the group listed in \emph{Magma} as \rm{\texttt{SmallGroup(7\string^6, 813)}} and there exists a pearl $E\in \mathcal{P}(\F)_e$ such that $E^\F$ is the unique $\F$-conjugacy class of $\F$-essential subgroups of $S$. In particular $O_7(\F) = \Z(S)$.
\end{lemma}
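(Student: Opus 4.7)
The approach mirrors Lemma \ref{sec.3.exotic}: identify $S$ up to isomorphism, apply Theorem \ref{type.pearl} for the uniqueness of the $\F$-class of pearls, rule out non-pearl essentials using the rigidity of Lemmas \ref{SL.index.p} and \ref{lambda.action}, and extract $O_7(\F)$ from the $\SL_2(7)$-module structure on $E/\Z(E)$.

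First, Theorem \ref{main}(3) with $(k,p)=(4,7)$ already forces $|S|=7^6$, $S$ of exponent $7$ and $\gamma_1(S)$ non-abelian. Since $\mathcal{P}(\F)_a = \emptyset$, condition (3) of Theorem \ref{S1extraspecial} fails, so $\gamma_1(S) = \C_S(\Z_2(S))$ and $\gamma_1(S)$ is not extraspecial. A \emph{Magma} search through $7$-groups of order $7^6$ of class $5$, exponent $7$, sectional rank $4$ with $\gamma_1(S) = \C_S(\Z_2(S))$ neither abelian nor extraspecial identifies \texttt{SmallGroup(7\string^6, 813)} as the unique candidate; one also verifies in \emph{Magma} that this group admits an order-$6$ automorphism normalising a self-centralising extraspecial subgroup of order $7^3$ and acting on it as in Definition \ref{delta}, so it genuinely supports an extraspecial pearl.

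Because $\gamma_1(S)$ is not extraspecial, Theorem \ref{type.pearl} applies. With $\epsilon = 1$ and $n = 6$, the congruence $n \equiv \epsilon \pmod{p-1}$ reads $6 \equiv 1 \pmod 6$ and fails, so part (2) gives a single $\F$-class $E^\F$ of pearls, while part (1) gives $\Out_\F(E) \cong \SL_2(7)$ and $\Out_\F(S) \cong \C_6$. The bulk of the work is then to show that no other $\F$-essential $P$ exists. By Lemma \ref{pearl} any such $P$ is contained in $\gamma_1(S)$. For $P = \gamma_1(S)$: the Frattini subgroup $\Phi(\gamma_1(S)) = [\gamma_1(S),\gamma_1(S)]$ is characteristic in $S$, so Lemma \ref{SL.index.p} forces $O^{7'}(\Out_\F(\gamma_1(S))) \cong \SL_2(7)$ acting naturally on a $2$-dimensional section of $\gamma_1(S)/\Phi(\gamma_1(S))$; but Lemma \ref{lambda.action} assigns any $\varphi_\lambda \in \Delta_\F(E)$ eigenvalue $\lambda^{5-i}$ on $\gamma_i(S)/\gamma_{i+1}(S)$, and combining this with the fact that $\Out_S(\gamma_1(S))$ acts as a single Jordan block on $\gamma_1(S)/\Phi(\gamma_1(S))$ (so the only $\Out_S$-invariant flags are the standard lower-central ones) rules out every candidate $2$-dimensional section on which $\varphi_\lambda$ could have determinant compatible with lying in $\SL_2(7)$, a contradiction. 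For $P < \gamma_1(S)$, $\F$-centricity forces $\Z(\gamma_1(S)) \leq P$, and, $S$ now being pinned down, a direct \emph{Magma} enumeration shows no such $P$ supports a strongly $7$-embedded $\Out_\F(P)$ compatible with the scalar action of $\Out_\F(S) \cong \C_6$. Reconciling the $\SL_2(7)$-module structure of Lemma \ref{SL.index.p} with the rigid eigenvalue pattern from Lemma \ref{lambda.action} is the genuine obstacle.

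Finally, $\Z(S) = \Phi(E) = \Z(E)$ is characteristic in $S$ and in each pearl, hence $\Aut_\F(Q)$-invariant for every essential $Q$, so $\Z(S) \leq O_7(\F)$. Conversely $O_7(\F) \leq E$ is $\Aut_\F(E)$-invariant; since $\Aut_\F(E) \cong \SL_2(7)$ acts irreducibly on $E/\Z(E)$ (a natural $\SL_2(7)$-module by Corollary \ref{auto.pearl}), the only $\Aut_\F(E)$-invariant subgroups of $E$ are $1$, $\Z(E)$ and $E$. As $E$ is not normal in $S$, this forces $O_7(\F) = \Z(S)$.
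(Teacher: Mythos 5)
Your overall strategy matches the paper's (and mirrors Lemma \ref{sec.3.exotic}), and several pieces are fine: the deduction that $\gamma_1(S)=\C_S(\Z_2(S))$ is not extraspecial via Theorem \ref{S1extraspecial}, the uniqueness of the pearl class via the congruence $6\not\equiv 1 \bmod 6$ in Theorem \ref{type.pearl}(2), and the final extraction of $O_7(\F)=\Z(S)$ from the invariant-subgroup structure of an extraspecial pearl. But there are concrete gaps. First, the identification of $S$: the paper's \emph{Magma} search (order $7^6$, class $5$, exponent $7$, elementary abelian derived subgroup, $\C_S(\Z_2(S))$ non-abelian) returns \emph{two} groups, \texttt{SmallGroup(7\string^6, 789)} and \texttt{SmallGroup(7\string^6, 813)}, and the elimination of $789$ is not structural: it uses the fact that $\Aut(789)$ has no element of order prime to $7$ acting trivially on the centre, which is incompatible with Lemma \ref{action.frat} (an extraspecial pearl forces $\varphi_\lambda\in\Delta_\F(E)$ to centralize $\Z(S)$). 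You assert that adding the criteria ``sectional rank $4$'' and ``$\gamma_1(S)=\C_S(\Z_2(S))$ neither abelian nor extraspecial'' already singles out $813$; this is an unverified computational claim, and since the author evidently needed the automorphism-group argument, it is quite likely that $789$ survives your structural filter too. You need the Lemma \ref{action.frat} step.

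Second, your exclusion of $\gamma_1(S)$ invokes Lemma \ref{SL.index.p}, whose hypotheses include $\N_S(E)<S$; since $\gamma_1(S)\norm S$ this lemma simply does not apply (its proof starts by choosing $x\in\N_S(\N_S(E))\setminus\N_S(E)$). The correct route, as in the paper, is Theorem \ref{auto.rank.3} applied to the rank-$3$ group $\gamma_1(S)$ together with $\Out_\F(S)\cong\C_6$, which pins $\Out_\F(\gamma_1(S))$ down to $\SL_2(7)$ or $\PSL_2(7)$; after that your determinant computation against the eigenvalues $\lambda^{5-i}$ from Lemma \ref{lambda.action} is essentially the paper's contradiction. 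Third, for essential subgroups $P<\gamma_1(S)$ you defer entirely to ``a direct \emph{Magma} enumeration,'' which is neither performed nor precisely specified; the obstruction is not purely group-theoretic (one must use that morphisms in $\N_{\Aut_\F(P)}(\Aut_S(P))$ lift when $P\norm S$, the bounds of Theorems \ref{auto.rank.3} and \ref{samb4} on $[\N_S(P):P]$, and the stabilized series $\Phi(P)<\Z(P)<P$ against Lemma \ref{char.series}), and the paper carries this out by hand. As written, the proposal leaves both the identification of $S$ and the exclusion of non-pearl essentials below $\gamma_1(S)$ unproven.
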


\begin{proof}
Since $\mathcal{P}(\F)=\mathcal{P}(\F)_e$, Theorem \ref{S1extraspecial} implies that $\gamma_1(S)$ is not extraspecial and $\gamma_1(S) = \C_S(\Z_2(S))$. In particular by Theorem \ref{positive.deg} we have $[\gamma_2(S),\gamma_2(S)] = [\gamma_2(S),\gamma_3(S)] \leq \gamma_6(S) =1$. So the group $\gamma_2(S)$ is abelian.
By Theorem \ref{main} the group $S$ has exponent $7$. Hence $\gamma_2(S)$ is elementary abelian.
Theorem \ref{main} also tells us that the group $\gamma_1(S)$ is not abelian.

Using the computer software \emph{Magma} we can prove that the groups \texttt{SmallGroup(7\string^6, 789)} and \texttt{SmallGroup(7\string^6, 813)} are, up to isomorphism, the only groups of order $7^6$ that have nilpotency class $5$, exponent $7$, elementary abelian derived subgroup and such that the centralizer of the second center is not abelian.
Thus either $S\cong $ \texttt{SmallGroup(7\string^6, 789)} or $S\cong$\texttt{SmallGroup(7\string^6, 813)}.

Using \emph{Magma} we can determine the generators of the automorphism group of the $7$-group $P=$ \texttt{SmallGroup(7\string^6, 789)}. We find $6$ generators having order $7$ and one having order $6$. Also, all the generators of order $7$ act trivially on $\Z(P)$ and the one of order $6$ acts on $\Z(P)$ as $5$. In particular there is no automorphism of $P$ of order prime to $7$ that acts trivially on $\Z(P)$.
Let $E\in \mathcal{P}(\F)_e$ be an extraspecial pearl. Then by Lemma \ref{action.frat} every morphism in $\Delta_\F(E)$ centralizes $\Z(S)$. If $S \cong P$ then $\Delta_\F(E) = \emptyset$, that is a contradiction.

Thus we need $S\cong$\texttt{SmallGroup(7\string^6, 813)}.
Note that  we can check that the automorphism group of \texttt{SmallGroup(7\string^6, 813)} contains an automorphism of order $6$ centralizing the center.

We now show that $E^\F$ is the only $\F$-conjugacy class of $\F$-essential subgroups.
By Theorem \ref{type.pearl}(2), $E^\F$ is the only $\F$-conjugacy class of pearls.

Suppose the group $\gamma_1(S)$ is $\F$-essential.
Note that $\gamma_1(S)$ has rank $3$ and Theorem \ref{auto.rank.3} and the fact that $\Out_\F(S) \cong \C_6$ (Theorem \ref{type.pearl}(1)) imply that either $\Out_\F(\gamma_1(S))\cong \SL_2(7)$ or $\Out_\F(\gamma_1(S))\cong \PSL_2(7)$.
Take $\varphi_\lambda\in \Delta_\F(E)$. Then $\varphi_\lambda$ acts on $V/\Phi(\gamma_1(S))$ as a matrix having determinant $1$, where $V\leq \gamma_1(S)$ is a subgroup of $\gamma_1(S)$ normalized by $\Out_\F(\gamma_1(S))$ (and so normal in $S$). This contradicts Lemma \ref{lambda.action}. Hence the group $\gamma_1(S)$ is not $\F$-essential.

Aiming for a  contradiction, suppose that $P\leq S$ is an $\F$-essential subgroup that is not a pearl. Then by Lemma \ref{pearl} and what we proved above we get $P<\gamma_1(S)=\C_S(\Z_2(S))$. In particular $\Z_2(S) < P$ and so $7^3\leq |P| \leq 7^4$.
Note that $\Phi(\gamma_1(S))=\Z_2(S)=\Z(\gamma_1(S)) \leq P$, so $P\norm \gamma_1(S)$.
By Theorems \ref{auto.rank.3} and \ref{samb4} either $[\N_S(P) \colon P]=7$ or $\Out_\F(P)$ involves $\PSL_2(49)$, $P$ has rank $4$ and $[\N_S(P) \colon P]=7^2$. Since $|\gamma_1(S)|=7^5$ and $\gamma_1(S)\leq \N_S(P)$ we get  $[\gamma_1(S) \colon P] \leq 7$.
If $P\norm S$ then every morphism in $\N_{\Aut_\F(P)}(\Aut_S(P))$ is the restriction of a morphism of $\Aut_\F(S)$ and so we need $[S \colon P]=7$ and $P=\gamma_1(S)$, that is a contradiction. Thus $|P|=7^4$ and  $\gamma_1(S)=\N_S(P)$ (that implies $P\neq \gamma_2(S)$).
Note that $\gamma_1(S)=P\gamma_2(S)$ and $\gamma_2(S)$ is abelian. If $P$ is abelian then $[\gamma_1(S) \colon \Z(\gamma_1(S))]=[\gamma_1(S) \colon \gamma_2(S) \cap P] =7^2$, contradicting the fact that $\Z(\gamma_1(S))=\Z_2(S)$. So $P$ is not abelian. Thus $\Phi(P)\neq 1$ and $\Phi(\gamma_1(S))=\Z_2(S)=\Z(P)$. Hence the group $\gamma_1(S)$ stabilizes the series $\Phi(P) < \Z(P) <P$, contradicting Lemma \ref{char.series}.

Hence all the $\F$-essential subgroups of $S$ are pearls and by what we showed above this implies that $E^\F$ is the only $\F$-conjugacy class of $\F$-essential subgroups. In particular $O_7(\F)=\Z(S)=\Z(E)$.
\end{proof}

\begin{lemma}\label{4.5}
Suppose $k=4$, $p=5$ and $|S|=p^n\geq 5^6$. Let $P\leq S$ be an $\F$-essential subgroup of $S$. Then either $P$ is a pearl or $P=\gamma_1(S)=\C_S(\Z_2(S))$.
\end{lemma}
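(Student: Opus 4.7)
The plan is to use Lemma \ref{pearl} to reduce to the case $P \leq \gamma_1(S)$, and then rule out $P<\gamma_1(S)$ via Lemma \ref{Si.in.ess} together with a determinant computation modelled on the proof of Lemma \ref{sec.3.exotic}. By Lemma \ref{pearl}, if $P$ is $\F$-essential and not a pearl then $P\leq \gamma_1(S)$ or $P\leq \C_S(\Z_2(S))$; the hypotheses $p=5$, $k=4$, $|S|\geq 5^6$ put us in case $(2)$ of Theorem \ref{main}, so $\gamma_1(S)=\C_S(\Z_2(S))$ and hence $P\leq\gamma_1(S)$. Assume $P<\gamma_1(S)$ for contradiction. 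Since $P$ is $\F$-centric, $\Z(\gamma_1(S))\leq P$, and since the degree of commutativity is $l\geq 1$ by Theorem \ref{positive.deg} and $\Z(\gamma_1(S))$ is normal in $S$, one checks $\Z(\gamma_1(S))=\Z_{l+1}(S)$.

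Next I would apply Lemma \ref{Si.in.ess}, valid for $|S|>5^6$. If some $\gamma_i(S)\nleq P$ with $i\geq 2$, the lemma forces $l\leq (p-2)-i = 3-i$; combined with $l\geq 1$ this yields $i=2$, $l=1$, and $|S|\leq 5^8$. Hence either $\gamma_2(S)\leq P$ --- forcing $P=\gamma_2(S)$ since $[\gamma_1(S)\colon \gamma_2(S)]=p$ --- or $l=1$, $5^7\leq|S|\leq 5^8$, $\gamma_3(S)\leq P$, and $\gamma_2(S)\nleq P$. The remaining case $|S|=5^6$ must be handled separately: here $\Phi(\gamma_1(S))\leq\Z_2(S)=\Z(\gamma_1(S))$ and $\gamma_1(S)$ has class two, so the possibilities for $P$ between $\Z_2(S)$ and $\gamma_1(S)$ can be enumerated directly.

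In each remaining subcase the contradiction is obtained by fixing a pearl $E\in\mathcal{P}(\F)$ together with $\varphi_\lambda\in\Delta_\F(E)$ (nonempty by Corollary \ref{auto.pearl} and Theorem \ref{lift}), and --- after replacing $P$ by a suitable $S$-conjugate if necessary so that $\varphi_\lambda$ normalizes $P$ --- computing the action of $\varphi_\lambda$ on $P/\Phi(P)$ via Lemma \ref{lambda.action}: it acts on each section $\gamma_i(S)/\gamma_{i+1}(S)$ as the scalar $\lambda^{n-i-\epsilon}$. On the other hand, Theorems \ref{auto.rank.3} and \ref{samb4} constrain $O^{p'}(\Out_\F(P))$ to be $\SL_2(5)$, $\PSL_2(5)$, or $\PSL_2(25)$, each of which acts on $P/\Phi(P)$ with trivial determinant; the $p'$-element $\varphi_\lambda|_P$ therefore must also have trivial determinant, contradicting the explicit computation, since the exponent of $\lambda$ summed over the contributing sections is nonzero modulo $p-1=4$. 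The main obstacle will be the case $P=\gamma_2(S)$ of rank $4$ with $[\N_S(P)\colon P]=p^2$ and $\Out_\F(P)$ involving $\PSL_2(25)$, where one first verifies that $\PSL_2(25)$ embeds into $\SL_4(5)$ (its natural $\GF(25)$-representation viewed over $\GF(5)$ has trivial determinant) and then carefully sums $n-i-\epsilon$ over the indices $i\in\{2,3,4,5\}$ contributing to $\gamma_2(S)/\Phi(\gamma_2(S))$ to produce the final contradiction.
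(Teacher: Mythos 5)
Your reduction and case division are sound and match the paper's: Lemma \ref{pearl} plus $\gamma_1(S)=\C_S(\Z_2(S))$ puts $P\leq\gamma_1(S)$; Lemma \ref{Si.in.ess} with $l\geq 1$ forces $\gamma_3(S)\leq P$ and splits the problem into $P=\gamma_2(S)$, the case $5^7\leq|S|\leq 5^8$ with $\gamma_2(S)\nleq P$, and the separate case $|S|=5^6$. The gap is in the mechanism you propose for closing every subcase. You argue that since $O^{p'}(\Out_\F(P))$ is one of $\SL_2(5)$, $\PSL_2(5)$, $\PSL_2(25)$ acting with trivial determinant, the $p'$-element $\varphi_\lambda|_P$ "must also have trivial determinant." That inference is false: a $p'$-element of $\Out_\F(P)$ normalizing $\Out_S(P)$ need not lie in $O^{p'}(\Out_\F(P))$ (compare a diagonal matrix of non-unit determinant in $\GL_2(p)$, which is a $p'$-element outside $O^{p'}(\GL_2(p))=\SL_2(p)$), and there do exist elements of non-trivial determinant in $\GL_4(5)$ normalizing a copy of $\Omega_4^-(5)\cong\PSL_2(25)$. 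So computing $\det(\varphi_\lambda|_{P/\Phi(P)})\neq 1$ via Lemma \ref{lambda.action} is not, by itself, a contradiction; one must first pin $\varphi_\lambda|_P$ inside the determinant-one subgroup, which is exactly the delicate step (done via the constraint $\Out_\F(S)\cong\C_4$ and the extension property of $\N_{\Aut_\F(P)}(\Aut_S(P))$) that your sketch omits.

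The paper's proof confirms that this is not a cosmetic omission: in two of the three subcases the determinant computation alone cannot finish. For $P=\gamma_2(S)\norm S$ it instead uses that every element of $\N_{\Aut_\F(P)}(\Aut_S(P))$ restricts from $\Aut_\F(S)$, of order $4\cdot 5^{n-1}$, while the Sylow $5$-normalizer in $\PSL_2(25)$ has order $300$, divisible by $3$ --- a divisibility contradiction with no determinants involved. For $|S|=5^6$ the computation $\det(\varphi_\lambda|_{P/V})=\lambda^2\neq 1$ yields only that $\varphi_\lambda|_P\notin O^{5'}(\Out_\F(P))$; the paper then uses Alperin--Goldschmidt to force $\gamma_1(S)$ to be $\F$-essential and obtains the contradiction one level up, after verifying that $\varphi_\lambda|_{\gamma_1(S)}$ genuinely lies in $H\cong\SL_2(5)$ because $|\N_H(\Out_S(\gamma_1(S)))|=20$ exhausts $\N_{\Out_\F(\gamma_1(S))}(\Out_S(\gamma_1(S)))$. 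For $5^7\leq|S|\leq 5^8$ the paper abandons determinants entirely and shows $\Z(S)\leq\Phi(P)\leq\Z_4(S)$, so that $\gamma_2(S)$ stabilizes a series violating Lemma \ref{char.series} (note also that here $\Phi(P)$ need not be a term of the lower central series, so your sections $\gamma_i(S)/\gamma_{i+1}(S)$ do not cleanly decompose $P/\Phi(P)$). You would need to supply arguments of this kind to complete each branch.
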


\begin{proof}
Since $|S|\geq 5^6$, by Lemma \ref{equal2step} we have $\gamma_1(S)=\C_S(\Z_2(S))$. Suppose $P$ is not a pearl.
Then by Lemma \ref{pearl} we have $P \leq \gamma_1(S)$.

Aiming for a contradiction suppose that $P < \gamma_1(S)$. In particular $\gamma_1(S)$ is not abelian.
By Theorem \ref{S1extraspecial}, the group $\gamma_1(S)$ is not extraspecial. Since $\mathcal{P}(\F)\neq \emptyset$, by Theorem \ref{type.pearl}(1) we have $|\Out_\F(S)|=4$.

By Theorem \ref{samb4} we have $[\N_S(P) \colon P] \leq 5^2$ and if $[\N_S(P) \colon P] = 5^2$ then the group $\Out_\F(P)$ involves $\PSL_2(25)$.
Suppose $P \norm S$. Since $P < \gamma_1(S)$ this implies $P=\gamma_2(S)$ and $[S \colon \gamma_2(S)] =5^2$.
Also, every morphism in $\N_{\Aut_\F(P)}(\Aut_S(P))$ is the restriction of a morphism in $\Aut_\F(S)$.  However the normalizer in $\PSL_2(25)$ of a Sylow $5$-subgroup has order $4\cdot 3\cdot 5^2$, contradicting the fact that $|\Aut_\F(S)| = 4\cdot 5^{n-1}$. Thus $P\neq \gamma_2(S)$ and $P$ is not normal in $S$. Also we get $\gamma_2(S)\nleq P$ and by Lemma \ref{Si.in.ess} we deduce that $5^6\leq |S|\leq 5^8$.

\begin{itemize}
\item Suppose $|S|=5^6$.
Then $[\gamma_1(S),\gamma_1(S)] \leq \gamma_{4}(S) =\Z_2(S) \leq \Z(\gamma_1(S)) \leq P$, so $P$ is normal in $\gamma_1(S)$.
If $|P|=5^3$ then $[\gamma_1(S) \colon P] =5^2$, contradicting Theorem \ref{auto.rank.3}. Thus $|P|=5^4$ and $\N_S(P)=\gamma_1(S)$.
Note that $\gamma_2(S)$ has exponent $5$ by Theorem \ref{power} and $[\gamma_2(S),\gamma_2(S)]=[\gamma_2(S),\gamma_3(S)]\leq \gamma_6(S)=1$. So $\gamma_2(S)$ is  elementary abelian. 
The group $\gamma_1(S)$ stabilizes the series $1 < \Z_2(S)=\gamma_{4}(S) < P$. Hence by Lemma \ref{char.series} the group $\Z_2(S)$ is not normalized by $\Aut_\F(P)$. In particular $\Z_2(S) < \Z(P)$ and so $P$ is abelian.
We have $\gamma_1(S)= \gamma_2(S)P$, so $\Z(\gamma_1(S))=\gamma_2(S)\cap P$ has order $5^3$ and since $\Z(\gamma_1(S))\norm S$ we deduce that $\Z(\gamma_1(S))=\gamma_3(S)$.
Thus $\gamma_3(S) \leq \Omega_1(P)$. Since $\gamma_1(S)$ stabilizes the series $1<\gamma_3(S)<P$, the group $\gamma_3(S)$ is not normalized by $\Aut_\F(P)$ and so $P=\Omega_1(P)$. Since $P$ is abelian we deduce that $P$ has exponent $5$ (and so $\gamma_1(S)=\gamma_2(S)P$ has exponent $5$).
Thus $P$ is elementary abelian of order $5^4$.

Set $V=\C_P(O^{5'}(\Aut_\F(P)))$. Then by Lemma \ref{SL.index.p} we get that $P/V$ is a natural $\SL_2(5)$-module for $O^{5'}(\Out_\F(P))\cong \SL_2(5)$. In particular $\Z(S)=[\gamma_1(S),\gamma_1(S)] \nleq V$. 

Let $E\in\mathcal{P}(\F)$ be a pearl and let $\varphi_\lambda\in \Delta_\F(E)$. Lemma \ref{lambda.action} and the fact that $\Z(\gamma_1(S))=\gamma_3(S)$ imply that $\varphi_\lambda$ acts on $\gamma_1(S)/\gamma_3(S)$ as $\begin{pmatrix} \lambda & 0 \\ 0 & 1 \end{pmatrix}$, and so $E$ is abelian. Note that $\varphi_\lambda$ normalizes the quotient $\gamma_1(S)/\gamma_3(S)$ and its maximal subgroup $\gamma_2(S)/\gamma_3(S)$, so we may assume that it normalizes $P/\gamma_3(S)$ and therefore $P$.
 Since $\Z(S)\nleq V$, the morphism $\varphi_\lambda$ acts on $P/V$ as $\begin{pmatrix} \lambda & 0 \\ 0 & \lambda\end{pmatrix}$.  Thus $\varphi_\lambda|_P \notin O^{5'}(\Out_\F(P))$. Recall that $\Out_\F(S) = \langle \varphi_\lambda \rangle$, so there are automorphisms in $\N_{O^{5'}(\Out_\F(P))}(\Out_S(P))$ that are not restrictions of automorphisms of $S$ (but they are restrictions of automorphisms of $\gamma_1(S)=\N_S(P)$). By the Alperin-Goldschmidt fusion theorem this implies that the group $\gamma_1(S)$ is $\F$-essential.
The groups $\Z(S)=[\gamma_1(S),\gamma_1(S)]$ and $\gamma_3(S)=\Z(\gamma_1(S))$ are characteristic in $\gamma_1(S)$ and since $P$ is fully normalized, the group $\gamma_2(S)$ has to be normalized by $\Aut_\F(\gamma_1(S))$. Since $\gamma_1(S)$ is $\F$-essential, the group $O^{5'}(\Out_\F(\gamma_1(S)))$ has a strongly $5$-embedded subgroup and by Lemma \ref{char.series} none of the maximal subgroups of $\gamma_3(S)/\Z(S)\cong \C_5 \times \C_5$ is normalized by $O^{5'}(\Out_\F(\gamma_1(S)))$. Hence the group $O^{5'}(\Out_\F(\gamma_1(S)))$ involves $\SL_2(5)$.

Set $H=  O^{5'}(\Out_\F(\gamma_1(S)))$. Then $H$ is isomorphic to a subgroup of $\SL_4(5)$ (Lemma \ref{GLr}) and satisfies the following:
\begin{itemize}
\item $O_5(H)=1$ ($H$ has a strongly $5$-embedded subgroup);
\item $H$ involves $\SL_2(5)$;
\item $\Out_S(\gamma_1(S))\in \Syl_5(H)$ and $|\N_H(\Out_S(\gamma_1(S)))| \leq 20$ (because every morphism in $\N_{\Out_\F(\gamma_1(S))}(\Out_S(\gamma_1(S)))$ is the restriction of an automorphism of $S$ and $|\Out_\F(S)|=4$).
\end{itemize}
Using the computer software \emph{Magma} we conclude that $H\cong \SL_2(5)$ and $\gamma_3(S)/\Z(S)$ is a natural $\SL_2(5)$-module for $H$.
In particular $|\N_H(\Out_S(\gamma_1(S)))| = 20$ and so $\N_H(\Out_S(\gamma_1(S))) = \N_{\Out_\F(\gamma_1(S))}(\Out_S(\gamma_1(S)))$. This implies $\varphi_\lambda|_{\gamma_1(S)} \in H$. However $\varphi_\lambda$ acts on $\gamma_3(S)/\Z(S)$ as $\begin{pmatrix} \lambda^{3} & 0 \\ 0 & \lambda^2 \end{pmatrix}$, that has determinant not congruent to $1$ modulo $5$ and gives a contradiction.
Therefore $|S|\neq 5^6$.

\item Suppose that $5^7 \leq |S| \leq 5^8$.
Let $l$ be the degree of commutativity of $S$. Then by Theorem \ref{positive.deg} we have $l\geq 1$. In particular by Lemma   \ref{Si.in.ess} we get $\gamma_3(S) \leq P$. Since $P$ is not normal in $S$ we have $\gamma_3(S) < P$ and  so $[\gamma_1(S) \colon P] =5$, that implies $P\norm \gamma_1(S)$. Note that $\Omega_1(\gamma_1(S))=\Z_4(S) \leq \gamma_3(S)\leq P$, so $\Omega_1(\gamma_1(S))=\Omega_1(P)$. The group $\gamma_2(S)$ is not contained in $P$, normalizes $P$ and stabilizes the series $\Z(S) < \Omega_1(P)=\Z_4(S) < P$.
We show that $\Z(S) \leq \Phi(P) \leq \Z_4(S)$, contradicting Lemma \ref{char.series}. Note that either $\Z_4(S)=\gamma_3(S)$ or $\Z_4(S)=\gamma_{4}(S)$. Since $[\gamma_1(S),\gamma_1(S)]=[\gamma_1(S),\gamma_2(S)]\leq \gamma_{4}(S)$ we conclude that $P$ centralizes $P/\Z_4(S)$. Also, by Lemma \ref{power} we have $\gamma_1(S)^5 = \gamma_5(S) \leq \Z_4(S)$. So $P^5\leq \Z_4(S)$ and $\Phi(P)\leq \Z_4(S)$.
Suppose $\Z_3(S)\leq \Z(P)$. Since $\Z_3(S)\leq \Z(\gamma_2(S))$ and $\gamma_1(S)=\gamma_2(S)P$, we get $\Z_3(S) \leq \Z(\gamma_1(S))$. Thus $\Z_3(S) \leq \Omega_1(\Z(P)) \leq \Omega_1(P) =\Z_4(S)$. Hence $\Omega_1(\Z(P))$ is normal in $S$. Since $P$ is not normal in $S$ we have $\gamma_1(S)=PP^g$ for some $g\in S$ and $\Omega_1(\Z(\gamma_1(S))) =\Omega_1(\Z(P)) \cap \Omega_1(\Z(P^g)) =\Omega_1(\Z(P))$. In particular $\gamma_1(S)$ stabilizes the series $\Omega_1(\Z(P)) \leq \Z_4(P)=\Omega_1(P) <P$, contradicting Lemma \ref{char.series}. Therefore $\Z_3(S)\nleq \Z(P)$ and so $[\Z_3(S),E] =\Z(S)$. Thus $\Z(S) \leq [E,E] \leq \Phi(E) \leq \Z_4(S)$ and we reach a  contradiction.
\end{itemize}
\end{proof}

\begin{proof}[\textbf{Proof of Theorem \ref{small.rank}}]~
Suppose $|S|=p^{k+1}$. Then by Theorem \ref{main} the group $S$ has an elementary abelian maximal subgroup. Also, if $k=2$ then $S\cong p^{1+2}_+$ and if $k=3$ then by Lemma \ref{Sp4p} the group $S$ is isomorphic to a Sylow $p$-subgroup of the group $\Sp_4(p)$.
Finally  if $k\geq 3$ then by Lemma \ref{pearl} the group $\gamma_1(S)$ is the only candidate for an $\F$-essential subgroup that is not a pearl.

Suppose $|S|\neq p^{k+1}$. Then by Lemmas \ref{small.rank.1}, \ref{sec.3.exotic} and \ref{7.6}  one of the following holds:
\begin{itemize}
\item $(p,k) \in \{ (2,3), (4,5) \}$, $\gamma_1(S)=\C_S(\Z_2(S))$ and the $\F$-essential subgroups of $S$ are given by \cite[Theorem 1.1]{DRV} if $p=3$ and by Lemma \ref{4.5} if $p=5$.

\item  $k =3$, $p=7$, $S$ has order $7^5$, $S\cong$ \rm{\texttt{SmallGroup(7\string^5, 37)}}, $\mathcal{P}(\F)= \mathcal{P}(\F)_a$ and there exists an abelian pearl $E\in \mathcal{P}(\F)_a$ such that
$E^\F$ is the unique $\F$-conjugacy class of $\F$-essential subgroup of $S$, $\Aut_\F(E) \cong \SL_2(7)$, $\Out_\F(S)\cong \C_6$, $\F$ is simple and if we assume the classification of finite simple groups then $\F$ is exotic.

\item $k=4$, $p=7$, $|S|=7^6$ and

\begin{itemize}
\item if $\mathcal{P}(\F)_a \neq \emptyset$ then $S$ is isomorphic to a Sylow $7$-subgroup of the group $\G_2(7)$ and the $\F$-essential subgroups of $S$ are described in \cite[Theorem 4.2]{G2p};
\item if $\mathcal{P}(\F)=\mathcal{P}(\F)_e$ then $S\cong$  \rm{\texttt{SmallGroup(7\string^6, 813)}} and there exists a pearl $E\in \mathcal{P}(\F)_e$ such that $E^\F$ is the unique $\F$-conjugacy class of $\F$-essential subgroups of $S$. In particular $O_7(\F) = \Z(S)$. Note that in this case $S/\Z(S)\cong$ \rm{\texttt{SmallGroup(7\string^5, 37)}}, $E/\Z(S)$ is an abelian pearl for $\F/\Z(S)$ and $\F/\Z(S)$ is the unique saturated fusion system defined on $S/\Z(S)$ and containing an abelian pearl.
\end{itemize}
\end{itemize}
\end{proof}

\begin{remark}
As we mentioned in the introduction of this paper, the classification of saturated fusion systems containing pearls on $p$-groups of sectional rank $p-1$ will be the subject of a forthcoming paper.
For example, suppose that $\F_0$ is a saturated fusion system on the $5$-group $X_0$ that has order $5^5$ and sectional rank $4$ and suppose that $\F_0$ contains a pearl $E$. By Theorem \ref{main} the group $X_0$ has a maximal subgroup that is elementary abelian. Suppose there exists a tower of saturated fusion systems $\F_0 \subset \F_1 \subset \dots \subset \F_n \subset \dots$ defined on the $5$-groups
\[X_0 < X_1 < \dots < X_n < \dots\] such that for every $i\geq 0$ the group $E$ is a pearl of $\F_i$ and the $5$-group $X_i$ has sectional rank $4$ and is a maximal subgroup of $X_{i+1}$. Suppose moreover that $X_1$ does not have index $5$ abelian subgroups. Then there are $5$ candidates for $X_1$, namely the groups stored in \emph{Magma} as \texttt{SmallGroup(5\string^6,i)} for $i\in \{636, 639, 640, 641, 642\}$. Among these ones, only the group \texttt{SmallGroup(5\string^6,636)} is contained in a $5$-group of maximal nilpotency class and order $5^7$ containing a pearl. In other words, there are $5$ towers of $5$-groups containing $X_0$ such that $X_1$ does not have index $5$ abelian subgroups and $4$ of these have only two members: $X_0 < X_1$. Inspired by this observation, we claim that typically a $5$-group of sectional rank $4$ containing a pearl has an index $5$ abelian subgroup and only a finite number of examples \emph{deviates} from such standard case.

\end{remark}

\section*{Acknowledgments}
The main theorems of this paper are generalizations of results proved by the author in her PhD thesis, under the supervision of Prof. Chris Parker. She is immensely grateful to him for his support. She would also like to show her gratitude to Dr. Ellen Henke for comments that improved this manuscript.

\bibliographystyle{alpha}
\bibliography{bibpearls}

\end{document}